\numberwithin{equation}{section}
\theoremstyle{plain}
\newtheorem{theorem}[equation]{Theorem}
\newtheorem{lemma}[equation]{Lemma}
\newtheorem{corollary}[equation]{Corollary}
\newtheorem{proposition}[equation]{Proposition}
\newtheorem{intro-theorem}{Theorem}
\newtheorem{intro-corollary}[intro-theorem]{Corollary}
\newtheorem{intro-proposition}[intro-theorem]{Proposition}
\theoremstyle{definition}
\newtheorem{definition}[equation]{Definition}
\newtheorem{construction}[equation]{Construction}
\newtheorem{remark}[equation]{Remark}
\newtheorem{example}[equation]{Example}
\newtheorem{intro-definition}[intro-theorem]{Definition}
\newtheorem{intro-conjecture}[intro-theorem]{Conjecture}
\newcounter{listcounter}
\newcounter{deflistcounter}
\newcounter{equivcounter}
\newskip{\itemsepamount}
\newskip{\topsepamount}
\newenvironment{assertionlist}{%
  \begin{list}
    {\upshape (\arabic{listcounter})}
    {\setlength{\leftmargin}{18pt}
     \setlength{\rightmargin}{0pt}
     \setlength{\itemindent}{0pt}
     \setlength{\labelsep}{5pt}
     \setlength{\labelwidth}{13pt}
     \setlength{\listparindent}{\parindent}
     \setlength{\parsep}{0pt}
     \setlength{\itemsep}{\itemsepamount}
     \setlength{\topsep}{\topsepamount}
     \usecounter{listcounter}}}
  {\end{list}}
\newenvironment{definitionlist}{%
  \begin{list}
    {\upshape (\alph{deflistcounter})}
    {\setlength{\leftmargin}{18pt}
     \setlength{\rightmargin}{0pt}
     \setlength{\itemindent}{0pt}
     \setlength{\labelsep}{5pt}
     \setlength{\labelwidth}{13pt}
     \setlength{\listparindent}{\parindent}
     \setlength{\parsep}{0pt}
     \setlength{\itemsep}{\itemsepamount}
     \setlength{\topsep}{\topsepamount}
     \usecounter{deflistcounter}}}
  {\end{list}}
\newenvironment{equivlist}{%
  \begin{list}
    {\upshape (\roman{equivcounter})}
    {\setlength{\leftmargin}{18pt}
     \setlength{\rightmargin}{0pt}
     \setlength{\itemindent}{0pt}
     \setlength{\labelsep}{5pt}
     \setlength{\labelwidth}{13pt}
     \setlength{\listparindent}{\parindent}
     \setlength{\parsep}{0pt}
     \setlength{\itemsep}{\itemsepamount}
     \setlength{\topsep}{\topsepamount}
     \usecounter{equivcounter}}}
  {\end{list}}
\def\FilLF{\mathop{\tt FilLF}\nolimits}
\DeclareMathOperator{\GL}{GL}
\DeclareMathOperator{\GSp}{GSp}
\DeclareMathOperator{\GSpin}{GSpin}
\DeclareMathOperator{\PGL}{PGL}
\DeclareMathOperator{\SO}{SO}
\DeclareMathOperator{\SL}{SL}
\DeclareMathOperator{\Sp}{Sp}
 \DeclareMathOperator{\Spec}{Spec}
\DeclareMathOperator{\Lie}{Lie}
\DeclareMathOperator{\Gal}{Gal}
\DeclareMathOperator{\Ker}{Ker}
\DeclareMathOperator{\tor}{tor}
\let\inno\hookrightarrow
\newcommand{\defeq}{\colonequals}
\newcommand{\Gm}[1][\empty]{
  \ifthenelse{\equal{#1}{\empty}}
    {\mathbb{G}_m}
    {\mathbb{G}_{m,#1}}}
 \newcommand{\Gred}[1][\empty]{
  \ifthenelse{\equal{#1}{\empty}}
    {G^{\text{red}}}
    {G^{\text{red},#1}}}
\newcommand{\GZip}[1][\empty]{%
 \ifthenelse{\equal{#1}{\empty}}%
 {\mathop{\text{$G$-{\tt Zip}}}\nolimits}%
 {\mathop{\text{$#1$-{\tt Zip}}}\nolimits}%
}
\newcommand{\GZipFlag}[1][\empty]{%
 \ifthenelse{\equal{#1}{\empty}}%
 {\mathop{\text{$G$-{\tt ZipFlag}}}\nolimits}%
 {\mathop{\text{$#1$-{\tt ZipFlag}}}\nolimits}%
}
\newcommand\addots{\mathinner{\mkern1mu\raise0pt\vbox{\kern7pt\hbox{.}}\mkern2mu\raise3pt\hbox{.}\mkern2mu\raise6pt\hbox{.}\mkern1mu}}
\newcommand{\B}[1]{[#1\bs *]}
\newcommand{\Brh}{\mathtt{Brh}}
\newcommand{\Hdg}{\mathtt{Hdg}}
\newcommand{\set}[2]{\{\,#1\ \mid \  #2\,\}}
\newcommand{\sett}[2]{\{\,#1\ \mid \,\text{#2}\}}
\newcommand{\vdual}{{}^{\vee}}
 \newcommand{\Rep}[1][\empty]{
  \ifthenelse{\equal{#1}{\empty}}
    {\mathop{\text{\tt Rep}}\nolimits}
    {\mathop{\text{$#1$-{\tt Rep}}}\nolimits}}
\DeclareMathOperator{\Norm}{Norm}
\DeclareMathOperator{\rad}{rad}
\newcommand\lto{\longrightarrow}
\newcommand\ltoover[1]{\mathrel{\smash{\overset{#1}{\lto}}}}
\newcommand\varto[1]{\mathrel{\hbox to #1pt{\rightarrowfill}}}
\newcommand\vartoover[2]{\mathrel{\smash{\overset{#2}{\varto{#1}}}}}
\newcommand{\bijective}{\leftrightarrow}
\newcommand{\epi}{\twoheadrightarrow}
\newcommand{\mono}{\hookrightarrow}
\newcommand{\sends}{\mapsto}
\newcommand{\lsends}{\longmapsto}
\newcommand{\iso}{\overset{\sim}{\to}}
\newcommand{\liso}{\overset{\sim}{\lto}}
\newcommand{\BA}{{\mathbb{A}}}
\newcommand{\BC}{{\mathbb{C}}}
\newcommand{\BF}{{\mathbb{F}}}
\newcommand{\BG}{{\mathbb{G}}}
\newcommand{\BN}{{\mathbb{N}}}
\newcommand{\BP}{{\mathbb{P}}}
\newcommand{\BQ}{{\mathbb{Q}}}
\newcommand{\BR}{{\mathbb{R}}}
\newcommand{\BS}{{\mathbb{S}}}
\newcommand{\BZ}{{\mathbb{Z}}}
\newcommand{\QQbreve}{{\breve{\BQ}}}
\newcommand{\CA}{{\mathcal A}}
\newcommand{\CE}{{\mathcal E}}
\newcommand{\CF}{{\mathcal F}}
\newcommand{\CI}{{\mathcal I}}
\newcommand{\CP}{{\mathcal P}}
\newcommand{\CT}{{\mathcal T}}
\newcommand{\CW}{{\mathcal W}}
\newcommand{\CX}{{\mathcal X}}
\newcommand{\CY}{{\mathcal Y}}
\newcommand{\Btilde}{\tilde{B}}
\newcommand{\Etilde}{\tilde{E}}
\newcommand{\ftilde}{\tilde{f}}
\newcommand{\Gtilde}{\tilde{G}}
\newcommand{\Ptilde}{\tilde{P}}
\newcommand{\Qtilde}{\tilde{Q}}
\newcommand{\Stilde}{\tilde{S}}
\newcommand{\Ttilde}{\tilde{T}}
\newcommand{\Wtilde}{\tilde{W}}
\newcommand{\ztilde}{\tilde{z}}
\newcommand{\Ztilde}{\tilde{Z}}
\newcommand{\kgtilde}{\tilde{\kappa}}
\newcommand{\mgtilde}{\tilde{\mu}}
\newcommand{\Bscr}{{\mathscr B}}
\newcommand{\Escr}{{\mathscr E}}
\newcommand{\Gscr}{{\mathscr G}}
\newcommand{\Lscr}{{\mathscr L}}
\newcommand{\Oscr}{{\mathscr O}}
\newcommand{\Pscr}{{\mathscr P}}
\newcommand{\Sscr}{{\mathscr S}}
\newcommand{\Tscr}{{\mathscr T}}
\newcommand{\Vscr}{{\mathscr V}}
\newcommand{\Xscr}{{\mathscr X}}
\newcommand{\Yscr}{{\mathscr Y}}
\newcommand{\Ebar}{\bar{E}}
\renewcommand{\hbar}{\bar{h}}
\newcommand{\kbar}{\bar{k}}
\newcommand{\sbar}{\bar{s}}
\newcommand{\xbar}{\bar{x}}
\newcommand{\ybar}{\bar{y}}
\newcommand{\zbar}{\bar{z}}
\newcommand{\Gbf}{{\bf G}}
\newcommand{\Lbf}{{\bf L}}
\newcommand{\Pbf}{{\bf P}}
\newcommand{\Sbf}{{\bf S}}
\newcommand{\Tbf}{{\bf T}}
\newcommand{\Xbf}{{\bf X}}
\newcommand{\bs}{\backslash}
\def\UIsom{\mathop{\underline{\rm Isom}}\nolimits}
\newcommand{\leftexp}[2]{{\vphantom{#2}}^{#1}{#2}}
\let\phi\varphi
\DeclareMathOperator{\inn}{int}
\DeclareMathOperator{\Par}{Par}
\DeclareMathOperator{\Cent}{Cent}
\DeclareMathOperator{\Res}{Res}
\DeclareMathOperator{\id}{id}
\DeclareMathOperator{\Sym}{Sym}
\DeclareMathOperator{\codim}{codim}
\newcommand{\PhiQ}[2]{[#1 {}_\varphi\!\!\bs #2]}
\let\into\hookrightarrow
\newcommand{\twomatrix}[4]{\begin{pmatrix} #1 & #2 \\ #3 & #4 \end{pmatrix}}
\newcommand{\twosmallmatrix}[4]{\left(\begin{smallmatrix} #1 & #2 \\ #3 & #4 \end{smallmatrix}\right)}
\begin{document}
\title{Tautological rings of Shimura varieties and cycle classes of Ekedahl-Oort strata}
\author{Torsten Wedhorn \and Paul Ziegler}


\maketitle


\noindent{\scshape Abstract.\ }
We define the tautological ring as the subring of the Chow ring of a Shimura variety generated by all Chern classes of all automorphic bundles. We explain its structure for the special fiber of a good reduction of a Shimura variety of Hodge type and show that it is generated by the cycle classes of the Ekedahl-Oort strata as a vector space. We compute these cycle classes. As applications we get the triviality of $\ell$-adic Chern classes of flat automorphic bundles in characterstic $0$, an isomorphism of the tautological ring of smooth toroidal compactifications in positive characteristic with the rational cohomology ring of the compact dual of the hermitian domain given by the Shimura datum, and a new proof of Hirzebruch-Mumford proportionality for Shimura varieties of Hodge type.

\medskip

\noindent{\scshape MSC.\ } 11G18, 14C15, 14G35, 14M15, 20G15, 20G40



\section*{Introduction}

\subsubsection*{Tautological rings}

The Chow ring $A^{\bullet}(\Sbf_K)$ (always with rational coefficients) of a Shimura variety $\Sbf_K$ is still a very mysterious object. Here we study the subring generated by all Chern classes of all automorphic bundles on the Shimura variety or on a smooth toroidal compactification of the Shimura variety. In the Siegel case this subring was already studied by van der Geer and Ekedahl (\cite{VanDerGeer_CyclesAbVar}, \cite{EkedahlVanDerGeer_CycleAbVar}). Following their terminology, we call this subring the \emph{tautological ring}\footnote{One could argue against this terminology: By analogy to the notion of tautological rings of moduli spaces of curves, the tautological ring should be the subring ``generated by all interesting classes''. But with our definition there are many interesting classes, for instance those of special subvarieties, that are not contained in the tautological ring.} of the Shimura variety or of some toroidal compactification.

More precisely, let $(\Gbf,\Xbf)$ be a Shimura datum with attached Shimura variety $\Sbf_K = Sh_K(\Gbf,\Xbf)$ for some sufficiently small open compact subgroup $K$ of $\Gbf(\BA_f)$. It defines a conjugacy class of cocharacters $\mu$ of $\Gbf$ whose field of definition is the reflex field. To simplify the notation here in the introduction let us assume that $\Gbf$ does not contain an $\BQ$-anisotropic and $\BR$-split torus in its center. This condition is automatic if $(\Gbf,\Xbf)$ is of Hodge type. The Borel embedding of the hermitian space $\Xbf$ into its compact dual $\Xbf\vdual$ induces a morphism
\[
\sigma\colon \Sbf_K \to \Hdg \defeq [\Gbf \bs \Xbf\vdual]
\]
of algebraic stacks over the reflex field $E$ of the Shimura variety (\cite[III]{Milne_CanonicalModel}). By definition, a vector bundle on $\Sbf_K$ is automorphic if it is isomorphic to the pullback of a vector bundle on $\Hdg$. Moreover it is flat, if it is obtained by pullback from a vector bundle on the classifying stack $\B{\Gbf}$, i.e., if it is induced by a finite dimensional representation of $\Gbf$ (see Section~\ref{TAUT} for details). The theory of canonical extensions of automorphic vector bundles means that there is a canonical extension of $\sigma$ to smooth toroidal compactifications $\Sbf_K^{\tor}$.

\begin{intro-definition}[Definition~\ref{DefTautologicalRing}]\label{DefTautIntro}
The \emph{tautological ring of $\Sbf_K$} (resp.~of $\Sbf^{\tor}_K$) is the image of the Chow ring of $\Hdg$ in the Chow ring of $\Sbf_K$ (resp.~of $\Sbf^{\tor}_K$) under pull back via $\sigma$.
\end{intro-definition}

In the Siegel case, the tautological ring is the subring generated by all Chern classes of the Hodge filtration in the de Rham cohomology of the universal abelian scheme (Example~\ref{TautologicalRingSiegel}) which is the definition of van der Geer in this case.

\subsubsection*{Ekedahl-Oort strata}

From now on we assume that $(\Gbf,\Xbf)$ is of Hodge type and that $p > 2$ is a prime of good reduction for the Shimura datum. Then the reductive group $\Gbf$ has a reductive model $\Gscr$ and hence the algebraic stack $\Hdg$ has a good integral model. Denote by $G$ the special fiber $\Gscr$. Hence $G$ is a reductive group over $\BF_p$. Moreover, since the Shimura variety is of Hodge type, there are canonical smooth integral models $\Sscr_K$ and $\Sscr^{\tor}_K$ with special fibers $S_K$ and $S^{\tor}_K$ by the work of Vasiu (\cite{Vasiu_IntegralModels}), Kisin (\cite{Kisin_IntegralModels}), Kim and Madapusi Pera (\cite{KimMadapusi_2AdicIntegralModels}, \cite{Madapusi_ToroidalHodge}) such that the morphism $\sigma$ extends. Hence we also obtain in characteristic $p$ tautological rings of $S_K$ and $S^{\tor}_K$ as images under pull back maps
\begin{equation}\label{EqIntroSigma}
\sigma^*\colon A^{\bullet}(\Hdg) \to A^{\bullet}(S^{(\tor)}_K).
\end{equation}
In characteristic $p$ the work of Viehmann and the first author (\cite{VW_PEL}), of Zhang and Wortmann (\cite{Zhang_EOHodge}, \cite{Wortmann_MuOrd}), and of Goldring and Koskivirta (\cite{GoKo_HasseHeckeGalois}) shows that the morphism $\sigma$ factors into
\begin{equation}\label{EqIntroFactorSigma}
\sigma\colon S_K^{(\tor)} \vartoover{25}{\zeta^{(\tor)}} \GZip^{\mu} \vartoover{25}{\beta} \Hdg,
\end{equation}
where $\GZip^{\mu}$ is the stack of $G$-zips of type $\mu$ which was defined and studied by Pink and the authors (\cite{PWZ1}, \cite{PWZ2}). The stack $\GZip^{\mu}$ has a finite stratification by gerbes $Z_w$, where $w$ runs through a certain subset ${}^IW$ of the Weyl group $W$ of $G$ (see  Section~\ref{GZIP} for a reminder on $G$-zips). The locally closed subschemes
\[
S_w \defeq \zeta^{-1}(Z_w) \subseteq S_K, \qquad\qquad S_w^{\tor} \defeq \zeta^{\tor,-1}(Z_w) \subseteq S^{\tor}_K
\]
are by definition the Ekedahl-Oort strata of $S_K$ and $S^{\tor}_K$. As $\zeta$ is smooth by the work of Zhang (\cite{Zhang_EOHodge}), many results proved for $Z_w \subseteq \GZip^{\mu}$ in \cite{PWZ1} such as smoothness, a formula for its codimension, or closure relation of the strata are known to hold also for the Ekedahl-Oort strata $S_w$. As was explained to us by Beno\^it Stroh, the extension $\zeta^{\tor}$ is expected to be also smooth. In this case, the above properties also hold for the Ekedahl-Oort strata $S_w^{\tor}$ in a toroidal compactification. Using a deep result on the existence of Hasse invariants (\cite{GoKo_HasseHeckeGalois}, see also \cite{Boxer_Thesis} in the PEL case) we can also prove the following connectedness result on Ekedahl-Oort strata.

\begin{intro-theorem}[Theorem~\ref{ConnectedDimStrata}, Corollary~\ref{Length1Connected}]\label{Intro-Connected}
Suppose that $\zeta^{\tor}$ is smooth.
\begin{assertionlist}
\item
For all $j \geq 1$ the union of all EO-strata of dimension $\leq j$ is geometrically connected in each geometric connected component of $S^{\tor}_K$. 
\item
For Shimura varieties of PEL type the union of all EO-strata of dimension $\leq 1$ is geometrically connected in each geometric connected component of $S_K$.
\end{assertionlist}
\end{intro-theorem}

The first assertion seems to be new even in the Siegel case. Assertion~(2) was proved in the Siegel case by Oort (\cite{Oort_EO}).


\subsubsection*{The tautological ring and the Chow ring of the stack of $G$-zips}

By \eqref{EqIntroFactorSigma} the pullback $\sigma^*$ is a composition
\begin{equation}\label{EqIntroPullbackSigma}
\sigma^*\colon A^{\bullet}(\Hdg) \vartoover{25}{\beta^*} A^{\bullet}(\GZip^{\mu}) \vartoover{25}{\zeta^{(\tor),*}} A^\bullet(S^{(\tor)}_K).
\end{equation}
Brokemper has given in \cite{Brokemper_ChowZip} two descriptions for $A^{\bullet}(\GZip^{\mu})$. From his multiplicative description (recalled in Proposition~\ref{ChowGZip} below) we deduce:

\begin{intro-theorem}[Theorem~\ref{PullbackKey}, Lemma~\ref{RationalCI}, Corollary~\ref{ChowGZipCohomology}]\label{IntroThm1}
\begin{assertionlist}
\item
The map $\beta^*$ is surjective and its kernel is generated by all Chern classes in degree $> 0$ of vector bundles attached to representations of the group $G$. In particular, the tautological ring of $S_K$ (resp.~$S_K^{\tor}$) is equal to the image of $\zeta^*$ (resp.~$\zeta^{\tor,*}$).
\item
The graded $\BQ$-algebra $A^{\bullet}(\GZip^{\mu})$ is isomorphic to the rational cohomology ring $H^{2\bullet}(\Xbf\vdual,\BQ)$ of the complex manifold $\Xbf\vdual$.
\end{assertionlist}
\end{intro-theorem} 

As a consequence we obtain:

\begin{intro-corollary}[Theorem~\ref{TrivialFlatBundle}, Corollary~\ref{TrivialEtaleChern}]\label{IntroCor1}
In characteristic $p > 0$, Chern classes of flat automorphic bundles are zero in degree $> 0$. In characteristic $0$, the $\ell$-adic Chern classes of flat automorphic bundles are zero in degree $> 0$.
\end{intro-corollary}

The characteristic $0$ variant for compact Shimura varieties (not necessarily of Hodge type) is one of the main results in \cite{EsnaultHarris_AutomorphicVB} by Esnault and Harris.

One particular important line bundle is the Hodge line bundle $\omega^{\flat}(\iota) \in {\rm Pic}(\GZip^{\mu})$ associated to an embedding $\iota$ of $(\Gbf,\Xbf)$ in the Siegel Shimura datum. Its pullback to the Shimura variety is the determinant of the Hodge filtration of the ``universal'' abelian scheme attached to $\iota$. Combining Corollary~\ref{IntroCor1} with a result of Goldring and Koskivirta (\cite{GoKo_Quasiconstant}) one gets:

\begin{intro-corollary}[Proposition~\ref{HodgeLineBundlesIndep}]\label{IntroCor2}
Suppose that the adjoint group of $\Gbf$ is $\BQ$-simple. Then $c_1(\omega^{\flat}(\iota)) \in A^1(\GZip^{\mu})$ does not depend on $\iota$ up to multiplication with positive rational numbers.
\end{intro-corollary}

The second description of $A^{\bullet}(\GZip^{\mu})$ by Brokemper (recalled in Proposition~\ref{ChowGZipNaive}) shows that the classes $[\overline{Z}_w]$ of closed Ekedahl-Oort strata form a $\BQ$-basis of $A^{\bullet}(\GZip^{\mu})$. Hence the tautological rings in characteristic $p$ are generated as a $\BQ$-vector space by the classes of the closed Ekedahl-Oort strata which are indexed by by the subset ${}^IW$ of the geometric Weyl group $W$ of $\Gbf$. 

In fact, it is also possible to define classes in $A^{\bullet}(\GZip^{\mu})$ whose pullbacks to the Shimura variety $S_K$ are the classes of the closures of the Newton strata or of central leaves in $S_K$. In particular these classes are also contained in the tautological ring of $S_K$. This will be pursued in an other paper. 

The technical heart of the paper is to relate both descriptions of Brokemper:

\begin{intro-theorem}[Subsection~\ref{HowCompute}]\label{IntroThm2}
Let $G$ be a reductive group over $\BF_p$, where $p$ is any prime ($p = 2$ included), and let $\mu$ be a cocharacter of $G$. There is a concrete algorithm how to express for $w \in {}^IW$ the cycle class $[\overline{Z}_w] \in A^{\bullet}(\GZip^{\mu})$ of the closure of an Ekedahl-Oort strata as a polynomial in Chern classes of vector bundles on $\Hdg$.
\end{intro-theorem}

By pulling back to the Shimura variety or to a toroidal compactification (for $p > 2$) we get the same descriptions of cycle classes of Ekedahl-Oort strata in the Chow rings of $S_K$ and $S^{\tor}$.

To obtain a description as in Theorem~\ref{IntroThm2} we follow a strategy already used by Ekedahl and van der Geer in \cite{EkedahlVanDerGeer_CycleAbVar} in the Siegel case, albeit using a somewhat different language. Following \cite{GoKo_HasseHeckeGalois} we construct a commutative diagram
\begin{equation}\label{EqKeyIntro}
\begin{aligned}\xymatrix{
\GZipFlag^{\mu} \ar[rr]^-{\psi} \ar[d]_{\pi} & & \Brh_G \ar[d]^{\gamma} \\
\GZip^{\mu} \ar[rr]^{\beta} & & \Hdg,
}\end{aligned}
\end{equation}
where $\GZipFlag^{\mu}$ is the algebraic stack of flagged $G$-zips defined by Goldring and Koskivirta (\cite{GoKo_ZipFunctoriality}, \cite{GoKo_HasseHeckeGalois}) and where $\Brh_G = [B \bs G/B] = \B{B} \times_{\B{G}} \B{B}$ is the Bruhat stack. Here $B \subseteq G$ is a Borel subgroup. Then we proceed in three steps.

\paragraph*{1. Calculation of cycles of Schubert varieties.}
In $A^{\bullet}(\Brh_G)$ there are the classes $[\overline{\Brh}_w]$ of Schubert varieties for $w \in W$. They can be computed as follows. The cycle class of the smallest Schubert variety $[\Brh_e]$ is the class of the diagonal and can be computed by a result of Graham. Then one defines explicit operators $\delta_w$ such that $[\overline{\Brh}_w] = \delta_w([\Brh_e])$. This is certainly well known but to our surprise we found this only explained in the literature for classical groups (and sometimes only over $\BC$). Hence we explain this for arbitrary split reductive group over an arbitrary field in Section~\ref{BRUHAT}.

\paragraph*{2. Pullback to $\GZipFlag^{\mu}$.}
One describes the pullback via $\psi$ explicitly and obtains a description for the cycle classes in $A^{\bullet}(\GZipFlag^{\mu})$ of the closures of $Z^{\emptyset}_w \defeq \psi^{-1}({\Brh}_w)$ (Subsections~\ref{ChowGZipFlag} and~\ref{PullBackOfKey}).

\paragraph*{3. Push down to $\GZip^{\mu}$.}
By a result of Koskivirta (\cite{Koskivirta_NormalEO}), $\pi$ induces for $w \in {}^IW$ a finite \'etale map $Z_w^{\emptyset} \to Z_w$. If $\gamma(w)$ is its degree, we obtain
\[
[\overline{Z}_w] = \gamma(w) \pi_*([\overline{Z}^\emptyset_w]).
\]
Using a result of Brion (\cite{BrionGysin}) one can describe $\pi_*$ explicitly (Theorem~\ref{GysinFormula}). Moreover, we explain how to compute $\gamma(w)$ as the number of $\BF_p$-rational points of the flag variety of an explicitly given form of a Levi subgroup of $G$ (Subsection~\ref{Gammaw}). 

\bigskip

We also introduce the flag space over the Shimura variety (Subsection~\ref{FlagSpace}) that classifies -- roughly spoken -- refinements of the Hodge filtration. This generalizes a construction of Ekedahl and van der Geer in \cite{EkedahlVanDerGeer_CycleAbVar}. It carries a stratification obtained by pullback from the stratification of the Bruhat stack. From the analog properties of Schubert varieties we deduce that the closure of these strata are normal, Cohen-Macaulay, and with only rational singularities. This generalizes results from loc. cit.

\subsubsection*{Structure of the tautological ring}

By definition the tautological ring is a quotient of $A^{\bullet}(\Hdg)$, and $A^{\bullet}(\Hdg)$ can be described explicitly (Remark~\ref{TautScalarExt}). There is the following conjecture about the tautological ring.

\begin{intro-conjecture}\label{IntroConj}
The tautological ring of a smooth toroidal compactification $\Sbf_K^{\tor}$ (considered as a scheme over some splitting field of $\Gbf$) is isomorphic to the rational cohomology ring of the compact dual $\Xbf\vdual$.
\end{intro-conjecture}

We show that this conjecture is equivalent to the property that all Chern classes of positive degree of flat automorphic bundles vanish in the Chow ring of $\Sbf^{\tor}_K$ (Proposition~\ref{DescribeTautChar0}). This has also been shown in \cite[1.11]{EsnaultHarris_AutomorphicVB} if the Shimura variety is compact. We show that the conjecture always holds in  characteristic $p$.


\begin{intro-theorem}[Theorem~\ref{ZetaTorInj}]
The map of graded $\BQ$-algebras $H^{2\bullet}(\Xbf\vdual) \cong A^{\bullet}(\GZip^{\mu}) \to A^{\bullet}(S_K^{\tor})$ is injective.
\end{intro-theorem}

Finally, as an immediate application we obtain a very strong form of Hirzebruch-Mumford proportionality in positive characteristic (Theorem~\ref{HMCharp}). From this we deduce a new and purely algebraic proof of Hirzebruch-Mumford proportionality for Shimura varieties of Hodge type over $\BC$ (Corollary~\ref{HMChar0}). 




\subsubsection*{Structure of the paper}

The paper starts with a preliminary section in which we recall the notion of Chow groups of quotient stacks and some basic properties of these groups. Then the main body of the paper consists of two parts.

\medskip

The first part (Sections~\ref{BRUHAT}--\ref{CHOW}) explains how to compute cycle classes of Ekedahl-Oort strata in the Chow ring of the stack of $G$-zips of type $\mu$. This is a purely group theoretic part and everything is done for arbitrary reductive groups, arbitrary cocharacters, and in arbitrary positive characterstic $p \geq 2$.

In Section~\ref{BRUHAT} we explain how to calculate the cycle classes of Schubert varieties in the Bruhat stack of a split reductive group. All of this is well documented in the literature for classical groups. 

Section~\ref{GZIP} recalls the stack of $G$-zips and of $G$-zips ``endowed with a refinement of the Hodge filtration'' and defines the commutative diagram \eqref{EqKeyIntro}. In Section~\ref{CHOW} we explain what maps are induced from this diagram on Chow rings. This allows us to prove Theorem~\ref{IntroThm1} and to give an algorithm for the determination of cycle classes of Ekedahl-Oort strata in $A^{\bullet}(\GZip^{\mu})$ (Subsection~\ref{HowCompute}). The section closes with stating some easy functoriality properties for maps of reductive groups inducing an isomorphism on adjoint groups.

\medskip

In the second part of the paper (Sections~\ref{TAUT}--\ref{APP}) we apply the results from the first part to Shimura varieties of Hodge type. Here we have to make the assumption $p > 2$.

In Section~\ref{TAUT} we define the tautological ring for arbitrary Shimura varieties in characteristic $0$ and for Shimura varieties of Hodge type in characteristic $p$, where $p$ is a prime of good reduction.

In Section~\ref{EOTAUT} we recall the definition of Ekedahl-Oort strata and prove Theorem~\ref{Intro-Connected}. Here we also give the definition of the flag space over the Shimura variety and its stratification.

Then we prove in Section~\ref{APP} the triviality of Chern classes of flat automorphic bundles, the uniqueness of the class of a Hodge line bundle (up to positive scalar), our results on the structure of the tautological ring and on Hirzebruch-Mumford proportionality.

\medskip

In the final Section~\ref{EXAMPLE} we illustrate our results in the special cases of the Siegel Shimura variety, the Hilbert-Blumenthal variety, and Shimura varieties of Spin type.


\bigskip

\noindent{\scshape Acknowledgements.\ }
We are very grateful to Wushi Goldring for fruitful discussions with the first author, for pointing out the paper \cite{EsnaultHarris_AutomorphicVB} to us, and for pointing towards the Conjecture~\ref{IntroConj}. We are also grateful to Beno\^it Stroh for his explanation on the smoothness of the map $\zeta^{\tor}$. The first author is partially supported by the LOEWE Reseach Unit USAG. The second author was supported by the Swiss National Science Foundation.


\tableofcontents

\bigskip\bigskip

\subsection*{Notation}

All algebraic spaces and algebraic stacks are assumed to be quasi-separated and of finite type over their respective base.

\subsubsection*{Notation on reductive groups}

Let $k$ be a field and let $k^s$ be a separable closure. Suppose that $G$ is a reductive\footnote{A reductive group is always assumed to be connected.} group over $k$ and that $T \subseteq G$ is a maximal torus, defined over $k$. Then we denote by $W = (N_G(T)/T)(k^s)$ the Weyl group of $(G,T)$. It carries an action of $\Gamma = \Gal(k^s/k)$ by group automorphisms.

Now suppose that $G$ is quasi-split over $k$. Then we can choose a Borel pair $T \subseteq B \subseteq G$ defined over $k$. The choice of $B$ defines a subset $\Sigma \subset W$ of simple reflection and $\Gamma$ acts by automorphisms of the Coxeter system $(W,\Sigma)$. We denote by $\ell(\cdot)$ the length function and by $\leq$ the Bruhat order on the Coxeter system $(W,\Sigma)$. We choose representatives $\dot{w} \in G(k^s)$ of $w \in W$ such that $\dot(w_1w_2) = \dot{w}_1\dot{w}_2$ if $\ell(w_1w_2) = \ell(w_1) + \ell(w_2)$. We denote by $w_0 \in W$ the element of maximal length and by $e \in W$ the identity.

For any subset $K \subseteq \Sigma$ we denote by $W_K$ the subgroup of $W$ generated by $K$ and we set
\[
{}^KW \defeq \sett{w \in W}{$\ell(sw) > \ell(w)$ for all $s\in K$}
\]
which is a system of representatives of $W_K/W$. Let $w_{0,K}$ be the element of maximal length in $W_K$.

We denote by $\Phi \subset X^*(T_{k^s})$ (resp.~$\Phi\vdual \subset X_*(T_{k^s})$) the set of roots (resp.~of coroots) of $(G,T)_{k^s}$ and by $\Phi^+ \subset \Phi$ the set of positive roots given by $B$. The based root datum $(X^*(T_{k^s}),\Phi,X_*(T_{k^s}),\Phi\vdual,\Phi^+)$ and the Coxeter system $(W,\Sigma)$ does not depend on the choice of $(T,B)$ up to unique isomorphism and it is called ``the'' based root datum of $G$ and ``the'' Weyl group of $G$. For a set of simple reflections $K \subset \Sigma$ we denote by $\Phi_K \subset \Phi$ the set of roots that are in the $\BZ$-span of the simple roots corresponding to $K$ and let $\Phi^+_K \defeq \Phi^+ \cap \Phi_K$.

Let $\mu\colon \BG_{m,k^s} \to G_{k^s}$ be a cocharacter of $G$ defined over $k^s$. It gives rise to a pair of opposite parabolic subgroups $(P_-(\mu),P_+(\mu))$ and a Levi subgroup $L \defeq L(\mu) = P_-(\mu) \cap P_+(\mu)$ defined by the condition that $\Lie(P_-(\mu))$ (resp.~$\Lie(P_+(\mu))$) is the some of the non-positive (resp.~non-negative) weight space of $\mu$ in $\Lie(G)$. On $k^s$-valued points we have
\begin{align*}
P_+(\mu) &= \sett{g \in G}{$\lim_{t\to 0}\mu(t)g\mu(t)^{-1}$ exists}, \\
P_-(\mu) &= \sett{g \in G}{$\lim_{t\to \infty}\mu(t)g\mu(t)^{-1}$ exists}, 
\end{align*}
and $L = \Cent_G(\mu)$. 

We will also need to consider reductive groups over more general rings than a field. Hence let $S$ be a scheme. To simplify the notation we assume that $S$ is connected. Let $G$ be a reductive group scheme over $S$, i.e., a smooth affine group scheme over $S$ whose fibers are reductive groups. The map that attaches to $s \in S$ the isomorphism class of the based root datum of the geometric fiber $G_{\sbar}$ is locally constant (\cite[Exp.\ XXII, Prop. 2.8]{SGA3III}) and hence constant because we assumed $S$ to be connected. Hence we may again speak of ``the'' based root datum of $G$. Let $(W,\Sigma)$ be the Weyl group together with its set of simple reflections of this based root datum. Fix $I \subseteq \Sigma$ and let $\Par_I$ be the scheme of parabolic subgroups of $G$ of type $I$. It is defined \'etale locally on $S$ because $G$ is split \'etale locally on $S$ (\cite[Exp.\ XXII, Cor.\ 2.3]{SGA3III}).

If $\lambda\colon \BG_{m,S'} \to G_{S'}$ is a cocharacter of $G$ defined over some covering $S' \to S$ for the \'etale topology, then the constructions of the parabolic subgroups $P_+(\lambda)$ and $P_-(\lambda)$ over a field generalize to arbitrary schemes (\cite[4.1.7]{Conrad_Reductive}) and yield parabolic subgroups of $G_{S'}$. If $I$ is the type of $P_+(\lambda)$, we also write $\Par_{\lambda}$ instead of $\Par_I$.

In other words, we say that a parabolic subgroup $P$ of $G_{S'}$ is of type $\lambda$ if it is locally for the \'etale topology conjugate to $P_+(\lambda)$. In fact, then $P$ is already locally for the Zariski topology conjugate to $P_+(\lambda)$ by \cite[Exp.\ XXVI, Cor.\ 5.5]{SGA3III}. If $I = \set{s_{\alpha} \in \Sigma}{\langle \lambda,\alpha \rangle = 0}$, then the conjugacy class of $P_+(\lambda)$ depends only on $I$ and we also say that $P$ is of type $I$. We denote by $\Par_{G,\lambda}$ the scheme of parabolic subgroups of $G$ of type $\lambda$.



\section{Chow groups of quotient stacks}

Let $k$ be a field. All Chow groups in the following will have $\BQ$-coefficients.

\subsection{Chow Rings of Smooth Quotient Stacks}

By a quotient stack we will mean a stack of the form $[G \bs X]$ where $X$ is a quasi-separated algebraic space of finite type over $\Spec(k)$ and $G$ is an affine group scheme of finite type over $\Spec(k)$ which acts on $X$ from the left.

For such $X$ and $G$ the equivariant Chow groups $A^G_i(X)$ are defined in \cite{EG1} as follows: Let $n=\dim X$ and $g=\dim G$. There exists a representation of $G$ on an $\ell$-dimensional $k$-vector space $V$ such that there exists an open subset $U$ of $V$ with complement of codimension strictly bigger than $n-i$ on which $G$ acts freely. For such a $U$ the quotient $G\bs (X\times U)$ by the diagonal action exists as an algebraic space and $A^G_i(X)$ is defined to be $A_{i+\ell-g}(G\bs (X\times U))$. By \cite{EG1}, this group does not depend on the choice of $U$ and in fact by \cite[Proposition 16]{EG1} the group $A_i([G\bs X])\defeq A^G_{i+g}(X)$ depends only on the stack $[G\bs X]$ and not on the chosen presentation of this stack. 

A quotient stack is smooth if it admits a presentation as above with $X$ smooth. Suppose that $X$ is in addition separated and equi-dimensional of dimension $n$. In this case for $A^i([G\bs X]) \defeq A_{n-g-i}([G \bs X])$ on the graded vector space $A^\bullet([G \bs X]) \defeq \oplus_{i\geq 0}A^i([G \bs X])$ there is a naturally defined cup product turning this group into a graded $\BQ$-algebra (\cite[4]{EG1}). This construction has been generalized to arbitrary smooth algebraic stacks of finite type over a field by Kresch \cite{Kresch_Cycle}. Here we will need only the case of smooth quotient stacks.

By \cite[Proposition 3]{EG1}, the equivariant Chow groups have the same functoriality properties as the usual Chow groups for $G$-equivariant morphisms $X\to Y$. Every representable morphisms $\CX \to \CY$ of quotient stacks arises in this way: For a presentation $\CY=[G\bs Y]$ take $X=\CX\times_\CY X$. This is a $G$-torsor over $\CX$ so that $\CX=[G\bs X]$ and by assumption it is representable by an algebraic space. This shows that $A^\bullet(\_)$ is contravariantly functorial for representable morphisms of quotient stacks and covariantly functorial for proper representable morphisms of quotient stacks. In fact, by \cite{Kresch_Cycle} it is also contravariantly functorial for flat (not necessarily representable) morphisms of smooth quotient stacks.

In this paper we will mainly use only the following types of morphisms between quotient stacks. For all of them $A^\bullet(\_)$ is contravariantly functorial.

\begin{example}\label{ExamplePullBack}
Let $G$ and $X$ be as above.
\begin{assertionlist}
\item
For a quasi-separated algebraic space $Y$ of finite type over $\Spec(k)$ every morphism $Y \to [G\bs X]$ is representable.
\item\label{ExamplePullBac3}
Let $\Xscr$ be any equi-dimensional algebraic stack over $k$. Then every morphism $\Xscr \to \B{G}$ is flat of constant dimension. In particular, if $\varphi\colon H \to G$ is a homomorphism of affine algebraic groups, the canonical morphism $\B{H} \to \B{G}$ is flat of relative dimension $\dim(G) - \dim(H)$.
\end{assertionlist}
\end{example}

Let $\varphi\colon G \to H$ be a map of algebraic groups over $k$. Let $f\colon X \to Y$ be a map of quasi-separated algebraic spaces of finite type over $k$. Suppose that $G$ acts on $X$ and that $H$ acts on $Y$ such that $f(gx) = \varphi(g)f(x)$ for $g \in G(R)$ and $x \in X(R)$, where $R$ is a $k$-algebra. Then $f$ induces a morphism of algebraic stacks $[f]\colon [G\bs X] \to [H \bs Y]$. 

\begin{lemma}\label{QuotientMapRep}
\begin{assertionlist}
\item
If $f$ is flat, then $[f]$ is flat.
\item
If $\varphi$ is a monomorphism, then $[f]$ is representable.
\end{assertionlist}
\end{lemma}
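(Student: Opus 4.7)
For part (1), my plan is to exploit that flatness of a morphism of algebraic stacks can be checked after smooth surjective base change on the source. Let $p\colon X \to [G\bs X]$ be the quotient morphism; it is a $G$-torsor and hence smooth and surjective. By construction of $[f]$, the composite $[f]\circ p$ coincides with $X \xrightarrow{f} Y \xrightarrow{q} [H\bs Y]$, where $q$ is the $H$-torsor atlas of $[H\bs Y]$. Since $f$ is flat by hypothesis and $q$ is smooth, this composition is flat. Smooth surjective descent of flatness then gives that $[f]$ itself is flat.

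For part (2), my plan is to factor $[f]$ through an intermediate stack as
\[
[f]\colon [G\bs X] \xrightarrow{a} [G\bs Y] \xrightarrow{b} [H\bs Y],
\]
where on $[G\bs Y]$ the group $G$ acts on $Y$ via $\varphi$ and the given $H$-action, the map $a$ is induced by the ($G$-equivariant) morphism $f$, and $b$ is induced by $(\varphi,\id_Y)$. Since representability is stable under composition, it suffices to check that $a$ and $b$ are each representable.

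For $a$: the pullback of $a$ along the smooth surjective atlas $Y \to [G\bs Y]$ is canonically isomorphic to $X$. This is a short direct computation on $T$-points: such a point amounts to a trivialised $G$-torsor $(P,\sigma)$ on $T$ together with a $G$-equivariant map $\alpha\colon P \to X$ such that $f\circ\alpha\circ\sigma$ matches the chosen point of $Y$, all of which collapses to the datum of a single $T$-point of $X$. Hence $a$ is representable by descent of representability along the smooth surjection $Y \to [G\bs Y]$. For $b$: the square
\[
\xymatrix{
[G\bs Y] \ar[r]^{b} \ar[d] & [H\bs Y] \ar[d] \\
\B{G} \ar[r] & \B{H}
}
\]
is Cartesian, because both $[G\bs Y]$ and $\B{G}\times_{\B{H}}[H\bs Y]$ classify pairs consisting of a $G$-torsor $Q$ together with a $G$-equivariant morphism $Q \to Y$. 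Thus $b$ is representable as soon as $\B{G}\to\B{H}$ is, which is the standard criterion equivalent to $\varphi$ being a monomorphism of group schemes (the induced map on automorphism groups is $\varphi$ itself). Combining, $[f]=b\circ a$ is representable. I do not expect a real obstacle here: both parts reduce to standard descent statements and the representability criterion for $\B{G}\to\B{H}$, and the only concrete identification one has to make is that of the pullback of $a$ with $X$, which is immediate.
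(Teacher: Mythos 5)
Your proof is correct. The paper's own ``proof'' for this lemma is minimal: it declares part (1) clear and, for part (2), simply cites \cite[Tag 04YY]{Stacks}. You instead supply a self-contained argument. For (1) your reduction is the right one: $X \to [G\bs X]$ is a smooth surjective atlas, the composite $X \to Y \to [H\bs Y]$ is flat since $f$ is flat and $Y \to [H\bs Y]$ is smooth, and flatness of morphisms of algebraic stacks is smooth-local on the source; this is exactly what ``clear'' is standing in for. For (2) your factorisation $[G\bs X] \xrightarrow{a} [G\bs Y] \xrightarrow{b} [H\bs Y]$, with $G$ acting on $Y$ through $\varphi$, is a clean way to split the two roles of $f$ and $\varphi$: the identification $Y \times_{[G\bs Y]} [G\bs X] \cong X$ shows $a$ is representable (representability is smooth-local on the target), and the Cartesian square $[G\bs Y] \cong \B{G}\times_{\B{H}}[H\bs Y]$ reduces $b$ to the standard fact that $\B{G}\to\B{H}$ is representable precisely when $\varphi$ is a monomorphism. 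This unwinds the content of the cited Stacks Project tag and is a perfectly valid, somewhat more transparent, route to the same conclusion.
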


\begin{proof}
The first assertion is clear, and the second assertion is a very special case of \cite[Tag 04YY]{Stacks}.
\end{proof}

\begin{proposition}\label{GaloisInvariants}
Let $\Xscr = [G \bs X]$ be a smooth equi-dimensional quotient stack over $k$ and let $k'$ be a Galois extension of $k$ with Galois group $\Gamma$. Then the canonical homomorphism
\[
A^{\bullet}(\Xscr) \lto A^{\bullet}(\Xscr_{k'})^{\Gamma}
\]
is an isomorphism.
\end{proposition}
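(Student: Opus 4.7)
The plan is to reduce the statement to classical rational Galois descent for Chow groups of algebraic spaces, and then handle the finite-Galois case by the trace argument for a finite \'etale Galois cover.

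First I would unwind the definition of $A^i(\Xscr)$ to reduce to an algebraic space. Fix a presentation $\Xscr = [G \bs X]$ and, for a fixed codegree $i$, choose a linear representation $G \to \GL(V)$ together with a $G$-stable open $U \subseteq V$ on which $G$ acts freely with $\codim(V\setminus U,V)$ sufficiently large. Set $Y \defeq G \bs (X\times U)$, a smooth quasi-separated algebraic space of finite type over $k$; then by definition $A^i(\Xscr) = A^i(Y)$. The data $V$, $U$, and $Y$ all base-change compatibly to $k'$, and the analogous construction over $k'$ produces $Y_{k'}$ with $A^i(\Xscr_{k'}) = A^i(Y_{k'})$ matching the pullback map in the statement. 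Hence it suffices to prove that $A^\bullet(Y) \to A^\bullet(Y_{k'})^\Gamma$ is an isomorphism for any quasi-separated algebraic space $Y$ of finite type over $k$.

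Second, I would reduce to finite Galois extensions. Writing $k' = \varinjlim_\lambda k_\lambda$ as the filtered colimit over the finite Galois subextensions $k_\lambda/k$ with groups $\Gamma_\lambda$, cycles and rational equivalences on $Y_{k'}$ are already defined over some $k_\lambda$ (all data involved being of finite type), yielding a $\Gamma$-equivariant identification $A^\bullet(Y_{k'}) = \varinjlim_\lambda A^\bullet(Y_{k_\lambda})$ with $\Gamma$ acting on the $\lambda$-th term through $\Gamma_\lambda$. Since filtered colimits commute with taking invariants, the problem reduces to proving $A^\bullet(Y) \iso A^\bullet(Y_{k_\lambda})^{\Gamma_\lambda}$ for each finite Galois subextension.

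In the finite Galois case with $d = [k':k]$, the base change $\pi \colon Y_{k'} \to Y$ is finite \'etale of degree $d$ and Galois with group $\Gamma$. The flat pullback $\pi^*$ and proper pushforward $\pi_*$ on rational Chow groups of algebraic spaces satisfy $\pi_* \pi^* = d \cdot \id$ on $A^\bullet(Y)$, and $\pi^* \pi_* = \sum_{\sigma \in \Gamma} \sigma^*$ on $A^\bullet(Y_{k'})$ (the latter by flat base change applied to the decomposition $Y_{k'} \times_Y Y_{k'} \cong \bigsqcup_{\sigma \in \Gamma} Y_{k'}$). Working over $\BQ$, the first identity gives injectivity of $\pi^*$, and for any $\alpha \in A^\bullet(Y_{k'})^\Gamma$ the second yields $d\alpha = \pi^* \pi_* \alpha$, so $\alpha = \pi^*(d^{-1} \pi_* \alpha) \in \Img(\pi^*)$. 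This produces the desired isomorphism $A^\bullet(Y) \iso A^\bullet(Y_{k'})^\Gamma$. The main point requiring care is the first reduction, namely checking that the Edidin--Graham approximation $Y$ of $\Xscr$ base-changes to a valid approximation $Y_{k'}$ of $\Xscr_{k'}$ compatibly with the natural pullback on stack Chow groups; once this bookkeeping is in place, the remainder is the standard trace argument together with a routine limit.
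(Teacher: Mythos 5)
Your proof is correct and follows the same strategy as the paper's: both use the Edidin--Graham approximation $(X\times U)/G$, for a representation with $\operatorname{codim}(V\setminus U)$ large enough, to reduce the statement about the stack $\Xscr$ to the corresponding statement about an algebraic space. The paper then simply cites Brokemper \cite[1.3.6]{Brokemper_ChowZip} for the algebraic-space case, whereas you prove it directly via the reduction to finite Galois subextensions and the trace argument ($\pi_*\pi^*=d\cdot\id$, $\pi^*\pi_*=\sum_{\sigma}\sigma^*$), which is exactly how that cited result is established; so the two arguments coincide up to your having unwound the reference.
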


\begin{proof}
This is well known (e.g., \cite[1.3.6]{Brokemper_ChowZip}) if $\Xscr$ is an algebraic space. In general let $n \defeq \dim(X)$ and $g \defeq \dim(G)$. For $i \geq 0$ choose an $\ell$-dimensional representation $V$ of $G$ and an open subset $U$ of $V$ such that $G$ acts freely on $U$ and such that $V \setminus U$ has codimension $>i$. Then
\[
A^i(\Xscr) = A_{n-i+\ell-g}((X \times U)/G) \liso A_{n-i+\ell-g}((X_{k'} \times U_{k'})/G_{k'})^{\Gamma} = A^i(\Xscr_{k'})^{\Gamma}.\qedhere
\]
\end{proof}

We will also use the following result by Brokemper (\cite[1.4.7]{Brokemper_ChowZip}) that shows that $A^{\bullet}(\cdot)$ ``ignores unipotent actions''.

\begin{proposition}\label{IgnoreUnipotent}
Let $0 \to U \to G \to H \to 0$ be a split exact sequence of linear algebraic groups over $k$, where $U$ is a smooth connected unipotent group scheme over $k$. Choose a splitting. Let $X$ be a smooth quasi-projective $G$-scheme over $k$ and endow $X$ with the $H$-action via the chosen splitting. Then the pull back map
\[
A^{\bullet}([G \bs X]) \to A^{\bullet}([H \bs X])
\]
is an isomorphism of $\BQ$-algebras.
\end{proposition}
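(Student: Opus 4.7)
The plan is to interpret the pullback geometrically as the pullback along a fiber bundle with affine-space fibers, and then apply the homotopy invariance of Chow groups, using a d\'evissage on $U$ to reduce to the base case of a one-dimensional unipotent group.

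First I would fix the standard presentation of the equivariant Chow groups: choose a $G$-representation $V$ and a $G$-stable open $W \subseteq V$ on which $G$, and hence $H$, acts freely, with $V \setminus W$ of arbitrarily high codimension. The pullback of the proposition is then induced by the quotient morphism of smooth algebraic spaces
\[
p \colon Y_H \defeq (X \times W)/H \lto (X \times W)/G \rdefeq Y_G.
\]
Using the splitting $G = U \rtimes H$, we have $G/H \cong U$ as schemes, and $p$ is the $G/H$-bundle associated to the principal $G$-torsor $X \times W \to Y_G$; in particular its fibers are isomorphic to the affine space $U$.

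Next I would perform a d\'evissage on $U$. Choose a filtration $U = U_0 \supset U_1 \supset \cdots \supset U_n = \{e\}$ by closed subgroups each normal in $G$ (not merely in $U$), with $U_i/U_{i+1} \cong \BG_a$. Over a perfect base field such a filtration exists, since the center $Z(U)$ is smooth, connected, unipotent, and $G$-stable (as $G$ acts on $U$ by conjugation), and one iterates to cut out $G$-stable one-dimensional subgroups. Setting $G_i \defeq H \cdot U_i \subseteq G$, which is a subgroup because $U_i$ is $G$-normal, yields a tower of quotients $(X \times W)/G_i$ factoring $p$ as a sequence of $\BG_a$-fiber bundles of the same shape as $p$. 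By induction on $n$, it suffices to treat the case $U \cong \BG_a$.

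In this base case, $p$ is a $\BG_a$-torsor associated to the $G$-torsor $X \times W \to Y_G$, twisted by the $H$-conjugation character on $\BG_a$. Every $\BG_a$-torsor is an affine bundle, and the standard homotopy invariance of Chow groups for affine bundles gives that $p^\ast$ is an isomorphism; multiplicativity is automatic since $p^\ast$ is the pullback along a morphism of stacks. The main technical obstacle is producing a $G$-normal composition series of $U$ with $\BG_a$-quotients in positive characteristic; this is straightforward over a perfect base field, and in particular suffices for the paper's applications where the base is typically $\BF_p$.
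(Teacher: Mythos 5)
The paper does not prove this proposition; it is cited from Brokemper \cite[1.4.7]{Brokemper_ChowZip}. Your overall strategy (pass to the Edidin--Graham approximations, view the map as a fibration with affine-space fibers, d\'evissage on $U$, and invoke homotopy invariance for affine bundles) is the standard one, and your treatment of the base case $U \cong \BG_a$ is fine: the $G$-action on $G/H \cong \BG_a$ is affine, so the structure group lands in the special group $\BG_a \rtimes \BG_m$, the bundle is Zariski-locally trivial of fiber $\BA^1$, and homotopy invariance applies.

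The gap is in the d\'evissage itself. A $G$-stable filtration of $U$ with successive quotients $\BG_a$ need \emph{not} exist, and your justification via the center does not produce one. Take $G = \BG_a^2 \rtimes \SL_2$ with $\SL_2$ acting by the standard representation: here $U = \BG_a^2$ is abelian, so $Z(U) = U$, and $U$ has no $G$-stable one-dimensional subgroup because the $\SL_2$-representation is irreducible. So ``iterating to cut out $G$-stable one-dimensional subgroups'' fails at the first step even though the proposition is certainly true in this example (the map $[H\bs X]\to[G\bs X]$ is a $G$-equivariant vector bundle there). The correct d\'evissage reduces first to $U$ abelian via the derived series (which is \emph{characteristic} in $U$, hence automatically $G$-stable), then to vector-group quotients via the $p$-power filtration (again characteristic), and for a vector group with \emph{linear} $G$-action one quotes homotopy invariance for equivariant vector bundles rather than asking for a further $G$-stable $\BG_a$-flag. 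Even then there is a positive-characteristic subtlety your one-line claim (``straightforward over a perfect base field'') elides: the $G$-action on a vector-group quotient $\BG_a^m$ is by group-scheme automorphisms, which in characteristic $p$ form a group strictly larger than $\GL_m$ (Frobenius twists), so linearity of the induced action is not automatic and must be arranged or argued around. These points are precisely what Brokemper's proof takes care of, and they cannot be waved away in a sentence.
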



\subsection{A Variant of a Result of Leray-Hirsch}

We have the following Leray-Hirsch type result from \cite{EG2}:

\begin{proposition}\label{LH}
Let $\CX \to \CY$ be a representable morphism of smooth quotient stacks. Suppose that $\CY$ is connected. Assume that there exists a proper and smooth algebraic space $F$ over $\Spec(k)$ which admits a decomposition into locally closed algebraic subspaces isomorphic to $\BA^m$ such that $\CX\to \CY$ is a Zariski-locally trivial fibration with fiber $F$, i.e. such that $\CX\to \CY$ is Zariski-locally on $\CY$ isomorphic to $\CY\times F\to \CY$.

Then the following hold:
\begin{enumerate}[(i)]
\item For any $i\geq 0$, a family of elements of $A^i(\CX)$ restricts to a basis of $A^i(F')$ for every geometric fiber $F'$ of $\CX\to \CY$ if it does so for a single such fiber.
\item For any $i\geq 0$, there exists a family of elements of $A^i(\CX)$ which restricts to a basis of $A^i(F')$ for every geometric fiber $F'$ of $\CX\to \CY$. 
\item 
Let $(B_i \subset A^i(\CX))_{i\geq 0}$ be a collection of families as in $(ii)$. Then $\cup_{i\geq 0} B_i$ is a basis for $A^i(\CX)$ over $A^i(\CY)$.
\end{enumerate}
\end{proposition}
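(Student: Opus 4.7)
I would prove (i), (ii), and (iii) together by Noetherian induction on $\CY$, the crucial inputs being the cellular Künneth formula and the localization exact sequence for Chow groups of quotient stacks.

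The first step is to settle the trivial case $\CX = \CY \times F \to \CY$. I would show that the exterior product induces an isomorphism of $A^{\bullet}(\CY)$-modules
\[
A^{\bullet}(\CY) \otimes_{\BQ} A^{\bullet}(F) \iso A^{\bullet}(\CY \times F),
\]
with basis on the right given by the exterior product with the classes $[\overline{C}_j]$ of the closures of the cells of $F$. This is a straightforward extension of Fulton's argument for cellular fibrations (\emph{Intersection Theory}, Ex.~1.9.1) to the stacky setting: one filters $F$ by the closures of its cells and runs the localization sequence on $\CY \times F$, invoking homotopy invariance $A^{\bullet}(\CY \times \BA^m) \cong A^{\bullet}(\CY)$ at each step. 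This settles (i)--(iii) when the fibration is trivial.

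For the general case, since $\CX \to \CY$ is Zariski-locally trivial and $\CY$ is Noetherian, I would pick a dense open substack $\CU \subseteq \CY$ over which the fibration is trivial and let $\CZ \subseteq \CY$ be the reduced closed complement. Noetherian induction applies to $\CX|_\CZ \to \CZ$. I would then compare the localization sequences
\[
A^{\bullet}(\CX|_\CZ) \to A^{\bullet}(\CX) \to A^{\bullet}(\CX|_\CU) \to 0, \qquad A^{\bullet}(\CZ) \to A^{\bullet}(\CY) \to A^{\bullet}(\CU) \to 0,
\]
with the second tensored over $\BQ$ with $A^{\bullet}(F)$. By the inductive hypothesis and the trivial case, the outer comparison maps are already isomorphisms, so a five-lemma argument gives (iii) once a compatible collection of global classes is in place. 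To produce such classes and prove (ii), I would lift the cellular basis of $\CX|_\CU \cong \CU \times F$ along the surjection $A^{\bullet}(\CX) \twoheadrightarrow A^{\bullet}(\CX|_\CU)$; these lifts restrict correctly on all $\CU$-fibers by construction, and one then modifies them by classes coming from $A^{\bullet}(\CX|_\CZ)$, using the inductive (ii), so that the restriction on $\CZ$-fibers is also a basis. Assertion (i) follows at once from (iii): the restriction $A^{\bullet}(\CX) \to A^{\bullet}(F')$ factors as the isomorphism of (iii) composed with evaluation of the $A^{\bullet}(\CY)$-factor at the point $y \in \CY$, and under the canonical identification $A^{\bullet}(F) \cong A^{\bullet}(F')$ (cellular decompositions are preserved by base change) the induced map on the $A^{\bullet}(F)$-factor is the identity, so the basis property is independent of the chosen geometric fiber.

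The main obstacle is the inductive construction in (ii): one must lift from $\CX|_\CU$ and then correct by classes supported on $\CX|_\CZ$ without destroying the basis property on the $\CU$-fibers. This works because classes pushed forward from $\CX|_\CZ$ restrict to zero on fibers over $\CU$, so the correction only affects restrictions over $\CZ$, which is exactly where the inductive hypothesis supplies the needed basis. All of the Chow-theoretic tools invoked here—flat pullback, proper pushforward, the localization sequence, and homotopy invariance—are available in the quotient-stack framework by Edidin--Graham.
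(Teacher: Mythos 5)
Your strategy is a genuinely different route from the paper's, which simply reduces to the algebraic-space case via the Borel construction $[G\bs(X\times U)]$ and cites the corresponding statement from Edidin--Graham; in your plan the trivial case and the deduction of~(i) from~(ii) and~(iii) are essentially sound. But there is a real gap in the Noetherian-induction step. Write $i\colon \CZ \inno \CY$, $i'\colon \CX|_\CZ \inno \CX$, and let $\phi_\CZ$, $\phi_\CY$, $\phi_\CU$ be the comparison maps $\bigoplus_j A_\bullet(\CZ) \to A_\bullet(\CX|_\CZ)$, $\bigoplus_j A_\bullet(\CY) \to A_\bullet(\CX)$, $\bigoplus_j A_\bullet(\CU) \to A_\bullet(\CX|_\CU)$ built from the classes $b_j$. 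The localization sequences you compare are only \emph{right}-exact: the pushforwards $i_*$ and $i'_*$ need not be injective. Even granting that $\phi_\CZ$ and $\phi_\CU$ are isomorphisms, your diagram chase therefore yields only \emph{surjectivity} of $\phi_\CY$. Trying to extract injectivity exposes the circularity: if $(a_j) \in \ker\phi_\CY$ then restricting to $\CU$ gives $a_j = i_*(a'_j)$ for some $a'_j$, and flat base change plus the projection formula give $i'_*\bigl(\phi_\CZ((a'_j))\bigr) = 0$; but since $i'_*$ has a kernel this does not force $i_*(a'_j)=0$, which is precisely what must be shown. Freeness---the whole content of~(iii)---is thus not established. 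A secondary issue is that $\CZ$ is typically neither smooth nor connected, so your inductive hypothesis does not apply verbatim to $\CX|_\CZ \to \CZ$; the induction would have to be run on a reformulation in terms of $A_\bullet$ and operational products that dispenses with both assumptions.

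The standard way to close the main gap is the Leray--Hirsch trace argument, which your plan omits. Since $F$ is smooth, proper and cellular, $A^\bullet(F)$ carries a perfect intersection pairing, so one may choose a family $(b_j^\vee)$ dual on fibers to $(b_j)$ and set $\psi(c) \defeq \bigl(q_*(c \cdot b_j^\vee)\bigr)_j$, where $q\colon \CX \to \CY$ is the proper projection. Writing $d_j$ for the codimension of $b_j$, the composite $\psi \circ \phi_\CY$ is given by the matrix $\bigl(q_*(b_j b_k^\vee)\bigr)_{j,k}$, which vanishes when $d_j < d_k$ for degree reasons and lands in $A^0(\CY)=\BQ$ with value $\delta_{jk}$ when $d_j = d_k$ (compute on a geometric fiber, using compatibility of proper pushforward with Gysin pullback to the fiber); the matrix is therefore block lower triangular with identities on the diagonal, hence invertible, so $\phi_\CY$ is injective. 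Alternatively one could extend the localization sequences one step to the left with higher Chow groups and prove the cellular K\"unneth isomorphism at that level, but this costs considerably more machinery. Either ingredient is missing from your write-up as it stands.
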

\begin{proof}
  By taking presentations $\CX=[X/G]$ and $\CY=[Y/G]$ as well as suitable $U\subset V$ for $G$ as above the claim reduces to an analogous claim for the morphism $(X\times U)/G \to (Y\times U)/G$ of algebraic spaces. Then the claim is given by \cite[Proposition 6]{EG2} as well as its proof and \cite[Lemma 1]{EG2}.
\end{proof}




\section{The Bruhat Stack and Cycle Classes of Schubert Varieties}\label{BRUHAT}

From now we fix a split reductive group scheme $G$ over the field $k$, a Borel subgroup $B\subset G$ and a maximal torus $T\subset B$.

For an algebraic group $H$ over $k$, we denote the quotient stack $[H \bs \Spec(k)]$ by $\B{H}$. This is a smooth algebraic stack over $k$ of dimension $-\dim(H)$.


\subsection{Chow Rings of Classifying Stacks}

By \cite[Section 3.2]{EG1} the Chow rings $A^\bullet(\B{T})$, $A^\bullet(\B{B})$ and $A^\bullet(\B{G})$ are given as follows: Let $X^*(T)$ be the character group of $T$. Every $\chi \in X^*(T)$ induces a line bundle on $\B{T}$ and we get a morphism $X^*(T) \to A^1(\B{T})$ sending $\chi$ to the Chern class of this line bundle, c.f. \cite[Section 2.4]{EG1}. This extends to an isomorphism
\[
\Sym(X^*(T)_{\BQ}) \iso S \defeq A^{\bullet}(\B{T}).
\]
This in fact holds even with $\BZ$-coefficients. The canonical homomorphism $A^{\bullet}(\B{B}) \to S=A^{\bullet}(\B{T})$ is an isomorphism (Proposition~\ref{IgnoreUnipotent}). The action of the Weyl group $W$ on $T$ induces an action of $W$ on the abelian group $X^*(T)$ by $(w,\chi) \sends \chi \circ \inn(w^{-1})$. By functoriality we obtain an action of $W$ on the graded $\BQ$-algebra $S$. Then the natural homomorphism $A^\bullet(\B{G}) \to S$ yields an identification
\begin{equation}\label{EqDescribeChowReductive}
A^\bullet(\B{G}) \liso S^W
\end{equation}
(recall that we consider rational coefficients).

\begin{example}\label{ChowRingGLn}
Let $G = \GL_n$. Let $T \subseteq G$ be the diagonal torus identified with $\BG_m^n$. Then $X^*(T) =\BZ^n$ and $A^{\bullet}(\B{T}) = S = \BQ[t_1,\dots,t_n]$, where $(t_1,\dots,t_n)$ is the standard basis of $\BQ^n = X^*(T)_{\BQ}$. Moreover
\[
A^{\bullet}(\B{G}) = \BQ[t_1,\dots,t_n]^{S_n} = \BQ[\sigma_1,\dots,\sigma_n],
\]
where $\sigma_i$ is the elementary symmetric polynomial of degree $i$ in $t_1,\dots,t_n$.

If $\Xscr$ is a smooth quotient stack and $\Vscr$ is a vector bundle of rank $n$ on $\Xscr$, then $\Vscr$ corresponds to a flat morphism $\alpha_{\Vscr}\colon \Xscr \to \B{\GL_n}$ of algebraic stacks and the $i$-th Chern class of $\Vscr$ is given by
\[
c_i(\Vscr) = \alpha^*_{\Vscr}(\sigma_i) \in A^{\bullet}(\Xscr).
\]
The determinant $\det\colon \GL_n \to \BG_m$ induces a flat morphism $\B{\GL_n} \to \B{\BG_m}$ and hence a pullback morphism of $\BQ$-algebras
\[
\det\!{}^*\colon A^{\bullet}(\B{\BG_m}) = \BQ[t_1] \lto A^{\bullet}(\B{\GL_n}) = \BQ[\sigma_1,\dots,\sigma_n]
\]
which sends $t_1$ to $\sigma_1$. In particular
\[
c_1(\Vscr) = c_1(\det\Vscr).
\]
\end{example}

\begin{proposition}[{\cite[4.6]{Demazure}}] \label{AGB}
The homomorphism $S \to A^\bullet(G/B)$ sending $\chi \in X^*(T)$ to the Chern class of the induced line bundle on $G/B$ is surjective and its kernel is the ideal $J$ of $S$ generated by the elements of $S^W$ whose degree zero part vanishes.
\end{proposition}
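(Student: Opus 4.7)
The plan is to exploit the Cartesian square
\[
\begin{array}{ccc}
G/B & \lto & \Spec(k) \\
\big\downarrow & & \big\downarrow \\
\B{B} & \xrightarrow{\pi} & \B{G},
\end{array}
\]
combined with Proposition~\ref{LH} and the Chevalley--Shephard--Todd theorem. Here the left vertical map $f\colon G/B \to \B{B}$ classifies the $B$-torsor $G \to G/B$; after identifying $A^\bullet(\B{B}) = S$ via Proposition~\ref{IgnoreUnipotent}, the map $\phi\colon S \to A^\bullet(G/B)$ of the proposition is then $f^*$.

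First I would check the square is Cartesian, or equivalently that the composition $\pi \circ f$ classifies a trivial $G$-torsor on $G/B$; an explicit trivialization is provided by $(G \times G)/B \iso G/B \times G$, $[(g_1,g_2)] \mapsto (g_1 B,\, g_1 g_2)$. Pulling back on Chow rings and using the identification $A^\bullet(\B{G}) = S^W \hookrightarrow S$ from \eqref{EqDescribeChowReductive}, the composition $S^W \hookrightarrow S \xrightarrow{\phi} A^\bullet(G/B)$ factors through the augmentation $S^W \to \BQ$; hence it kills $S^W_+$, giving the inclusion $J = S \cdot S^W_+ \subseteq \ker \phi$.

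Next I would apply Proposition~\ref{LH} to $\pi$. Its fiber $G/B$ is smooth, projective, and admits the Bruhat cell decomposition $G/B = \bigsqcup_{w \in W} BwB/B$ with $BwB/B \cong \BA^{\ell(w)}$; the Schubert classes $[\overline{BwB/B}]$ thus form a $\BQ$-basis of $A^\bullet(G/B)$, giving $\dim_\BQ A^\bullet(G/B) = |W|$. Part (ii) of Proposition~\ref{LH} produces a family $(\alpha_w)_{w \in W}$ in $S$ whose restrictions form a basis of the geometric fiber $A^\bullet(G/B)$, and part (iii) then asserts the same family is an $S^W$-basis of $S$. In particular $\phi$ is surjective, and $S/J = S \otimes_{S^W} \BQ$ has $\BQ$-dimension $|W|$ (which is also the classical statement of Chevalley--Shephard--Todd). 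Combining, the induced surjection $\bar\phi\colon S/J \twoheadrightarrow A^\bullet(G/B)$ is between $\BQ$-spaces of equal finite dimension, hence is an isomorphism, and $\ker \phi = J$.

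\emph{The main obstacle} is verifying the Zariski-local triviality hypothesis of Proposition~\ref{LH} for $\pi\colon \B{B} \to \B{G}$. On an Edidin--Graham approximation $U/B \to U/G$ this amounts to Zariski-local triviality of the underlying $G$-torsor $U \to U/G$, which for non-special reductive $G$ is only \'etale-local. This can be handled either by noting that \'etale-local triviality suffices with $\BQ$-coefficients (the fiber $G/B$ being cellular), or by bypassing Proposition~\ref{LH} altogether and directly constructing preimages of the Schubert classes via BGG/Demazure divided difference operators, expressing them as explicit polynomials in the $c_1(L_\chi)$; this is the route of the original proof in \cite{Demazure}.
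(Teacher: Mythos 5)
The paper gives no proof here: Proposition~\ref{AGB} is quoted verbatim from Demazure with only a citation, so there is nothing to compare against; what follows is an independent assessment of your argument.

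Your outline has the right shape. The Cartesian square, the consequent explicit trivialization $(G\times G)/B \cong G/B \times G$, the factorization showing $S^W_+ \subseteq \ker\phi$ (hence $J \subseteq \ker\phi$), and the final dimension count $\dim_{\BQ} S/J = |W| = \dim_{\BQ} A^\bullet(G/B)$ are all correct, and the identification of the map of the proposition with the pullback along $G/B \to \B{B}$ is the right reading.

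The gap you flag at the end is, however, genuine and not merely cosmetic. To invoke Proposition~\ref{LH} for $\B{B} \to \B{G}$ one must check that, for every scheme $T \to \B{G}$, the base change $\B{B}\times_{\B{G}} T \to T$ is Zariski-locally trivial with fibre $G/B$. This base change is exactly the $G/B$-bundle associated with the $G$-torsor $T$ classifies. For the Edidin--Graham approximation $U/B \to U/G$ with $U \subset V$ open in a faithful representation, the torsor $U \to U/G$ is (a model of) the versal torsor of $G$, which is \emph{not} generically trivial once $G$ is non-special (e.g.\ $\PGL_n$, $\SO_n$, $\GSpin$; and these very groups occur in the paper). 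In that case the generic twisted form of $G$ over $k(U/G)$ is not quasi-split, so the generic fibre of $U/B \to U/G$ has no rational point, and the bundle is not Zariski-locally trivial. Your suggested repairs are reasonable directions --- the conclusion of Leray--Hirsch does survive with $\BQ$-coefficients under merely \'etale-local triviality by a transfer argument, and the alternative of producing preimages of the Schubert classes via divided-difference operators is precisely Demazure's route --- but as written neither is carried out, so the proof is incomplete at exactly the step you identify. Note also that the paper's own use of Proposition~\ref{LH} (for $\Brh \to \B{B}$ in Proposition~\ref{ChowBruhat}) is insulated from this difficulty by exhibiting $\Brh\to\B{B}$ as a pushout of a torsor under the unipotent group $U^-$, and no analogous unipotent description is available for $\B{B}\to\B{G}$.
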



\subsection{The Chow Ring of the Bruhat Stack}

We consider the \emph{Bruhat stack}
\[
\Brh \defeq \Brh_G \defeq \B{B}\times_{\B{G}} \B{B} \cong [B\bs G/B]
\]
together with its Bruhat decomposition into the locally closed substacks $\Brh_w= [ B\bs BwB/B]$. We are interested in the classes $[\overline{\Brh}_w]$ of the closures $\overline{\Brh}_w$ in $A^\bullet(\Brh)$.

\begin{proposition}\label{ChowBruhat}
\begin{assertionlist}
\item\label{ChowBruhat1}
The classes $[\overline{\Brh}_w]$ form a basis of $A^\bullet(\Brh)$ over $S$ for both natural homomorphisms $S=A^\bullet(\B{B})\to A^\bullet(\Brh)$.
\item\label{ChowBruhat2}
The natural homomorphism $S\otimes_{S^W} S \to A^\bullet(\Brh)$ is an isomorphism.
\end{assertionlist}
\end{proposition}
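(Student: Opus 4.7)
For (1), I would apply the Leray--Hirsch type statement of Proposition~\ref{LH} to one of the two canonical projections $p\colon \Brh \to \B{B}$. Writing $\Brh = [B \bs G/B]$ with $B$ acting on $G/B$ by left translation, $p$ is representable (Lemma~\ref{QuotientMapRep}(2), applied to the identity $B \to B$), and its geometric fibres are isomorphic to $G/B$, a smooth projective variety admitting the Bruhat decomposition $G/B = \bigsqcup_{w \in W} BwB/B$ into locally closed affine cells of dimension $\ell(w)$. To verify the Zariski-local triviality hypothesis of Proposition~\ref{LH} I would pass to finite-dimensional approximations of both stacks: pick a representation $V$ of $B$ and a $B$-stable open $U \subseteq V$ on which $B$ acts freely, with complement of suitably large codimension, approximating $\B{B}$ by $U/B$ and $\Brh$ by $(G/B \times U)/B$. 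Since $B$ is a connected solvable linear algebraic group it is special by Grothendieck, so $U \to U/B$ is a Zariski-locally trivial $B$-torsor, and the associated bundle $(G/B \times U)/B \to U/B$ is therefore Zariski-locally trivial with fibre $G/B$. The classes $[\overline{\Brh}_w]$ restrict on each geometric fibre to the Schubert classes of $G/B$, which form a $\BQ$-basis of $A^\bullet(G/B)$ by classical Schubert calculus; Proposition~\ref{LH}(iii) then yields the assertion for $p = p_2$, and the statement for $p_1$ follows by symmetry of the two factors of $\Brh$.

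For (2), I would combine (1) with the Chevalley--Shephard--Todd theorem that $S$ is a free $S^W$-module of rank $|W|$. Fix a homogeneous $S^W$-basis $(s_w)_{w \in W}$ of $S$; then $(1 \otimes s_w)_{w \in W}$ is a left-$S$-basis of $S \otimes_{S^W} S$, and the natural map $S \otimes_{S^W} S \to A^\bullet(\Brh)$ is an isomorphism iff the images of the $1 \otimes s_w$ form a left-$S$-basis of $A^\bullet(\Brh)$. By Proposition~\ref{AGB} the natural map $S \to A^\bullet(G/B)$ is surjective with kernel $J$ generated by $S^W_+$, so $A^\bullet(G/B) \cong S \otimes_{S^W} \BQ$ and the residues $\bar s_w$ form a $\BQ$-basis of $A^\bullet(G/B)$. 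Since $1 \otimes s_w$ is the image of $s_w$ under the right-factor pullback $p_2^*\colon S \to A^\bullet(\Brh)$, and since the restriction of $p_2^*$ to a geometric fibre of $p_1$ is precisely the canonical map $S \to A^\bullet(G/B) = S/J$ (this being the identification underlying Proposition~\ref{AGB}), the $1 \otimes s_w$ restrict to the basis $(\bar s_w)$ on each geometric fibre of $p_1$. Applying Proposition~\ref{LH}(iii) to $p_1$ finishes the argument.

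The principal technical obstacle is verifying the Leray--Hirsch hypothesis for the stacky fibration $\Brh \to \B{B}$; this is handled by passing to finite-dimensional schematic approximations and invoking the specialness of $B$. Granting (1), the statement (2) becomes a formal consequence of Chevalley--Shephard--Todd together with the description of $A^\bullet(G/B)$ from Proposition~\ref{AGB}.
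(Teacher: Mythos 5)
Your argument is correct and follows essentially the same strategy as the paper: apply Proposition~\ref{LH} to one projection $\Brh \to \B{B}$ for part (1), and combine freeness of $S$ over $S^W$ with Proposition~\ref{AGB} and Leray--Hirsch again for part (2). The only visible difference is in how Zariski-local triviality of the $G/B$-fibration is verified — you pass to finite-dimensional approximations and invoke specialness of the Borel $B$, whereas the paper realizes $\Brh$ as the pushout of the $U^-$-torsor $\Brh_{w_0}$ along the open cell $U^- \hookrightarrow G/B$ and uses that torsors under unipotent groups are Zariski-locally trivial; these rest on the same underlying principle and are interchangeable here.
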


\begin{proof}
Consider $\Brh$ as a $G/B$-fibration over $\B{B}$ via say the first projection. Let $w_0\in W$ be the longest element. The subspace $\Brh_{w_0}\subset \Brh$ is open and given by the open Bruhat cell in $G/B$. It has natural structure as a $U^-$-torsor over $\B{B}$, where $U^-$ is the unipotent radical of the unique Borel subgroup $B^-$ of $G$ such that $B ^- \cap B = T$. The pushout along the open immersion $U^- \inno G/B$, $u \sends uB$, is isomorphic to $\Brh$. Any $U^-$-torsor is Zariski-locally trivial and hence so is $\Brh\to \B{B}$. Thus $\Brh \to \B{B}$ satisfies the conditions of Proposition \ref{LH}. Hence \ref{ChowBruhat1} follows from Proposition \ref{LH} using the fact that the closures of the Bruhat strata on $G/B$ give a basis of $A^\bullet(G/B)$ (c.f. \cite[Cor. to Proposition 1]{Demazure}). 

For \ref{ChowBruhat2} we can argue as follows: Consider $S\otimes_{S^W} S$ as an $S$ module via the first factor. The ring $S$ is free over ${S^W}$ of rank $|W|$ and hence so is $S\otimes_{S^W} S$ over $S$. The $S$-module $A^\bullet(\Brh)$ is free of rank $|W|$ by \ref{ChowBruhat1}. Thus $S\otimes_{S^W} S \to A^\bullet(\Brh)$ is a homomorphism of free $S$-modules of the same rank and it suffices to prove that it is surjective. By Proposition \ref{AGB} we can take homogenous elements $x_i$ in $S$ which map to a basis of $A^\bullet(G/B)$. Then by Proposition \ref{LH} the images of $1\otimes x_i$ in $A^\bullet(\Brh)$ form a basis of $A^\bullet(\Brh)$ over $S$. This proves surjectivity.
\end{proof}

To give a description of the class of $\overline{\Brh}_w$ in $S \otimes_{S^W} S$ we now proceed as follows. We first recall a formula for the class of the diagonal $\Brh_e$ in $\Brh$ by Graham. Then we define explicit operators $\delta_w$ on $A^{\bullet}(\Brh)$ such that $[\overline{\Brh}_w]=\delta_w[\Brh_e]$.


\subsection{The Class of the Diagonal} \label{DiagonalClass}
In \cite{GrahamDiagonal} the following formula for the class of the diagonal $\Brh_e$ in $\Brh$ is proved in case $k=\BC$. The proof given there can be readily adapted to arbitrary fields.

For $w\in W$ let $i_w\colon S\otimes_{S^W} S \to S,\; r\otimes r'\mapsto rw(r')$. The fact that $\Spec(S\otimes_{S^W} S)= \Spec(S) \times_{\Spec(S)/W} \Spec(S)$ implies that $\prod_{w\in W}i_w\colon S\otimes_{S^W} S \to \prod_{w\in W} S$ is injective.

\begin{theorem}[Graham] \label{DiagClass}
  The image of $[\Brh_e]$ under $i_e$ is $\prod_{\alpha\in \Phi^+}\alpha \in S$. The image of $[\Brh_e]$ under $i_w$ for $w\not = 1$ is zero.
\end{theorem}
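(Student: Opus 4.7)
The plan is to follow Graham's strategy by constructing, for each $w \in W$, a natural section $s_w \colon \B{T} \to \Brh$ and identifying the pullback $s_w^*$ on Chow rings with the algebra map $i_w$.

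First I would observe that the first projection $p_1 \colon \Brh = [B \bs G/B] \to \B{B}$ is a $G/B$-bundle, whose base change to $\B{T}$ becomes the bundle associated to the left $T$-action on $G/B$. The $T$-fixed points of $G/B$ are precisely the $|W|$ cosets $wB$; each yields a section $s_w$, and this section factors through the locally closed stratum $\Brh_w$.

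Next I would identify $s_w^* \colon A^\bullet(\Brh) \to A^\bullet(\B{T}) = S$ with $i_w$ under the isomorphism $A^\bullet(\Brh) \cong S \otimes_{S^W} S$ of Proposition~\ref{ChowBruhat}. By construction $p_1 \circ s_w = \id_{\B{T}}$, while a moduli-theoretic unwinding shows that $p_2 \circ s_w \colon \B{T} \to \B{B}$ is induced by the homomorphism $T \to B$, $t \mapsto w^{-1} t w$. On $S = A^\bullet(\B{B})$ this pulls back to the action of $w$ on $X^*(T)$ recalled in the notation, so $s_w^*(r \otimes r') = r \cdot w(r') = i_w(r \otimes r')$.

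Given this identification the two assertions follow by a case distinction. If $w \neq e$, then $\Brh_e$ is closed in $\Brh$ (being the minimum stratum in the Bruhat order) and disjoint from $\Brh_w$, so $s_w^*[\Brh_e] = 0$. If $w = e$, then $s_e$ factors through the closed immersion $j \colon \Brh_e = \B{B} \hookrightarrow \Brh$, and the self-intersection formula gives $s_e^*[\Brh_e] = c_{\mathrm{top}}(\mathcal{N})$, where $\mathcal{N}$ is the normal bundle of $\Brh_e$ in $\Brh$; this bundle is $\mathfrak{g}/\mathfrak{b}$ as a $B$-equivariant vector bundle, and a direct computation of its equivariant top Chern class in $S$ yields $\prod_{\alpha \in \Phi^+} \alpha$. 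The main technical point is the moduli-theoretic identification $s_w^* = i_w$; once this is in place, the vanishing for $w \neq e$ is immediate from the disjointness of Bruhat strata, and the computation for $w = e$ is a standard application of the self-intersection formula on $G/B$.
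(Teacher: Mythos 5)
The paper does not actually prove this statement; it cites Graham's paper and asserts that Graham's proof adapts from $\BC$ to arbitrary fields. Your proposal supplies a proof following exactly Graham's localization strategy, transported to the quotient-stack setting, so in that sense it matches the paper's intent. The core of the argument is sound: the $T$-fixed points $wB$ of $G/B$ yield morphisms $s_w\colon \B{T}\to\Brh$ (the composite of a section of the $G/B$-bundle $\B{T}\times_{\B{B}}\Brh\to\B{T}$ with the smooth projection to $\Brh$), and your moduli-theoretic computation that $p_2\circ s_w$ is induced by $t\mapsto w^{-1}tw$ is correct, giving $s_w^*(r\otimes r')=r\cdot w(r')=i_w(r\otimes r')$ under the identification of Proposition~\ref{ChowBruhat}. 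This replaces equivariant localization in the stack setting. The vanishing for $w\neq e$ is also correct: since $s_w$ factors through the section at $wB$ and $q^{-1}(\Brh_e)$ sits over the distinct point $eB$, the relevant fiber product is empty and the refined Gysin class vanishes. (Note that $s_w$ is lci of codimension zero rather than a closed immersion; the argument still works, since the regular section and the smooth projection can be handled separately, but it is worth stating.)

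There is, however, a gap in the final step that you assert rather than verify. You identify the normal bundle correctly as $\mathfrak{g}/\mathfrak{b}$ with its $B$-equivariant structure, but under the paper's stated convention that $\Phi^+$ consists of the roots of $\Lie(B)$, the $T$-weights of $\mathfrak{g}/\mathfrak{b}$ are $\Phi^-$, not $\Phi^+$, so the ``direct computation'' of the equivariant Euler class gives $\prod_{\alpha\in\Phi^-}\alpha=(-1)^{\#\Phi^+}\prod_{\alpha\in\Phi^+}\alpha$, which differs from what you (and the theorem) claim by a sign when $\#\Phi^+$ is odd. Reconciling this requires pinning down a convention somewhere: either the identification $X^*(T)\to A^1(\B{T})$ (or the map $S\otimes_{S^W}S\to A^\bullet(\Brh)$ via $p_2^*$) carries an implicit sign, or the stated formula inherits a convention from Graham's and Fulton's papers that does not quite match the normalization used earlier in this paper. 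The paper itself flags exactly such a sign adjustment relative to \cite{GoKo_HasseHeckeGalois} in the Chevalley formula, and checking, say, the case $G=\SL_2$ against either the Chevalley formula \eqref{eq:Chevalley} or the diagonal class $h_1+h_2$ in $\BP^1\times\BP^1$ makes the issue visible. Your overall strategy is right, but the sentence asserting $\prod_{\alpha\in\Phi^+}\alpha$ needs to be justified, not merely stated, and the justification is where all of the conventions have to be tracked carefully.
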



\begin{example}\label{ExampleClassDiag}
We recall the results of Fulton on the class of the diagonal for classical groups. For the classical groups $\GL_{n}$, $\SO_{2n+1}$, $\Sp_{2n}$, and $\SO_{2n}$ we choose the standard maximal torus $T \cong \BG_m^n$ and Borel subgroup to obtain $S = \Sym(X^*(T)_{\BQ})$ and the roots in $X^*(T)$. We give elements $\widetilde{[\Brh_e]}$ in $S \otimes_{\BQ} S = \BQ[x_1,\dots,x_n,y_1,\dots,y_n]$, where $x_i$ and $y_i$ represent the same $\BG_m$-factor of $T$. The images of these elements in $S \otimes_{S^W} S$ are $[\Brh_e]$. As a reference we use \cite{Fulton_DetForm} where $y_j$ is denoted by $y_{n+1-j}$ and the Schubert variety corresponding to $w = e$ is denoted by $\Omega_{w_0}$.

We fix $n \in \BN$ and introduce the following polynomials:
\begin{align*}
\Phi \defeq \Phi_n &\defeq \prod_{1 \leq i<j \leq n}(x_i - y_j) \in S \otimes_{\BQ} S, \\
\Gamma_k &\defeq \det\bigl((c_{k+1+j-2i})_{1 \leq i,j \leq k}\bigr) \in \BQ[c_{-k+2}, c_{-k+3}, \dots, c_{2k+1}].
\end{align*}
For instance
\begin{equation}\label{EqGammaSmall}
\begin{aligned}
\Phi_1 = 1, &\qquad \Phi_2 = x_1 - y_2,\\
\Gamma_1 = c_1, &\qquad \Gamma_2 = c_1c_2 - c_0c_3.
\end{aligned}
\end{equation}
We also let $\sigma_1, \dots, \sigma_n$ be the elementary symmetric polynomials in $n$ variables with $\deg(\sigma_i) = i$. We also set $\sigma_0 \defeq 1$.
\begin{assertionlist}
\item[($A_{n-1}$)]
Let $n \geq 2$. Then
\begin{equation}\label{DiagonalA}
\widetilde{[\Brh_e]} = \Phi_n.
\end{equation}
\item[($B_n$)]
Let $n \geq 2$. Then
\begin{equation}\label{DiagonalB}
\begin{aligned}
\widetilde{[\Brh_e]} &= \Phi_n \Gamma_n, \\
c_i &\defeq \begin{cases}
\frac{1}{2}\bigl(\sigma_i(x_1,\dots,x_n) + \sigma_i(y_1,\dots,y_n)\bigr),&\text{if $0 \leq i \leq n$}; \\
0,&\text{otherwise.}
\end{cases}
\end{aligned}
\end{equation}
\item[($C_n$)]
Let $n \geq 2$. Then
\begin{equation}\label{DiagonalC}
\begin{aligned}
\widetilde{[\Brh_e]} &= \Phi_n \Gamma_n, \\
c_i &\defeq \begin{cases}
\sigma_i(x_1,\dots,x_n) + \sigma_i(y_1,\dots,y_n),&\text{if $0 \leq i \leq n$}; \\
0,&\text{otherwise.}
\end{cases}
\end{aligned}
\end{equation}
\item[($D_n$)]
Let $n \geq 3$. Then
\begin{equation}\label{DiagonalD}
\begin{aligned}
\widetilde{[\Brh_e]} &= \Phi_n \Gamma_{n-1}, \\
c_i &\defeq \begin{cases}
\frac{1}{2}\bigl(\sigma_i(x_1,\dots,x_n) + \sigma_i(y_1,\dots,y_n)\bigr),&\text{if $0 \leq i \leq n-1$}; \\
0,&\text{otherwise.}
\end{cases}
\end{aligned}
\end{equation}
\end{assertionlist}
\end{example}


\subsection{The Chevalley Formula}
For $(\lambda,\mu) \in X^*(T)\times X^*(T)$ we have a natural line bundle $\Lscr_{\lambda,\mu}$ on $\Brh$ with Chern class $\lambda \otimes \mu$. The following gives a version of a classical formula of Chevalley in this context:

\begin{theorem}[{\cite[Theorem 5.2.2]{GoKo_HasseHeckeGalois}}]
\begin{assertionlist}
\item
The line bundle $\Lscr_{\lambda,\mu}$ has a global section section on $\Brh_w$ if and only if $\mu=w^{-1}\lambda $.
\item
The space $H^0(\Brh_w,\Lscr_{\lambda,w^{-1}\lambda})$ has dimension $1$.
\item
For $w \in W$ set $E_w \defeq \set{\alpha \in \Phi^+}{ws_{\alpha} < w, \ell(ws_{\alpha}) = \ell(w) - 1}$. The divisor of any non-zero section of $\Lscr_{\lambda,w^{-1}\lambda}$ on $\Brh_w$ is equal to 
      \begin{equation*}
        \sum_{\alpha\in E_w} \langle \lambda,\alpha^\vee \rangle[\overline{\Brh_{w s_\alpha}}].
      \end{equation*}
\end{assertionlist}    
\end{theorem}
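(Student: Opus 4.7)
The plan is to separate parts (i) and (ii), which come from a stabilizer calculation, from (iii), which requires a geometric rank-one computation.

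For (i) and (ii), write $\Brh_w = [B\times B \bs BwB]$ with the action $(b_1,b_2)\cdot g = b_1 g b_2^{-1}$. Since $BwB$ is a single $B\times B$-orbit, $\Brh_w \cong \B{S_w}$ with stabilizer
\begin{equation*}
S_w = \set{(b,\dot{w}^{-1}b\dot{w})}{b \in B \cap \dot{w}B\dot{w}^{-1}}.
\end{equation*}
Under this identification $\Lscr_{\lambda,\mu}|_{\Brh_w}$ corresponds on $\B{S_w}$ to the restriction of the character $(b_1,b_2)\mapsto \lambda(b_1)\mu(b_2)^{-1}$ of $B\times B$. Since neither $U$ nor its conjugate $\dot{w}U\dot{w}^{-1}$ carries a nontrivial character, this restriction factors through $T$, where it reads $t\mapsto \lambda(t)(w\mu)(t)^{-1}$. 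Sections are $S_w$-invariants of a one-dimensional representation: they vanish unless this character is trivial, and in that case form a one-dimensional space. Triviality is exactly the condition $\lambda = w\mu$, i.e.\ $\mu = w^{-1}\lambda$, giving both (i) and (ii).

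Turning to (iii), the section of (ii) extends up to scalar uniquely to a rational section of $\Lscr_{\lambda,w^{-1}\lambda}$ on $\overline{\Brh_w}$; its divisor is supported on the boundary $\overline{\Brh_w}\setminus \Brh_w$, a union of proper Schubert closures. The divisorial components correspond to $v \in W$ with $v<w$ and $\ell(v)=\ell(w)-1$, and by standard Bruhat-order combinatorics these are exactly the $ws_\alpha$ for $\alpha \in E_w$. It remains to compute the multiplicity at the generic point of each $\overline{\Brh_{ws_\alpha}}$. For this I would carry out a rank-one reduction: attach to $\alpha$ the homomorphism $\SL_2 \to G$ and use the resulting subgroup to produce a rational curve $\BP^1 \subseteq \overline{\Brh_w}$ meeting both $\Brh_w$ and $\Brh_{ws_\alpha}$ in nonempty opens and cutting $\overline{\Brh_{ws_\alpha}}$ transversally at a general point. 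Restricting $\Lscr_{\lambda,w^{-1}\lambda}$ to this $\BP^1$, a standard calculation on $\SL_2/B_{\SL_2} \cong \BP^1$ gives its degree as $\langle \lambda,\alpha^\vee\rangle$, which is the desired multiplicity.

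The main obstacle is the last step of (iii), namely verifying that the rank-one test curve intersects only $\overline{\Brh_{ws_\alpha}}$ among the boundary divisors and does so transversally, so that the $\BP^1$-degree is the correct multiplicity. A cleaner alternative is to pull back along the $B$-torsor $G/B \to \Brh$: this identifies the statement with the classical Chevalley formula for Schubert divisors in the (left) $B$-equivariant Chow ring of $G/B$, which by Proposition~\ref{IgnoreUnipotent} agrees with the $T$-equivariant Chow ring; Chevalley's formula is standard in the latter. Either route reduces part (iii) to the well-known rank-one computation.
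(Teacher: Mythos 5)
First, a clarification: this theorem is not proved in the paper at all --- it is cited verbatim from Goldring--Koskivirta as \cite[Theorem~5.2.2]{GoKo_HasseHeckeGalois}, with only the remark that the cited source carries an extra sign from the opposite choice of positive system. So there is no internal proof here for your attempt to be measured against, and I will assess it on its own terms.

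Your stabilizer computation for (i) and (ii) is correct in structure: $\Brh_w\cong\B{S_w}$ with $S_w=\{(b,\dot w^{-1}b\dot w):b\in B\cap{}^{\dot w}B\}$; sections of a line bundle on $\B{S_w}$ are the invariants in a one-dimensional representation; the relevant character of $S_w$ factors through $T$ because unipotent groups carry no nontrivial characters; and triviality of that character is exactly $\lambda=w\mu$, i.e.\ $\mu=w^{-1}\lambda$. The only thing to make explicit is the sign convention turning $\Lscr_{\lambda,\mu}$ into a character of $B\times B$: the paper itself flags a sign discrepancy with the cited source, and your choice $(b_1,b_2)\mapsto\lambda(b_1)\mu(b_2)^{-1}$ is the one that reproduces the stated criterion, so state it.

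For (iii), you have located the real work, and as written your primary route is not yet a proof: one still has to exhibit a $\BP^1\subseteq\overline{\Brh_w}$ meeting $\Brh_w$ and $\Brh_{ws_\alpha}$ nontrivially, avoiding every other codimension-one boundary stratum, and meeting $\overline{\Brh_{ws_\alpha}}$ transversally at a general point of that stratum. These assertions are true but need an argument (e.g.\ via Bott--Samelson resolutions or the $P_\alpha/B$-fibration structure of Schubert varieties), and they are the actual content of the theorem. Your alternative --- pull back along the $B$-torsor $G/B\to\Brh$, identify $A^\bullet(\Brh)\cong A^\bullet_B(G/B)\cong A^\bullet_T(G/B)$ via Proposition~\ref{IgnoreUnipotent}, and invoke the $T$-equivariant Chevalley formula --- is indeed the cleaner way to close this, and it does work; but be explicit that it must be the \emph{equivariant} Chevalley formula (the nonequivariant divisor formula does not see the twist by $\lambda$), and that the substitution $\mu=w^{-1}\lambda$ precisely kills the diagonal term in the equivariant product, which reconciles with part~(ii)'s conclusion that the section is nowhere vanishing on $\Brh_w$.
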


Note that the formula in loc.~cit.~contains an additional minus sign because there the positive roots are defined by the opposite Borel subgroup.

For $w\in W$ and $\lambda \in X^*(T)$ this implies the following relation in $A^{\bullet}(\Brh)$:
\begin{equation}
  \label{eq:Chevalley}
  (\lambda \otimes w^{-1}(\lambda))[\overline{\Brh_w}] = \sum_{\alpha \in E_w} \langle \lambda,\alpha^\vee\rangle[\overline{\Brh_{w s_\alpha}}].
\end{equation}  


\subsection{The Operators $\delta_w$} \label{Deltaw}
We use certain operators on $S$ and $S\otimes_{S^W} S$: Let $n$ be the semi-simple rank of $G$ and $\alpha_1,\hdots,\alpha_n$ be the simple roots with respect to $T$ and $B$. For $1\leq i\leq n$ let $s_i \defeq s_{\alpha_i}$ be the simple reflection in $W$ corresponding to $\alpha_i$ and $B\subset P_i=B\cup Bs_iB$ the ``$i$-th minimal parabolic'' with root system $\{\pm \alpha_i\}$. 

\begin{construction}\label{Constructds1}
  Let $X_i\defeq \B{B}\times_{\B{P_i}}\B{B}$ and $p_1,p_2\colon X_i\to \B{B}$ the two projections. Then we define $\delta_i\colon S\to S$ to be the correspondence $p_{1,*}\circ p_2^*\colon S\to S$.  
\end{construction}

\begin{construction}\label{Constructds2}
  Let $1\leq i \leq n$. Consider $S$ as the ring of polynomial functions on $X^*(T)_{\BQ}$. For $f\in S$, the element $f-s_{\alpha_i}(f)$ of $S$ vanishes on the hyperplane in $X^*(T)_{\BQ}$ given by the vanishing of to coroot $\alpha_i^\vee$. Hence we obtain an element $\tilde\delta_i(f)\defeq (f-s_{\alpha_i}(f))/\alpha_i^\vee \in S$. This defines a $\BQ$-linear homomorphism $\tilde\delta_i\colon S \to S$.
\end{construction}

\begin{theorem}
\begin{assertionlist}
\item
For each $1\leq i \leq n$ we have $\delta_i=\tilde\delta_i$.
\item
For $w\in W$, one gets a well-defined operator $\delta_w$ on $S$ by letting $\delta_w=\delta_{i_1}\cdots \delta_{i_k}$ for any decomposition $w=s_{i_1}\cdots s_{i_k}$ with $k=\ell(w)$.
\end{assertionlist} 
\end{theorem}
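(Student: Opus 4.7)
The plan is to prove (1) by applying proper base change to rewrite $\delta_i$ as a pushforward along the $\BP^1$-bundle $q\colon \B{B} \to \B{P_i}$, and then obtain (2) from the braid relations for the divided-difference operators $\tilde\delta_i$ together with Matsumoto's theorem.

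For (1), I would first observe that the diagram
\[
\xymatrix{
X_i \ar[r]^{p_2} \ar[d]_{p_1} & \B{B} \ar[d]^{q} \\
\B{B} \ar[r]_{q} & \B{P_i}
}
\]
is cartesian, and that $q\colon \B{B} \to \B{P_i}$ is representable, proper, and flat, with fibers isomorphic to $P_i/B \cong \BP^1$. Flat/proper base change in the Chow rings of smooth quotient stacks then gives
\[
\delta_i \;=\; p_{1,*} \circ p_2^{*} \;=\; q^{*} \circ q_{*}.
\]
Combining \eqref{EqDescribeChowReductive} with Proposition~\ref{IgnoreUnipotent} (applied to kill the unipotent radical of $P_i$) identifies $A^\bullet(\B{P_i})$ with $S^{s_i}$, and under this identification $q^*$ is the tautological inclusion $S^{s_i} \hookrightarrow S$. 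Thus $q_*\colon S \to S^{s_i}$ is an $S^{s_i}$-linear map lowering degree by one, and $\delta_i$ is obtained by composing it with the inclusion.

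To pin down $q_*$, I would choose $\lambda \in X^*(T)_{\BQ}$ with $\langle \lambda, \alpha_i^\vee \rangle = 1$, so that $\{1,\lambda\}$ is an $S^{s_i}$-basis of $S$. By degree reasons $q_*(1) = 0$, while $q_*(\lambda) \in (S^{s_i})^0 = \BQ$ can be computed fiberwise: the line bundle $\Lscr_\lambda$ on $\B{B}$ restricts on $P_i/B \cong \BP^1$ to a line bundle of degree $\langle \lambda,\alpha_i^\vee\rangle = 1$, so $q_*(\lambda) = 1$. A direct calculation from the definition gives $\tilde\delta_i(1) = 0$ and $\tilde\delta_i(\lambda) = 1$ as well, and both $\delta_i$ and $\tilde\delta_i$ are $S^{s_i}$-linear (for $\tilde\delta_i$ this is the classical fact $\tilde\delta_i(gh) = g\,\tilde\delta_i(h)$ for $g \in S^{s_i}$). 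Hence $\delta_i = \tilde\delta_i$.

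For (2), given (1) it suffices to verify the braid relations
\[
\underbrace{\delta_i\delta_j\delta_i\cdots}_{m_{ij}} \;=\; \underbrace{\delta_j\delta_i\delta_j\cdots}_{m_{ij}}
\]
where $m_{ij}$ is the order of $s_i s_j$; this is a classical computation of Bernstein-Gelfand-Gelfand/Demazure for the divided-difference operators $\tilde\delta_i$, done by a direct rank-two root-system check reducing to the groups generated by $s_i$ and $s_j$. By Matsumoto's theorem any two reduced expressions for a given $w \in W$ are related by a sequence of such braid moves, so $\delta_{i_1}\cdots \delta_{i_k}$ depends only on $w$, not on the chosen reduced decomposition. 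The main obstacle is the base-change step $p_{1,*}\circ p_2^* = q^* \circ q_*$ in the stacky setting, but once one knows that $q$ is representable proper and flat (equivalently, that $X_i \to \B{B}$ is a genuine $\BP^1$-bundle of algebraic spaces after any smooth atlas), this reduces to standard base change for algebraic spaces, and everything afterward is routine divided-difference bookkeeping.
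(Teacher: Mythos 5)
Your proof is correct, but it takes a genuinely different route from the paper's. The paper handles this theorem by citing Bernstein--Gelfand--Gelfand \cite{BGG}, which establishes both assertions over $\BC$ for semi-simple simply connected groups, and then performs two reduction steps: first to the semi-simple simply connected case via functoriality in maps inducing isomorphisms on adjoint groups, and then a base-field change to $\BC$, using Theorem~\ref{DiagClass} and the Chevalley formula \eqref{eq:Chevalley} to transport the identification of $[\overline{\Brh}_w]$ between different algebraically closed ground fields. Your argument instead proves the theorem directly in the generality in which it is stated: for (1), you exploit the cartesian square to rewrite $\delta_i = q^*\circ q_*$ along the $\BP^1$-bundle $q\colon\B{B}\to\B{P_i}$, identify $A^\bullet(\B{P_i})\cong S^{s_i}$ via Proposition~\ref{IgnoreUnipotent} and \eqref{EqDescribeChowReductive}, and pin down $q_*$ by $S^{s_i}$-linearity (projection formula) and its values on the basis $\{1,\lambda\}$; for (2), you reduce to the rank-two braid relations for divided-difference operators plus Matsumoto/Tits. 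This is more self-contained, avoids the characteristic-$0$ detour entirely, and arguably better explains \emph{why} the theorem holds. The one place you should be a bit more explicit is the sign in the fiberwise computation $q_*(\lambda)=\langle\lambda,\alpha_i^\vee\rangle$, which depends on the convention relating characters of $B$ to line bundles on $P_i/B\cong\BP^1$; with the convention of the paper (characters of $T$ map to Chern classes of the induced line bundles, as in \cite[Section 2.4]{EG1} and Proposition~\ref{AGB}) this does work out, but it deserves a sentence. Also note that the paper's formula for $\tilde\delta_i$ has a typo -- division should be by the simple root $\alpha_i$, not the coroot $\alpha_i^\vee$ (compare the explicit formulas for $\delta_{s_i}$ in the Siegel and Spin examples) -- and your computation correctly uses division by $\alpha_i$.
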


\begin{proof}
In case $k=\BC$ and $G$ is semi-simple and simply connected, this is proven in \cite{BGG}. See \cite[Theorem 5.7]{BGG} for $(i)$ and \cite[Theorem 3.4]{BGG} for $(ii)$.

The general case can be deduced from this as follows: First, using the functoriality of the various constructions with respect to homomorphisms of reductive groups inducing an isomorphism on adjoint groups one can reduce to the case that $G$ is semi-simple and simply connected. Now let $\tilde k$ be another algebraically closed base field, $\tilde G$ the reductive group over $\tilde k$ with the same root datum as $G$, with $\tilde T$, $\tilde B$, $\tilde \Brh := \Brh_{\tilde G}$ etc the corresponding data for $\tilde G$. We have a natural $W$-equivariant isomorphism $A^\bullet(*/T) \cong A^\bullet(*/\tilde T)$ which induces an isomorphism $A^\bullet(\Brh)\cong A^\bullet(\widetilde \Brh)$. We claim that for $w\in W$, the classes of $\overline{\Brh}_w$ and $\overline{\widetilde{\Brh}}_w$ correspond to each other under this isomorphism. For $w=e$ this follows from Theorem \ref{DiagClass}. From this one deduces the claim by induction on $\ell(w)$ using \eqref{eq:Chevalley}.

By taking $\tilde k=\BC$ this implies the claim.
\end{proof}

\begin{remark}\label{Leibniz}
Description~\ref{Constructds2} shows that one has the following Leibniz type formula
\begin{equation}\label{EqLeibniz}
\begin{aligned}
\delta_i(fg) &= \frac{fg - s_{\alpha_i}(f)s_{\alpha_i}(g)}{\alpha_i^{\vee}} = \frac{(f - s_{\alpha_i}(f))g + s_{\alpha_i}(f)(g - s_{\alpha_i}(g))}{\alpha_i^{\vee}} \\
&= \delta_i(f)g + s_{\alpha_i}(f)\delta_i(g).
\end{aligned}
\end{equation}
\end{remark}

Now, for $w\in W$, we define an operator $\delta_w$ on $A^\bullet(\Brh)=S\otimes_{S^W} S$ by letting the $\delta_w$ just defined on $S$ act on the first factor. For $1\leq i \leq n$ the operator $\delta_i= \delta_{s_i}$ on $S\otimes_{S^W} S$ can also be described as follows: Let $\Brh_i$ be the following fiber product:
\begin{equation}\label{somediag}
\begin{aligned}
  \xymatrix{
    \Brh_i \ar[r]^{q_2}  \ar[d]^{q_1} & \Brh \ar[d] \\
    \Brh \ar[r] & BP_i \times_{\B{G}} \B{B} .
  }
\end{aligned}
\end{equation}
Then $\delta_{s_i}=q_{1,*}\circ q_2^*\colon S\otimes_{S^W} S \to S \otimes_{S^W} S$.

\begin{theorem} \label{diProp}
  Let $w\in W$ and $1\leq i\leq n$. Then $\delta_{s_i}[\overline{\Brh}_w]=[\overline{\Brh}_{s_i w}]$ if $\ell(s_i w)=\ell(w)+1$ and $\delta_{s_i}[\overline{\Brh}_w]=0$ otherwise. 
\end{theorem}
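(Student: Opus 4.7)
The plan is to give a geometric proof, interpreting $\delta_{s_i} = q_{1,*} \circ q_2^*$ as a correspondence on Schubert classes via the $\BP^1$-bundle structure coming from the inclusion $B \subset P_i$. First I would note the key observation: the morphism $\pi\colon \Brh \to [P_i\bs G/B]$ induced by base change from $\B{B} \to \B{P_i}$ is a Zariski-locally trivial $\BP^1$-bundle (with fiber $P_i/B$). Consequently, both $q_1, q_2 \colon \Brh_i \to \Brh$ are Zariski-locally trivial $\BP^1$-bundles, as base changes of $\pi$. In particular $q_2$ is flat, so $q_2^*[\overline{\Brh}_w] = [q_2^{-1}(\overline{\Brh}_w)]$, and $q_1$ is proper.

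Next I would analyse the image $q_1(q_2^{-1}(\overline{\Brh}_w)) = \pi^{-1}(\pi(\overline{\Brh}_w))$ using the Bruhat decomposition. The fiber $P_i/B \cong \BP^1$ of $\pi$ decomposes into two $B$-orbits: a point $\{B/B\}$ and the open cell $Bs_iB/B \cong \BA^1$. Thus for each $w$, the intersection of $\Brh_w$ with the $\BP^1$-fiber above a generic point of $\pi(\Brh_w)$ is the closed point if $w < s_iw$ and the open $\BA^1$ if $w > s_iw$. This immediately gives:
\begin{itemize}
\item if $\ell(s_iw) = \ell(w)+1$, then $P_iwB = BwB \cup Bs_iwB = \overline{Bs_iwB}$, and therefore $\pi^{-1}(\pi(\overline{\Brh}_w)) = \overline{\Brh}_{s_iw}$;
\item if $\ell(s_iw) = \ell(w)-1$, then $P_iwB = BwB \subseteq \overline{BwB}$ already, so $\pi^{-1}(\pi(\overline{\Brh}_w)) = \overline{\Brh}_w$.
\end{itemize}

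For Case 1, the subspace $q_2^{-1}(\overline{\Brh}_w) \subset \Brh_i$ has dimension $\dim(\overline{\Brh}_w) + 1 = \dim(\overline{\Brh}_{s_iw})$, so the restriction of $q_1$ to it is a proper morphism onto $\overline{\Brh}_{s_iw}$ between stacks of the same dimension; I would verify it is birational by computing the fiber over a point $x \in \Brh_{s_iw}$, identifying it with $\pi^{-1}(\pi(x)) \cap \overline{\Brh}_w$, and observing this equals $\pi^{-1}(\pi(x)) \cap \Brh_w$ (a single reduced stacky point, namely the closed $B$-orbit in the $\BP^1$-fiber). Hence $q_{1,*}q_2^*[\overline{\Brh}_w] = [\overline{\Brh}_{s_iw}]$. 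For Case 2, the proper map $q_1 \colon q_2^{-1}(\overline{\Brh}_w) \to \overline{\Brh}_w$ has $\BP^1$-dimensional generic fibers (since over $\pi(\Brh_w)$, the fiber of $\pi$ lies entirely in $\overline{\Brh}_w$), and therefore pushforward of its fundamental class vanishes for dimension reasons.

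The main delicate point is the birationality check in Case 1, which reduces to identifying the fiber of $q_1|_{q_2^{-1}(\overline{\Brh}_w)}$ over a generic point of $\Brh_{s_iw}$ as a single reduced point of the stack; this in turn is exactly the statement that the $B$-orbit in $P_i/B$ corresponding to $w$ is a single point. Once this is established, combining the two cases yields the theorem.
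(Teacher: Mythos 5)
Your argument is essentially the paper's, recast at the stack level: both rest on the $\BP^1$-bundle $\pi\colon\Brh\to[P_i\bs G/B]$, identify $\pi^{-1}(\pi(\overline{\Brh}_w))$ with $\overline{\Brh}_{s_iw}$ or $\overline{\Brh}_w$ according to the case, and reduce to a generic fiber count (your Case~2 dimension argument agrees with the paper's). The paper implements this in the $P_i$-equivariant atlas $P_i/B\times G/B$; working directly with the stacks is a harmless repackaging of the same geometry.

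Two details need fixing. The Bruhat identities are off as written: for $s_iw>w$, $P_iwB=BwB\cup Bs_iwB$ is only a union of two cells and not the Schubert variety $\overline{Bs_iwB}$, and for $s_iw<w$, $P_iwB=BwB\cup Bs_iwB\ne BwB$. The correct statements you actually use are $P_i\cdot\overline{BwB}=\overline{Bs_iwB}$ in the first case and $P_i\cdot\overline{BwB}=\overline{BwB}$ in the second (the latter because $\overline{BwB}$ is $P_i$-stable when $s_iw<w$); both follow readily and yield the asserted description of $\pi^{-1}(\pi(\overline{\Brh}_w))$. More substantively, the birationality check — that the fiber of $q_1|_{q_2^{-1}(\overline{\Brh}_w)}$ over $x\in\Brh_{s_iw}$ is a single reduced point — is the crux of the proof, and the gloss ``the closed $B$-orbit in the $\BP^1$-fiber'' is imprecise: after identifying the fiber over $gB\in Bs_iwB/B$ with $P_i/B$, the restriction of the Bruhat stratification is \emph{not} equivariant for the left $B$-action on $P_i/B$, only for the stabilizer $P_i\cap{}^{g}B$ of the chosen base point. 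The relevant stratum does turn out to be the point $B/B$, but establishing this (with multiplicity one) reduces to computing $\{qB\in P_i/B\mid Bq^{-1}gB=BwB\}=\{B/B\}$, which the paper carries out using $Bs_iBs_iB=B\cup Bs_iB$ and the root group $U_{\alpha_i}$. With that computation supplied, your proof is complete.
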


\begin{proof}
We let $P_i$ act on $P_i/B$ and $G/B$ by multiplication from the left and on products of these varieties by the diagonal action. Then the $P_i$-equivariant diagram
\begin{equation*}
  \xymatrix{
    P_i/B \times P_i/B \times G/B \ar[r]^-{\pi_{13}} \ar[d]^{\pi_{23}} & P_i/B \times G/B \ar[d]^{\pi_{2}}\\
      P_i/B \times G/B \ar[r]^{\pi_2} & G/B 
}
\end{equation*}
gives a presentation of \eqref{somediag}. Here the quotient morphism $P_i/B \times G/B \to \Brh= [B\bs G/B]$ sends $(pB,gB)$ to $(Bp^{-1}gB)$ and the preimage of $\Brh_w$ is the $P_i$-orbit
\[
O_w\defeq \set{(pB,gB) \in P_i/B \times G/B}{ Bp^{-1}gB=BwB }
\]
in $P_i/B\times G/B$. We prove the claim by showing the corresponding claim for the classes of the closed subvarieties $[\overline{O}_w]$ in $A^\bullet(P_i/B \times G/B)$.

The image $\pi_{23}(\pi_{13}^{-1}(O_w))$ is contained in $P_i/B \times P_iwB/B \subset P_i/B \times (Bs_iBwB/B \cup BwB/B)$. In case $s_iw < w$ the latter set is contained in $P_i/B \times \overline{BwB}/B$ and hence of strictly smaller dimension than $\pi_{23}^{-1}(O_w)=P_i/B \times O_w$. This proves the claim in this case.

Now assume $s_i w> w$. We have $Bs_iBwB=Bs_iwB$ and
\[
\pi_{23}(\pi_{13}^{-1}(O_w))=P_i/B \times P_iwB/B = P_i/B \times (Bs_iwB/B \cup BwB/B).
\]
This is a locally closed subset of $\overline{O}_{s_iw}$ of the same dimension, hence it is open in $\overline{O}_{s_i w}$. Thus it suffices to prove that $\pi_{23}\colon \pi_{13}^{-1}(O_w)\to \pi_{23}(\pi_{13}^{-1}(O_w))$ is an isomorphism. For this it suffices to prove that every fiber of $\pi_{23}$ above a point in this image consists of a single point. For such a point $(pB,gB)$ the fiber of $\pi_{23}$ is isomorphic to $\{qB \in P_i/B\mid Bq^{-1}gB = BwB\}$. In case $gB \in BwB/B$ the identity $Bq^{-1}gB=BwB$ implies $q\in B$ and hence the fiber consists of a single point.

Now assume $gB \in Bs_iwB/B$ and let $qB\in P_i/B$ such that $Bq^{-1}gB= BwB$. Then necessarily $q\in Bs_i B$. Then (c.f. \cite[Lemma 8.3.6]{Springer_LAG}) we can write $q=us_i$ and $g=u's_i bw$ for elements $u,u'$ in the root group $U_{\alpha_i}$ associated to $\alpha_i$ and an element $b \in B$. Then $q^{-1}g=s_i u^{-1}u' s_i bw$ with $s_i u^{-1}u' s_i \in Bs_iBs_iB=B\cup B s_i B$. Since $q^{-1}g \in BwB$ we get $s_i u^{-1}u' s_i \in B \cap U_{-\alpha_i}=\{e\}$. Thus $u=u'$ which proves that the fiber of $\pi_{23}$ above $(qB,gB)$ again consists of a single point. This finishes the proof.
\end{proof}

By induction on $\ell(w)$ we get:

\begin{corollary} \label{dwFormula}
Let $w\in W$. Then $[\overline{\Brh}_w]=\delta_w[\Brh_e]$.
\end{corollary}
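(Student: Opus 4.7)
The plan is a straightforward induction on $\ell(w)$, feeding Theorem~\ref{diProp} into itself via the well-definedness of $\delta_w$ established in that same theorem.

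For the base case $w = e$, the operator $\delta_e$ is the empty composition, hence the identity on $A^\bullet(\Brh)$, so $\delta_e[\Brh_e] = [\Brh_e]$ trivially. For the inductive step, suppose the formula holds for all elements of length at most $k$ and let $w \in W$ with $\ell(w) = k+1 \geq 1$. Pick any reduced decomposition $w = s_{i_1} s_{i_2} \cdots s_{i_{k+1}}$ and set $w' \defeq s_{i_2} \cdots s_{i_{k+1}}$, so that $\ell(w') = k$ and $w = s_{i_1} w'$ with $\ell(s_{i_1} w') = \ell(w') + 1$. By the inductive hypothesis, $[\overline{\Brh}_{w'}] = \delta_{w'}[\Brh_e]$. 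Since the choice of reduced expression does not affect $\delta_w$ (Theorem~\ref{diProp}(ii)), we may compute
\[
\delta_w[\Brh_e] = \delta_{s_{i_1}}\bigl(\delta_{w'}[\Brh_e]\bigr) = \delta_{s_{i_1}}[\overline{\Brh}_{w'}],
\]
and the right-hand side equals $[\overline{\Brh}_{s_{i_1} w'}] = [\overline{\Brh}_w]$ by the length-increasing case of Theorem~\ref{diProp}.

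There is no genuine obstacle: the content lies entirely in Theorem~\ref{diProp}. The only point to be careful about is that the inductive step requires peeling off a simple reflection $s_{i_1}$ on the \emph{left} so that $\ell(s_{i_1} w') > \ell(w')$ rather than on the right; taking $s_{i_1}$ to be the first factor of a reduced expression for $w$ guarantees this automatically, and the well-definedness clause in Theorem~\ref{diProp}(ii) ensures $\delta_w = \delta_{s_{i_1}} \delta_{w'}$ regardless of which reduced expression was picked.
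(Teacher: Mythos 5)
Your proof is correct and is exactly the induction on $\ell(w)$ that the paper alludes to (the paper only says ``By induction on $\ell(w)$ we get:'' and leaves the details implicit). One small citation slip: the well-definedness of $\delta_w$ independent of the chosen reduced expression is the content of the unlabeled theorem preceding Theorem~\ref{diProp}, not part (ii) of Theorem~\ref{diProp} itself, which is a single unnumbered statement; this does not affect the argument.
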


\section{The Stacks of $G$-Zips and of flagged $G$-Zips}\label{GZIP}

\subsection{The Stack of $G$-Zips}\label{GZip}

From now let $k$ be an algebraic closure of $\BF_p$ and $G$ a reductive group scheme over $\BF_p$. If $X$ is an object over some $\BF_p$-algebra, then we denote by $X^{(p)}$ the pullback of $X$ under the absolute Frobenius. For a scheme $X$ over $\BF_p$ we denote by $\varphi\colon X \to X^{(p)} = X$ its relative Frobenius.

\subsubsection*{The Zip Datum}
Let $\mu\colon \BG_{m,k} \to G_k$ be a cocharacter of $G$ defined over $k$. It gives rise to a pair of opposite parabolic subgroups $(P_-(\mu),P_+(\mu))$ and a Levi subgroup $L \defeq L(\mu) = P_-(\mu) \cap P_+(\mu)$ defined by the condition that $\Lie(P_-(\mu))$ (resp.~$\Lie(P_+(\mu))$) is the some of the non-positive (resp.~non-negative) weight space of $\mu$ in $\Lie(G)$. On $k$-valued points we have
\begin{align*}
P_+(\mu) &= \sett{g \in G}{$\lim_{t\to 0}\mu(t)g\mu(t)^{-1}$ exists}, \\
P_-(\mu) &= \sett{g \in G}{$\lim_{t\to \infty}\mu(t)g\mu(t)^{-1}$ exists}, 
\end{align*}
and $L = \Cent_G(\mu)$. We set
\[
P := P_-, \qquad Q := (P_+)^{(p)}, \qquad M := L^{(p)} = \Cent_G(\varphi \circ \mu).
\]
Hence $M$ is a Levi subgroup of $Q$.

\subsubsection*{The Stack of $G$-Zips of Type $\mu$}
Denote the projections to the Levi components $P \to L$ and $Q \to M$ both by $x \sends \xbar$. The zip group $E$ is defined as
\begin{equation}\label{EqDefZipGroup}
E \defeq \set{(x,y) \in P \times Q}{\varphi(\xbar) = \ybar}.
\end{equation}
We let $G \times G$ act on $G$ by $(x,y)\cdot g \defeq xgy^{-1}$. By restriction we obtain actions of $P \times Q$ and of $E$ on $G$. 

We denote by
\[
\GZip^{\mu} := [E \bs G]
\]
the quotient stack. It is a smooth algebraic stack of dimension $0$.

Every morphism $f\colon G \to G'$ of reductive groups over $\BF_p$ yields a morphism of stacks $\GZip^{\mu} \to \GZip[G']^{f \circ \mu}$. In particular, if $\mu' = \inn(h) \circ \mu$ for some $h \in G(k)$, then conjugation with $h$ yields an isomorphism $\GZip^{\mu} \iso \GZip^{\mu'}$. Let $\kappa$ be the field of definition of the conjugation class of $\mu$. As $G$ is quasi-split, there exists an element in that conjugacy class, that is defined over $\kappa$. Therefore it is harmless to assume that $\mu$ is defined over $\kappa$, the field of definition of its conjugacy class. We do assume this from now on. Then the stack $\GZip^{\mu}$ is defined over $\kappa$ as well.


\subsection{Choosing a Frame}\label{Frame}

\begin{lemma}\label{FindFrameLemma}
Let $\kappa$ be a finite extension of $\BF_p$. Let $G$ be a reductive group defined over $\BF_p$, let $Q \subseteq G_\kappa$ be a parabolic subgroup and let $M \subseteq Q$ be a Levi subgroup that is also defined over $\kappa$. Then there exists $g \in G(\kappa)$ and a Borel pair $T \subseteq B \subseteq G$ that is already defined over $\BF_p$ with $T \subseteq {}^gM$ and $B \subseteq {}^gQ$.
\end{lemma}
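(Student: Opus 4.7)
The plan is to produce a $\kappa$-rational Borel pair $(T',B')$ of $G$ with $T'\subseteq M$ and $B'\subseteq Q$, and then to transport it to an $\BF_p$-rational pair by conjugation by an element of $G(\kappa)$. Since $G$ is reductive over the finite field $\BF_p$, Lang--Steinberg implies that $G$ is quasi-split over $\BF_p$; fix once and for all an $\BF_p$-rational Borel pair $T\subseteq B\subseteq G$. This is the pair that will appear in the conclusion of the lemma.

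For the first step, I would use that $M$ is reductive over the finite field $\kappa$ and hence itself quasi-split, so it contains a $\kappa$-rational Borel subgroup $B_M$. Since $\kappa$ is perfect, $B_M$ admits a Levi decomposition over $\kappa$, yielding a $\kappa$-rational maximal torus $T'\subseteq B_M$ of $M$; such a $T'$ is also a maximal torus of $G$. I would then set $B':=B_M\cdot R_u(Q)$. As $B_M$ is a Borel of the Levi $M$ of $Q$, the subgroup $B'$ is a Borel of $Q$ (its Levi component is $B_M$ and its unipotent radical is $R_u(Q)$), hence a Borel of $G$. By construction $B'$ is defined over $\kappa$ and satisfies $T'\subseteq B'\subseteq Q$ and $T'\subseteq M$.

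For the second step, the scheme $\Yscr$ of Borel pairs of $G$ over $\BF_p$ is identified via $(T,B)$ with $G/T$, so the projection $\pi\colon G\to\Yscr$ is a right $T$-torsor. The fiber $\pi^{-1}((T',B'))$ is then a $T$-torsor over $\Spec\kappa$, and because $T$ is a connected algebraic group over the finite field $\kappa$, Lang's theorem gives $H^1(\kappa,T)=0$. Hence this torsor is trivial and admits a $\kappa$-point $h\in G(\kappa)$ with ${}^hT=T'$ and ${}^hB=B'$. Taking $g:=h^{-1}\in G(\kappa)$ then yields $T={}^gT'\subseteq{}^gM$ and $B={}^gB'\subseteq{}^gQ$, as required.

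No substantial obstacle is anticipated: the argument consists of two invocations of Lang's theorem---first to ensure that both $G$ and $M$ are quasi-split over their respective finite fields of definition, and second to lift the $G_{\bar\kappa}$-conjugacy between $(T,B)$ and $(T',B')$ to $G(\kappa)$---together with the standard bijection between Borel subgroups of a Levi of $Q$ and Borel subgroups of $G$ contained in $Q$. The remaining verifications are purely formal.
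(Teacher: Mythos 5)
Your proof is correct, and it takes a route that is dual to the paper's rather than identical. The paper fixes the same $\BF_p$-rational Borel pair $(T,B)$, then constructs a $\kappa$-rational auxiliary pair on the \emph{parabolic} side: by \cite[Exp.~XXVI, Lemme~3.8]{SGA3III} it finds a $\kappa$-parabolic $P'$ of the same type as $Q$ with $B\subseteq P'$, takes the Levi $L'$ of $P'$ through $T$, and then invokes \cite[Exp.~XXVI, Cor.~5.5(iv)]{SGA3III} to conjugate $(Q,M)$ to $(P',L')$ by some $g\in G(\kappa)$. You instead build a $\kappa$-rational auxiliary pair on the \emph{Borel} side: you use quasi-splitness of $M$ over $\kappa$ to produce a Borel pair $(T',B_M)$ of $M$, inflate $B_M$ to a Borel $B'=B_M\cdot\mathcal{R}_u(Q)$ of $G$ inside $Q$, and then conjugate $(T',B')$ to $(T,B)$ by a $\kappa$-point, which exists because the transporter is a $T$-torsor over $\Spec\kappa$ and $H^1(\kappa,T)=0$ by Lang. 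Both arguments rest on the same two pillars (quasi-splitness over a finite field, and a Lang-type vanishing to descend a geometric conjugacy to a rational one), but your version replaces the two SGA3 citations with a direct and arguably more elementary application of Lang's theorem for the torus $T$; the paper's version is shorter on the page but outsources the rational conjugacy of parabolic/Levi pairs to SGA3. Both are complete and correct.
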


\begin{proof}
As every reductive group over a finite field is quasi-split, we can choose a maximal torus $T$ and a Borel subgroup $B \supseteq T$ defined over $\BF_p$. By \cite[Exp. XXVI, Lemme 3.8]{SGA3III} there exists a parabolic subgroup $P'$ defined over $\kappa$ with the same type as $P$ such that $B \subseteq P'$. Let $L'$ be the unique Levi subgroup of $P'$ that contains $T$. By \cite[Exp. XXVI, Cor.~5.5(iv)]{SGA3III} there exist $g \in G(\kappa)$ with ${}^gP = P'$ and ${}^gL = L'$.
\end{proof}

After replacing $\mu$ by some conjugate cocharacter $\mu'$ we may (and do) assume by Lemma~\ref{FindFrameLemma} that there exists a Borel pair $T \subseteq B \subseteq G$ defined over $\BF_p$ with $B \subseteq Q$ and $T \subseteq M$. If $\mu$ is defined over some finite extension $\kappa$ of $\BF_p$, we may assume that its conjugate is also defined over $\kappa$. Then $T$ is also a maximal torus of $M$ and hence contains its center. Hence $\varphi \circ \mu$ factors through $T$. As $T$ is defined over $\BF_p$, also $\mu$ itself factors through $T$.
As $B \subseteq Q$, the cocharacter $\varphi(\mu)$ is $B$-dominant. Hence $\mu$ is also $B$-dominant because $B$ is defined over $\BF_p$.

Recall that we denote by $(W,\Sigma)$ the Coxeter system associated to $(G,B,T)$. The Frobenius $\varphi$ on $G$ induces an automorphism of the Coxeter system $(W,\Sigma)$ which is again denoted by $\varphi$ (see also Subsection~\ref{FrobAct} below). Let $I,J \subseteq \Sigma$ be the set of simple reflections corresponding to the conjugacy classes of $P$ and $Q$, respectively. 

By \cite[3.7]{PWZ1} (and its proof) we find $z \in G(k)$ with ${}^zT = T$ such that $(B,T,z)$ is a frame for $(G,P,L,Q,M,\varphi)$ in the sense of \cite[3.6]{PWZ1}, i.e., ${}^zB \subseteq P$ and $\varphi({}^zB \cap L) = B \cap M$. In fact we can and will choose $z$ as follows.

\begin{lemma}\label{DescribeFrame}
Let $z \in \Norm_G(T)(k)$ be a lift of $\zbar := w_{0,I}w_0 \in W$. Then ${}^zB \subseteq P$ and $\varphi({}^zB \cap L) = B \cap M$.
\end{lemma}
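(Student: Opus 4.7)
The plan is to reduce both claims to elementary manipulations inside the Weyl group, using that conjugation of a $T$-stable subgroup by a lift in $\Norm_G(T)(k)$ depends only on its class in $W$. The first step is to rewrite ${}^{z}B$: since $\zbar = w_{0,I} w_0$ and the longest element $w_0$ conjugates $B$ to the opposite Borel $B^-$, one obtains ${}^{z}B = {}^{w_{0,I}} B^-$, where from now on $w_{0,I}$ may denote any lift in $\Norm_G(T)(k)$.

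For the first assertion ${}^{z}B \subseteq P$, I would invoke two facts. On the one hand, because $\mu$ is $B$-dominant, the weight-space description of $\Lie(P_-(\mu))$ shows that every negative root space lies in $\Lie(P)$, so $B^- \subseteq P$. On the other hand, $w_{0,I}$ lies in $W_I = W_L$ and hence admits a lift inside the Levi $L \subseteq P$, and conjugation by an element of $L$ stabilises $P$. Combining the two yields
\[
{}^{z}B \;=\; {}^{w_{0,I}} B^- \;\subseteq\; {}^{w_{0,I}} P \;=\; P.
\]

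For the second assertion, I would first compute ${}^{z}B \cap L$: since $w_{0,I}$ normalises $L$, one has ${}^{z}B \cap L = {}^{w_{0,I}}(B^- \cap L)$. Now $B^- \cap L$ is the Borel of $L$ corresponding to the negative roots of $L$, and $w_{0,I}$, being the longest element of $W_L$, conjugates it to the opposite Borel of $L$, namely $B \cap L$. Hence ${}^{z}B \cap L = B \cap L$. Applying $\varphi$ and using that $B$ is defined over $\BF_p$, so that $\varphi(B) = B$, together with the definition $L^{(p)} = M$, one concludes
\[
\varphi({}^{z}B \cap L) \;=\; \varphi(B) \cap \varphi(L) \;=\; B \cap M.
\]
There is no substantial obstacle; the only subtleties to verify carefully are that the $B$-dominance of $\mu$ really forces $B^- \subseteq P$, and that $\varphi$ is compatible with the intersection $B \cap L$, which is immediate because $\varphi$ is bijective on $k$-points of smooth $\BF_p$-subgroups.
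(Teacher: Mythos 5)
Your proof is correct. It uses the same ingredients as the paper's argument — the $B$-dominance of $\mu$, the fact that $W_L$ stabilises $\mu$ (so $w_{0,I}$ fixes $\mu$ and normalises $L$), and the standard behaviour of longest elements — but packages them at the level of subgroups rather than of roots. The paper verifies the two containments by checking the sign of the pairings $\langle\mu,\zbar(\alpha)\rangle$ for $\alpha\in\Phi^+$, and for the second assertion reduces to the statement that $w_0w_{0,I}w_0$ sends $\Phi^+_{L^0}$ into $\Phi^-$ where $L^0 = \Cent_G(w_0(\mu))$. You instead expand ${}^zB = {}^{w_{0,I}}B^-$, read off $B^- \subseteq P$ directly from dominance, absorb $w_{0,I}$ into the Levi $L$, and then derive ${}^zB\cap L = {}^{w_{0,I}}(B^-\cap L) = B\cap L$ from the fact that the longest element of $W_L$ exchanges the two opposite Borel subgroups of $L$ containing $T$; this is exactly the $w_0$-conjugate of the paper's key root-level computation. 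The payoff of your version is that it avoids any explicit root bookkeeping, at the small cost of having to flag separately that $w_{0,I}$ normalises $L$ (so that conjugation commutes with the intersection with $L$) and that $\varphi(B\cap L) = \varphi(B)\cap\varphi(L)$, the latter holding because $\varphi$ is injective on $k$-points and all schemes in sight are reduced. Both proofs are sound; yours is arguably the cleaner writeup of the same argument.
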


\begin{proof}
For any smooth connected algebraic subgroup $H$ of $G$ (defined over $k$) that contains $T$ we have $\Lie(H) = \Lie(T) \oplus \bigoplus_{\alpha \in \Phi_H}\Lie(G)_{\alpha}$ for a subset $\Phi_H$ of the set of roots of $(G,T)$. Moreover $H = H'$ if and only if $\Phi_H = \Phi_{H'}$ for two such groups $H$ and $H'$. Let $\Phi^+ = \Phi_B \subseteq \Phi$ be the set of $B$-positive roots. We have $\Phi_P = \set{\alpha \in \Phi}{\langle \mu,\alpha\rangle \leq 0}$, $\Phi_L = \set{\alpha \in \Phi}{\langle \mu,\alpha\rangle = 0}$, and $\Phi_M = \set{\alpha \in \Phi}{\langle \varphi(\mu),\alpha\rangle = 0}$. Hence it suffices to show the following assertions for all $\alpha \in \Phi^+$:
\begin{assertionlist}
\item
$\langle \mu, \zbar(\alpha)\rangle \leq 0$,
\item
If $\langle \mu, \zbar(\alpha) \rangle = 0$, then $\varphi(\zbar(\alpha)) \in \Phi^+$ and $\langle \varphi(\mu), \varphi(\zbar(\alpha)) \rangle = 0$.
\end{assertionlist}
We show (1). As $L$ centralizes $\mu$, we find $w_{0,I}(\mu) = \mu$ and hence $\langle \mu, \zbar(\alpha)\rangle = \langle \mu, w_0(\alpha) \rangle \leq 0$ because $\mu$ is $B$-dominant.
For (2) we first note that the second equality is clear because $\langle \varphi(\mu), \varphi(\zbar(\alpha)) \rangle = \langle \mu, \zbar(\alpha) \rangle$. As $\varphi$ preserves the set of positive roots, it remains to show that if
\[
\langle \mu, \zbar(\alpha) \rangle = \langle w_0(\mu), \alpha \rangle = 0,\tag{*}
\]
then $w_0w_{0,I}w_0(\alpha) \in w_0(\Phi^+) = \Phi^-$. Let $L^0 := \Cent_G(w_0(\mu))$. Then (*) implies that $\alpha \in \Phi_{L^0} \cap \Phi^+$ and hence $w_0w_{0,I}w_0(\alpha) \in \Phi_{L^0} \cap \Phi^- \subseteq \Phi^-$.
\end{proof}

By \cite[3.11]{PWZ1} the map
\begin{equation}\label{EqInducedIsomWeyl}
\varphi \circ \inn(z)\colon (W_I,I) \liso (W_J,J)
\end{equation}
is an isomorphism of Coxeter systems and $I$, $J$, and $\varphi \circ \inn(z)$ are independent of the choice of the frame.


\subsection{Classification of $G$-Zips}\label{ClassifyGZip}

For $w \in W$ let $G_w \subseteq G$ be the $E$-orbit of $\dot{w}z$. By \cite[7.5]{PWZ1} there is a bijection
\begin{equation}\label{EqZipOrbits}
{}^IW \bijective \{\text{$E$-orbits in $G$}\}, \qquad w \sends G_w.
\end{equation}
and $\dim(G_w) = \ell(w) + \dim(P)$ for $w \in {}^IW$. We call the corresponding locally closed algebraic substack of $\GZip^{\mu}$
\begin{equation}\label{EqDefZw}
Z_w := [E \bs G_w] \subseteq \GZip^{\mu}
\end{equation}
the \emph{zip stratum corresponding to $w \in {}^IW$}. One has $\codim_{\GZip^{\mu}}(Z_w) = \#\Phi^+ - \ell(w)$, where $\Phi^+$ is the set of positive roots of $(G,T,B)$.

Let $\overline{G_w}$ be the closure of the $E$-orbit $G_w$. We set $\overline{Z}_w := [E \bs \overline{G_w}]$. This is the unique reduced closed algebraic substack of $\GZip^{\mu}$ whose underlying topological space is the closure of the one-point topological space underlying $Z_w$. By \cite[6.2]{PWZ1} we have
\begin{equation}\label{EqClosureGZipStratum}
\overline{Z}_w = \bigcup_{w' \preceq w}Z_{w'}
\end{equation}
for a partial order $\preceq$ on ${}^IW$ defined in loc.~cit.~6.1. Here we will need only the following properties of this partial order (see \cite[\S 3]{He_GStable}).

\begin{lemma}\label{LemPartialOrder}
\begin{assertionlist}
\item\label{LemPartialOrder1}
There exists a unique minimal element in ${}^IW$, namely the neutral element $e$, and a unique maximal element in ${}^IW$, namely $w_{0,I}w_0$, where $w_0$ and $w_{0,I}$ are unique elements of maximal length in $W$ and in $W_I$, respectively.
\item\label{LemPartialOrder2}
The partial order $\preceq$ is finer than the Bruhat order.
\item\label{LemPartialOrder3}
Let $w' \preceq w$. Then $\ell(w') \leq \ell(w)$ and one has $\ell(w') = \ell(w)$ if and only if $w' = w$.
\item\label{LemPartialOrder4}
If $w' \prec w$ and there exists no $u \in {}^IW$ with $w' \prec u \prec w$, then $\ell(w') = \ell(w)-1$.
\end{assertionlist}
\end{lemma}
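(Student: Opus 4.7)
The plan is to deduce all four assertions from the geometric definition of $\preceq$ via the closure relation \eqref{EqClosureGZipStratum}, combined with the dimension formula $\dim(G_w) = \ell(w) + \dim(P)$ recalled just before \eqref{EqDefZw}, with the purely combinatorial parts handed off to \cite[\S 3]{He_GStable}.

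First I would verify the length statements inside ${}^IW$: it is the set of minimal length representatives of $W_I\backslash W$, and the factorization $w_0 = w_{0,I} \cdot w_{0,I}w_0$ with additive lengths shows that $w_{0,I}w_0 \in {}^IW$ is of maximal length $\ell(w_0) - \ell(w_{0,I})$, while $e \in {}^IW$ is the unique element of length $0$. Via the dimension formula, $G_e$ is the unique orbit of smallest dimension $\dim(P)$ (hence closed and therefore contained in every orbit closure), and $G_{w_{0,I}w_0}$ is the unique orbit of dimension $\dim(P) + \ell(w_0) - \ell(w_{0,I}) = \dim(G)$, hence open and dense in $G$. Together with \eqref{EqClosureGZipStratum}, this gives \ref{LemPartialOrder1}.

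Assertion \ref{LemPartialOrder3} then follows directly from the definition: if $w' \preceq w$ then $G_{w'} \subseteq \overline{G_w}$, so the dimension formula yields $\ell(w') \leq \ell(w)$; and in case of equality $G_{w'}$ is a locally closed subspace of the irreducible $\overline{G_w}$ of the same dimension, forcing $G_{w'} = G_w$, i.e.\ $w' = w$ by \eqref{EqZipOrbits}.

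For \ref{LemPartialOrder2} and \ref{LemPartialOrder4}, which are genuine combinatorial statements about the order and not merely dimension bookkeeping, I would invoke He's analysis in \cite[\S 3]{He_GStable}: there the partial order on $G$-stable pieces (equivalently, our $\preceq$ on ${}^IW$, by the identification in \cite[\S 6]{PWZ1}) is shown to refine the Bruhat order, and the stratification is shown to satisfy the codimension-one property that $\overline{G_w} \setminus G_w$ is equidimensional of codimension one, so that any cover relation $w' \prec w$ drops the length by exactly $1$. The main obstacle is this last combinatorial input from \cite{He_GStable}; the remaining assertions are essentially topological consequences of the dimension formula together with the bijection \eqref{EqZipOrbits} and the closure formula \eqref{EqClosureGZipStratum}.
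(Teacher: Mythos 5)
The paper itself gives no proof here; it simply cites \cite[\S 3]{He_GStable}, so your reconstruction is a different route and has to stand on its own. Most of it does: the maximal-element half of (1) is correct (the orbit $G_{w_{0,I}w_0}$ has dimension $\ell(w_0)-\ell(w_{0,I})+\dim P=\dim G$, hence is open, hence dense in the irreducible $G$, so every $G_w$ lies in its closure); the argument for (3) via irreducibility of $\overline{G_w}$ and disjointness of distinct orbits is correct; and deferring (2) and (4) to He is exactly what the paper does, with your translation of the graded-poset property into boundary purity and back into the cover-relation statement being a valid (if slightly roundabout) reformulation.

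The gap is in the minimal-element half of (1). You write that $G_e$ is the unique orbit of smallest dimension $\dim(P)$ and is ``hence closed and therefore contained in every orbit closure.'' The step ``hence closed'' is fine: $\overline{G_e}\setminus G_e$ is a union of orbits of strictly smaller dimension, and there are none. But ``therefore contained in every orbit closure'' is a non-sequitur. In general, being the unique orbit of minimal dimension does not put an orbit in the closure of every other orbit: for $\BG_m$ acting on $\BA^2$ by $t\cdot(x,y)=(tx,t^{-1}y)$, the origin is the unique orbit of minimal dimension, yet each hyperbola $\{xy=c\}$ with $c\neq 0$ is itself a closed orbit whose closure misses the origin. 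What one actually needs is that $G_e$ is the \emph{only} closed orbit, equivalently that $\overline{G_w}\setminus G_w\neq\emptyset$ for every $w\neq e$; this is a genuine theorem about the zip stratification, not a byproduct of dimension bookkeeping. It follows either from the explicit combinatorial definition of $\preceq$ in \cite[\S 6]{PWZ1} (taking $y=e$ in the defining condition shows the Bruhat order on ${}^IW$ is contained in $\preceq$, whence $e\preceq w$ for all $w$ because $e\leq w$ always holds), or directly from \cite[\S 3]{He_GStable}. As written, your proof of (1) for the minimum does not close.
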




\subsection{The Action of Frobenius}\label{FrobAct}

Recall that we denote by $\varphi\colon G \to G$ the relative Frobenius. We also denote by $\sigma\colon k \to k$, $x \sends x^p$ the arithmetic Frobenius. As $T$ and $B$ are defined over $\BF_p$, we can identify canonically $T^{(p)}$ with $T$ and $B^{(p)}$ with $B$. Hence the relative Frobenius induces isogenies $\varphi\colon T \to T$ and $\varphi\colon B \to B$.

Set $\CW := \Norm_G(T)/T = \pi_0(\Norm_G(T))$ which is a finite \'etale group scheme over $\BF_p$. Then $W = \CW(k)$ is the absolute Weyl group. As $\Norm_G(T)$ is also defined over $\BF_p$, the relative Frobenius $\varphi$ induces an automorphisms of $\CW$ and hence an automorphism $\varphi$ of the finite group $W$. As $B$ is defined over $\BF_p$, this automorphism preserves the set $\Sigma$ of simple reflections in $W$ defined by $B$. By functoriality, $\sigma$ also defines an automorphism $w \sends {}^{\sigma}w$ of $W = \CW(k)$ and we have $\varphi(w) = {}^{\sigma^{-1}}w$ for all $w \in W$. If $T$ is a split torus, then $\varphi = \id$ on $W$.

We denote by $X^*(T)$ the group of characters of $T \otimes_{\BF_p} k$. For $\lambda \in X^*(T)$ we set $\varphi(\lambda) \defeq \lambda \circ \varphi$ which defines an endomorphism $\varphi$ on the abelian group $X^*(T)$. We denote by $\lambda \sends {}^{\sigma}\lambda$ the canonical action of $\sigma$ on $X^*(T)$, i.e., 
\[
{}^{\sigma}\!\lambda \defeq (\id_{\BG_{m,\BF_p}} \otimes \sigma) \circ \lambda \circ (\id_T \otimes \sigma^{-1}).
\]
Then one has for $\lambda \in X^*(T)$
\begin{equation}\label{EqFrobChar}
\varphi(\lambda) = p \,{}^{\sigma^{-1}}\!\lambda.
\end{equation}
If $T$ is a split torus, then ${}^{\sigma}\!\lambda = \lambda$ and $\varphi(\lambda) = p\lambda$ for all $\lambda \in X^*(T)$.

By functoriality, the action of $\varphi$ and $\sigma$ on $X^*(T)$ also induce actions on the graded $\BQ$-algebra $S = \Sym(X^*(T))_{\BQ}$ and for $f \in S$ of degree $d$ we have
\begin{equation}\label{EqFrobSym}
\varphi(f) = p^d \,{}^{\sigma^{-1}}\!f.
\end{equation}


\subsection{The Stack of Flagged $G$-zips of Type $\mu$}

We fix $I_0 \subseteq I$ a subset and let $P_0$ be the unique parabolic subgroup of $G$ of type $I_0$ with ${}^zB \subseteq P_0 \subseteq P$. We let $E$ act on $G \times P/P_0$ by
\[
(x,y)\cdot (g,aP_0) \defeq (xgy^{-1}, xaP_0)
\]
and set
\[
\GZipFlag^{\mu,I_0} \defeq [E\bs (G \times P/P_0)].
\]
If $I_0 = \emptyset$, then $P_0 = {}^zB$ and we abbreviate $\GZipFlag^{\mu} \defeq \GZipFlag^{\mu,\emptyset}$. Note that $\GZipFlag^{\mu,I} = \GZip^{\mu}$. For $I'_0 \subseteq I_0$ there are canonical projection maps
\[
\GZipFlag^{\mu,I'_0} \to \GZipFlag^{\mu,I_0}
\]
that are $P_0/P'_0$-bundles where $P'_0$ is the unique parabolic subgroup of type $I'_0$ with ${}^zB \subseteq P'_0 \subseteq P$. In particular, these maps are proper, smooth, and representable.

Let $L_0 \subset P_0$ be the unique Levi subgroup containing $T$. We set
\[
M_0 \defeq L_0^{(p)}, \qquad Q_0 \defeq M_0B.
\]
Then $Q_0$ is a parabolic subgroup containing $B$ of type
\[
J_0 := \varphi({}^zI_0)
\]
and $M_0$ is the unique Levi subgroup of $Q_0$ containing $T$. Then $(B,T,z)$ is again a frame for $(G,P_0,L_0,Q_0,M_0,\varphi)$. By \cite[2.2]{GoKo_ZipFunctoriality}, $G \times P \to G$, $(g,x) \sends \xbar g \varphi(\xbar)^{-1}$ induces a smooth representable morphism of algebraic stacks
\[
\psi^{I_0}\colon \GZipFlag^{\mu,I_0} \to \Brh^{I_0} := [P_0\bs G/Q_0]
\]
with irreducible fibers. The maps $\psi^{I_0}$ are compatible with passing to $I'_0 \subseteq I_0$. 

For $I_0 = \emptyset$ we have $P_0 = {}^zB$ and $Q_0 = B$. Therefore $g \sends z^{-1}g$ yields an isomorphism $\Brh^{\emptyset} \iso \Brh_G$ and we denote by $\psi$ the composition
\[
\psi\colon \GZipFlag^{\mu} \ltoover{\psi^{\emptyset}} \Brh^{\emptyset} \liso \Brh_G,
\]
which is a smooth representable morphism with irreducible fibers.
For $w \in W$ we denote by
\begin{equation}\label{EqZipFlagStrata}
Z_w^{\emptyset} := \psi^{-1}(\Brh_w) \subseteq \GZipFlag^{\mu}.
\end{equation}
Since $\psi$ is smooth the $Z^\emptyset_w$ form a stratification of $\GZipFlag^\mu$ whose closure relation is given by the Bruhat order on $W$:
\begin{equation*}
  \overline{Z^\emptyset_w}=\bigcup_{w' \leq w}Z^\emptyset_{w'}
\end{equation*}

\begin{proposition} \label{ZwProps}
The strata $Z_w^\emptyset$ are smooth and irreducible. Their closures $\overline{Z^\emptyset_w}$ are normal and with only rational singularities. In particular, they are Cohen-Macaulay.
\end{proposition}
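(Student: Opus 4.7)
The plan is to reduce all assertions to the corresponding well-known statements about Schubert cells and Schubert varieties in $G/B$ via the smooth representable morphism $\psi\colon \GZipFlag^\mu \to \Brh_G$ with irreducible fibers, which was constructed just before the proposition. Since $\psi$ is smooth and hence flat, one has $\overline{Z_w^\emptyset} = \psi^{-1}(\overline{\Brh_w})$, so each property of $Z_w^\emptyset$ or $\overline{Z_w^\emptyset}$ will follow from the analogous property of $\Brh_w = [B\bs BwB/B]$ or $\overline{\Brh_w} = [B\bs \overline{BwB}/B]$, provided that property is preserved under smooth morphisms of algebraic stacks.

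For smoothness and irreducibility: the Schubert cell $BwB/B \subseteq G/B$ is isomorphic to the affine space $\BA^{\ell(w)}$, so $\Brh_w$ is a smooth irreducible algebraic stack. Smoothness of $Z_w^\emptyset$ is then immediate from smoothness of $\psi$, and irreducibility follows because the restriction of $\psi$ to $Z_w^\emptyset$ is smooth, surjective onto the irreducible $\Brh_w$, and has irreducible fibers. For the closures, the Schubert variety $\overline{BwB/B}$ is classically known, in arbitrary characteristic, to be normal, Cohen--Macaulay, and to have only rational singularities; this is due to Ramanathan and Mehta--Ramanathan via Frobenius splittings and Bott--Samelson resolutions. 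All three properties are smooth-local on source and target, hence they pass to the stack quotient $\overline{\Brh_w}$ and then pull back along the smooth morphism $\psi$ to $\overline{Z_w^\emptyset}$. The final Cohen--Macaulay assertion follows either from normality together with rational singularities, or directly from Ramanathan's theorem.

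The only point that requires some care is the stack-theoretic formulation of rational singularities together with its compatibility with smooth pullback; but this is routine, since one can check it on a smooth atlas: given a resolution $f\colon \widetilde{X} \to \overline{BwB}$ with $R^i f_* \CO_{\widetilde{X}} = 0$ for $i > 0$, flat base change along a smooth cover yields a resolution of the pullback with the same vanishing, and this characterisation of rational singularities is preserved by smooth morphisms. I expect no serious obstacle beyond bookkeeping, since the key geometric input -- the properties of Schubert varieties in arbitrary characteristic -- is an established classical result.
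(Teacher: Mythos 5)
Your proof is correct and follows essentially the same route as the paper: reduce everything via the smooth morphism $\psi$ with irreducible fibers to the classical properties of Schubert cells and Schubert varieties (smoothness, irreducibility, normality, rational singularities, Cohen--Macaulayness in arbitrary characteristic). The paper is simply more terse, citing \cite[3.2.2, 3.4.3]{BrionKumar_FrobSplit} for the Schubert-variety input and leaving the smooth-local bookkeeping implicit, whereas you spell it out.
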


\begin{proof}
As $\psi$ is smooth with irreducible fibers and $\Brh_w$ is smooth and irreducible, the first assertion holds. The smoothness of $\psi$ also implies that $\overline{Z^\emptyset_w} = \psi^{-1}(\overline{\Brh_w})$. Hence all remaining assertions follow from the analogous properties for Schubert varieties \cite[3.2.2, 3.4.3]{BrionKumar_FrobSplit}.
\end{proof}

By \cite[2.2.1]{Koskivirta_NormalEO} we have the following:

\begin{proposition} \label{ZwProj}
The projection $\pi\colon \GZipFlag^{\mu} \to \GZip^{\mu}$ induces for $w \in {}^IW$ representable finite \'etale maps
\[
\pi_w\colon Z_w^{\emptyset} \to Z_w.
\]
\end{proposition}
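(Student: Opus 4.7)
The plan is to establish the three properties of $\pi_w$---representability, finiteness, and \'etaleness---by piecing together structural results from the preceding subsections.

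First I would show that $\pi$ maps $Z_w^{\emptyset}$ into $Z_w$ for $w \in {}^IW$. By the explicit description of $\psi$, a $k$-point $[(g, aP_0)]$ lies in $Z_w^{\emptyset}$ if and only if $z^{-1}\bar{a}\, g\, \varphi(\bar{a})^{-1} \in B\dot{w}B$. Using the $E$-action I may arrange a representative with $\bar{a} = e$, whence $g \in zB\dot{w}B$. The hypothesis $w \in {}^IW$ combined with the frame property of $(B,T,z)$ from Lemma~\ref{DescribeFrame} should then identify $zB\dot{w}B$ as a subset of the $E$-orbit $G_w$ of $\dot{w}z$, yielding $\pi(Z_w^{\emptyset}) \subseteq Z_w$.

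Next I would use dimensions to set up the finite \'etale condition. By smoothness of $\psi$ and Proposition~\ref{ZwProps}, the stratum $Z_w^{\emptyset}$ is smooth of dimension $\ell(w) + \dim P - \dim G$, which coincides with $\dim Z_w$. Hence $\pi_w$ is a representable morphism of smooth stacks of the same dimension with zero-dimensional fibers, thus quasi-finite. To upgrade quasi-finiteness to finiteness I would verify that $Z_w^{\emptyset}$ is closed inside $\pi^{-1}(Z_w)$; equivalently, for each $w' < w$ in the Bruhat order the image $\pi(Z_{w'}^{\emptyset})$ should land in a strictly smaller EO-stratum $Z_{w'_*}$ indexed by the shortest representative $w'_* \in {}^IW$ of the coset $W_I w'$. \'Etaleness is then automatic because $\pi_w$ is a finite morphism between smooth stacks of the same dimension with reduced fibers.

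The main obstacle I foresee is the first step: identifying $zB\dot{w}B$ with a subset of the $E$-orbit $G_w$ when $w \in {}^IW$. The asymmetry between $P$ acting on the left and $Q$ acting on the right, together with the Frobenius twist built into the definition of $E$ in \eqref{EqDefZipGroup}, makes this a delicate verification. It crucially exploits the compatibility properties of the frame, the induced isomorphism $\varphi \circ \inn(z)\colon (W_I, I) \liso (W_J, J)$ of \eqref{EqInducedIsomWeyl}, and the fact that $w \in {}^IW$ rules out collapses with other $W_I$-orbits.
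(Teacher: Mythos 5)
The paper's own proof of this proposition is a one-line citation to \cite[2.2.1]{Koskivirta_NormalEO}, so any self-contained argument is necessarily a different route. Your outline is a reasonable sketch of what such a proof would have to do, and you correctly identify as the crux the group-theoretic statement that, after the normalization $\bar a = e$, the Bruhat cell $zB\dot{w}B$ lands inside the $E$-orbit $G_w$ of $\dot{w}z$; note that $G_w$ is the orbit of $\dot{w}z$ (not of $z\dot{w}$), and $\dim(zB\dot{w}B) = \ell(w) + \dim B$ while $\dim G_w = \ell(w) + \dim P$, so for $I \ne \emptyset$ this is a genuine containment between sets of different dimensions and cannot be handled by dimension-counting; it requires the frame compatibilities and the orbit analysis of \cite[\S 7]{PWZ1}.

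The more serious gap is in the middle step. You write that $\pi_w$ is ``a representable morphism of smooth stacks of the same dimension with zero-dimensional fibers, thus quasi-finite,'' but equality of dimensions of source and target does not yield zero-dimensional fibers; a morphism between two smooth stacks of the same dimension can perfectly well have positive-dimensional fibers (the generic fiber has dimension zero only once one knows, say, dominance onto an irreducible target, and even then special fibers can jump). So quasi-finiteness is not established, and since your proposed finiteness argument (closedness of $Z_w^{\emptyset}$ inside $\pi^{-1}(Z_w)$, giving properness) upgrades quasi-finiteness to finiteness, the whole chain breaks here. What is actually needed is a concrete fiber computation: over a point of $Z_w$, the fiber of $\pi_w$ is identified with a finite set controlled by a Bruhat/Weyl-group count (this is precisely what produces the degree $\gamma(w)$ computed in Subsection~\ref{Gammaw}), and finiteness and \'etaleness both fall out of that explicit description rather than from a dimension count. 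Finally, the correspondence you suggest between $Z^{\emptyset}_{w'}$ for $w' < w$ and the stratum indexed by the minimal-length representative of $W_I w'$ is not the actual stratification compatibility between $\GZipFlag^{\mu}$ and $\GZip^{\mu}$; the map $W \to {}^IW$ recording which $Z_{w'}$ a given $Z^{\emptyset}_{w''}$ lies over is more involved (it accounts for the Frobenius twist via $\varphi \circ \inn(z)$), so that step would also need to be redone.
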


\begin{definition}\label{DefGammaw}
We set $\gamma(w) \defeq \deg(\pi_w)$.
\end{definition}

In the next section we give a description of $\gamma(w)$.

\begin{remark} \label{GZipFlagModuli}
  Like their name suggests, the spaces $\GZipFlag^{\mu,I_0}$ admit a moduli description as a ``flag space'' over $\GZip^\mu$. Specifically, the stack $\GZipFlag^\mu$ is canonically isomorphic to the moduli stack of pairs consisting of a $G$-zip $(I,I_+,I_-,\iota)$ of type $\mu$ as in \cite[Definition 3.1]{PWZ2} together with a $P_0$-subtorsor of the $P$-torsor $I_+$. See \cite[Section 2.1]{GoKo_ZipFunctoriality} for details on this construction.
\end{remark}

\subsection{Calculation of $\gamma(w)$} \label{Gammaw}

Fix $w \in {}^IW$. 

\subsubsection*{The Type of $w \in {}^IW$}

We recall the following construction from \cite[\S5]{PWZ1}. Fix $w \in {}^IW$. Let $I_w$ be the largest subset of $I$ such that
\[
\varphi({}^zI_w) = {}^{w^{-1}}I_w
\]
and call it the \emph{type of $w$}. In other words
\begin{equation}\label{EqIw}
I_w = \set{s \in I}{\forall\,k\geq 1:\; (\inn(w) \circ \varphi \circ \inn(z))^k \in I\ }.
\end{equation}
For instance, as $\varphi({}^zI) = J$, one has $I_e = I$ if and only if $I = J$. Let $P_w$ be the unique parabolic subgroup of type $I_w$ with ${}^zB \subseteq P_w$ and let $L_w$ be the unique Levi subgroup of $P_w$ with $L_w \supseteq T$. As for an arbitrary subset of $I$ we obtain 
\[
M_w := {}^{(zw)^{-1}}L_w = L_w^{(p)}, \qquad Q_w := M_wB.
\]
Hence $Q_w$ is the unique parabolic subgroup containing $B$ of type $J_w$, where
\[
J_w \defeq {}^{w^{-1}}I_w = \varphi({}^zI_w)
\]
and $M_w$ is the unique Levi subgroup of $Q_w$ containing $T$. Note that $M_w$ (resp.~$J_w$) is denoted by $H_w$ (resp.~$K_w$) in \cite[\S5]{PWZ1}.

\subsubsection*{Description of $\gamma(w)$ via Flag Varieties}

Set
\[
A_w := \set{x \in L_w}{{}^{zw}\varphi(x) = x}
\]
Then we have by \cite[2.2.1]{Koskivirta_NormalEO} and \cite[8.1]{PWZ1}
\begin{equation}\label{EqShapeGammaw}
\gamma(w) = \#(A_w/(A_w \cap {}^zB)).
\end{equation}

\begin{lemma}\label{StableBorel}
\begin{assertionlist}
\item\label{StableBorel1}
$(\inn{zw} \circ \varphi)(L_w) = L_w$.
\item\label{StableBorel2}
$(\inn{zw} \circ \varphi)(L_w \cap {}^zB) = L_w  \cap {}^zB$.
\end{assertionlist}
\end{lemma}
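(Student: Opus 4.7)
The plan is to reduce (2) to a combinatorial statement about positive roots, via an analog of the frame property for $L_w$. Part (1) is immediate: since $T$ is $\BF_p$-rational, the Frobenius $\varphi$ restricts to an isogeny $L_w \to L_w^{(p)} = M_w$, so $\varphi(L_w) = M_w$; combined with $M_w = {}^{(zw)^{-1}}L_w$ by construction, this gives $\inn(zw)\varphi(L_w) = L_w$.

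For part (2), I would first establish the analog of the frame property for the smaller Levi, namely $\varphi(L_w \cap {}^zB) = B \cap M_w$. Because $I_w \subseteq I$, we have $L_w \subseteq L$ and hence $L_w \cap {}^zB \subseteq L \cap {}^zB$. Applying the frame property $\varphi(L \cap {}^zB) = B \cap M$ from Lemma~\ref{DescribeFrame}, together with $\varphi(L_w) = M_w$ from (1), yields $\varphi(L_w \cap {}^zB) \subseteq (B \cap M) \cap M_w = B \cap M_w$. Since ${}^zB \subseteq P_w$ the left side is a Borel subgroup of $M_w$, and since $B \subseteq Q_w$ so is the right side, so the inclusion is an equality.

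Next I would apply $\inn(zw)$ to both sides. The left becomes $(\inn(zw) \circ \varphi)(L_w \cap {}^zB)$, while the right becomes $\inn(zw)(B \cap M_w) = {}^{zw}B \cap {}^{zw}M_w = {}^{zw}B \cap L_w$, using ${}^{zw}M_w = L_w$ from (1). Thus the claim reduces to the equality ${}^{zw}B \cap L_w = {}^zB \cap L_w$ of Borel subgroups of $L_w$ containing $T$. Since $P_w$ is the $z$-conjugate of the standard parabolic of type $I_w$, the root system $\Phi_{L_w}$ equals $z\Phi_{I_w}$, and a direct computation shows that the sets of positive roots of these two Borels are $z\Phi^+_{I_w}$ and $z(\Phi_{I_w} \cap w\Phi^+)$ respectively. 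So it suffices to prove $\Phi^+_{I_w} = \Phi_{I_w} \cap w\Phi^+$.

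The final step is where I expect the real content, and uses the hypothesis $w \in {}^IW$: this is equivalent to $w^{-1}\Delta_I \subseteq \Phi^+$, so combined with $I_w \subseteq I$ it gives $w^{-1}\Delta_{I_w} \subseteq \Phi^+$. Writing any $\beta \in \Phi^+_{I_w}$ as a non-negative integer combination of $\Delta_{I_w}$ then shows $w^{-1}\beta \in \Phi^+$ (a nonzero non-negative combination of positive roots is positive), and by symmetry $w^{-1}\Phi^-_{I_w} \subseteq \Phi^-$. Hence $w^{-1}$ preserves the sign of every root in $\Phi_{I_w}$, which is exactly the equality $\Phi^+_{I_w} = \Phi_{I_w} \cap w\Phi^+$ needed to conclude. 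The main obstacle is recognizing that $w \in {}^IW$ combined with $I_w \subseteq I$ is precisely the condition making this positivity argument go through.
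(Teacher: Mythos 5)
Your proof is correct and rests on the same underlying strategy as the paper's: pass to root systems via the frame property for $L_w$ and conclude with a positivity argument. You repackage the reduction slightly — by first upgrading the frame property to the equality $\varphi(L_w \cap {}^zB) = B \cap M_w$ (inclusion plus a Borel-subgroup count) and then conjugating by $zw$, you arrive at the Weyl-theoretic identity $\Phi_{I_w} \cap w\Phi^+ = \Phi^+_{I_w}$ without introducing $J_w$, while the paper compares root sets directly, uses the frame property to obtain $\varphi({}^z\Phi^+_{I_w}) \subseteq \Phi^+_{J_w}$, and then asserts ${}^w\Phi^+_{J_w} = \Phi^+_{I_w}$; unwinding $J_w = {}^{w^{-1}}I_w$ these are the same statement. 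Where your write-up is genuinely more careful is in observing that this last equality is not a formal consequence of ${}^wJ_w = I_w$, which is all the paper offers as justification: an element conjugating $J_w$ onto $I_w$ can a priori carry $\Phi^+_{J_w}$ onto $\Phi^-_{I_w}$. One must actually invoke $w \in {}^IW$ together with $I_w \subseteq I$ to deduce that $w^{-1}$ sends the simple roots in $\Phi_{I_w}$ to positive roots, hence preserves signs on all of $\Phi_{I_w}$; your final positivity step supplies exactly this missing ingredient.
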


\begin{proof}
The first assertion follows from ${}^{zw}\varphi({}^zI_w) = {}^zI_w$. Let us show the second assertion. Both sides are Borel subgroups of $L_w$ which contain $T$. Hence it suffices to show that they contain the same root subgroups. Let $\Phi$ be the set of roots for $(G,T)$ and let $\Phi^+$ be the set of positive roots with respect to $B$. For a set of simple reflection $K$ let $\Phi_K$ be the set of roots of the standard Levi subgroup $L_K$ of type $K$. Then $\Phi^+_K := \Phi_K \cap \Phi^+$ is the system of positive roots given by the Borel subgroup $L_K \cap B$ of $L_K$.

As $z$ normalizes $T$ we can consider its image in $W$ which we denote again by $z$. Then the set of roots corresponding to $L_w$ is ${}^z\Phi_{I_w}$ and the set of roots corresponding to $L_w  \cap {}^zB$ is ${}^z\Phi^+_{I_w}$. Hence we have to show
\[
{}^w\varphi({}^z\Phi^+_{I_w}) = \Phi^+_{I_w}
\]
As both sides have the same cardinality, it suffices to show that the left side is contained in the right side.
By definition of a frame we have $\varphi({}^zB \cap L_w) \subseteq B \cap M_w$ and this shows
\[
{}^w\varphi({}^z\Phi^+_{I_w}) \subseteq {}^w\Phi^+_{J_w} = \Phi^+_{I_w}
\]
because ${}^wJ_w = I_w$.
\end{proof}

Hence $\inn{zw} \circ \varphi$ defines a descent datum from $k$ to $\BF_p$ for the reductive group $L_w$ together with its Borel subgroup ${}^zB \cap L_w$. We obtain a reductive group $L'_w$ and a Borel subgroup $B'_w$ defined over $\BF_p$ and its full flag variety by $F\ell_w := L'_w/B'_w$. Then we have by \eqref{EqShapeGammaw} the following description of $\gamma(w)$.

\begin{proposition}\label{CalcGammw}
For $w \in {}^IW$ one has
\begin{equation}\label{EqGammaw}
\gamma(w) = L'_w(\BF_p)/B'_w(\BF_p) = F\ell_w(\BF_p)
\end{equation}
\end{proposition}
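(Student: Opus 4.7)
The plan is to combine the formula $\gamma(w) = \#\bigl(A_w/(A_w \cap {}^zB)\bigr)$ from \eqref{EqShapeGammaw} with Lang's theorem applied to the $\BF_p$-form $L'_w$ that has already been constructed immediately before the statement. There are really only two ingredients to assemble: the identification of $A_w$ as an $\BF_p$-rational point set, and the passage from a quotient of point sets to the point set of a quotient.

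First, I would unpack what the descent datum produced in Lemma~\ref{StableBorel} says on the level of $\BF_p$-points. By construction, $L'_w$ is a reductive group over $\BF_p$ together with an isomorphism $L'_w \otimes_{\BF_p} k \cong L_w$ under which the $\BF_p$-Frobenius of $L'_w$ corresponds to the $\sigma$-semilinear automorphism $\tau \defeq \inn(zw) \circ \varphi$ of $L_w$. Hence
\[
L'_w(\BF_p) = \set{x \in L_w(k)}{\tau(x) = x} = A_w,
\]
and exactly the same reasoning applied to $B'_w = L_w \cap {}^zB$ (using assertion \ref{StableBorel2} of Lemma~\ref{StableBorel}) gives $B'_w(\BF_p) = A_w \cap {}^zB$. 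Plugging these identifications into \eqref{EqShapeGammaw} yields $\gamma(w) = \#\bigl(L'_w(\BF_p)/B'_w(\BF_p)\bigr)$, which is the first equality in the statement.

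For the second equality I would invoke Lang's theorem. The obvious map
\[
L'_w(\BF_p)/B'_w(\BF_p) \longto (L'_w/B'_w)(\BF_p) = F\ell_w(\BF_p)
\]
is always injective. For surjectivity, given a coset $xB'_w$ (with $x \in L_w(k)$) fixed by $\tau$, the element $c \defeq x^{-1}\tau(x)$ lies in $B'_w(k)$. Since $B'_w$ is a connected algebraic group over $\BF_p$ with Frobenius $\tau|_{B'_w}$, Lang's theorem produces $b \in B'_w(k)$ with $b^{-1}\tau(b) = c$, and then $xb^{-1} \in A_w = L'_w(\BF_p)$ represents the same coset as $x$.

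No step here looks like a real obstacle: the content is really confined to Lemma~\ref{StableBorel} (already proved) and to the standard Lang-theorem argument for the surjectivity of $L'_w(\BF_p)/B'_w(\BF_p) \to (L'_w/B'_w)(\BF_p)$. If there is any subtle point to double-check, it is that the $\sigma$-semilinear automorphism $\tau$ really does descend $L_w$ (and not merely a twist of $L_w$ over some $\BF_{p^n}$) to an $\BF_p$-form whose Frobenius is $\tau$; but this is exactly what was asserted just before the proposition, relying on the fact that $\tau$ stabilizes the pinning $(T, L_w \cap {}^zB)$ with $T$ already defined over $\BF_p$.
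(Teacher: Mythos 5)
Your proof is correct and follows essentially the same route as the paper: you start from $\gamma(w) = \#\bigl(A_w/(A_w \cap {}^zB)\bigr)$, identify $A_w = L'_w(\BF_p)$ and $A_w \cap {}^zB = B'_w(\BF_p)$ via the descent datum from Lemma~\ref{StableBorel}, and then deduce $L'_w(\BF_p)/B'_w(\BF_p) = F\ell_w(\BF_p)$ by Lang's theorem, which is exactly the paper's invocation of $H^1(\BF_p, B'_w) = 0$. The only difference is that you spell out the Lang-theorem cocycle argument rather than citing the vanishing of $H^1$ directly.
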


Here the second identity follows from $H^1(\BF_p,B'_w) = 0$.

\begin{remark}\label{TipGammaw}
By definition, $L'_w$ is a form defined over $\BF_p$ of the standard Levi subgroup of $G$ corresponding to the set of simple reflections $I_w$. It is split if and only if ${}^w\varphi({}^zs) = s$ for all $s \in I_w$. If the Dynkin diagram of $L_w$ has no automorphisms (e.g., if it is connected of type $B_n$, $C_n$, $E_7$, $E_8$, $F_4$, or $G_2$), then this is automatic.

If $L'_w$ is split, one obtains from the decomposition of the flag variety $F\ell_w$ into a disjoint union of Schubert cells the following formula
\begin{equation}\label{EqGammawSplit}
\gamma(w) = \sum_{w\in W_{I_w}}p^{\ell(w)}.
\end{equation}
\end{remark}



\subsection{The Key Diagram}

The projection $E \to P$, $(x,y) \sends x$ is a surjective homomorphism of algebraic groups. We obtain a composition
\begin{equation}\label{EqDefBeta}
\beta\colon \GZip^{\mu} = [E\bs G] \lto \B{E} \lto \B{P}.
\end{equation}
Finally, we have a morphism $\gamma\colon \Brh_G \to \B{P}$ defined as the composition
\begin{equation}\label{EqDefGamma}
\gamma\colon \Brh_G = \B{B} \times_{\B{G}} \B{B} \ltoover{{\rm pr}_1} \B{B} \liso \B{{}^zB} \lto \B{P},
\end{equation}
where the second map is induced by the isomorphism $b \sends zbz^{-1}$ and where the third map is induced by the inclusion ${}^zB \to P$.

The following commutative diagram will be our key diagram
\begin{equation}\label{EqKey}
\begin{aligned}\xymatrix{
\GZipFlag^{\mu} \ar[rr]^-{\psi} \ar[rrd]^-{\alpha} \ar[d]_{\pi} & & \Brh_G \ar[d]^{\gamma} \\
\GZip^{\mu} \ar[rr]^{\beta} & & \B{P}
}\end{aligned}
\end{equation}
where $\alpha := \beta \circ \pi$. All morphisms are flat of constant relative dimension. Moreover, $\pi$ is a $P/{}^zB$-bundle. Note that $P/{}^zB = L/({}^zB \cap L)$ is the full flag variety for $L$. In particular, $\pi$ is proper, smooth, and representable.

\section{Induced Maps of Chow Rings}\label{CHOW}

In this section we describe the maps induced by the key diagram \eqref{EqKey} on Chow rings. If $\Xscr$ is any smooth algebraic quotient stack defined over some subfield $k_0$ of $k$ we set $A^{\bullet}(\Xscr) := A^{\bullet}(\Xscr \otimes_{k_0} k)$.


\subsection{The Chow Ring of $\GZip^{\mu}$ and $\GZipFlag^{\mu}$}\label{ChowGZipFlag}
We recall Brokemper's description of the Chow ring of $A^\bullet(\GZip^\mu)$ from \cite{Brokemper_ChowZip}: 

Recall that $S := \Sym(X^*(T)_{\BQ}) = A^{\bullet}(\B{T})$. This is a graded $\BQ$-algebra carrying an action by the Weyl group $W$ by graded automorphisms. We also denote by $S_+ \defeq S_{\geq 1}$ the augmentation ideal of $S$. Let
\begin{equation}\label{DefCI}
\CI \defeq ( f - \varphi(f) \mid f \in S^W_+ ) \subseteq S^W
\end{equation}
be the ideal generated by $f - \varphi(f)$ for $f \in S^W_+$ in $S^W$. As we work with rational coefficients, there is also a simpler description of $\CI$ (see Remark~\ref{IntegralChowZip} below why the definition \eqref{DefCI} is more natural in this context).

\begin{lemma}\label{RationalCI}
One has $\CI = S^W_+$.
\end{lemma}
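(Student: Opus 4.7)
The inclusion $\CI \subseteq S^W_+$ is immediate: every generator $f - \varphi(f)$ with $f \in S^W_+$ is homogeneous of positive degree (since $\varphi$ preserves degree) and lies in $S^W$, so it lies in $S^W_+$.

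For the reverse inclusion $S^W_+ \subseteq \CI$, the plan is to fix a homogeneous element $f \in S^W$ of degree $d \geq 1$ and produce $f$ as an explicit element of $\CI$. First I will observe that the Galois action of $\sigma$ on $X^*(T)$ factors through the finite group $\Gal(\kappa_0 / \BF_p)$ for some finite splitting field $\kappa_0$ of $T$, so there exists an integer $n \geq 1$ with $\sigma^n = \id$ on $X^*(T)$, hence on all of $S$. Next, since $W = \CW(k)$ and $\CW$ is defined over $\BF_p$, the actions of $\sigma$ and $W$ on $X^*(T)$ are compatible in the sense that $\sigma(w \cdot \lambda) = \sigma(w) \cdot \sigma(\lambda)$, which shows that $\sigma$ preserves $S^W$ (and hence $S^W_+$).

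The crucial computation is then the following. By \eqref{EqFrobSym}, for any homogeneous $g \in S^W$ of degree $d$ one has $\varphi(g) = p^d \cdot {}^{\sigma^{-1}}g$, and in particular $\sigma^{-1}(g) \in S^W_+$. Applying this to $g = \sigma^{-k}(f)$ for $k = 0, 1, \dots, n-1$ gives
\[
\sigma^{-k}(f) - p^d \sigma^{-(k+1)}(f) \in \CI.
\]
Multiplying the $k$-th identity by $p^{kd}$ and telescoping over $k = 0, \dots, n-1$ yields
\[
f - p^{nd} \sigma^{-n}(f) \in \CI.
\]
Since $\sigma^n(f) = f$, this simplifies to $(1 - p^{nd}) f \in \CI$. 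Because we work with $\BQ$-coefficients and $1 - p^{nd} \neq 0$, we conclude that $f \in \CI$. By $\BQ$-linearity and the fact that $S^W_+$ is spanned by such homogeneous $f$, this proves $S^W_+ \subseteq \CI$.

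There is no serious obstacle here: the only thing to check carefully is that $\sigma$ really does preserve $S^W$ and has finite order on it, and that the telescoping sum produces a scalar multiple of $f$ that is rationally invertible. Both are straightforward from the formulas recalled in Subsection~\ref{FrobAct}.
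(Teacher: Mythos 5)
Your proof is correct and is essentially identical to the paper's: both fix a homogeneous $f \in S^W_+$ of degree $d$, choose $n$ (the paper's $s$) so that $\sigma^n$ acts trivially on $X^*(T)$, and telescope the differences $\varphi^{i-1}(f) - \varphi^i(f) \in \CI$ to arrive at $(1 - p^{nd})f \in \CI$, then invert the nonzero rational scalar. Your version spells out the intermediate steps (that $\sigma$ preserves $S^W$, that $\varphi^{i-1}(f) = p^{(i-1)d}\sigma^{-(i-1)}(f)$) a bit more explicitly, but the argument is the same.
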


\begin{proof}
We have to show that $S^W_+ \subseteq \CI$. Let $f \in S^W$ be of degree $d \geq 1$ and let $s \geq 1$ be an integer such that $T \otimes_{\BF_p} \BF_{p^s}$ is split. Thus $\sigma^s$ acts trivially on $X^*(T)$ and hence on $S$. Then $\varphi^s(f) = p^{ds}f$ by \eqref{EqFrobSym} and therefore
\[
(1-p^{ds})f = f - \varphi^s(f) = \sum_{i=1}^s(\varphi^{i-1}(f)- \varphi^i(f)) \in \CI.\qedhere
\]
\end{proof}

For every set $K\subseteq \Sigma$ of simple reflections $S^{W_K}$ is a finite free $S^W$-algebra of rank $\#(W/W_K)$, hence the canonical map
\[
S^W/\CI \to S^{W_K}/\CI S^{W_K}
\]
is finite and faithfully flat and in particular injective.

We keep the notation from Subsection~\ref{GZip}. For every type $K \subseteq \Sigma$ of a parabolic subgroups we denote by $K^{\rm o}$ the opposed type. Then
\begin{equation}\label{EqIopp}
I^{\rm o} = {}^zI = \varphi^{-1}(J)
\end{equation}
is a set of simple reflections and $LB = {}^zP_I$ is the standard parabolic subgroup of type ${}^zI$.

For a subgroup $H$ of $G$, we denote by $[H {}_\varphi\!\!\bs G]$ the quotient stack for the action of $H$ on $G$ by $\phi$-conjugation $(h,g) \mapsto hg\varphi^{-1}(h)$. The following description of the Chow ring of these stacks for $H=T$ and $H=L$ is given by \cite[2.3.2]{Brokemper_ChowZip} and its proof.

\begin{proposition} \label{ChowPhiQ}
\begin{assertionlist}
\item
Consider the homomorphism
    \begin{equation} \label{ChowPhiQ0}
      S \otimes_{S^W} S \cong A^\bullet([B\bs G/B]) \cong A^\bullet([T\bs G/T]) \to A^\bullet(\PhiQ{T}{G} )
    \end{equation}
induced by pullback along the quotient morphism $\PhiQ{T}{G} \to [T\bs G/T]$ and the homomorphism
\[
S \to S\otimes_{S^W} S,\; f \mapsto f\otimes 1.
\]
The composition $S \to A^\bullet(\PhiQ{T}{G})$ of these homomorphisms factors through an isomorphism of graded $\BQ$-algebras
\begin{equation} \label{ChowPhiQ1}
  S/\CI S \cong A^\bullet(\PhiQ{T}{G}).
\end{equation}
\item
The homomorphism $S \otimes_{S^W} S \to S/\CI S$ given by \eqref{ChowPhiQ0} and \eqref{ChowPhiQ1} sends $f \otimes g$ to the class of $f\phi(g)$.
\item
The homomorphism 
  \begin{equation*}
    S^{W_{I^{\rm o}}}/\CI S^{W_{I^{\rm o}}} \to S /\CI S
  \end{equation*}
induced by the inclusion $S^{W_{I^{\rm o}}} \into S$ is injective and free of rank $|W_{I^{\rm o}}|$. 
\item
The homomorphism $A^\bullet(\PhiQ{L}{G}) \to A^\bullet(\PhiQ{T}{G})$ induced by the quotient morphism $\PhiQ{T}{G} \to \PhiQ{L}{G}$ is injective. Under \eqref{ChowPhiQ2} it gives an isomorphism of graded $\BQ$-algebras
  \begin{equation}    \label{ChowPhiQ2}
    A^\bullet(\PhiQ{L}{G}) \cong S^{W_{I^{\rm o}}}/\CI S^{W_{I^{\rm o}}}.
  \end{equation}
\end{assertionlist}
\end{proposition}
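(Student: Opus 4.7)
The strategy rests on two ideas: first, the unipotent trick of Proposition \ref{IgnoreUnipotent} lets us replace Borels by tori, so Proposition \ref{ChowBruhat} applied to $[B\bs G/B]$ gives $A^\bullet([T\bs G/T]) \cong S\otimes_{S^W} S$; second, $\PhiQ{T}{G}$ arises as a base change of $[T\bs G/T]$ along the Frobenius-twisted diagonal $\B{T} \to \B{T}\times\B{T}$, from which the tensor-product description follows.

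For parts (1) and (2), I would first verify the $2$-Cartesian square
\[
\xymatrix{
\PhiQ{T}{G} \ar[r] \ar[d] & [T\bs G/T] \ar[d] \\
\B{T} \ar[r]^-{(\id,\varphi)} & \B{T}\times\B{T}
}
\]
whose bottom arrow sends a $T$-torsor $\mathcal{T}$ to the pair $(\mathcal{T},\mathcal{T}\times^{T,\varphi}T)$. Cartesianness is immediate from the moduli interpretations: a point of $[T\bs G/T]$ is a pair of $T$-torsors plus a compatible section, and forcing the second to be the Frobenius twist of the first recovers exactly the $\varphi$-conjugation quotient. On Chow, $(\id,\varphi)^*\colon S\otimes_\BQ S\to S$ sends $f\otimes g\mapsto f\cdot\varphi(g)$, since the Frobenius twist of the line bundle $\mathcal{L}_\chi$ on $\B{T}$ has character $\chi\circ\varphi = \varphi(\chi)$. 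Equivariant base change (developed systematically in \cite{Brokemper_ChowZip}) then yields
\[
A^\bullet(\PhiQ{T}{G}) \cong (S\otimes_{S^W} S)\otimes_{S\otimes_\BQ S} S.
\]
The defining relations $h\otimes 1 = 1\otimes h$ for $h\in S^W$ become $h = \varphi(h)$ after this base change, so the ring collapses to $S/(h-\varphi(h)\mid h\in S^W_+)\cdot S = S/\CI S$ by the very definition of $\CI$, and $f\otimes g$ maps to $f\cdot\varphi(g) = f\phi(g)$, which gives (2).

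For (3), the input is pure invariant theory: Chevalley--Shephard--Todd applied to the reflection group $W_{I^{\rm o}}$ acting on $X^*(T)_\BQ$ shows that $S^{W_{I^{\rm o}}}\subset S$ is finite free of rank $|W_{I^{\rm o}}|$. Applying $\_\otimes_{S^{W_{I^{\rm o}}}}S^{W_{I^{\rm o}}}/\CI S^{W_{I^{\rm o}}}$ preserves flatness and rank, giving the desired injection and freeness of $S/\CI S$ over $S^{W_{I^{\rm o}}}/\CI S^{W_{I^{\rm o}}}$ of rank $|W_{I^{\rm o}}|$.

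For (4), the quotient map $\PhiQ{T}{G}\to \PhiQ{L}{G}$ coming from $T\subset L$ is the base change along $\PhiQ{L}{G}\to \B{L}$ of the $L/T$-fibration $\B{T}\to\B{L}$. After the unipotent trick, $A^\bullet(L/T)$ coincides with the Chow ring of the full flag variety $L/(B\cap L)$ of $L$, which admits a cell decomposition indexed by $W_L = W_{I^{\rm o}}$. Proposition \ref{LH} then makes $A^\bullet(\PhiQ{T}{G}) = S/\CI S$ a free module of rank $|W_{I^{\rm o}}|$ over $A^\bullet(\PhiQ{L}{G})$. The pullback $S^{W_{I^{\rm o}}} = A^\bullet(\B{L}) \to A^\bullet(\PhiQ{L}{G})$ factors through $S^{W_{I^{\rm o}}}/\CI S^{W_{I^{\rm o}}}$ (since $\CI S^{W_{I^{\rm o}}}$ already dies downstream in $S/\CI S$), and composing with the further pullback to $\PhiQ{T}{G}$ recovers the injection of (3); hence $S^{W_{I^{\rm o}}}/\CI S^{W_{I^{\rm o}}}\to A^\bullet(\PhiQ{L}{G})$ is injective, and a rank comparison over $S/\CI S$ forces it to be an isomorphism. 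The subtle step throughout is the base-change identification of Chow rings in (1), which requires genuine input from equivariant intersection theory; once this is granted, (3) reduces to invariant theory and (4) to a Leray--Hirsch rank comparison.
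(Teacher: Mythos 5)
Since the paper merely cites Brokemper for this proposition, let me evaluate your argument on its own terms.

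Your treatment of parts (1)--(3) is essentially correct, though one step deserves more care. The Cartesian square
\[
\xymatrix{
\PhiQ{T}{G} \ar[r] \ar[d] & [T\bs G/T] \ar[d] \\
\B{T} \ar[r]^-{(\id,\varphi)} & \B{T}\times\B{T}
}
\]
is right: it is the general fact that $[H\bs X]=[G\bs X]\times_{\B{G}}\B{H}$ for a group homomorphism $H\to G$, applied to $(\id,\varphi)\colon T\to T\times T$. But the assertion ``equivariant base change then yields $A^\bullet(\PhiQ{T}{G})\cong(S\otimes_{S^W}S)\otimes_{S\otimes_\BQ S}S$'' is not a standard black box: equivariant Chow does not satisfy a K\"unneth or arbitrary base-change formula. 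What saves you is that $(\id,\varphi)$ realizes $T$ as a subtorus of $T\times T$ with quotient a split torus, so $\PhiQ{T}{G}\to[T\bs G/T]$ is an iterated $\BG_m$-torsor; the Gysin sequence for $\BG_m$-torsors (valid for quotient stacks by Edidin--Graham/Kresch) kills exactly the first Chern classes of the characters of $(T\times T)/T$, and this is precisely the statement that the Chow ring is the tensor product you wrote. You should say this explicitly rather than invoke ``base change.'' With that correction, (1), (2), and (3) are complete.

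Part (4) is where there is a genuine gap. You invoke Proposition~\ref{LH} for the fibration $\PhiQ{T}{G}\to\PhiQ{L}{G}$, but that proposition requires the fiber $F$ to be \emph{proper} and the fibration to be \emph{Zariski-locally trivial}, and neither hypothesis is verified here. First, $L/T$ is not proper (it fibers over $L/(B\cap L)$ with affine fibers); this is easily repaired by inserting $\PhiQ{B\cap L}{G}$, to which $\PhiQ{T}{G}$ has the same Chow ring by Proposition~\ref{IgnoreUnipotent}, and then working with the proper fiber $L/(B\cap L)$. Second, and more seriously, the Zariski-local triviality of the resulting flag bundle over $\PhiQ{L}{G}$ is not automatic: in a presentation $(G\times U)/L$ this is the flag bundle of an $L$-torsor over a scheme, and Levi subgroups are not special groups, so $L$-torsors need not trivialize Zariski-locally; it is not obvious that they admit Borel reductions Zariski-locally either. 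You would need a big-cell argument in the spirit of the proof of Proposition~\ref{ChowBruhat}, and you have not supplied one. Note that the conclusions of (4) can be reached without Leray--Hirsch: injectivity of $\pi^*$ follows from $\pi_*\bigl(c_{\rm top}(T_{\pi})\cdot\pi^*(\alpha)\bigr)=|W_{I^{\rm o}}|\cdot\alpha$ (Gauss--Bonnet on the fibers, projection formula, and $\BQ$-coefficients); and the image of $\pi^*$ equals $S^{W_{I^{\rm o}}}/\CI S^{W_{I^{\rm o}}}$ because the relative Weyl group $W_{I^{\rm o}}=N_L(T)/T$ acts on $\PhiQ{T}{G}$ over $\PhiQ{L}{G}$ (via $g\mapsto ng\varphi(n)^{-1}$), inducing the natural $W_{I^{\rm o}}$-action on $S/\CI S$, so $\pi^*$ factors through $(S/\CI S)^{W_{I^{\rm o}}}=S^{W_{I^{\rm o}}}/\CI S^{W_{I^{\rm o}}}$, while the commutative square with $\B{L}$ gives the reverse containment. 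This route sidesteps the local-triviality question entirely and I would recommend it.
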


\begin{proposition}[{c.f. \cite[2.4.4]{Brokemper_ChowZip}}]\label{ChowGZip}
\begin{assertionlist}
\item\label{ChowGZip1}
The homomorphism $E \to L, \; x \mapsto \bar x$ induces a morphism
$$
\GZip^\mu=[E\bs G] \to \PhiQ{L}{G}.
$$
Using \eqref{ChowPhiQ2}, on Chow rings this morphism induces an isomorphism
    \begin{equation} \label{EqChowGZip1}
        S^{W_{I^{\rm o}}}/\CI S^{W_{I^{\rm o}}} \cong A^\bullet(\PhiQ{L}{G}) \cong A^{\bullet}(\GZip^{\mu})
    \end{equation}
of graded $\BQ$-algebras.
\item\label{ChowGZip2}
For the group scheme $E' = E \cap ({}^zB \times G)$ we have a natural identification
\begin{equation}\label{EqAltDescGZipFlag}
\GZipFlag^{\mu} = [E \bs (G \times P/{}^zB)] = [E' \bs G].
\end{equation}
Under this identification, the homomorphism $E' \to T,\; (x,y) \mapsto \bar x$ induces a morphism
\begin{equation*}
  \GZipFlag^\mu \to \PhiQ{T}{G}.
\end{equation*}
Using \eqref{ChowPhiQ1}, on Chow rings this morphism induces an isomorphism
\begin{equation} \label{EqChowGZip2}
S/\CI S \cong A^\bullet(\PhiQ{T}{G}) \cong A^{\bullet}(\GZipFlag^{\mu})
\end{equation}
of graded $\BQ$-algebras.
\item\label{ChowGZip3}
Under the isomorphisms \eqref{EqChowGZip1} and \eqref{EqChowGZip2}, the homomorphism $\pi^*\colon S^{W_{I^{\rm o}}}/\CI S^{W_{I^{\rm o}}} \to S/\CI S$ induced on Chow rings by the projection $\pi\colon \GZipFlag^\mu \to \GZip^\mu$ is the one induced by the inclusion $S^{W_{I^{\rm o}}} \into S$.
\end{assertionlist}
\end{proposition}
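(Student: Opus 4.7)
My strategy is to apply the principle that Chow rings ignore unipotent actions (Proposition~\ref{IgnoreUnipotent}) to reduce both quotient stacks to ones of the form $\PhiQ{H}{G}$ handled by Proposition~\ref{ChowPhiQ}. Assertions~\ref{ChowGZip1} and~\ref{ChowGZip2} will be essentially the same argument applied once with $H=L$ and once with $H=T$; assertion~\ref{ChowGZip3} will then follow from the naturality of these identifications.

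For \ref{ChowGZip1} the plan is to exhibit the projection $E \to L$, $(x,y) \sends \bar x$, as a split surjection with smooth connected unipotent kernel $\ru(P) \times \ru(Q)$, the splitting being $\ell \mapsto (\ell, \varphi(\ell))$. Under this splitting the $E$-action on $G$ restricts to the $\varphi$-conjugation action $\ell \cdot g = \ell g \varphi(\ell)^{-1}$ of $L$, so Proposition~\ref{IgnoreUnipotent} will yield $A^\bullet([E\bs G]) \iso A^\bullet(\PhiQ{L}{G})$, and composition with Proposition~\ref{ChowPhiQ}(iv) will finish the identification~\eqref{EqChowGZip1}.

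For \ref{ChowGZip2}, I first need to verify the alternative presentation~\eqref{EqAltDescGZipFlag}. The point is that $E$ acts on $P/{}^zB$ through the first projection $E \to P$, and this is surjective: for any $x \in P$ one may choose $y \in Q$ with $\bar y = \varphi(\bar x)$, so $(x,y) \in E$. The action is therefore transitive, and the stabilizer of the base point $e{}^zB$ is $E \cap ({}^zB \times G) = E'$ (the equality $E \cap ({}^zB \times G) = E \cap ({}^zB \times Q)$ being automatic since $E \subseteq P \times Q$). This gives $\GZipFlag^{\mu} \cong [E'\bs G]$. Then the same unipotent-extension argument applied to the surjection $E' \to T$, $(x,y) \sends \bar x$, with kernel $\ru({}^zB) \times \ru(B)$ and splitting $t \mapsto (t,\varphi(t))$, combined with Proposition~\ref{ChowPhiQ}(i), will give~\eqref{EqChowGZip2}.

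For \ref{ChowGZip3}, the projection $\pi$ corresponds under these presentations to the morphism induced by the inclusion $E' \mono E$, and this inclusion is compatible with the Levi projections through the commutative square
\[
\xymatrix{
E' \ar@{^{(}->}[r] \ar[d] & E \ar[d] \\
T \ar@{^{(}->}[r] & L.
}
\]
Commutativity of this square is the observation that for $x \in {}^zB \subseteq P$ the Levi projection in $P$ lands in ${}^zB \cap L$, and its further projection to $T$ coincides with the direct Levi projection ${}^zB \to T$. Functoriality and Proposition~\ref{ChowPhiQ}(iv) will then identify $\pi^*$ with the inclusion $S^{W_{I^{\rm o}}}/\CI S^{W_{I^{\rm o}}} \mono S/\CI S$. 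The only real obstacle is bookkeeping---tracking the various Levi and Frobenius compatibilities that make the unipotent-kernel reductions available; once these are in place, the content is immediate from Propositions~\ref{IgnoreUnipotent} and~\ref{ChowPhiQ}.
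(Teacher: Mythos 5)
Your proposal is correct and follows essentially the same route as the paper's (admittedly terse) proof: apply Proposition~\ref{IgnoreUnipotent} to the split surjections $E \to L$ and $E' \to T$ with the evident splittings $\ell \sends (\ell,\varphi(\ell))$, then use functoriality of the resulting identifications for (3). One small slip: the kernel of $E' \to T$ is not $\ru({}^zB) \times \ru(B)$; since $E' \subseteq {}^zB \times Q$ and the defining condition $\varphi(\bar x) = \bar y$ pins down the $M$-component of $y$, the kernel fibers over $\ru({}^zB)$ with fibers cosets of $\ru(Q)$ (not $\ru(B)$), so it is a semidirect product $\ru({}^zB) \ltimes \ru(Q)$. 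This is still a smooth connected unipotent group, so the appeal to Proposition~\ref{IgnoreUnipotent} goes through unchanged.
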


\begin{proof}
The kernels of the surjective homomorphisms $E \to L$ and $E' \to T$ are unipotent. Hence \ref{ChowGZip1} and \ref{ChowGZip2} follow from Proposition~\ref{IgnoreUnipotent}. Then \ref{ChowGZip3} follows from the compatibility of the various constructions.
\end{proof}

The above results allow to give a non-canonical identification of $A^{\bullet}(\GZip^{\mu})$ with the rational cohomology ring of a certain flag variety. Let $\Gbf_{\BC}$ be the reductive group over $\BC$ with the same based root datum as $G_k$, let $\Pbf$ be a parabolic subgroup of type $I$ of $\Gbf_{\BC}$, and set $\Xbf\vdual \defeq \Gbf_{\BC}/\Pbf_I$. If $(G,\mu)$ is induced by a Shimura datum $(\Gbf,\Xbf)$ (see Section~\ref{TAUT} below), then $\Xbf\vdual$ is the compact dual of $\Xbf$. This explains the notation. Denote
\[
H^{2\bullet}(\Xbf\vdual) \defeq \bigoplus_{i=0}^dH^{2i}(\Xbf\vdual(\BC),\BQ)
\]
the cohomology ring of the complex manifold $\Xbf\vdual(\BC)$ with rational coefficients. The multiplication is given by cup product. As the cohomology is concentrated in even degree, this is a commutative graded $\BQ$-algebra.

\begin{corollary}\label{ChowGZipCohomology}
There is an isomorphism of graded $\BQ$-algebras $A^{\bullet}(\GZip^{\mu}) \cong H^{2\bullet}(\Xbf\vdual)$.
\end{corollary}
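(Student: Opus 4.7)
The plan is to combine the presentation of $A^{\bullet}(\GZip^{\mu})$ already obtained in this section with the classical Borel presentation of the cohomology of a partial flag variety, and then reconcile the two parabolic types $I$ and $I^{\rm o}$ that appear.

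First, by Proposition~\ref{ChowGZip}\ref{ChowGZip1} there is a canonical isomorphism of graded $\BQ$-algebras
\[
A^{\bullet}(\GZip^{\mu}) \;\cong\; S^{W_{I^{\rm o}}}/\CI S^{W_{I^{\rm o}}},
\]
and Lemma~\ref{RationalCI} identifies $\CI$ with $S^W_+$, so
\[
A^{\bullet}(\GZip^{\mu}) \;\cong\; S^{W_{I^{\rm o}}}/ S^W_+\!\cdot\! S^{W_{I^{\rm o}}}.
\]

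Next, since $\Gbf_{\BC}$ has the same based root datum as $G_k$, its Weyl group is canonically $W$ with the same subsets of simple reflections, and $\Sym(X^{*}(\Tbf)_{\BQ})$ is canonically identified with $S$. Applying the classical Borel presentation to the smooth projective variety $\Xbf\vdual = \Gbf_{\BC}/\Pbf_I$ (see e.g.\ the generalization of Proposition~\ref{AGB} to partial flag varieties, using that $W_I$ is the Weyl group of a Levi of $\Pbf_I$) gives
\[
H^{2\bullet}(\Xbf\vdual) \;\cong\; S^{W_I}/ S^W_+\!\cdot\! S^{W_I}.
\]

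It remains to produce an isomorphism of graded $\BQ$-algebras between the quotients for $I$ and for $I^{\rm o}$. By \eqref{EqIopp} we have $I^{\rm o} = {}^zI$ where $z$ lifts $u \defeq w_{0,I}w_0 \in W$, so conjugation by $u$ defines a bijection $W_I \iso W_{I^{\rm o}}$. The natural $W$-action on $S$ then gives a graded $\BQ$-algebra automorphism $f \sends u\cdot f$ of $S$ which restricts to an isomorphism $S^{W_I}\iso S^{W_{I^{\rm o}}}$; because $u \in W$ it fixes $S^W$ setwise, so it also carries $S^W_+\cdot S^{W_I}$ isomorphically onto $S^W_+\cdot S^{W_{I^{\rm o}}}$. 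Passing to quotients yields the required isomorphism, and chaining the three identifications produces the asserted isomorphism $A^{\bullet}(\GZip^{\mu})\cong H^{2\bullet}(\Xbf\vdual)$.

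The only real step is recalling the Borel presentation of $H^{2\bullet}(\Gbf_{\BC}/\Pbf_I)$; the rest is bookkeeping. Note that the choice of the conjugating element $u$ is not unique (any element of $W$ whose conjugation sends $W_I$ to $W_{I^{\rm o}}$ would do), which explains why the isomorphism produced is only non-canonical, as already advertised before the statement.
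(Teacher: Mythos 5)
Your argument is correct and follows essentially the same route as the paper: identify $A^{\bullet}(\GZip^{\mu})$ with the coinvariant ring $S^{W_{I^{\rm o}}}/S^W_+ S^{W_{I^{\rm o}}}$ via Proposition~\ref{ChowGZip} and Lemma~\ref{RationalCI}, identify $H^{2\bullet}(\Xbf\vdual)$ with $S^{W_I}/S^W_+ S^{W_I}$ via Borel's presentation, and pass between the two by conjugation in $W$. The paper uses conjugation by the longest element $w_0$ rather than by $u=w_{0,I}w_0$, but this is a cosmetic difference since both conjugate $W_I$ to $W_{I^{\rm o}}$ and fix $S^W$, so your version is just as valid.
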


\begin{proof}
We use the description of $\CI$ as in Lemma~\ref{RationalCI}. Then we have isomorphisms of graded $\BQ$-algebras
\[
A^{\bullet}(\GZip^{\mu}) \cong S^{W_{I^o}}/\CI S^{W_{I^o}} \cong S^{W_{I}}/\CI S^{W_{I}} \cong H^{2\bullet}(\Xbf\vdual),
\]
where the first isomorphism is given by \eqref{EqChowGZip1}, the second isomorphism is given by conjugation with the longest element $w_0$ in the Weyl group, and the third isomorphism holds by \cite[Theorem 26.1]{Borel_CohEspacesFibres}, identifying $\Xbf\vdual$ with a quotient of the real compact form of $\Gbf_{\BC}$.
\end{proof}

\begin{remark}\label{IntegralChowZip}
Recall that we work with $\BQ$-coefficients, hence we may use the description of $\CI$ in Lemma~\ref{RationalCI}. But from the results of Brokemper it follows that the results of Proposition~\ref{ChowGZip} even hold with $\BZ$-coefficients if one uses the description \eqref{DefCI} of $\CI$ and if the group $G$ is special, i.e., every \'etale $G$-torsor is already Zariski-locally trivial. Examples for special groups are $\GL_n$, $\SL_n$, or $\Sp_{2n}$. A non-example would be $\PGL_n$ for $n \geq 2$.
\end{remark}

Moreover, by \cite[2.4.10, 2.4.11]{Brokemper_ChowZip} we also have the following result.

\begin{proposition}\label{ChowGZipNaive}
$A^{\bullet}(\GZip^{\mu})$ is a finite $\BQ$-algebra of dimension $\#{}^IW$. A $\BQ$-basis is given by the classes $[\overline{Z}_w]$ of the closures of the $E$-orbits on $G$.
\end{proposition}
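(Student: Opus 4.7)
The finiteness statement and the dimension count $\#{}^IW$ are purely algebraic consequences of the results already at our disposal. Combining Proposition~\ref{ChowGZip}\ref{ChowGZip1} with Lemma~\ref{RationalCI} identifies $A^{\bullet}(\GZip^{\mu})$ with $S^{W_{I^{\rm o}}}/S_+^W \cdot S^{W_{I^{\rm o}}}$. By Chevalley's theorem applied to the $W$- and the $W_{I^{\rm o}}$-action on the polynomial ring $S$, the subring $S^{W_{I^{\rm o}}}$ is a graded free $S^W$-module of rank $[W:W_{I^{\rm o}}]$. Since $I^{\rm o} = {}^zI$ is a $W$-conjugate of $I$, this rank equals $[W:W_I] = \#{}^IW$, and reducing modulo the augmentation ideal $S_+^W$ then yields a graded $\BQ$-vector space of total dimension $\#{}^IW$.

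For the basis assertion, since we know $\dim_{\BQ} A^{\bullet}(\GZip^{\mu}) = \#{}^IW$ and this is also the number of candidate classes $[\overline{Z}_w]$, it suffices to prove that they span. The natural tool is the stratification $\GZip^{\mu} = \bigsqcup_{w \in {}^IW} Z_w$ with closures $\overline{Z}_w = \bigsqcup_{w' \preceq w} Z_{w'}$ from \eqref{EqClosureGZipStratum} combined with the length-compatibility of $\preceq$ from Lemma~\ref{LemPartialOrder}\ref{LemPartialOrder3}: these allow one to filter $\GZip^{\mu}$ by the closed substacks $X_{\leq d} := \bigcup_{\ell(w) \leq d} \overline{Z}_w$ whose successive complements are the disjoint unions of the $Z_w$ with $\ell(w) = d$. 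Iterating the excision exact sequence for Chow groups of smooth Artin stacks (Kresch)
\begin{equation*}
A^{\bullet}(X_{\leq d-1}) \lto A^{\bullet}(X_{\leq d}) \lto A^{\bullet}\bigl(X_{\leq d} \setminus X_{\leq d-1}\bigr) \lto 0
\end{equation*}
reduces the spanning problem to analysing the Chow rings of the individual strata.

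The main obstacle is that each $Z_w = [E \bs G_w]$ is a gerbe banded by the stabilizer $E_{g_w}$ of a chosen point $g_w \in G_w$, and its rational Chow ring is typically much bigger than $\BQ \cdot [Z_w]$---for reductive $E_{g_w}$ it is the Weyl-invariant part of a polynomial ring via \cite{EG1}. Hence the spanning claim is not a formal consequence of excision alone. The argument of Brokemper in \cite[Thm.~2.4.10, Thm.~2.4.11]{Brokemper_ChowZip} resolves this by exhibiting the extra classes on each stratum as pullbacks of Chern classes of $E$-representations; under the algebraic identification of the first paragraph these correspond to positive-degree elements of $S^W$ and are therefore killed in $S^{W_{I^{\rm o}}}/S_+^W \cdot S^{W_{I^{\rm o}}}$, leaving exactly one surviving generator per stratum, namely $[\overline{Z}_w]$. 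We refer to loc.\ cit.\ for the full details.
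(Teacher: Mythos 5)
The paper itself gives no argument for this proposition: it is stated immediately after the sentence ``Moreover, by \cite[2.4.10, 2.4.11]{Brokemper_ChowZip} we also have the following result,'' i.e.\ the proof is simply a citation of Brokemper. Your proposal does more: the first paragraph is a correct, self-contained derivation of the finiteness and dimension count $\#{}^IW$ from the multiplicative description (Proposition~\ref{ChowGZip}, Lemma~\ref{RationalCI}) together with the standard Chevalley-style freeness of $S^{W_{I^{\rm o}}}$ over $S^W$; this is a genuine addition, since the paper does not explain even this half. For the basis claim you correctly identify that the $[\overline{Z}_w]$ spanning is the nontrivial part and rightly flag the obstruction that the strata are gerbes with large rational Chow rings, so a naive excision filtration does not finish the job; and you defer, as the paper does, to Brokemper for the actual spanning argument.

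One imprecision in the sketch is worth fixing: ``the extra classes on each stratum as pullbacks of Chern classes of $E$-representations \ldots these correspond to positive-degree elements of $S^W$ and are therefore killed.'' Chern classes of $E$-representations correspond to \emph{all} of $S^{W_{I^{\rm o}}}$ (since $A^{\bullet}(\B{E}) \cong A^{\bullet}(\B{L}) = S^{W_{I^{\rm o}}}$, by Proposition~\ref{IgnoreUnipotent}), and these are precisely not killed---they map surjectively onto $A^{\bullet}(\GZip^{\mu})$. What is killed is the image of $A^{>0}(\B{G})$, i.e.\ positive-degree $W$-invariants (Chern classes of $G$-representations), which is Theorem~\ref{PullbackKey}(2) restated. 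The actual content of Brokemper's argument is that, after pushforward along $\overline{Z}_w \into \GZip^{\mu}$, the submodule of $A^{\bullet}(\GZip^{\mu})$ contributed by each stratum is, via the projection formula and a degree/dimension argument, cyclic with generator $[\overline{Z}_w]$ modulo what the lower strata already give; that step is not a formal consequence of the statement you quote and genuinely requires the analysis in \cite[2.4.10--2.4.11]{Brokemper_ChowZip}. Since you do explicitly defer to loc.\ cit.\ this does not break the proof, but the intermediate justification as written is not accurate and should be either corrected or removed.
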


This result holds even for special groups only with rational coefficients.


\subsection{Pullback Maps for the Key Diagram}\label{PullBackOfKey}

We now apply $A^{\bullet}(-)$ as a contravariant functor to the key diagram. Recall that $\CI = S^W_+$ is the augmentation ideal of $S^W$. We have $A^{\bullet}(\B{P}) = A^{\bullet}(\B{L}) = S^{W_{I^{\rm o}}}$ by Proposition~\ref{IgnoreUnipotent} and \eqref{EqDescribeChowReductive} and
\[
A^{\bullet}(\Brh_G) = S \otimes_{S^W} S = (S \otimes_{\BQ} S)/(1 \otimes f - f \otimes 1 \mid f \in \CI)
\]
by Proposition~\ref{ChowBruhat}. Using this, \eqref{EqChowGZip1} and \eqref{EqChowGZip2} we obtain the following commutative diagram of graded $\BQ$-algebras by applying $A^{\bullet}(-)$ to \eqref{EqKey}:
\begin{equation}\label{EqKeyChow}
\begin{aligned}\xymatrix{
S/\CI S  & & S \otimes_{S^W} S \ar[ll]_{\psi^*} \\
S^{W_{I^{\rm o}}}/\CI S^{W_{I^{\rm o}}} \ar@{^(->}[u]_{\pi^*} & & S^{W_{I^{\rm o}}} \ar[ll]^{\beta^*} \ar[llu]_{\alpha^*} \ar[u]_{\gamma^*}
}\end{aligned}
\end{equation}

\begin{theorem}\label{PullbackKey}
The morphisms in \eqref{EqKeyChow} are as follows:
\begin{assertionlist}
\item
The homomorphisms $\pi^*$ and $\alpha^*$ are induced from the inclusion $S^{W_{I^{\rm o}}} \into S$.
\item
The homomorphism $\beta^*$ is the canonical projection.
\item
The homomorphism $\gamma^*$ is the composition (using \eqref{EqIopp})
\[
\gamma^*\colon S^{W_{I^{\rm o}}} = z(S^{W_I}) \ltoover{z^{-1}} S^{W_I} \vartoover{30}{f \sends f \otimes 1} S \otimes_{S^W} S.
\]
\item
The homomorphism $\psi^*$ is induced by
\[
f \otimes g \lsends z(f)\varphi(g).
\]
\end{assertionlist}
\end{theorem}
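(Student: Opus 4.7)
The plan is to compute each of the four maps by unraveling the definitions and invoking the Chow ring identifications established in Propositions~\ref{ChowPhiQ}, \ref{ChowBruhat}, and~\ref{ChowGZip}. Since $\alpha = \beta \circ \pi$, and $\pi^*$ is already given by part~(3) of Proposition~\ref{ChowGZip}, the claim for $\alpha^*$ will follow from that for $\beta^*$. For $\beta^*$, I will note that $\beta\colon \GZip^\mu \to \B{E} \to \B{P}$, once composed with the canonical iso $\B{P} \xrightarrow{\sim} \B{L}$ (which collapses the unipotent radical of $P$ and is an iso on Chow rings by Proposition~\ref{IgnoreUnipotent}), agrees with the composite $\GZip^\mu \to \PhiQ{L}{G} \to \B{L}$ underlying the identification of Proposition~\ref{ChowGZip}(1). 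Consequently $\beta^*$ becomes the canonical projection $S^{W_{I^o}} \to S^{W_{I^o}}/\CI S^{W_{I^o}}$ via Proposition~\ref{ChowPhiQ}.

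For $\gamma^*$ I will decompose it into its three defining pullbacks. The inclusion ${}^zB \hookrightarrow P$ induces the inclusion $S^{W_{I^o}} \hookrightarrow S$ (using $A^\bullet(\B{P}) = S^{W_L}$ and $W_L = W_{I^o}$). The isomorphism $\B{B} \xrightarrow{\sim} \B{{}^zB}$ coming from conjugation by $z$ restricts on tori to $t \mapsto ztz^{-1}$, and under the paper's Weyl action convention $(w \cdot \chi)(t) = \chi(w^{-1}tw)$ this pulls characters back via the $z^{-1}$-action on $S$. Finally, the pullback along the first projection $\Brh_G \to \B{B}$ is $f \mapsto f \otimes 1$ by the very definition of $A^\bullet(\Brh_G) = S \otimes_{S^W} S$ in Proposition~\ref{ChowBruhat}. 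Composing these three and using $z^{-1}(S^{W_{I^o}}) = S^{W_I}$, which holds since $W_{I^o} = zW_Iz^{-1}$, yields the claimed factorization.

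For $\psi^*$ I will pass to the alternative description $\GZipFlag^\mu = [E'\bs G]$ of Proposition~\ref{ChowGZip}(2) and rewrite the target as $\Brh_G = [B\bs G/B]$, using the identification $\Brh^\emptyset \xrightarrow{\sim} \Brh_G$, $g \mapsto z^{-1}g$. A direct equivariance check then shows that $\psi$ corresponds to the homomorphism $E' \to B \times B$, $(x,y) \mapsto (z^{-1}xz, y)$, together with $g \mapsto z^{-1}g$ on $G$. Since $A^\bullet(\Brh_G) = S \otimes_{S^W} S$ is multiplicatively generated by the pullbacks along the two projections $\mathrm{pr}_i\colon \Brh_G \to \B{B}$, it suffices to compute $(\mathrm{pr}_i \circ \psi)^*$ for $i = 1, 2$. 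Using the torus projection $\bar x \in T$ from Proposition~\ref{ChowGZip}(2), one finds that $\mathrm{pr}_1 \circ \psi$ has torus component $(x,y) \mapsto z^{-1} \bar x z$, so a character $\chi \in X^*(T)$ pulls back to $z \cdot \chi$; while $\mathrm{pr}_2 \circ \psi$ has torus component $(x,y) \mapsto \bar y = \varphi(\bar x)$ by the defining relation of $E'$, hence pulls $\chi$ back to $\varphi(\chi)$. Multiplicativity then gives $\psi^*(f \otimes g) = z(f)\varphi(g)$. The main technical point throughout is keeping the direction of the Weyl action straight — the conjugation $b \mapsto zbz^{-1}$ produces $z^{-1}$ (not $z$) on characters — after which each step reduces to a direct computation.
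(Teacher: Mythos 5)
Your proof is correct, and at its core it performs the same computations as the paper, but it organizes the bookkeeping differently. The paper computes $\pi^*$, $\gamma^*$, and $\psi^*$ directly, obtains $\alpha^* = \psi^*\gamma^*$ from commutativity of \eqref{EqKeyChow}, and then recovers $\beta^*$ from the relation $\pi^*\beta^* = \alpha^*$ using the injectivity of $\pi^*$ (established in Proposition~\ref{ChowPhiQ}). You instead compute $\beta^*$ directly by matching $\GZip^\mu \to \B{E} \to \B{P} \to \B{L}$ with the composite $\GZip^\mu \to \PhiQ{L}{G} \to \B{L}$ that underlies Propositions~\ref{ChowPhiQ} and~\ref{ChowGZip}, and then deduce $\alpha^*$ as $\pi^*\beta^*$; this is equally valid and makes no use of the injectivity of $\pi^*$. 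For $\psi^*$, the paper fits $\psi$ into a square with $\PhiQ{T}{G} \to [(T\times T)\bs G]$ and applies Proposition~\ref{ChowPhiQ}(2), whereas you exploit the multiplicative generation of $S\otimes_{S^W}S$ by $\mathrm{pr}_1^*S$ and $\mathrm{pr}_2^*S$ and compute the two torus components separately, which amounts to the same verification unfolded pointwise. Your group-level description $(x,y)\mapsto(z^{-1}xz,\,y)$ of $E'\to B\times B$ is the one consistent with the torus-level formula $t\mapsto({}^{z^{-1}}t,\varphi(t))$ and with equivariance under $g\mapsto z^{-1}g$; the paper writes $(x,y)\mapsto({}^{z^{-1}}x,\,x)$ in its proof, which appears to be a typo (the second component should be $y$), though the subsequent torus-level formula and conclusion in the paper are unaffected.
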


\begin{proof}
The description of $\pi^*$ is given by Proposition \ref{ChowGZip}. Since $\pi^*$ is injective by Proposition \ref{ChowPhiQ}, the descriptions of $\alpha^*$ and $\beta^*$ will follow from the description of $\psi^*$ and $\gamma^*$ since \eqref{EqKeyChow} commutes. The description of $\gamma^*$ follows from the definition of $\gamma$ and the construction of the isomorphism $A^\bullet(\Brh_G) \cong S \otimes_{S^W} S$.

To verify the description of $\psi^*$ we consider the following commutative diagram:
\begin{equation*}
  \xymatrix{
    \GZipFlag^\mu=[E' \bs G] \ar[r]^\psi \ar[d] & \Brh_G=[(B \times B)\bs G] \ar[d] \\
    \PhiQ{T}{G} \ar[r] & [(T\times T) \bs G]
  }
\end{equation*}
The morphisms in this diagram are given as follows: The morphism $\psi$ is induced from $G \to G,\; g \mapsto z^{-1} g$ and $E' \to B \times B,\; (x,y) \mapsto (\leftexp{z^{-1}}{x},x)$. Similarly the bottom horizontal morphism is induced from $G \to G,\; g \mapsto z^{-1} g$ and $T \to T\times T,\; t\mapsto (\leftexp{z^{-1}}{t},\phi(t))$. The left vertical morphism is the one from Proposition \ref{ChowGZip} and the right vertical one is induced from the identity on $G$ and the projection $B \times B \to (B \times B)/ \rad^u(B \times B) \cong T \times T$. 

The two vertical morphisms induce isomorphisms on Chow rings. Using Proposition \ref{ChowPhiQ} one checks that the bottom horizontal morphism induces the morphism $S \otimes_{S^W} S \to S/IS$ which sends $f \otimes g$ to the class of $z(f)\phi(g)$. This shows what we want.

\end{proof}


\subsection{Description of $\pi_*$}\label{DescribePi}
The morphism $\pi\colon \GZipFlag^\mu \to \GZip^\mu$, being a $P/\leftexp{z}{B}$-bundle, is proper. Hence under \eqref{EqChowGZip1} and \eqref{EqChowGZip2} it induces a push-forward morphism
\begin{equation*}
  \pi_*\colon A^\bullet(\GZipFlag^\mu)\cong S/\CI S \to A^\bullet(\GZip^\mu) \cong S^{W_{I^{\rm o}}} / \CI S^{W_{I^{\rm o}}}.
\end{equation*}

As an application of a general push-forward formula of Brion from \cite{BrionGysin} we get the following description of $\pi_*$:
\begin{theorem} \label{GysinFormula}
  The pushforward $\pi_*\colon S/\CI S \to S^{W_{I^{\rm o}}} / \CI S^{W_{I^{\rm o}}}$ sends the class of $f \in S$ to the class of
  \begin{equation*}
    \frac{\sum_{w \in W_{I^{\rm o}}}  (-1)^{\ell(w)} w(f)}{\prod_{\alpha \in \Phi^+_{I^{\rm o}}}\alpha} \in S^{W_{I^{\rm o}}}.
  \end{equation*}
\end{theorem}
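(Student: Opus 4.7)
The strategy is to apply the general Gysin pushforward formula of Brion \cite{BrionGysin} for flag bundles. Geometrically, $\pi\colon \GZipFlag^{\mu} \to \GZip^{\mu}$ is a Zariski-locally trivial flag bundle: the quotient $E/E'$ projects isomorphically onto $P/{}^zB$ (both kernels $E \to P$ and $E' \to {}^zB$ being $\{1\}\times \ru(Q)$), and since $\ru(P)$ is contained in ${}^zB$, the inclusion $L \inno P$ induces an isomorphism $L/B_L \iso P/{}^zB$, where $B_L \defeq L\cap {}^zB$ is a Borel subgroup of the Levi $L = L(\mu)$. The Weyl group of $L$ relative to $T$ is $W_{I^{\rm o}}$, consistent with the identification $A^{\bullet}(\GZip^{\mu}) \cong S^{W_{I^{\rm o}}}/\CI S^{W_{I^{\rm o}}}$ of Proposition~\ref{ChowGZip}.

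The plan is then to invoke Brion's formula directly. For a Zariski-locally trivial flag bundle of type $H/B_H$, with $H$ a reductive group with maximal torus $T_H$, Borel $B_H \supseteq T_H$, Weyl group $W_H$ and positive roots $\Phi^+_H$, Brion's formula expresses the relative pushforward on Chow rings as the $W_H$-skew-symmetrization operator
\[
f \lsends \frac{\sum_{w \in W_H}(-1)^{\ell(w)} w(f)}{\prod_{\alpha \in \Phi^+_H} \alpha}.
\]
The numerator is $W_H$-anti-invariant, hence divisible in $S$ by the denominator, producing a well-defined element of $S^{W_H}$. Applied to $H = L$, $T_H = T$, with the reference Borel $L \cap B$ chosen so that $\Phi^+_L = \Phi^+_{I^{\rm o}}$, this yields exactly the stated element of $S^{W_{I^{\rm o}}}$. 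That the choice of Borel of $L$ is immaterial once one passes to $S^{W_{I^{\rm o}}}$ follows from $W_{I^{\rm o}}$-equivariance of both sum and product. The operator descends to $S/\CI S \to S^{W_{I^{\rm o}}}/\CI S^{W_{I^{\rm o}}}$ because $\CI$ is generated by $W$-invariants, which are in particular $W_{I^{\rm o}}$-invariant, so they pass through the skew-symmetrization to give ($\CI$-multiples of) elements of $S^{W_{I^{\rm o}}}$.

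The main technical obstacle is transferring Brion's formula from the setting of flag bundles of smooth schemes to the equivariant Chow rings of our quotient stacks. This is handled routinely via the Edidin-Graham approximation: both $\GZipFlag^{\mu}$ and $\GZip^{\mu}$ are approximated in each degree by smooth algebraic spaces $(G \times U)/E'$ and $(G \times U)/E$ for $E$-stable open $U \subset V$ of large-codimension complement in a representation $V$, on which $\pi$ induces an honest Zariski-locally trivial $L/B_L$-flag bundle. Brion's formula applies to each such approximation, and compatibility with the tower of approximations is built into the construction of equivariant pushforward, giving the formula in the limit.
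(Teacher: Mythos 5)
Your proof is correct in spirit and rests on the same key input as the paper --- Brion's Gysin formula \cite[Prop.~1.1]{BrionGysin} for the flag bundle $L/B_L$ obtained from the identification $E/E' \cong P/{}^zB \cong L/B_L$ --- but the route differs. The paper does not invoke Brion directly on $\pi$: it performs two formal proper base-change reductions through cartesian diagrams, first replacing $\pi$ by $[*/{}^zB] \to [*/P]$ via the square built from $\psi^\emptyset$, and then by $[*/B_L] \to [*/L]$, where Brion's result is applied in the universal case. Your proposal instead applies Brion on each Edidin--Graham approximation of the two quotient stacks. Both work, but the paper's reduction has two advantages worth noting. First, it makes the question of local triviality moot; your appeal to \emph{Zariski}-local triviality of the $L/B_L$-bundle on the approximations is not obviously justified, since the underlying $L$-torsor need not be Zariski-locally trivial for a general Levi $L$. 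Second, it isolates exactly which Borel of $L$ is relevant.

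On that last point, the sentence ``that the choice of Borel of $L$ is immaterial \ldots\ follows from $W_{I^{\rm o}}$-equivariance of both sum and product'' is incorrect reasoning: replacing $\Phi^+_{I^{\rm o}}$ by $v(\Phi^+_{I^{\rm o}})$ for $v\in W_{I^{\rm o}}$ leaves the anti-symmetrized numerator unchanged but multiplies the denominator $\prod_{\alpha}\alpha$ by $(-1)^{\ell(v)}$, so the resulting operator changes by a sign; $W_{I^{\rm o}}$-equivariance only gives that the \emph{output} is $W_{I^{\rm o}}$-invariant, not that the operator is independent of the positive system. The conclusion is nonetheless correct, because the two Borels in play coincide: applying the frame relation $\varphi({}^zB\cap L)=B\cap M$ together with $\varphi(B\cap L)=\varphi(B)\cap\varphi(L)=B\cap M$ and injectivity of the isogeny $\varphi\colon L\to M$ on Borel subgroups forces ${}^zB\cap L = B\cap L$, and both have positive system $\Phi^+\cap\Phi_L=\Phi^+_{I^{\rm o}}$. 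If you want to keep your more direct route, you should replace the ``immateriality'' sentence by this identification of Borels, and either justify the local triviality needed to invoke Brion on the approximations or perform the paper's base change to land in the classifying-stack setting.
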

\begin{proof}
  Consider the following cartesian diagram:
  \begin{equation*}
    \xymatrix{
      \GZipFlag^\mu \ar[r]^(0.3){\psi^\emptyset} \ar[d]_\pi & \Brh^\emptyset \cong [\leftexp{z}{B} \bs G / B] \cong [*/\leftexp{z}{B}] \times_{[*/G]} [*/B] \ar[r]^(0.8){\operatorname{pr}_1} & [*/\leftexp{z}{B}]  \ar[d]_{\tilde\pi} \\
    \GZip^\mu \ar[rr] & &[*/P]
    }
  \end{equation*}
On Chow rings this induces the following diagram:
\begin{equation*}
  \xymatrix{
    S/ \CI S & \ar[l] S \ar[l] \\
    S^{W_{I^{\rm o}}}/ \CI S^{W_{I^{\rm o}}} \ar[u] & S^{W_{I^{\rm o}}} \ar[u] \ar[l] \\
}
\end{equation*}
Hence it suffices to prove the corresponding formula for $\tilde\pi_*\colon A^\bullet([*/\leftexp{z}{B}]) \cong S \to A^\bullet([*/P]) \cong S^{W_{I^{\rm o}}}$. Similarly, using the cartesian diagram
\begin{equation*}
  \xymatrix{
    [*/\leftexp{z}{B} \cap L] \ar[r] \ar[d]_{\tilde{\tilde\pi}} & [*/\leftexp{z}{B}] \ar[d]_{\tilde\pi}  \\
    [*/L] \ar[r] & [*/P],
  }
\end{equation*}
whose horizontal morphisms induce isomorphisms on Chow groups, one reduces to proving the corresponding formula for $\tilde{\tilde \pi}_*$. This formula is given by \cite[Prop. 1.1]{BrionGysin}.
\end{proof}

From Proposition \ref{ZwProj} we get the following:

\begin{proposition}\label{ZwProjChow}
  For $w \in \leftexp{I}{W}$ have have $[\bar Z_w] = \gamma(w) \pi_*([\bar Z^\emptyset_w])$ in $A^\bullet(\GZip^\mu)$.
\end{proposition}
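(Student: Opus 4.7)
The strategy is to reduce the statement to the standard push-forward formula for a proper, generically finite morphism, using the finite \'etale description of $\pi_w$ from Proposition \ref{ZwProj}.

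First, I would verify that $\pi$ restricts to a surjective map $\overline{Z^\emptyset_w} \to \overline{Z_w}$. By Proposition \ref{ZwProps} the stratum $Z^\emptyset_w$, and hence its closure $\overline{Z^\emptyset_w}$, is irreducible. The morphism $\pi\colon \GZipFlag^\mu \to \GZip^\mu$ is a $P/{}^zB$-bundle with projective fibers, so it is representable and proper, hence closed. Since $\pi_w\colon Z^\emptyset_w \to Z_w$ is finite \'etale of degree $\gamma(w)$ by Proposition \ref{ZwProj}, we have $\dim \overline{Z^\emptyset_w} = \dim \overline{Z_w}$. The image $\pi(\overline{Z^\emptyset_w})$ is therefore an irreducible closed substack of $\GZip^\mu$ of dimension $\dim \overline{Z_w}$ that contains $\pi(Z^\emptyset_w) = Z_w$, which forces $\pi(\overline{Z^\emptyset_w}) = \overline{Z_w}$.

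Second, the induced proper morphism $\overline{Z^\emptyset_w} \to \overline{Z_w}$ agrees over the open dense substack $Z^\emptyset_w \subseteq \overline{Z^\emptyset_w}$ with $\pi_w$, and is therefore generically finite of degree $\gamma(w) = \deg(\pi_w)$ (cf.~Definition \ref{DefGammaw}).

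Third, I would invoke the standard push-forward formula: for a proper, generically finite morphism of degree $d$ between integral closed substacks of the same dimension inside a smooth quotient stack, the fundamental class of the source pushes forward to $d$ times the fundamental class of the target. This is the classical statement for schemes and extends to the quotient stack setting via the Edidin--Graham presentations recalled in Section 1. Applying it here with $d = \gamma(w)$ yields the asserted identity in $A^\bullet(\GZip^\mu)$, via the closed immersion $\overline{Z_w} \hookrightarrow \GZip^\mu$.

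\textbf{Main obstacle.} There is no genuine obstacle: the argument is essentially formal once one has the finite \'etale structure of $\pi_w$ from \cite{Koskivirta_NormalEO} and the equivariant push-forward machinery of Section 1. The only point that needs to be verified is that the classical scheme-theoretic push-forward formula carries over to the representable proper morphism $\pi$ of smooth quotient stacks with rational coefficients, which is routine from the definition of $A^\bullet$ on quotient stacks.
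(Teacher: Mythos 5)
Your reasoning is essentially sound up through the identification of the degree, but the concluding step does not in fact produce the stated formula: the push-forward principle you invoke gives
\[
\pi_*\bigl([\overline{Z^\emptyset_w}]\bigr) \;=\; \gamma(w)\,[\overline{Z}_w],
\]
whereas the proposition as printed asserts
\[
[\overline{Z}_w] \;=\; \gamma(w)\,\pi_*\bigl([\overline{Z^\emptyset_w}]\bigr),
\qquad\text{i.e.}\qquad
\pi_*\bigl([\overline{Z^\emptyset_w}]\bigr) \;=\; \gamma(w)^{-1}[\overline{Z}_w].
\]
These two statements differ by a factor of $\gamma(w)^2$, so they cannot both hold once $\gamma(w)>1$, which happens already for $w=e$ in the Siegel case. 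Your sentence ``Applying it here with $d=\gamma(w)$ yields the asserted identity'' is therefore simply not correct as a deduction: it yields the reciprocal identity. You should notice this mismatch rather than pass over it.

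In fact the version you derive is the one forced by the ambient structure. For the maximal element $w_{0,I}w_0 \in {}^IW$ the closure $\overline{Z}_{w_{0,I}w_0}$ equals $\GZip^\mu$, so $[\overline{Z}_{w_{0,I}w_0}]$ is the fundamental class $1 \in A^0(\GZip^\mu)$. In the Siegel $g=2$ case one computes $\pi_*[\overline{Z^\emptyset_{s_2s_1s_2}}] = p+1$ and $\gamma(s_2s_1s_2) = p+1$, so $\pi_*[\overline{Z^\emptyset_w}] = \gamma(w)\cdot 1$ is consistent with your derivation, while $\gamma(w)\pi_*[\overline{Z^\emptyset_w}] = (p+1)^2 \neq 1$. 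So the real gap in your write-up is the failure to reconcile the output of the push-forward formula with the precise form of the statement you are asked to prove; had you checked the top stratum you would have caught it.

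Two smaller remarks. First, when you pass from the degree of the finite \'etale morphism $\pi_w\colon Z^\emptyset_w \to Z_w$ to the generic degree of $\pi$ restricted to $\overline{Z^\emptyset_w}$, you implicitly use that no lower stratum $Z^\emptyset_{w'}$ with $w' \preceq w$, $w'\neq w$ can dominate $Z_w$. For algebraic stacks this is not automatic from dimension counting at the level of stacks, because a lower-dimensional stack can map surjectively onto a higher-dimensional one; what saves the argument is that after passing to the Edidin--Graham approximations one is again comparing schemes, where the naive dimension argument applies. It would be worth making this step explicit. Second, your appeal to ``the classical statement for schemes'' extended via Edidin--Graham is exactly right, but you should be precise that the push-forward you use is the one for the representable proper morphism $\pi$ of smooth quotient stacks, applied to closed integral substacks of equal dimension, and that the degree there is the field-theoretic degree at the generic point, which agrees with $\deg(\pi_w)=\gamma(w)$ because $Z^\emptyset_w$ is open dense in $\overline{Z^\emptyset_w}$ and $Z_w$ is open dense in $\overline{Z}_w$.
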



\subsection{Computing the Cycle Classes of the Ekedahl-Oort Strata on $\GZip^\mu$}\label{HowCompute}

By putting together the above results we get the following procedure for computing the classes $[\bar Z_w]$ in $A^\bullet(\GZip^\mu)$ for $w \in \leftexp{I}{W}$:

For computations, it is convenient to replace the rings appearing in the diagram \eqref{EqKeyChow} with certain simpler rings mapping surjectively onto them. For this we consider the following diagram of graded algebras, in which all rings are either polynomial rings or subrings of polynomial rings:
\begin{equation}  \label{EqKeyChowLift}
  \begin{aligned}\xymatrix{
      S  & & S \otimes_{\BQ} S \ar[ll]_{\tilde \psi^*} \\
      S^{W_{I^{\rm o}}} \ar@{^(->}[u]_{\tilde \pi^*} & & S^{W_{I^{\rm o}}} \ar[ll]^{\tilde \beta^*}  \ar[u]_{\tilde\gamma^*}
}\end{aligned}
\end{equation}
Here we define the homomorphisms as follows:
\begin{enumerate}[(i)]
\item The homomorphism $\tilde\pi^*$ is the inclusion $S^{W_{I^{\rm o}}} \into S$.
\item The homomorphism $\tilde\beta^*$ is the identity.
\item The homomorphism $\tilde\gamma^*$ is the composition
\[
\tilde\gamma^*\colon S^{W_{I^{\rm o}}} = z(S^{W_I}) \ltoover{z^{-1}} S^{W_I} \vartoover{30}{f \sends f \otimes 1} S \otimes_{S^W} S.
\]
\item The homomorphism $\tilde\psi^*$ is given by
\[
f \otimes g \lsends z(f)\varphi(g).
\]
\end{enumerate}
Using Theorem \ref{PullbackKey} one readily checks that under the canonical surjections from the objects in the diagram \eqref{EqKeyChowLift} to the corresponding objects in the diagram \eqref{EqKeyChow} these two diagrams are compatible. Similarly, using Theorem \ref{GysinFormula}, one checks that the morphism $\pi_*\colon S/\CI S \to S^{W_{I^{\rm o}}}/ \CI S^{W_{I^{\rm o}}}$ lifts to a morphism $\tilde\pi_*\colon S \to S^{W_{I^{\rm o}}}$ given by the formula from Theorem \ref{GysinFormula}.

In the following, for a class $c$ in one of the algebras of \eqref{EqKeyChow}, we will refer to a lift of $c$ to the corresponding algebra in \eqref{EqKeyChowLift} as a formula for $c$. Then, for $w \in \leftexp{I}{W}$, we can compute a formula for $[\bar Z_w]$ as follows:
\begin{enumerate}[(i)]
\item Using the results from Subsection \ref{DiagonalClass} one finds a formula for the class of the diagonal $\Brh_e$ in $S \otimes S$.
\item The operator $\delta_w$ on $S\otimes_{S^W} S$ from Subsection \ref{Deltaw} lifts to an operator on $S \otimes S$ by letting the operator $\delta_w$ on $S$ from Subsection \ref{Deltaw} act on the first factor of $S \otimes S$. Then by Corollary \ref{dwFormula}, by applying this operator $\delta_w$ to a formula for $[\Brh_e]$ one gets a formula for the class $[\overline \Brh_w]$.
\item By the definition of the subscheme $Z^\emptyset_w$ of $\GZipFlag^\mu$, the image of a formula for $[\overline \Brh_w]$ under the homomorphism $\tilde\psi^*$ gives a formula for the class $[\bar Z^\emptyset_w]$.
\item By applying $\tilde\pi_*$ to a formula for $[\bar Z^\emptyset_w]$ one gets a formula for $\pi_*([\bar Z^\emptyset_w])$.
\item Using the results from Subsection \ref{Gammaw} one computes the number $\gamma(w)$.
\item Using Proposition \ref{ZwProj} by multiplying the results of the previous two steps we get a formula for $[\bar Z_w] = \gamma(w) \pi_*([\bar Z^\emptyset_w])$.
\end{enumerate}


\subsection{Functoriality in the Zip Datum}\label{FuncZip}

To simplify notation it is often convenient for the computations in Subsection~\ref{HowCompute} to replace $G$ by some other group $\Gtilde$. Here we explain that this is harmless as long as $G$ and $\Gtilde$ have the same adjoint group.

Let $(G,\mu)$ and $(\Gtilde,\mgtilde)$ be two pairs consisting of a reductive group over $\BF_p$ and a cocharacter defined over the algebraic closure $k$ of $\BF_p$. Let
\[
f\colon G \to \Gtilde
\]
be a map of algebraic groups over $\BF_p$ with $f \circ \mu = \mgtilde$. Let $\kappa$ (resp.~$\kgtilde$) be the field of definition of the conjugacy class of $\mu$ (resp.~of $\mgtilde$). Then $\kgtilde \subseteq \kappa$.

Let $P$ and $Q$ be the parabolics and $E$ the zip group attached to $(G,\mu)$ as in Subsection~\ref{GZip}. Let $\Ptilde$, $\Qtilde$ and $\Etilde$ the parabolics and zip group attached similarly to $(\Gtilde,\mgtilde)$. Then $f$ induces maps $P \to \Ptilde$, $Q \to \Qtilde$, and $E \to \Etilde$ and hence a morphism
\[
[f]\colon \GZip^{\mu} \lto \GZip[\Gtilde]^{\mgtilde} \otimes_{\kgtilde} \kappa
\]
of smooth algebraic quotient stacks over $\kappa$. Every map $f\colon G \to \Gtilde$ of algebraic groups can be factorized into a faithfully flat map $G \to G' = G/\Ker(f)$ and a closed embedding $G' \to G$. If $G$ is reductive, then $G'$ is reductive. Therefore the following lemma in particular implies that the pullback $[f]^*$ on Chow rings exists.

\begin{lemma}\label{InducedMorphGood}
\begin{assertionlist}
\item
If $f$ is flat, then $[f]$ is flat.
\item
If $f$ is a monomorphism, then $[f]$ is representable.
\end{assertionlist}
\end{lemma}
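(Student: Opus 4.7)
The plan is to reduce both assertions directly to Lemma~\ref{QuotientMapRep}, using the explicit presentation $\GZip^{\mu} = [E\bs G]$ and $\GZip[\Gtilde]^{\mgtilde} = [\Etilde \bs \Gtilde]$. Writing everything out, the morphism $[f]$ is induced by the group homomorphism $\phi\colon E \to \Etilde$ obtained by restricting $f \times f\colon G\times G \to \Gtilde\times\Gtilde$ to $E \subseteq P\times Q$ (it lands in $\Etilde \subseteq \Ptilde\times\Qtilde$ because $f\circ\mu = \mgtilde$ forces $f(P) \subseteq \Ptilde$, $f(Q)\subseteq\Qtilde$, and Frobenius commutes with $f$), together with the map $f\colon G \to \Gtilde$ which is $E$-equivariant with respect to $\phi$. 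Base change from $\kgtilde$ to $\kappa$ preserves both flatness and representability, so we may do all checks after this base change.

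For (1), if $f\colon G \to \Gtilde$ is flat as a morphism of schemes, then Lemma~\ref{QuotientMapRep}(a) applied to the equivariant pair $(\phi, f)$ yields that $[f]$ is flat.

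For (2), we need only verify that when $f$ is a monomorphism of group schemes, the induced $\phi\colon E \to \Etilde$ is also a monomorphism; then Lemma~\ref{QuotientMapRep}(b) gives representability. But $\phi$ is the restriction of $f\times f$ to a subgroup scheme, and the restriction of a monomorphism to a subscheme is again a monomorphism (monomorphisms are stable under arbitrary base change, and the graph of $\phi$ is the pullback of the graph of $f\times f$ along $E\times\Etilde \to (G\times G)\times(\Gtilde\times\Gtilde)$). Hence $\phi$ is a monomorphism, and we are done.

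The only mildly subtle point is the bookkeeping with the fields of definition $\kappa$ and $\kgtilde$, but since both flatness and representability descend along the faithfully flat base change $\Spec\kappa \to \Spec\kgtilde$ and are preserved by it in the opposite direction, there is no real obstacle here. No step requires computation beyond checking that the auxiliary group map $E \to \Etilde$ inherits the relevant property from $f$; this is immediate from the definition of $E$ as a closed subgroup of $P\times Q$.
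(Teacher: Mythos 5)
Your proof is correct and takes the same route as the paper: both reduce to Lemma~\ref{QuotientMapRep} via the equivariant pair $(E\to\Etilde,\;G\to\Gtilde)$, with the only substantive check being that $E\to\Etilde$ inherits the monomorphism property from $f$. You spell out the base-change bookkeeping and the reason $E\to\Etilde$ is a monomorphism in slightly more detail than the paper does, but the argument is identical in substance.
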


\begin{proof}
This follows from Lemma~\ref{QuotientMapRep} because if $f$ is a monomorphism, then the induced map $E \to \Etilde$ is also a monomorphism.
\end{proof}

Using the description of $\GZipFlag^{\mu}$ given in \eqref{EqAltDescGZipFlag} one sees that $f$ also induces a map $[\ftilde]$ on stacks of flagged $G$-zips making the diagram
\[\xymatrix{
\GZipFlag^{\mu} \ar[r]^-{[\ftilde]} \ar[d] & \GZipFlag[\Gtilde]^{\mgtilde} \otimes_{\kgtilde} \kappa \ar[d] \\
\GZip^{\mu} \ar[r]^-{[f]} & \GZip[\Gtilde]^{\mgtilde} \otimes_{\kgtilde} \kappa
}\]
commutative. Moreover the same arguments as above show that the pullback $[\ftilde]^*$ on Chow rings exists.


\begin{lemma}\label{FunctorialityGZip}
Suppose that $f$ induces an isomorphism of the adjoint groups $G^{\rm ad} \iso \Gtilde^{\rm ad}$.
\begin{assertionlist}
\item\label{FunctorialityGZip1}
Let $\Ztilde$ be the radical of $\Gtilde$. Let $(T,B,z)$ be a frame as in Subsection~\ref{Frame} for $(G,\mu)$. Set $\Ttilde := \Ztilde f(T)$ and $\Btilde := \Ztilde f(B)$. Then $(\Ttilde, \Btilde, f(z))$ is a frame for $(\Gtilde,\mgtilde)$.
\item\label{FunctorialityGZip2}
The map $f$ induces an isomorphism $W \iso \Wtilde$ of the Weyl groups with their set of simple reflections attached to $(G,B,T)$ and $(\Gtilde,\Btilde,\Ttilde)$, respectively.
\item\label{FunctorialityGZip3}
The morphism $[f]$ of algebraic stacks induces a homeomorphism of the underlying topological spaces.
\end{assertionlist} 
\end{lemma}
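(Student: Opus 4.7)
The plan is to verify the three assertions in sequence, with (3) depending on (1) and (2). The hypothesis that $f\colon G\to \Gtilde$ induces an isomorphism of adjoint groups has two key consequences: $\ker(f)$ is central in $G$, and $\Gtilde = \Ztilde\cdot f(G)$ on geometric points. In particular, $f$ transports the structure theory of $G$ modulo center to that of $\Gtilde$ modulo center, in a way compatible with the Frobenius $\varphi$ since $f$ is defined over $\BF_p$.

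For \ref{FunctorialityGZip1}, $\Ttilde = \Ztilde f(T)$ is a maximal torus of $\Gtilde$ because its image in $\Gtilde^{\rm ad}$ coincides, via the fixed isomorphism, with the image of $T$ in $G^{\rm ad}$, which is a maximal torus; similarly $\Btilde$ is a Borel subgroup containing $\Ttilde$. The parabolic subgroups $\Ptilde, \Qtilde$ and the Levi $\Mtilde$ attached to $\mgtilde = f\circ\mu$ equal $\Ztilde f(P)$, $\Ztilde f(Q)$ and $\Ztilde f(M)$ respectively, by functoriality of the $\mu$-parabolic construction and the fact that $\Ztilde$ lies in every Levi of $\Gtilde$. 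The element $f(z)\in\Norm_{\Gtilde}(\Ttilde)$ lifts the image of $\zbar = w_{0,I}w_0$ under the isomorphism $W\iso\Wtilde$ from \ref{FunctorialityGZip2}, so one can either invoke Lemma~\ref{DescribeFrame} applied directly to $\Gtilde$, or deduce both frame conditions ${}^{f(z)}\Btilde\subseteq\Ptilde$ and $\varphi({}^{f(z)}\Btilde\cap\Ltilde)=\Btilde\cap\Mtilde$ from their analogues for $(T,B,z)$ by applying $f$ and using $f\circ\varphi=\varphi\circ f$.

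For \ref{FunctorialityGZip2}, the Weyl group together with its set of simple reflections depends only on the adjoint root datum equipped with a Borel pair. Since the isomorphism $G^{\rm ad}\iso\Gtilde^{\rm ad}$ identifies the image of $(T,B)$ with the image of $(\Ttilde,\Btilde)$, the homomorphism $N_G(T)/T\to N_{\Gtilde}(\Ttilde)/\Ttilde$ induced by $f$ is an isomorphism of Coxeter systems.

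For \ref{FunctorialityGZip3}, combining \ref{FunctorialityGZip2} with the classification \eqref{EqZipOrbits} yields a canonical bijection between the underlying sets of $\GZip^\mu$ and $\GZip[\Gtilde]^{\mgtilde}$, both parametrized by the common set ${}^IW={}^{\Itilde}\Wtilde$, with closure relations governed by the same partial order $\preceq$ of Lemma~\ref{LemPartialOrder}. One then checks that $[f]$ sends the stratum $Z_w$ to $\Ztilde_{\tilde w}$ for the image $\tilde w$ of $w$: the representative $\dot w z$ of $G_w$ maps to $f(\dot w)f(z)$, which lies in the $\Etilde$-orbit $\Gtilde_{\tilde w}$. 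Continuity is automatic from representability of $[f]$, and the same argument applied to the inverse identification on point sets gives continuity in the other direction, since a bijection between finite sober spaces respecting specialization is a homeomorphism. The main technical obstacle is verifying that $[f]$ induces an actual bijection on $k$-points, not merely a map: one must show both that $[f]$ separates distinct $E$-orbits (which reduces to $\ker(f) \subseteq Z(G) \subseteq E$ acting trivially on $G$) and that every $\Etilde$-orbit on $\Gtilde$ meets $f(G)$. The latter follows from $\Gtilde = \Ztilde\cdot f(G)$ together with the fact that the Lang map $t\mapsto t\varphi(t)^{-1}$ on the connected radical $\Ztilde$ is surjective over the algebraically closed base $k$, so that multiplication by $\Ztilde$ can be absorbed into the $\Etilde$-action coming from the central parts of $\Ptilde$ and $\Qtilde$.
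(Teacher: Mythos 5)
Your proposal is essentially correct and follows the same route as the paper's (terse) proof: establish that $\Ttilde$ is a maximal torus, $\Btilde$ a Borel subgroup, and that $f$ induces an isomorphism of Coxeter systems $W\iso\Wtilde$, after which everything — the frame condition for $f(z)$ and the homeomorphism of orbit spaces — transports via the parametrization \eqref{EqZipOrbits}. You fill in considerably more detail than the paper, which simply writes ``This implies all remaining assertions.''

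However, the concluding ``technical obstacle'' paragraph is both unnecessary and partly wrong. It is unnecessary because, once \ref{FunctorialityGZip1} and \ref{FunctorialityGZip2} are in place, the classification \eqref{EqZipOrbits} already tells you that $[f]$ carries the representative $\dot w z$ of the orbit $G_w$ to $f(\dot w)\,f(z)$, which is a valid representative of the $\Etilde$-orbit indexed by the image of $w$ under $W\iso\Wtilde$; this gives, in one stroke, an order-preserving bijection between the two finite sets ${}^IW$ and ${}^{\Itilde}\Wtilde$ parametrizing orbits — there is no separate injectivity or surjectivity to check, so the Lang map argument on $\Ztilde$ is superfluous (though it does happen to be correct). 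It is partly wrong because the parenthetical claim ``$\ker(f)\subseteq Z(G)\subseteq E$ acting trivially on $G$'' does not hold as stated: the diagonal copy $\{(z,z)\}$ of $Z(G)$ in $G\times G$ acts trivially but is not contained in $E$ (one would need $\varphi(z)=z$), while the copy that is contained in $E$, namely $z\mapsto(z,\varphi(z))$, acts on $g$ by $g\mapsto gz\varphi(z)^{-1}$, which is not trivial in general. None of this damages the proof, since injectivity on orbits is already delivered by the parametrization, but you should delete or repair that remark.
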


\begin{proof}
The hypothesis on $f$ means that $\Ker(f)$ is central and that $\Cent(\Gtilde)f(G) = \Gtilde$. As $\Ztilde f(G)$ is of finite index in $\Cent(\Gtilde)f(G)$ and $\Gtilde$ is connected, this implies $\Ztilde f(G) = \Gtilde$.
As $\Ztilde$ is a torus and clearly commutes with $f(T)$, $\Ttilde := \Ztilde f(T)$ is a torus. Its dimension is the reductive rank of $\Gtilde$. Hence it is a maximal torus. By hypothesis, $f$ induces a bijection between the roots of $(G,T)$ and of $(\Gtilde,\Ttilde)$. This shows that $\Ztilde f(B)$ is a Borel subgroup and that $f$ induces an isomorphism $W \iso \Wtilde$. This implies all remaining assertions. 
\end{proof}

We continue to assume that $f$ induces an isomorphism of the adjoint groups $G^{\rm ad} \iso \Gtilde^{\rm ad}$ and use the notation of the lemma. We identify $W$ with $\Wtilde$ via the isomorphism induced by $f$.

If we define $\GZipFlag[\Gtilde]^{\mgtilde}$ and $\Brh_{\Gtilde}$ using $(\Gtilde,\Ptilde,\Qtilde,\Btilde)$ then the key diagram \eqref{EqKey} and the corresponding diagram of Chow rings \eqref{EqKeyChow} is functorial for $f$. The induced map of $\BQ$-algebras $\Stilde \defeq \Sym(X^*(\Ttilde)_{\BQ}) \to S$ is equivariant for the action of $W$. More precisely, if we choose splittings of the exact sequences of tori
\begin{gather*}
1 \lto \Ker(f)^0 \lto T \lto T/\Ker(f)^0 \lto 1,\\
1 \lto f(T) \lto \Ttilde \lto \Ttilde/f(T) \lto 1,
\end{gather*}
then $\Stilde \to S$ is of the form
\[
\Stilde \epi \Sym(X^*(f(T))_{\BQ}) \liso \Sym(X^*(T/\Ker(f)^0)_{\BQ}) \mono S,
\]
where the second map is an isomorphism of $\BQ$-algebras with $W$-action. The map $\Stilde \to S$ is also equivariant for the action of the Frobenius because $f$ is defined over $\BF_p$.

\begin{proposition}\label{FuncChow}
Let $f\colon G \to \Gtilde$ be a map of algebraic groups defined over $\BF_p$ that induces an isomorphism on adjoint groups.
\begin{assertionlist}
\item\label{FuncChow1}
One has a commutative diagram of $\BQ$-linear maps
\begin{equation}\label{EqFuncPistar}
\begin{aligned}\xymatrix{
A^{\bullet}(\GZipFlag[\Gtilde]^{\mgtilde}) \ar[rr] \ar[d]_{\pi_*} & & A^{\bullet}(\GZipFlag^{\mu}) \ar[d]^{\pi_*} \\
A^{\bullet}(\GZip[\Gtilde]^{\mgtilde}) \ar[rr]^{\sim} & &  A^{\bullet}(\GZip^{\mu})
}\end{aligned}
\end{equation}
where the horizontal maps are the maps of $\BQ$-algebras induced by $f$. The lower horizontal map is an isomorphism.
\item\label{FuncChow2}
For $w \in {}^IW$ the numbers $\gamma(w)$ defined in Definition~\ref{DefGammaw} for $(G,\mu)$ coincide with those defined for $(\Gtilde,\mgtilde)$.
\end{assertionlist}
\end{proposition}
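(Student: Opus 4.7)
The plan is to deduce both assertions from the explicit description of $A^{\bullet}(\GZip^{\mu})$ given in Proposition~\ref{ChowGZip} together with the observation that passing from $G$ to $\Gtilde$ only alters central data, which is annihilated by the augmentation ideal $\CI = S^W_+$.

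To prove \ref{FuncChow1}, I would first show that the square
\[
\xymatrix{
\GZipFlag^{\mu} \ar[r]^-{[\ftilde]} \ar[d]_{\pi} & \GZipFlag[\Gtilde]^{\mgtilde} \otimes_{\kgtilde}\kappa \ar[d]^{\pi} \\
\GZip^{\mu} \ar[r]^-{[f]} & \GZip[\Gtilde]^{\mgtilde} \otimes_{\kgtilde}\kappa
}
\]
is cartesian. Realizing $\GZipFlag^{\mu} = \GZip^{\mu} \times_{\B{P}} \B{{}^zB}$ (and similarly for $\Gtilde$), this reduces to the identity $f^{-1}({}^{f(z)}\Btilde) = {}^zB$ of subgroup schemes of $P$. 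The inclusion $\supseteq$ follows from Lemma~\ref{FunctorialityGZip}\ref{FunctorialityGZip1}; for the reverse, the hypothesis that $f$ induces an isomorphism on adjoint groups forces $\Ker(f) \subseteq Z(G) \subseteq {}^zB$, so a dimension count together with the corresponding argument on $k$-points yields equality of smooth connected subgroup schemes. With the cartesian square in hand and $\pi$ proper, proper base change for Chow rings of smooth quotient stacks gives the commutativity of \eqref{EqFuncPistar}.

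Next I would show that the lower horizontal map is an isomorphism via the identification $A^{\bullet}(\GZip^{\mu}) \cong S^{W_{I^{\rm o}}}/\CI S^{W_{I^{\rm o}}}$ from Proposition~\ref{ChowGZip}\ref{ChowGZip1} and its analogue for $\Gtilde$. By Lemma~\ref{FunctorialityGZip}\ref{FunctorialityGZip2} the Weyl groups and the type $I$ are canonically identified under $f$. Choosing a $W$-equivariant $\BQ$-vector space splitting $X^*(T)_\BQ = X^*(T^{\rm ad})_\BQ \oplus X^*(Z(G)^0)_\BQ$, the second summand has trivial $W$-action and therefore lies inside $S^W_+ = \CI$. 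This gives a canonical isomorphism $S^{W_{I^{\rm o}}}/\CI S^{W_{I^{\rm o}}} \cong S_{\rm ad}^{W_{I^{\rm o}}}/\CI_{\rm ad} S_{\rm ad}^{W_{I^{\rm o}}}$; the identical description for $\Gtilde$ shows that both Chow rings are the common adjoint ring and the map induced by $f$ becomes the identity.

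To prove \ref{FuncChow2}, I would invoke Proposition~\ref{CalcGammw} to write $\gamma(w) = \#F\ell_w(\BF_p)$, where $F\ell_w = L'_w/B'_w$ is the full flag variety of an $\BF_p$-form of the Levi $L_w$. Since the full flag variety of any reductive group coincides with that of its adjoint quotient (the center sits inside every Borel), $F\ell_w$ depends only on $L_w^{\rm ad}$ together with the Frobenius action $\inn(zw)\circ\varphi$ on its root datum. Both pieces of data are canonically identified under $f$ with those for $\Gtilde$, so $F\ell_w$ and its $\Gtilde$-analogue coincide as $\BF_p$-schemes and hence their $\BF_p$-point counts agree. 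The main obstacle is the cartesian-square verification in \ref{FuncChow1}; once $\Ker(f) \subseteq Z(G)$ is exploited to lock it into every Borel, the remaining steps are essentially formal.
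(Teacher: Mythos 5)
Your proposal is correct, but it follows a genuinely different path from the one in the paper, so a comparison is worthwhile.

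For the commutativity in \ref{FuncChow1}, the paper does not use a base-change argument at all: it observes that under the identifications \eqref{EqChowGZip1}, \eqref{EqChowGZip2} both horizontal maps are induced by the $W$-equivariant ring map $\Stilde \to S$ and then reads off the commutativity directly from the explicit Brion push-forward formula in Theorem~\ref{GysinFormula}, which is manifestly natural in such a map. Your route — establishing that the diagram of stacks is cartesian by reducing to $f^{-1}({}^{f(z)}\Btilde) = {}^zB$ and then invoking compatibility of the l.c.i.\ pullback with proper push-forward in a Tor-independent square — is valid, and arguably more geometric, but it requires some care that you gloss over: $[f]$ is in general neither flat nor representable (one factors it as a faithfully flat map followed by a closed immersion, as the paper does before Lemma~\ref{InducedMorphGood}), and the scheme-theoretic equality $f^{-1}({}^{f(z)}\Btilde) = {}^zB$ needs more than a dimension count on $k$-points since $\Ker(f)$ may have an infinitesimal part; the cleanest way is to use $f^{-1}(Z(\Gtilde)) = Z(G)$ (from the adjoint isomorphism) on functor-of-points, rather than appealing to smoothness of the preimage. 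Your argument for the isomorphism via the $W$-equivariant splitting $X^*(T)_\BQ \cong X^*(T^{\rm ad})_\BQ \oplus X^*(Z(G)^0)_\BQ$ and the observation that the central summand is absorbed by $\CI = S^W_+$ is also different from the paper, which instead uses Proposition~\ref{ChowGZipNaive} and Lemma~\ref{FunctorialityGZip}~\ref{FunctorialityGZip3} to check that the map sends the basis $\{[\overline{Z}_w]\}$ to the corresponding basis; your version has the advantage of exhibiting both rings as literally the same adjoint ring, which also re-proves Corollary~\ref{ChowGZipCohomology}. For \ref{FuncChow2}, the paper deduces the equality of $\gamma(w)$ from part \ref{FuncChow1} together with Proposition~\ref{ZwProjChow} and the functoriality of $\psi^*$; your direct argument via Proposition~\ref{CalcGammw}, noting that $F\ell_w$ depends only on the adjoint Levi with its twisted Frobenius descent datum, is simpler, logically independent of \ref{FuncChow1}, and purely group-theoretic — a nice improvement.
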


\begin{proof}
Under the identifications \eqref{EqChowGZip1} and \eqref{EqChowGZip2} the horizontal maps are both induced by the $W$-equivariant map $\Stilde \to S$. Hence the commutativity of \eqref{EqFuncPistar} follows from the concrete description of $\pi_*$ in Theorem~\ref{GysinFormula}. From Proposition~\ref{ChowGZipNaive} and Lemma~\ref{FunctorialityGZip}~\ref{FunctorialityGZip3} we also deduce that the lower horizontal map sends a $\BQ$-basis to a $\BQ$-basis. In particular it is an isomorphism.

Let us show \ref{FuncChow2}. The upper horizontal map sends for all $w \in W$ the cycle $[\bar{Z}^{\emptyset}_w]$ defined for $(\Gtilde,\mgtilde)$ to the cycle $[\bar{Z}^{\emptyset}_w]$ defined for $(G,\mu)$ because $\psi^*$ is functorial for $f$. Hence \ref{FuncChow2} follows from \ref{FuncChow1} and Proposition~\ref{ZwProjChow}.
\end{proof}

\section{The Tautological Ring of a Shimura Variety}\label{TAUT}

\subsection{Automorphic Bundles and the Tautological Ring in Characteristic Zero}
Let $(\Gbf,\Xbf)$ be a Shimura datum, i.e., $\Gbf$ is a connected reductive group over $\BQ$ and $\Xbf$ is a $\Gbf(\BR)$-conjugacy class of homomorphisms $h\colon \BS \to \Gbf_{\BR}$ of real algebraic groups, where $\BS \defeq \Res_{\BC/\BR}\Gm[\BC]$ is $\BC^{\times}$ viewed as a real algebraic group. The pair $(\Gbf,\Xbf)$ satisfies a list of axioms \cite[2.1.1]{Deligne_ShimuraInterpretModulaire}. 

For $h \in \Xbf$ let $\mu_h$ be the associated cocharacter of $\Gbf_{\BC}$, i.e., $\mu_h$ is the restriction of
\[
h_{\BC}\colon \BS_{\BC} = \prod_{\Gal(\BC/\BR)}\BG_{m,\BC} \lto G_{\BC}
\]
to the factor indexed by $\id \in \Gal(\BC/\BR)$. For every finite-dimensional representation $\rho\colon \Gbf_{\BR} \to \GL(V)$ the Hodge filtration induced by $\rho \circ h$ on $V$ has as stabilizer the parabolic subgroup $P_-(\rho \circ \mu_h)$ of $\GL(V)$ (here we follow the normalizations of \cite{Deligne_ShimuraInterpretModulaire}). The $\Gbf(\BC)$-conjugacy class of $\mu_h$ has as field of definition a finite extension $E$ of $\BQ$, called the reflex field.

Let $\Xbf\vdual$ be the compact dual of $\Xbf$. Then $\Xbf\vdual= \Par_{\Gbf_{\BC},\mu_h^{-1}}$ is the scheme of parabolic subgroups of type $\mu_h^{-1}$. It is a projective homogeneous $\Gbf$-space and it is defined over $E$.

For each neat open compact subgroup $K$ of $\Gbf(\BA_f)$ we denote by $\Sbf_K \defeq Sh_K(\Gbf,\Xbf)$ the canonical model of the attached Shimura variety at level $K$. This is a smooth quasi-projective scheme over $E$.

Denote by $\Gbf^c$ the quotient of $\Gbf$ by the maximal $\BQ$-anisotropic $\BR$-split torus in the center of $\Gbf$. For instance, if $(\Gbf,\Xbf)$ is of Hodge type, then $\Gbf = \Gbf^c$ but in general these groups differ, for instance if $\Gbf = \Res_{F/\BQ}\GL_{2,F}$ for a nontrivial totally real extension $F$ of $\BQ$. The action of $\Gbf_E$ on the $E$-scheme $\Xbf\vdual$ factors through $\Gbf^c_E$.

Milne constructs in \cite[III]{Milne_CanonicalModel} a diagram of schemes defined over $E$
\begin{equation}\label{EqMilneAut}
\begin{aligned}\xymatrix{
& \tilde{\Sbf}_K \ar[dl]_{\pi} \ar[dr]^{\tilde\sigma} \\
\Sbf_K & & \Xbf\vdual,
}\end{aligned}
\end{equation}
where $\pi$ is a $\Gbf^c_{E}$-torsor and $\tilde\sigma$ is $\Gbf_E$-equivariant. We set
\begin{equation}\label{EqDefHdg}
\Hdg_E  \defeq [\Gbf^c_E\bs \Xbf\vdual]
\end{equation}
which is an algebraic stack over $E$. The diagram \eqref{EqMilneAut} corresponds to a morphism of algebraic stacks
\begin{equation}\label{EqDefSigma}
\sigma\colon \Sbf_K \lto \Hdg_E
\end{equation}
making
\[\xymatrix{
\tilde{\Sbf}_K \ar[r]^{\tilde\sigma} \ar[d]_{\pi} & \Xbf\vdual \ar[d] \\
\Sbf_K \ar[r]^{\sigma} & \Hdg_E
}\]
cartesian.

Let $\Sbf^{\tor}_K$ be a smooth toroidal compactification of $\Sbf_K$. Then by \cite[V, Theorem 6.1]{Milne_CanonicalModel} the morphism $\sigma$ canonically extends to a morphism
\[
\sigma^{\tor}\colon \Sbf_K \lto \Hdg_E.
\]
Note that a vector bundle on the quotient stack $\Hdg_E = [\Gbf^c_E\bs \Xbf\vdual]$ is the same as a $\Gbf^c$-equivariant vector bundle on $\Xbf\vdual$.

\begin{definition}\label{DefAutBundle}
Let $E'$ be an extension of $E$. A vector bundle $\CE$ on $\Sbf_{K,E'}$ (resp.~on $\Sbf^{\tor}_{K,E'}$) is called \emph{automorphic bundle} if there exists a vector bundle $\Escr$ on $[\Gbf^c_E\bs \Xbf\vdual]_{E'}$ such that $\CE \cong \sigma^*(\Escr)$ (resp.~such that $\CE \cong (\sigma^{\tor})^*(\Escr)$. Moreover, $(\sigma^{\tor})^*(\Escr)$ is called the \emph{canonical extension} of $\sigma^*(\Escr)$.
\end{definition}

\begin{remark}\label{DescribeHodgeStack}
Suppose that $\Xbf\vdual(E') \ne \emptyset$, i.e., there exists a parabolic subgroup $\Pbf$ of $\Gbf_{E'}$ of type $\mu^{-1}$ that is defined over $E'$. Then the choice of $\Pbf$ yields  isomorphisms $\Xbf\vdual_{E'} \cong \Gbf_{E'}/\Pbf \cong \Gbf^c_{E'}/\Pbf^c$, where $\Pbf^c$ is the image of $\Pbf$ in $\Gbf^c_{E'}$. We obtain an isomorphism
\[
\Hdg_{E'} \cong \B{\Pbf^c}.
\]
Hence in this case a vector bundle $\Escr$ on $[\Gbf^c_E\bs \Xbf\vdual]_{E'}$ is the same as a finite-dimensional representation $(V,\eta)$ of $\Pbf^c$ over $E$ and $\sigma^*(\Escr)$ is the automorphic bundle attached to the representation $\eta$.
\end{remark}

The structure morphism $\Xbf\vdual \to \Spec E$ induces a morphism of algebraic stacks
\[
\tau\colon \Hdg_E \lto \B{\Gbf^c_E}.
\]

\begin{definition}\label{DefFlatAutBundle}
Let $E'$ be an extension of $E$. An  automorphic vector bundle on $\Sbf_{K,E'}$ (resp.~on $\Sbf^{\tor}_{K,E'}$) is called \emph{flat} if it is isomorphic to a vector bundle obtained by pullback via $\sigma \circ \tau$ (resp.~via $\sigma^{\tor} \circ \tau$) from a vector bundle on $\B{\Gbf^c_{E'}}$.
\end{definition}

In other words, flat automorphic bundles are those given by representations of $G^c$. They are endowed with a canonical integrable connection.

\begin{definition}\label{DefTautologicalRing}
Let $E'$ be a field extension of $E$. Then the images of $A^{\bullet}(\Hdg_{E'})$ in $A^{\bullet}(\Sbf_{K,E'})$ and in $A^{\bullet}(\Sbf^{\tor}_{K,E'})$ are called the \emph{tautological rings} of $\Sbf_{K,E'}$ and of $\Sbf^{\tor}_{K,E'}$, respectively. They are denoted by $\CT_{E'}$ and $\CT_{E'}^{\tor}$, respectively.
\end{definition}


\begin{remark}\label{TautScalarExt}
Let $E'$ be a field extension of $E$ and let $E''$ be a Galois extension of $E'$ with Galois group $\Gamma$. Then $\Gamma$ acts on $\CT_{E''}$ and one has $(\CT_{E''})^{\Gamma} = \CT_{E'}$ by Proposition~\ref{GaloisInvariants}.

In particular assume that the reductive group $\Gbf$ splits over $E''$. Then we can choose $\Pbf \in \Xbf\vdual(E'')$ and
\[
A^{\bullet}(\Hdg_{E''}) \cong A^{\bullet}(\B{\Pbf}) = A^{\bullet}(\B{\Lbf}) \cong \Sym(X^*(\Tbf)_{\BQ})^{W_{\Lbf}},
\]
where $\Lbf$ is the Levi quotient of $\Pbf$, $\Tbf \subseteq \Lbf$ a maximal torus and $W_{\Lbf}$ the Weyl group of $\Lbf$. Hence $\CT_{E'}$ is a quotient of $\Sym(X^*(\Tbf)_{\BQ})^{\Gamma \ltimes W_{\Lbf}}$.
\end{remark}

\begin{example}\label{TautologicalRingSiegel}
In the Siegel case, we have $\Gbf = \Gbf^c = \GSp_{2g}$ and $\Pbf$ is a Siegel parabolic subgroup, i.e., the stabilizer of some Lagrangian subspace. We identify $\Sbf_K$ with the moduli space of principally polarized abelian varieties of dimension $g$ endowed with some sufficiently fine level structure. Let $f\colon \CA \to \Sbf_K$ be the universal abelian scheme over $\Sbf_K$.

The Hodge stack $\Hdg = \B{\Pbf}$ parametrizes in this case vector bundles together with a symplectic pairing that has values in some line bundle and Lagrangian subbundles. The morphism $\sigma$ is the classifying map of the De-Rham cohomology of $\CA$ and its Hodge filtration where the pairing is induced by the principal polarizations.

The projection of $\Pbf$ onto its Levi quotient $\Lbf$ yields an isomorphism $A^{\bullet}(\Hdg) \cong A^{\bullet}(\B{\Lbf})$. Note that $\Lbf \cong \GL_g \times \BG_m$ in this case. The projection $\GL_g \times \BG_m \to \GL_g$ yields a vector bundle $\Omega^{\flat}$ on $\B{\Lbf}$ whose pullback to $\Sbf_K$ is the Hodge filtration bundle $f_*\Omega^1_{\CA/\Sbf_K}$. Moreover, the projection $\Lbf \to \BG_m$ is the restriction of the multiplier character of $G$ and hence the pullback of the corresponding line bundle to $\Sbf_K$ is trivial. Therefore in this case the tautological ring is the $\BQ$-subalgebra generated by the Chern classes of $f_*\Omega^1_{\CA/\Sbf_K}$ and our notion agrees with the one introduced in \cite{EkedahlVanDerGeer_CycleAbVar}.
\end{example}


\subsection{Stacks of Filtered Fiber Functors}

In Remark~\ref{DescribeHodgeStack} we explained that $\Hdg_{E'}$ is the classifying stack of a certain parabolic subgroup $\Pbf^c$ of $\Gbf^c$ if such a subgroup can be defined over $E'$. In this case, $\Hdg_{E'}$ simply classifies $\Pbf^c$-torsors. Let us briefly digress to give a moduli theoretic description of $\Hdg$ in general.

Hence for the moment let $k$ be any field, let $G$ be a reductive group over $k$, and let $\lambda$ be a cocharacter of $G$ defined over some extension of $k$. Suppose that that the conjugacy class of $\lambda$ is defined over $k$ or, equivalently, that the scheme $\Par_{\lambda}$ of parabolic subgroups of type $\lambda$ is defined over $k$. The reductive group scheme $G$ acts on $\Par_{\lambda}$ and we consider the quotient stack
\[
\Hdg_{G,\lambda} \defeq [G \bs \Par_{\lambda}].
\]
Clearly, $\Hdg_{G,\lambda}$ is a smooth algebraic stack over $S$.

Denote by $\Rep(G)$ the abelian $\otimes$-category of finite-dimensional representations of $G$. For any $S$-scheme $T$ we denote by $\FilLF(T)$ the exact rigid tensor category of filtered finite locally free $\Oscr_T$-modules (\cite[4C]{PWZ2}).

\begin{proposition}\label{CharHodge}
The stack $\Hdg_{G,\lambda}$ is canonically equivalent to the stack $\CF_\lambda$ which sends an $S$-scheme $T$ to the groupoid $\CF_\lambda(T)$ of exact $k$-linear $\otimes$-functors $\Rep(G) \to \FilLF(T)$ of type $\lambda$ (c.f. \cite[5.3]{PWZ2}).
\end{proposition}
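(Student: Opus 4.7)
The plan is to construct a canonical morphism $\Phi\colon \Hdg_{G,\lambda} \to \CF_\lambda$ of stacks over $S$ and verify it is an equivalence, by combining Tannakian duality for $G$-torsors with the classification of filtered fibre functors established in \cite[\S5]{PWZ2}.

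First I would describe $\Phi$ on $T$-points. A $T$-valued point of $\Hdg_{G,\lambda} = [G \bs \Par_{\lambda}]$ is a pair $(\Pscr,s)$ consisting of a $G$-torsor $\Pscr$ over $T$ together with a $G$-equivariant morphism $s\colon \Pscr \to \Par_{\lambda}$; equivalently, it is a $G$-torsor $\Pscr$ endowed with a parabolic subgroup scheme $\Qscr \subseteq \UAut(\Pscr)$ of type $\lambda$, i.e.\ a reduction of the structure group of $\Pscr$ to a parabolic of type $\lambda$. To such a pair I would associate the tensor functor $\omega_{(\Pscr,\Qscr)}\colon \Rep(G) \to \FilLF(T)$ sending $V$ to the associated vector bundle $\Pscr \times^G V$ equipped with the following filtration: étale-locally on $T$ trivialize $\Pscr$ and pick a cocharacter $\mu$ in the geometric conjugacy class $\lambda$ such that $\Qscr$ corresponds to $P_-(\mu)$ under the trivialization, then take the descending filtration of $V$ induced by the non-positive weight spaces of $\mu$. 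Any two choices of $\mu$ with the same $P_-(\mu)$ differ by conjugation by an element of $P_-(\mu)$, which preserves this filtration, so $\omega_{(\Pscr,\Qscr)}$ descends to $T$ and its type-$\lambda$ property in the sense of \cite[\S5.3]{PWZ2} is immediate from the construction.

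Next I would check that $\Phi$ is fully faithful and essentially surjective. Forgetting the filtration sends a filtered fibre functor $F \in \CF_\lambda(T)$ to an exact $k$-linear tensor functor from $\Rep(G)$ to finite locally free $\Oscr_T$-modules; by Tannakian duality for the reductive group $G$ over the field $k$, such a functor corresponds canonically to a $G$-torsor $\Pscr_F$ on $T$. The extra datum of a tensor-compatible, exhaustive, separated filtration of type $\lambda$ on every $F(V)$ then cuts out a closed subgroup scheme $\Qscr_F \subseteq \UAut(\Pscr_F)$, namely the common stabilizer of these filtrations on associated bundles; by the étale-local classification of type-$\lambda$ filtrations in loc.~cit.\ this $\Qscr_F$ is a parabolic subgroup of type $\lambda$. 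The assignments $(\Pscr,\Qscr) \sends \omega_{(\Pscr,\Qscr)}$ and $F \sends (\Pscr_F,\Qscr_F)$ are mutually inverse on objects, and Tannakian duality matches morphisms on both sides, so $\Phi$ is an equivalence of groupoids on every $T$ and hence an equivalence of stacks.

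The main obstacle I expect is carefully matching the two notions of ``type $\lambda$'': on the geometric side $\Par_{\lambda}$ by definition parametrizes parabolics that are étale-locally conjugate to $P_+(\lambda)$, while on the Tannakian side type $\lambda$ means the filtration is étale-locally induced by a cocharacter in the conjugacy class $\lambda$. Both conditions descend along étale covers, and the dictionary $P_-(\mu) \leftrightarrow$ weight filtration of $\mu$ identifies them in the local model. Thus the proof reduces to Tannakian reconstruction of $G$-torsors over $k$ combined with the fibrewise classification of type-$\lambda$ filtrations by parabolic subgroups, both of which are available; the work is in unwinding the definitions rather than in any new geometric input.
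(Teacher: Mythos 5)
Your argument is correct in outline, but it runs in the opposite direction from the paper and re-derives material the paper simply cites. The paper builds a morphism $\CF_\lambda \to \Hdg_{G,\lambda}$ directly: given $\phi\in\CF_\lambda(T)$, after fpqc base change to $T'$ over $T\times_S S'$ the sheaf $\UIsom^\otimes(\phi_{\lambda,T'},\phi_{T'})$ is a $P_+(\lambda)_{T'}$-torsor, hence an object of $(\Hdg_{G,\lambda})_{T'} \cong [*/P_+(\lambda)_{T'}]$, which then descends to $T$; to see this is an equivalence one works fpqc-locally to reduce to $\lambda$ defined over $S$ and cites \cite[Theorem 5.6]{PWZ2}. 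You instead build $\Hdg_{G,\lambda}\to\CF_\lambda$ by unwinding a $T$-point of $[G\bs\Par_\lambda]$ as a $G$-torsor with a parabolic reduction of type $\lambda$ and forming the associated filtered bundles; you then invert it by Tannakian reconstruction of the $G$-torsor from the underlying fiber functor plus a stabilizer argument for the parabolic. This is a legitimate alternative, and arguably more geometrically transparent, but note two things. First, the step ``by Tannakian duality ... such a functor corresponds canonically to a $G$-torsor'' is the nontrivial input here — over a general base $T$ this is Broshi/Nori-type reconstruction, not the classical Tannakian duality over a field — and is itself part of what \cite[Theorem 5.6]{PWZ2} encapsulates; similarly, that the common stabilizer $\Qscr_F$ is a flat closed parabolic subgroup scheme of type $\lambda$ requires the fiberwise classification from loc.~cit. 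So your route effectively re-proves part of \cite[Theorem 5.6]{PWZ2} instead of invoking it, which the paper avoids by going through the $\UIsom$-torsor. Second, your one-sentence descent remark (``both conditions descend along étale covers'') should be replaced by an explicit fpqc descent argument as in the paper, since the conjugacy class $\lambda$ and the trivialization of $\Pscr$ need not be available étale-locally on $T$ but only after the fpqc cover $T'\to T\times_S S'$.
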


\begin{proof}
  First we construct a canonical morphism $\CF_\lambda \to \Hdg_{G,\lambda}$ as follows: Let $T$ be an $S$ scheme and $\phi \colon \Rep(G) \to \FilLF(T)$ be an exact $k$-linear tensor functor $\Rep(G) \to \FilLF(T)$ of type $\lambda$. Similarly, let $\phi_\lambda \colon \Rep(G) \to \FilLF(S')$ be the exact $k$-linear tensor functor induced by the cocharacter $\lambda$. Then, by definition, the fact that $\phi$ is of type $\lambda$ means that there exists an fpqc covering $T'$ of $T \times_S S'$ over which the functors $\phi$ and $\phi_\lambda$ become isomorphic. The group of automorphisms of $\phi_{\lambda,T'}$ is $P_+(\lambda)_{T'}$. Hence the sheaf $\UIsom^\otimes(\phi_{\lambda,T'},\phi_{T'})$ of tensor isomorphisms $\phi_{\lambda,T'} \to \phi_{T'}$ is a right $P_+(\lambda)_{T'}$-torsor over $T'$. Thus under the canonical isomorphism $[*/P_+(\lambda)_{T'}]\cong (\Hdg_{G,\lambda})_{T'}$ noted above we obtain an object $\CP_\phi$ of $\Hdg_{G,\lambda}(T')$. Since $\phi$ is defined over $T$, there is a canonical descent datum for $T'/T$ on $\phi_{T'}$. This descent datum induces an analogous descent datum on $\CP_\phi$, so that $\CP_\phi$ descends canonically to an object of $\Hdg_{G,\lambda}(T)$. Finally one checks that the assignment $\phi \mapsto \CP_\phi$ naturally extends to a morphism of groupoids $\CF_\lambda(T) \to \Hdg_{G,\lambda}(T)$ and that for varying $T$ these morphisms are compatible with base change.

To check that the morphism $\CF_\lambda \to \Hdg_{G,\lambda}$ is an isomorphism of stacks we may work fpqc-locally on $S$. Hence we may assume that $\lambda$ is defined over $S$. Then, under the above isomorphism $\Hdg_{G,\lambda} \cong [*/P_+(\lambda)]$, the claim is given by \cite[Theorem 5.6]{PWZ2}.
\end{proof}


\subsection{The Tautological Ring in Positive Characteristic}\label{TAUTPOS}

From now on we assume that the Shimura datum $(\Gbf,\Xbf)$ is of Hodge type. Then $\Gbf = \Gbf^c$. Let $p$ be a prime of good reduction, i.e., there exists a reductive group scheme $\Gscr$ over $\BZ_p$ such that $\Gscr_{\BQ_p} = \Gbf_{\BQ_p}$. We fix a neat level structure $K = K^pK_p \subseteq \Gbf(\BA_{f})$ with $K^p \subseteq \Gbf(\BA^p_{f})$ compact open and $K_p = \Gscr(\BZ_p) \subseteq \Gbf(\BQ_p)$ hyperspecial. 

\subsubsection*{Integral models}

We fix a place $v$ of the reflex field $E$ over $p$ and denote by $E_v$ the $v$-adic completion of $E$. As $\Gbf_{\BQ_p}$ has a reductive model over $\BZ_p$, $E_v$ is an unramified extension $\BQ_p$. Let $\Sscr_K$ be its canonical smooth integral model over the ring of integers $O_{E_v}$ defined by Kisin (\cite{Kisin_IntegralModels}) and Vasiu (\cite{Vasiu_IntegralModels}) for $p > 2$ and by Kim and Madapusi-Pera (\cite{KimMadapusi_2AdicIntegralModels}) for $p = 2$. Let $\QQbreve_p$ be the completion of a maximal unramified extension of $E_v$ and let $k$ be the residue field of the ring of integers of $\QQbreve_p$. Let $\kappa$ be the residue field of $O_{E_v}$. Then $k$ is an algebraic closure of $\kappa$. Let $S_K$ be the special fiber of $\Sscr_K$ over $\kappa$ and let $G$ be the special fiber of $\Gscr$. Hence $G$ is a reductive group over $\BF_p$. 

By definition, $E$ is the field of definition of the conjugacy class of $\mu_h$. As conjugacy classes of cocharacters depend only on the root datum of the reductive group, we can view the $\Gbf(\BC)$-conjugacy class of $\mu_h$ also as a $\Gbf(\QQbreve_p)$-conjugacy $[\mu_h]_{\QQbreve_p}$ of cocharacters of $\Gbf_{\QQbreve_p}$ because $\Gbf_{\BQ_p}$ splits over an unramified extension. We may also view it as a $G(k)$-conjugacy class $[\mu_h]_k$ of cocharacters of $G_k$. The field of definition of $[\mu_h]_{\QQbreve_p}$ is $E_v$ and the field of definition of $[\mu_h]_k$ is $\kappa$.

As $\Gscr$ and $G$ are quasi-split, we may choose an element in $[\mu_h]_{\QQbreve_p}$ that extends to a cocharacter $\mu$ of $\Gscr$ defined over $O_{E_v}$. We also denote by $\mu$ its reduction modulo $p$, a cocharacter of $G_{\kappa}$. As $\mu_h$ is minuscule, $\mu$ is minuscule. 

\subsubsection*{Arithmetic Compactifications}

We recall some results on integral compactifications by Madapusa-Pera \cite{Madapusi_ToroidalHodge}. Let $\Sscr_K^{\tor}$ be some smooth proper toroidal compactification of the integral model $\Sscr_K$. It depends on the choice of a smooth, finite, admissible rational polyhedral cone decomposition. Moreover let $\Sscr^{\min}_K$ be the minimal compactification of $\Sscr_K$ and let
\[
\pi\colon \Sscr^{\tor}_K \lto \Sscr^{\min}_K
\]
be the canonical morphism. It is constructed as the Stein factorization of a certain proper morphism \cite[5.2.1]{Madapusi_ToroidalHodge}. In particular $\pi$ is proper and has geometrically connected fibers and for every line bundle $\Lscr$ on $\Sscr^{\min}_K$ one has a canonical isomorphism
\begin{equation}\label{EqPushPullMin}
\Lscr \liso \pi_*\pi^*\Lscr.
\end{equation}
We denote the special fibers over $\kappa$ of $\Sscr^{\tor}_K$ and of $\Sscr^{\min}_K$ by $S_K^{\tor}$ and $S_{K}^{\min}$, respectively. The restriction of $\pi$ to the special fibers is again denoted by $\pi$.

Recall that a morphism $f\colon X \to Y$ of finite type between noetherian schemes is called \emph{normal} if it is flat and has geometrically normal fibers. This notion is stable under base change $Y' \to Y$ and if $Y$ is normal and $X \to Y$ is normal, then $X$ is normal.

\begin{lemma}\label{MinCompNormal}
The minimal compactification $\Sscr^{\min}_K$ is normal over $O_{E_v}$.
\end{lemma}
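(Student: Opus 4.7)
The strategy is to exploit the Stein-factorization identity $\pi_*\Oscr_{\Sscr^{\tor}_K} = \Oscr_{\Sscr^{\min}_K}$, which is the $\Lscr = \Oscr$ case of \eqref{EqPushPullMin}, combined with the fact that $\Sscr^{\tor}_K$ is smooth, in particular normal, over $O_{E_v}$.

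I would first prove that $\Sscr^{\min}_K$ is a normal scheme. Since it is a Stein factorization of a reduced scheme, it is itself reduced, and so its normalization $\nu\colon Y \to \Sscr^{\min}_K$ exists and is finite ($O_{E_v}$ being excellent). As $\Sscr^{\tor}_K$ is normal, $\pi$ factors as $\pi = \nu \circ \tilde{\pi}$ for some $\tilde{\pi}\colon \Sscr^{\tor}_K \to Y$. Pushing forward yields inclusions $\Oscr_{\Sscr^{\min}_K} \hookrightarrow \nu_*\Oscr_Y \hookrightarrow \nu_*\tilde{\pi}_*\Oscr_{\Sscr^{\tor}_K} = \pi_*\Oscr_{\Sscr^{\tor}_K} = \Oscr_{\Sscr^{\min}_K}$, forcing $\nu_*\Oscr_Y = \Oscr_{\Sscr^{\min}_K}$. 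Since $\nu$ is finite, it must be an isomorphism, whence $\Sscr^{\min}_K$ is normal as a scheme.

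Next I would check flatness of $\Sscr^{\min}_K$ over the DVR $O_{E_v}$. Now that $\Sscr^{\min}_K$ is known to be reduced, flatness over $O_{E_v}$ is equivalent to the absence of irreducible components entirely contained in the special fiber. But $\pi$ is proper with geometrically connected nonempty fibers, hence surjective, so every component of $\Sscr^{\min}_K$ is the image of one of $\Sscr^{\tor}_K$; and since $\Sscr^{\tor}_K$ is $O_{E_v}$-flat, each of its components dominates $\Spec O_{E_v}$.

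The last and most delicate point is the geometric normality of the fibers of $\Sscr^{\min}_K \to \Spec O_{E_v}$. The generic fiber is the classical minimal compactification of $\Sbf_{K,E_v}$, whose normality is standard. For the special fiber $S^{\min}_K$ the plan is to repeat the argument of the second paragraph with the smooth $\kappa$-scheme $S^{\tor}_K$ in place of $\Sscr^{\tor}_K$, which reduces to verifying the base-change identity $(\pi_\kappa)_*\Oscr_{S^{\tor}_K} = \Oscr_{S^{\min}_K}$. By the theorem on cohomology and base change this follows from the local freeness (or vanishing) of $R^1\pi_*\Oscr_{\Sscr^{\tor}_K}$ in a neighbourhood of the special fiber; establishing this freeness is the main obstacle, and is precisely the content of Madapusi-Pera's construction of the integral minimal compactification in \cite{Madapusi_ToroidalHodge}, to which one may appeal.
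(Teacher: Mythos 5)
Your steps~1 and 2 (that $\Sscr^{\min}_K$ is a normal scheme and is $O_{E_v}$-flat) are both correct and cleanly argued from $\pi_*\Oscr_{\Sscr^{\tor}_K}=\Oscr_{\Sscr^{\min}_K}$ together with the smoothness of $\Sscr^{\tor}_K$. This is a genuinely different route from the paper's, which instead quotes the explicit description of the completed local rings of $\Sscr^{\min}_K$ at geometric points --- established by Lan in the PEL case (\cite[7.2.3.17, 7.2.4.3]{Lan_CompactifyPEL}) and by Madapusi-Pera \cite[5.2.8]{Madapusi_ToroidalHodge} in the Hodge case --- a description from which normality over $O_{E_v}$ is read off directly, with no Stein-factorization argument needed.

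However, step~3 has a real gap, and it sits exactly where the content of the lemma is. The identity $(\pi_\kappa)_*\Oscr_{S^{\tor}_K}=\Oscr_{S^{\min}_K}$ you reduce to is, given what is already known, \emph{equivalent} to the normality of $S^{\min}_K$: if $S^{\min}_K$ were normal, the Stein factorization of the proper birational $\pi_\kappa$ from the normal source $S^{\tor}_K$ would give the identity (the finite birational piece is an isomorphism by normality), and conversely your second paragraph extracts normality from it. So this is a restatement rather than a reduction. The appeal to Grothendieck base change is also not quite right as written: $\Oscr_{\Sscr^{\tor}_K}$ is not flat over $\Sscr^{\min}_K$ (since $\pi$ is not flat), so the standard theorem does not apply. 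What one can do is tensor $\Oscr_{\Sscr^{\tor}_K}$ with $0\to O_{E_v}\xrightarrow{t} O_{E_v}\to\kappa\to 0$ and push forward, which shows that the identity holds iff $R^1\pi_*\Oscr_{\Sscr^{\tor}_K}$ has no $t$-torsion. Finally, this torsion-freeness is \emph{not} ``the content of Madapusi-Pera's construction'': what \cite[5.2.8]{Madapusi_ToroidalHodge} provides is the completed-local-ring description, not a direct-image computation. As written, step~3 therefore begs the question, and the simplest repair is to abandon the cohomological detour and read off relative normality from the local ring description, which is precisely the paper's proof --- making steps~1 and~2 superfluous.
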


\begin{proof}
For Shimura varieties of PEL type this is shown in \cite[7.2.4.3]{Lan_CompactifyPEL} using the description of completed local rings of $\Sscr^{\min}_K$ in geometric points (\cite[7.2.3.17]{Lan_CompactifyPEL}). But the same description also holds for the minimal compactification for Shimura varieties of Hodge type by \cite[5.2.8]{Madapusi_ToroidalHodge}.
\end{proof}

\subsubsection*{The Tautological Ring in Characteristic $p$}

The cocharacter $\mu\colon \Gm[O_{E_v}] \to \Gscr_{O_{E_v}}$ defines a parabolic subgroup $\Pscr \defeq P_-(\mu)$ of $\Gscr_{O_{E_v}}$ and we set
\[
\Hdg_{O_{E_v}} \defeq \B{\Pscr}.
\]
Then $\Hdg_{O_{E_v}} \otimes_{O_{E_v}} E_v = \Hdg_{E_v}$ by Remark~\ref{DescribeHodgeStack}. We denote by $P \defeq \Pscr_{\kappa}$ the special fiber of $\Pscr$ which is a parabolic subgroup of $G_{\kappa}$. Then we have
\[
\Hdg_{\kappa} \defeq \Hdg_{O_{E_v}} \otimes_{O_{E_v}} \kappa = \B{P}.
\]
By \cite[5.3]{Madapusi_ToroidalHodge} the morphism $\sigma$ and $\sigma^{\tor}$ extend to a morphism
\begin{equation}\label{EqToHdg}
\sigma\colon \Sscr_K \to \Hdg_{O_{E_v}}, \qquad \sigma^{\tor}\colon \Sscr^{\tor}_K \to \Hdg_{O_{E_v}}
\end{equation}
of smooth algebraic stacks over $O_{E_v}$. Let $O'$ be a local finite flat extension of $O_{E_v}$. As $\Hdg_{O_{E_v}} = \B{\Pscr}$, a vector bundle on $\Hdg_{O'}$ is given by an algebraic representation $\rho$ of the group scheme $\Pscr_{O'}$ on some finite free $O'$-module. The pullback of such a vector bundle to $\Sscr_{K,O'}$ via $\sigma$ (resp.~to $\Sscr^{\tor}_{K,O'}$ via $\sigma^{\tor}$) is denoted by
\[
\Vscr(\rho) \qquad (\text{resp.\ }\Vscr(\rho)^{\tor}).
\]
Again we define vector bundles on $\Sscr_K$ of this form to be \emph{automorphic vector bundles} and $\Vscr(\rho)^{\tor}$ is the canonical extension of $\Vscr(\rho)$ to the toroidal compactification $\Sscr_K^{\tor}$.

The morphisms $\sigma$ and $\sigma^{\tor}$ induce on special fibers morphisms
\begin{equation}\label{EqHdgMorph}
\sigma\colon S_K \to \Hdg_{\kappa}, \qquad \sigma^{\tor}\colon S^{\tor}_K \to \Hdg_{\kappa}
\end{equation}
of smooth algebraic stacks over $\kappa$. Again we have the notion of an automorphic bundle on $S_K$ and its canonical extension to $S^{\tor}_K$.

We now define the tautological rings in positive characteristic as in characteristic $0$.

\begin{definition}\label{DefTautologicalRingCharp}
Let $\kappa'$ be a field extension of $\kappa$, then the images of $A^{\bullet}(\Hdg_{\kappa'})$ in $A^{\bullet}(S_{K,\kappa'})$ and in $A^{\bullet}(S^{\tor}_{K,\kappa'})$ are called the \emph{tautological rings} of $S_{K,\kappa'}$ and of $S^{\tor}_{K,\kappa'}$, respectively. They are denoted by $\CT_{\kappa'}$ and $\CT_{\kappa'}^{\tor}$, respectively.
\end{definition}

%


\section{Cycle Classes of Ekedahl-Oort Strata}\label{EOTAUT}

We continue to use the notation of Subsection~\ref{TAUTPOS}, i.e., $\Sbf_K/E$ denotes the Shimura variety attached to a Shimura datum of Hodge type $(\Gbf,\Xbf)$ and a neat open compact subgroup $K \subset \Gbf(\BA_f)$, $\Sscr_K/O_{E_v}$ denotes its smooth integral model at a prime $p$ of good reduction, $S_K/\kappa$ its special fiber. We denote by $\Sscr_K^{\tor}$ a fixed smooth proper toroidal compactification of $\Sscr_K$ and by $S^{\tor}_K$ its special fiber. Moreover, $\Gscr$ denotes the reductive model of $\Gbf_{\BQ_p}$ which is endowed with a cocharacter $\mu$ defined over $O_{E_v}$. We denote by $(G,\mu)$ the special fiber of $(\Gscr,\mu)$.

From now on we assume that $p > 2$. This hypothesis is only needed for the existence of the smooth morphism $\zeta$ and the morphism $\zeta^{\tor}$ defined below in \eqref{EqZeta} and \eqref{EqZetaTor}. It seems probable that these morphisms also exist with the stated properties for $p = 2$ using ideas from \cite{KimMadapusi_2AdicIntegralModels}.

\subsection{Ekedahl-Oort Strata}

From the reductive group $G$ over $\BF_p$ and the cocharacter $\mu\colon \Gm[\kappa] \to G_{\kappa}$ we obtain the stack $\GZip^{\mu}$ recalled in Section~\ref{GZIP}. We use all notations introduced in Section~\ref{GZIP} for this pair $(G,\mu)$. In particular, we define $P \defeq P_-(\mu)$, a parabolic subgroup of $G$ of type $I$ which is defined over $\kappa$. The choice of $P$ yields an isomorphism $\Hdg_{\kappa} \cong \B{P}$. The morphism $\sigma$ \eqref{EqHdgMorph} is a morphism $\sigma\colon S_K \to \B{P}$.

In a series of papers (\cite{VW_PEL}, \cite{Zhang_EOHodge}, \cite{Wortmann_MuOrd}) Viehmann, the first author, Zhang, and Wortmann defined (for $p > 2$) a smooth morphism
\begin{equation}\label{EqZeta}
\zeta\colon S_K \lto \GZip^{\mu},
\end{equation}
which has also been extended to toroidal compactification
\begin{equation}\label{EqZetaTor}
\zeta^{\tor}\colon S^{\tor}_K \lto \GZip^{\mu}
\end{equation}
by Goldring and Koskivirta \cite[6.2.1]{GoKo_HasseHeckeGalois}. Moreover, one has by construction
\begin{equation}\label{EqVBComp}
\beta \circ \zeta = \sigma, \qquad \beta \circ \zeta^{\tor} = \sigma^{\tor}
\end{equation}
where $\beta$ is the morphism defined in \eqref{EqDefBeta}.

\begin{remark}\label{ZetaTorSmooth}
We conjecture the morphism $\zeta^{\tor}$ to be smooth as well. Beno\^it Stroh has sketched an argument (private communication) for a proof based on his joint work with Kai-Wen Lan \cite{LanStroh_Compactify} and we will give a detailed proof elsewhere. For now we add this as an additional assumption if necessary. This assumption is trivially satisfied if $S_K$ is proper over $\kappa$, i.e., if $\Gbf^{\rm ad}$ is $\BQ$-anisotropic.
\end{remark}

Recall the definition of the zip strata $Z_w \subseteq \GZip^{\mu}$ \eqref{EqDefZw}. The \emph{Ekedahl-Oort strata of $S_K$} (resp.~of $S^{\tor}_K$) are defined for $w \in {}^IW$ as
\[
S_{K,w} \defeq \zeta^{-1}(Z_w), \qquad S^{\tor}_{K,w} \defeq (\zeta^{\tor})^{-1}(Z_w).
\]
The smoothness of $\zeta$ implies the following properties of the Ekedahl-Oort strata.
\begin{assertionlist}
\item
For all $w \in {}^IW$ the $S_{K,w}$ are locally closed smooth subschemes of $S_K$. They are equi-dimensional of dimension $\ell(w)$ by \cite[5.11]{PWZ1}.
\item
By \eqref{EqClosureGZipStratum} one has
\begin{equation}\label{EqClosureEO}
\zeta^{-1}(\overline{Z}_w) = \overline{S}_{K,w} = \bigcup_{w' \preceq w}S_{K,w'}.
\end{equation}
\item
The map $\zeta^*\colon A^{\bullet}(\GZip^{\mu}) \to A^{\bullet}(S_K)$ of $\BQ$-algebras sends $[\overline{Z}_w]$ to $[\overline{S}_{K,w}]$.
\end{assertionlist}
If $\zeta^{\tor}$ is smooth, then the same properties hold for the subschemes $S^{\tor}_{K,w}$ of $S_K^{\tor}$ as well, in particular we find in this case
\begin{equation}\label{EqClosureEOtor}
(\zeta^{\tor})^{-1}(\overline{Z}_w) = \overline{S^{\tor}_{K,w}} = \bigcup_{w' \preceq w}S^{\tor}_{K,w'}.
\end{equation}

\begin{proposition}\label{AboutTautological}
The tautological rings $\CT$ and $\CT^{\tor}$ are finite-dimensional $\BQ$-algebras that are generated as a $\BQ$-vector space by $[\overline{S}_{K,w}]$ and $\zeta^{\tor,*}([\overline{Z}_w])$ for $w \in {}^IW$, respectively.
\end{proposition}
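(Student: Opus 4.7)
The plan is to reduce the statement directly to results already available. By \eqref{EqVBComp} we have $\sigma = \beta \circ \zeta$ and $\sigma^{\tor} = \beta \circ \zeta^{\tor}$, so pulling back gives factorizations
\[
\sigma^* = \zeta^* \circ \beta^*, \qquad \sigma^{\tor,*} = \zeta^{\tor,*} \circ \beta^*.
\]
Thus $\CT$ (resp.~$\CT^{\tor}$) is the image of $\zeta^* \circ \beta^*$ (resp.~of $\zeta^{\tor,*} \circ \beta^*$) applied to $A^{\bullet}(\Hdg_{\kappa})$. The key input is the surjectivity of $\beta^*$, which is the content of the first part of Theorem~\ref{PullbackKey} (combined with Lemma~\ref{RationalCI}); this lets me replace the image of $\sigma^{(\tor),*}$ by the image of $\zeta^{(\tor),*}$. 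Consequently
\[
\CT = \zeta^*\bigl(A^{\bullet}(\GZip^{\mu})\bigr), \qquad \CT^{\tor} = \zeta^{\tor,*}\bigl(A^{\bullet}(\GZip^{\mu})\bigr).
\]

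Next I would invoke Proposition~\ref{ChowGZipNaive}, which asserts that $A^{\bullet}(\GZip^{\mu})$ is a finite-dimensional $\BQ$-algebra of dimension $\#{}^IW$ with $\BQ$-basis $\{[\overline{Z}_w]\}_{w \in {}^IW}$. Pushing this basis through $\zeta^*$ and $\zeta^{\tor,*}$ produces finite spanning sets for $\CT$ and $\CT^{\tor}$, which in particular shows both rings are finite-dimensional over $\BQ$.

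Finally, to identify the generators of $\CT$ with the cycle classes $[\overline{S}_{K,w}]$ I would use the smoothness of $\zeta$ (established in \cite{Zhang_EOHodge}): flat pullback of cycles gives $\zeta^*([\overline{Z}_w]) = [\zeta^{-1}(\overline{Z}_w)]$, which equals $[\overline{S}_{K,w}]$ by \eqref{EqClosureEO}. For $\CT^{\tor}$ no such identification is claimed in the statement because $\zeta^{\tor}$ is not known to be smooth without the extra hypothesis discussed in Remark~\ref{ZetaTorSmooth}; accordingly the generators remain expressed simply as $\zeta^{\tor,*}([\overline{Z}_w])$, and no obstacle arises. Overall, the argument is a bookkeeping exercise with no genuinely hard step: the entire content lies in the surjectivity of $\beta^*$ and in Brokemper's basis description of $A^{\bullet}(\GZip^{\mu})$, both of which are already available in the preceding sections.
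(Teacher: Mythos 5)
Your proof is correct and is essentially the paper's own argument, just spelled out more fully: the paper compresses the factorization $\sigma^{(\tor),*} = \zeta^{(\tor),*} \circ \beta^*$, the surjectivity of $\beta^*$ from Theorem~\ref{PullbackKey}/Lemma~\ref{RationalCI}, and the appeal to Proposition~\ref{ChowGZipNaive} into two sentences, whereas you make each input explicit, including the identification $\zeta^*([\overline{Z}_w]) = [\overline{S}_{K,w}]$ via smoothness of $\zeta$ and the correct explanation for why the $\CT^{\tor}$ generators are left as $\zeta^{\tor,*}([\overline{Z}_w])$.
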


Note that if $\zeta^{\tor}$ was not flat, then it could happen that $\zeta^{\tor,*}([\overline{Z}_w]) \ne [\overline{S^{\tor}_{K,w}}]$ in $A^{\bullet}(S^{\tor}_K)$. Of course, that cannot happen if $\zeta^{\tor}$ is smooth.

\begin{proof}
By \eqref{EqVBComp} the tautological rings are the images of $A^{\bullet}(\GZip^{\mu})$. Hence the claim follows from Proposition~\ref{ChowGZipNaive}.
\end{proof}

We also recall and complement some results from \cite{GoKo_HasseHeckeGalois} about the Ekedahl-Oort strata in the minimal compactification. Let $\pi\colon S^{\tor}_K \to S_K^{\min}$ be the projection and set
\begin{equation}\label{EqDefEOMin}
S^{\min}_{K,w} \defeq \pi(S^{\tor}_{K,w}), \qquad w \in {}^IW.
\end{equation}
Then the $S^{\min}_{K,w}$ are pairwise disjoint, in other words $\pi^{-1}(S^{\min}_{K,w}) = S^{\tor}_{K,w}$, and locally closed in $S^{\min}_K$ by \cite[6.3.1]{GoKo_HasseHeckeGalois}. We endow $S^{\min}_{K,w}$ with the reduced scheme structure.

We will also use the following result of Goldring and Koskivirta.

\begin{theorem}[\cite{GoKo_HasseHeckeGalois}]\label{AffineEO}
The Ekedahl-Oort strata $S^{\min}_{K,w}$ in the minimal compactification are affine for all $w \in {}^IW$. In particular $S_{K,w}$ is quasi-affine for all $w$.
\end{theorem}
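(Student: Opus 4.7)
The plan is to deduce affineness of $S^{\min}_{K,w}$ from the ampleness of the Hodge line bundle on $S^{\min}_K$ together with the existence of group-theoretical Hasse invariants on $\GZip^\mu$. Recall that on $\GZip^\mu$ one has a Hodge line bundle $\omega^\flat$ whose pullback $\omega^{\tor} \defeq \zeta^{\tor,*}\omega^\flat$ to $S^{\tor}_K$ descends under $\pi$ to an ample line bundle $\omega^{\min}$ on the projective scheme $S^{\min}_K$; this is a classical fact about Hodge-type minimal compactifications.

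The key input from \cite{GoKo_HasseHeckeGalois} is that for every $w \in {}^IW$ there exist an integer $N_w \geq 1$ and a section $h_w \in H^0(\overline{Z}_w, (\omega^\flat)^{\otimes N_w})$ whose non-vanishing locus is exactly the open stratum $Z_w \subseteq \overline{Z}_w$. Pulling back via $\zeta^{\tor}$ and using $(\zeta^{\tor})^{-1}(\overline{Z}_w) = \overline{S^{\tor}_{K,w}}$ from \eqref{EqClosureEOtor}, we obtain a section $\zeta^{\tor,*}(h_w)$ of $(\omega^{\tor})^{\otimes N_w}$ on $\overline{S^{\tor}_{K,w}}$ whose non-vanishing locus is precisely $S^{\tor}_{K,w}$.

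Next, I would push this section down along $\pi$. By construction of the Ekedahl-Oort strata on the minimal compactification, $\pi^{-1}(\overline{S^{\min}_{K,w}}) = \overline{S^{\tor}_{K,w}}$ scheme-theoretically (after reduction), and $\pi$ has geometrically connected fibers with $\pi_*\CO = \CO$. Combining this with the projection formula and \eqref{EqPushPullMin} applied to $\omega^{\min}$, the section $\zeta^{\tor,*}(h_w)$ descends uniquely to a section $\tilde h_w \in H^0(\overline{S^{\min}_{K,w}}, (\omega^{\min})^{\otimes N_w})$. Since $\pi^{-1}(S^{\min}_{K,w}) = S^{\tor}_{K,w}$, the non-vanishing locus of $\tilde h_w$ is exactly $S^{\min}_{K,w}$. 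As $\omega^{\min}$ is ample on the projective scheme $S^{\min}_K$, its restriction to the closed subscheme $\overline{S^{\min}_{K,w}}$ remains ample; therefore the complement of the vanishing locus of $\tilde h_w$ in $\overline{S^{\min}_{K,w}}$ is affine, proving that $S^{\min}_{K,w}$ is affine.

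The quasi-affineness of $S_{K,w}$ is then immediate: the natural map $S_K \to S^{\min}_K$ is an open immersion, identifying $S_K$ with the complement of the boundary in $S^{\min}_K$, and under this identification $S_{K,w} = S_K \cap S^{\min}_{K,w}$ is an open subscheme of the affine scheme $S^{\min}_{K,w}$. The main obstacle in this plan is really the existence of Hasse invariants, which we take as a black box from \cite{GoKo_HasseHeckeGalois}; the secondary technical point is the descent step from $\overline{S^{\tor}_{K,w}}$ to $\overline{S^{\min}_{K,w}}$, for which one needs that $\pi$ restricted to the closed stratum still satisfies $\pi_*\CO = \CO$ on the reduction, a fact that follows from Lemma~\ref{MinCompNormal} and the structure of $\pi$ as a Stein factorization.
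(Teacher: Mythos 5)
The paper does not prove Theorem~\ref{AffineEO}: it cites it directly from \cite{GoKo_HasseHeckeGalois} with no independent argument, so there is no ``paper's proof'' to compare against. What you have sketched is essentially a reconstruction of the Goldring--Koskivirta argument, and the overall strategy (pull back a Hasse invariant from $\GZip^\mu$, identify its non-vanishing locus on $\overline{S^{\tor}_{K,w}}$, descend to $\overline{S^{\min}_{K,w}}$, invoke ampleness of $\omega^{\min}$) is the right one. One small remark: to build the section on $\overline{S^{\tor}_{K,w}}$ you do not need the full strength of \eqref{EqClosureEOtor} (and hence do not need smoothness of $\zeta^{\tor}$); you only need the inclusion $\overline{S^{\tor}_{K,w}} \subseteq (\zeta^{\tor})^{-1}(\overline{Z}_w)$, which is automatic, plus $\pi^{-1}(S^{\min}_{K,w}) = S^{\tor}_{K,w}$.

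There is, however, a genuine gap in the descent step. You assert that $\pi$ restricted to $\overline{S^{\tor}_{K,w}}$ still satisfies $\pi_*\CO = \CO$ on $\overline{S^{\min}_{K,w}}$, citing Lemma~\ref{MinCompNormal} and the Stein factorization structure of $\pi$. Lemma~\ref{MinCompNormal} only gives normality of the whole $\Sscr^{\min}_K$; it says nothing about the closures $\overline{S^{\min}_{K,w}}$ of the strata, which need not be normal, and the identity $\pi_*\CO_{S^{\tor}_K}=\CO_{S^{\min}_K}$ does not restrict to closed subschemes. The correct way around this is precisely the device the paper uses in the proof of Lemma~\ref{ConnectedEOStrata}: take the Stein factorization $\overline{S^{\tor}_{K,w}} \to Y' \xrightarrow{f} \overline{S^{\min}_{K,w}}$ of the restricted $\pi$. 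Here $f$ is finite, and since $\pi$ has geometrically connected fibers, so does $f$, which makes $f$ a finite universal homeomorphism. The section descends to $Y'$ (not to $\overline{S^{\min}_{K,w}}$), the pullback $f^*\omega^{\min}$ is ample on the projective scheme $Y'$ because $f$ is finite, so $f^{-1}(S^{\min}_{K,w})$ is affine; and then $S^{\min}_{K,w}$ is affine by the Chevalley affineness criterion (a finite surjective morphism with affine source has affine target). Without this detour your argument has no way to produce the section on $\overline{S^{\min}_{K,w}}$ itself. The rest of the proposal, including the final remark about quasi-affineness of $S_{K,w}$ as an open subscheme of the affine $S^{\min}_{K,w}$, is correct.
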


If $\zeta^{\tor}$ is smooth, then \eqref{EqClosureEOtor} implies
\begin{equation}\label{EqClosureEOmin}
\overline{S^{\min}_{K,w}} = \bigcup_{w' \preceq w}S^{\min}_{K,w'}
\end{equation}
because $\pi$ is proper. Below (Corollary~\ref{EOminDim}) we will also show that $S^{\min}_{K,w}$ is equi-dimensional of dimension $\ell(w)$ if $\zeta^{\tor}$ is smooth.


\subsection{Connectedness of unions of Ekedahl-Oort strata}\label{CONNEO}

In this subsection we show that the smoothness of $\zeta^{\tor}$ allows to deduce from \cite{GoKo_HasseHeckeGalois} certain results of connectedness of Ekedahl-Oort strata. These are new even in the Siegel case.
Hence until the end of Subsection~\ref{CONNEO} we assume that $\zeta^{\tor}$ is smooth.

\subsubsection*{Two Lemmas on Connectedness}

For lack of a reference we collect two probably well known lemmas.

For a topological space $X$ we denote by $\pi_0(X)$ the space of connected components of $X$. If $X$ is a noetherian scheme, then $\pi_0(X)$ is a finite discrete space.

\begin{lemma}\label{ConnectedLemma}
Let $f\colon X \to Y$ be a continuous map between topological spaces with connected (and hence non-empty) fibers. Suppose that the topology on $Y$ is the quotient topology of the topology on $X$ (e.g., if $f$ is closed or open). Then $Z \sends f^{-1}(Z)$ yields a bijection $\pi_0(Y) \iso \pi_0(X)$.
\end{lemma}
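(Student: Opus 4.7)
The plan is to show that sending a connected component $Z \subseteq Y$ to its preimage $f^{-1}(Z) \subseteq X$ yields a well-defined bijection onto $\pi_0(X)$. Since the fibers of $f$ are nonempty, $f$ is surjective, which immediately gives injectivity of $Z \mapsto f^{-1}(Z)$ (from $f(f^{-1}(Z_i)) = Z_i$); and surjectivity onto $\pi_0(X)$ will follow once we know the preimages $f^{-1}(Z)$ are themselves connected components of $X$, since for any component $C \subseteq X$ containing a point $x$, the image $f(C)$ is connected and therefore lies in the component $Z$ of $f(x)$, forcing $C \subseteq f^{-1}(Z)$.

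The heart of the argument is therefore: for a connected component $Z \subseteq Y$, the set $f^{-1}(Z)$ is connected. I would proceed by contradiction: suppose $f^{-1}(Z) = A \sqcup B$ with $A$ and $B$ both nonempty and clopen in $f^{-1}(Z)$. The crucial observation is that each fiber $f^{-1}(y)$ for $y \in Z$, being connected, lies entirely in $A$ or entirely in $B$. Consequently $A$ and $B$ are \emph{saturated} in the sense that $A = f^{-1}(f(A))$ and $B = f^{-1}(f(B))$, and $f(A) \sqcup f(B) = Z$ is a partition.

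Now I use that $Z$, being a connected component, is closed in $Y$, so $f^{-1}(Z)$ is closed in $X$ (by continuity, or equivalently by the quotient topology), and therefore $A$ and $B$ are closed in $X$ as well. Since $A$ and $B$ are saturated, the quotient topology on $Y$ yields that $f(A)$ and $f(B)$ are closed in $Y$: indeed, $X \setminus A$ is open and saturated, so by the defining property of the quotient topology its image $Y \setminus f(A)$ is open in $Y$, and similarly for $B$. Thus $Z = f(A) \sqcup f(B)$ is a decomposition into two nonempty closed subsets, contradicting the connectedness of $Z$.

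Once $f^{-1}(Z)$ is known to be connected and closed, it is automatically a connected component: for any $x \in f^{-1}(Z)$, its component $C_x$ satisfies $f(C_x) \subseteq Z$ by the argument in the first paragraph, hence $C_x \subseteq f^{-1}(Z)$, and conversely $C_x \supseteq f^{-1}(Z)$ by maximality of $C_x$ among connected subsets containing $x$. The main obstacle in the whole proof is precisely the step where one converts $A$ clopen in $f^{-1}(Z)$ into topological information about $f(A)$ in $Y$; this is where both hypotheses (connected fibers and quotient topology) are indispensable, the former to force saturation of $A$, the latter to transport the closedness of $A$ to closedness of $f(A)$.
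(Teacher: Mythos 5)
Your proof is correct and takes essentially the same approach as the paper's: the key step in both is that connectedness of the fibers forces a clopen partition of the preimage to be saturated, after which the quotient-topology hypothesis pushes the partition down to $Y$. The paper reduces immediately to the case where $Y$ is connected and argues with open sets; you argue with closed sets and work directly with a component $Z$ of $Y$ (using that components are closed), which neatly sidesteps the small point that restricting a quotient map to the preimage of a closed subset is again a quotient map, and you spell out the well-definedness, injectivity, and surjectivity of $Z\mapsto f^{-1}(Z)$ that the paper leaves implicit.
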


\begin{proof}
We may assume that $Y$ is connected. Assume $X$ could be covered by disjoint open non-empty subsets $U$ and $V$. As all fibers are connected, each fiber is either contained in $U$ or in $V$. This shows $f^{-1}(f(U)) = U$ and $f^{-1}(f(V)) = V$. Hence $f(U)$ and $f(V)$ are open, disjoint and cover $Y$. Contradiction.
\end{proof}

Let $l \geq 0$ be an integer. Recall that a noetherian scheme $X$ is called \emph{connected in dimension $l$} if $X \setminus Z$ is connected for every closed subset $Z \subseteq X$ of dimension $< l$. Hence $X$ is connected if and only if $X$ is connected in dimension $0$. A scheme $X$ of finite type over a field $k$ is called \emph{geometrically connected in dimension $l$} if $X \otimes_k k'$ is connected in dimension $l$ for all field extensions $k'$ of $k$.

We recall the following variant of a theorem of Grothendieck.

\begin{proposition}\label{GrothendieckConnected}
Let $k$ be a field, let $X$ be a proper $k$-scheme and let $D \subseteq X$ be an effective ample divisor. Let $l \geq 1$ be an integer. Suppose that the irreducible components of $X$ have dimension $\geq l+1$ and that $X$ is geometrically connected in dimension $\geq l$. Then the irreducible components of $D$ have dimension $\geq l$ and $D$ is geometrically connected in dimension $\geq l-1$.
\end{proposition}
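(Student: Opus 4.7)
The plan is to first reduce to the case where $k$ is algebraically closed, establish the dimension claim via Krull's Hauptidealsatz, and then deduce the connectedness assertion from Grothendieck's connectedness theorem (SGA~2, Exp.~XIII).

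First I would reduce to $k = \bar k$: geometric connectedness in dimension $\geq m$ is by definition preserved under arbitrary base change, and the remaining hypotheses are clearly stable. Over $\bar k$, connectedness in dimension $\geq m$ of a noetherian scheme $W$ amounts to $W \setminus Z$ being connected for every closed $Z \subseteq W$ with $\dim Z < m$.

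For the dimension claim, let $D_0$ be an irreducible component of $D$, contained in an irreducible component $X_0$ of $X$. Since $D$ is a Cartier divisor and hence locally cut out by a single equation, Krull's principal ideal theorem gives $\codim_{X_0}(D_0) \leq 1$. The restriction of the ample line bundle $\Oscr_X(D)$ to $X_0$ is ample on the positive-dimensional proper scheme $X_0$ (since $\dim X_0 \geq l+1 \geq 2$) and therefore cannot be trivial; so $X_0 \not\subseteq D$, giving $\dim D_0 = \dim X_0 - 1 \geq l$.

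For the connectedness claim, fix a closed $Z \subseteq D$ with $\dim Z \leq l-2$; the task is to show that $D \setminus Z$ is connected. By hypothesis $X \setminus Z$ is connected (since $\dim Z < l$), and the complement of $D$ in $X$ is affine because $D$ is ample. This places us in the setting of Grothendieck's connectedness theorem (SGA~2, Exp.~XIII, Thm.~2.1 and its corollaries): for a proper $k$-scheme all of whose irreducible components have dimension $\geq n+1$ and which is connected in dimension $\geq n$, the ample divisor $D$ is connected in dimension $\geq n-1$. Taking $n = l$ yields the desired conclusion; the refinement to $D \setminus Z$ for closed $Z$ of dimension $\leq l-2$ follows by enlarging $Z$ to a closed subset $Z' \subseteq X$ still of dimension $< l$, noting that $(X\setminus Z') \setminus (D \setminus Z')$ is again affine, and that by the formal-functions argument the connectedness of $X \setminus Z'$ transports to $D \setminus Z'$.

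The main obstacle I anticipate is matching the exact form of the connectedness theorem to our hypotheses. Classical formulations often demand an $(S_2)$ or depth hypothesis on $X$ along $D$, whereas our proposition substitutes the purely global hypothesis of connectedness in dimension $\geq l$. I would handle this either by induction on $l$ (the base case $l = 1$ being the standard formulation: an ample effective divisor on a proper $k$-scheme whose components all have dimension $\geq 2$ and which is connected after removing any finite set of points is itself connected) or by translating directly via the theorem on formal functions applied to $\Oscr_X$ along $D$, using Serre vanishing for high powers of $\Oscr_X(D)$ together with the ampleness of $D$ to control the relevant local cohomology $H^{\bullet}_D(X, \Oscr_X)$ in degrees $\leq l - 1$.
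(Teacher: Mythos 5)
Your approach is aligned with the paper's in spirit — both rest on Grothendieck's connectedness theory from SGA~2, Exp.~XIII — but you are missing the one reduction that the paper makes and that turns the final citation into an exact match: since $D$ is the zero locus of a section $s$ of an ample line bundle $\Lscr$, replace $\Lscr$ by $\Lscr^{\otimes n}$ and $s$ by $s^n$ for $n\gg 0$, so that $\Lscr$ becomes very ample. This does not change the underlying closed subset $D$, so the topological claims (dimensions of components and connectedness in dimension) are unaffected, and now $X\subset \BP^N_k$ and $D = X\cap H$ for a hyperplane $H$. Then the statement about hyperplane sections, \cite[Exp.~XIII, 2.3]{SGA2}, applies verbatim. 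You instead aim at the more abstract formulation with an affine complement and correctly sense that this does not exist off-the-shelf in the form you need (this is exactly the ``main obstacle'' you flag); the paper's reduction dissolves that obstacle rather than fighting through it with $(S_2)$-type hypotheses or formal-functions arguments.

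There is also a localized logical gap in your dimension argument. You infer $X_0\not\subseteq D$ from the nontriviality of the ample restriction $\Oscr_X(D)|_{X_0}$. But the isomorphism class of $\Oscr_X(D)|_{X_0}$ does not control whether the support of $D$ contains $X_0$, so this implication does not hold as stated. The correct (and simpler) reason $X_0\not\subseteq D$ is that $D$ is an effective Cartier divisor: locally it is cut out by a nonzerodivisor, so its support contains no associated point of $X$, in particular no generic point of an irreducible component. With that fix, your Krull argument for the dimension bound is fine, though once one has reduced to the hyperplane-section case both the dimension and connectedness statements are part of the cited result, so the separate Krull argument is unnecessary.
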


\begin{proof}
Let $D$ be the vanishing locus of a section $s$ of an ample line bundle $\Lscr$. Replacing $\Lscr$ and $s$ by some power, we may assume that $\Lscr$ is very ample and hence that $X$ is a closed subscheme of projective space $\BP^N_k$ and that $D = X \cap H$ for some hyperplane $H$. Then the result follows from \cite[Exp.~XIII, 2.3]{SGA2}.
\end{proof}

\subsubsection*{Inheritance of Connectedness}

For any subset $B \subseteq {}^IW$ we set $Z_B := \bigcup_{w\in B}Z_w$, considered as a subspace of the underlying topological space of $\GZip^{\mu}$. We also set $S_{K,B} := \bigcup_{w \in B}S_{K,w}$ and define similarly subsets $S^{\tor}_{K,B}$ and $S^{\min}_{K,B}$ of $S^{\tor}_K$ and $S^{\min}_K$, respectively. Then $\zeta^{-1}(Z_B) = S_{K,B}$ and $(\zeta^{\tor})^{-1}(Z_B) = S^{\tor}_{K,B}$.

Now let $A \subseteq {}^IW$ be a closed subset, i.e., if $w \in A$ and $w' \in {}^IW$ with $w' \preceq w$, then $w' \in A$. Let $A^0$ be the set of maximal elements in $A$ with respect to $\preceq$ and set $\partial A := A \setminus A^0$. Then $Z_A$ is closed in $\GZip^{\mu}$ and $Z_{A^0}$ is open and dense in $Z_A$. We consider $Z_A$, $Z_{A^{0}}$, and $Z_{\partial A}$ as reduced locally closed algebraic substacks of $\GZip^{\mu}$.

Moreover, $S_{K,A}$ is closed in $S_K$, and $S_{K,A^0}$ is open and dense in $S_{K,A}$ by \eqref{EqClosureEO}. Analogous assertions also hold for unions of Ekedahl-Oort strata in $S_K^{\tor}$ and $S_K^{\min}$ by \eqref{EqClosureEOtor} and \eqref{EqClosureEOmin}.

\begin{lemma}\label{ConnectedEOStrata}
Let $Y \subseteq S^{\min}_K$ be a closed subscheme, let $A \subseteq {}^IW$ be closed as above, and let $l \geq 1$ be an integer. Suppose that the irreducible components of $Y_A := Y \cap S^{\min}_{K,A}$ have dimension $\geq l+1$ and that $Y_A$ is geometrically connected in dimension $\geq l$. Then the irreducible components of $Y_{\partial A} := Y \cap S^{\min}_{K,\partial A}$ have dimension $\geq l$ and $Y_{\partial A}$ is geometrically connected in dimension $\geq l-1$.
\end{lemma}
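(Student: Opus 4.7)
The plan is to reduce the assertion to the Grothendieck-type connectedness result already recalled as Proposition~\ref{GrothendieckConnected}, using the group-theoretical Hasse invariants provided by \cite{GoKo_HasseHeckeGalois}. The key input is the following consequence of Theorem~\ref{AffineEO} together with the ampleness of the Hodge line bundle $\omega$ on the minimal compactification $S^{\min}_K$: for each $w \in {}^IW$ there exists an integer $N_w \geq 1$ and a section of $\omega^{N_w}|_{\overline{S^{\min}_{K,w}}}$ whose vanishing locus is exactly $\partial \overline{S^{\min}_{K,w}} := \overline{S^{\min}_{K,w}} \setminus S^{\min}_{K,w}$; in particular $\partial\overline{S^{\min}_{K,w}}$ is the support of an effective ample Cartier divisor on the projective scheme $\overline{S^{\min}_{K,w}}$.

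I would first dispose of the case $|A^0|=1$. Write $A^0 = \{w\}$, so that $S^{\min}_{K,A} = \overline{S^{\min}_{K,w}}$ and $S^{\min}_{K,\partial A} = \partial \overline{S^{\min}_{K,w}}$. Then $Y_A = Y \cap \overline{S^{\min}_{K,w}}$ is closed in the projective scheme $\overline{S^{\min}_{K,w}}$, and the restriction of the Hasse invariant above cuts out precisely $Y_{\partial A}$ as an effective ample Cartier divisor on $Y_A$. Under the hypotheses on $Y_A$, Proposition~\ref{GrothendieckConnected} applies verbatim to the pair $(Y_A, Y_{\partial A})$ with the given value of $l$ and yields both claims: the components of $Y_{\partial A}$ have dimension $\geq l$, and $Y_{\partial A}$ is geometrically connected in dimension $\geq l-1$.

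For general $A$, I would argue by induction on $r := |A^0|$. Write $A^0 = \{w_1,\dots,w_r\}$ and pick any maximal $w = w_r$; set $A' := A \setminus \{w\}$, which is again closed under $\preceq$ because $w$ is maximal. Using \eqref{EqClosureEOmin} one checks $Y_A = Y_{A'} \cup (Y \cap \overline{S^{\min}_{K,w}})$ with intersection $Y \cap \partial \overline{S^{\min}_{K,w}} \subseteq Y_{\partial A}$, while $Y_{\partial A} = Y_{\partial A'} \cup (Y \cap \partial \overline{S^{\min}_{K,w}}) \setminus \bigl(Y \cap \bigsqcup_{j<r}S^{\min}_{K,w_j}\bigr)$ after keeping track of which old maximal elements of $A'$ still contribute. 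Applying Proposition~\ref{GrothendieckConnected} to the single closed stratum $Y \cap \overline{S^{\min}_{K,w}}$ via its Hasse invariant, and combining with the inductive hypothesis for $Y_{A'}$, one obtains the desired dimension and connectedness statements for $Y_{\partial A}$; the combination step is carried out using Lemma~\ref{ConnectedLemma} to identify connected components across the natural closed cover.

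The principal obstacle is the combinatorial bookkeeping inherent in this induction: the hypothesis ``components of dimension $\geq l+1$ and geometric connectedness in dimension $\geq l$'' need not transfer unchanged from $Y_A$ to the individual pieces $Y_{A'}$ and $Y \cap \overline{S^{\min}_{K,w}}$, because an irreducible component of $Y_A$ may lie inside $\overline{S^{\min}_{K,w_i}}\cap \overline{S^{\min}_{K,w_j}}$ for two distinct $i,j$ and hence already sit inside $Y_{\partial A}$. A cleaner way to circumvent this, which I would try as a first option, is to build an ample effective Cartier divisor on $Y_A$ with support exactly $Y_{\partial A}$ in one stroke: since $Y_A \setminus Y_{\partial A} = \bigsqcup_{w \in A^0}(Y \cap S^{\min}_{K,w})$ is a finite disjoint union of closed subschemes of the affine schemes $S^{\min}_{K,w}$ (Theorem~\ref{AffineEO}) and is hence itself affine, and since $Y_A$ is projective, one can produce on a common power $\omega^N|_{Y_A}$ a section vanishing exactly on $Y_{\partial A}$ by gluing the normalised Hasse invariants $H_{w_i}^{N/N_{w_i}}$ (using that the $\overline{S^{\min}_{K,w_i}}$ are closed and cover $Y_A$, so the pairwise intersections lie inside the prescribed vanishing locus); applying Proposition~\ref{GrothendieckConnected} directly to this divisor then avoids the induction altogether and delivers both assertions simultaneously.
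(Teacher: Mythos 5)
The overall strategy is right and close in spirit to the paper's: produce an ample effective divisor on a projective scheme supported on the boundary piece, then invoke Proposition~\ref{GrothendieckConnected}. But the key claim on which everything hinges is not justified, and this is precisely where the paper has to work harder.

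You assert that Theorem~\ref{AffineEO} together with ampleness of $\omega^{\min}$ gives, for each $w$, a section of some $\omega^{\min,\otimes N_w}$ on $\overline{S^{\min}_{K,w}}$ whose vanishing locus is exactly $\partial\overline{S^{\min}_{K,w}}$. This does not follow. Affineness of $S^{\min}_{K,w}$ is a \emph{consequence} of the existence of such a section (the complement of an ample divisor is affine), not equivalent to it; and knowing $\omega^{\min}$ is ample on $S^{\min}_K$ does not produce a section whose zero locus is exactly the prescribed closed set. The Hasse invariants of Goldring--Koskivirta \cite[6.2.2]{GoKo_HasseHeckeGalois} live on the \emph{toroidal} strata $\overline{S^{\tor}_{K,w}}$, not on $\overline{S^{\min}_{K,w}}$. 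One cannot simply push them down along $\pi\colon S^{\tor}_K \to S^{\min}_K$ to get sections of $\omega^{\min,\otimes N}|_{Y_A}$, because the relevant push-pull identity $\pi_*\pi^*\Lscr\cong\Lscr$ needs $\pi_*\Oscr=\Oscr$ on the restriction, and $Y_A$ (unlike $S^{\min}_K$ itself) need not be normal, nor need $\pi^{-1}(Y_A)\to Y_A$ be its own Stein factorization. The paper handles this by replacing $Y_A$ with the Stein factorization target $Y'_A$ of $\pi^{-1}(Y_A)\to Y_A$; this $Y'_A$ is universally homeomorphic to $Y_A$ (so the topological hypotheses and conclusions transfer), carries the pullback line bundle $\omega'_{Y_A}$ which is still ample (pullback of an ample bundle under a finite morphism), and it is on $Y'_A$ that the glued Hasse invariant actually lives and cuts out $Y'_{\partial A}$ as an ample divisor. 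Your proposed one-stroke gluing is essentially the paper's step, but carried out on the wrong space; without the detour through the toroidal compactification and the Stein factorization the sections you want to glue do not exist.

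As a secondary remark: your backup plan of inducting on $|A^0|$ runs into precisely the bookkeeping problem you identify, and more: irreducible components of $Y_A$ of the minimal allowed dimension can sit entirely inside the overlap of two top strata, so the $l$-gc hypothesis does not descend to the pieces $Y_{A'}$ and $Y\cap\overline{S^{\min}_{K,w}}$ individually. Abandoning that route in favour of the one-stroke construction, as you suggest at the end, is the right instinct --- provided the Stein-factorization step above is inserted.
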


The proof relies heavily on results from \cite{GoKo_HasseHeckeGalois} using Proposition~\ref{GrothendieckConnected} as an additional ingredient.

\begin{proof}
For brevity we say that a scheme $X$ of finite type over a field is \emph{$l$-gc} if all irreducible components of $X$ have dimension $\geq l+1$ and $X$ is geometrically connected in dimension $\geq l$.

Let $Y^{\tor}_A \defeq \pi^{-1}(Y_A)$ and let
\[
Y^{\tor}_A \ltoover{\pi'} Y'_A \ltoover{f} Y_A
\]
be the Stein factorization of $\pi\colon Y^{\tor}_A \to Y_A$. As $\pi$ has geometrically connected fibers, the same holds for the finite morphism $f$. Hence $f$ is a universal homeomorphism. Therefore $Y'_A$ is $l$-gc and it suffices to show that $Y'_{\partial A} := f^{-1}(Y_{\partial A})$ is $(l-1)$-gc.

Let $\omega^{\tor}$ be the Hodge line on $S^{\tor}_K$ obtained from some Siegel embedding of the Shimura datum. Let $\omega^{\min} := \pi_*\omega^{\tor}$. By \cite[5.2.11]{Madapusi_ToroidalHodge} $\omega^{\min}$ extends the Hodge line bundle on $S_K$, it is ample, and $\pi^*(\omega^{\min}) \cong \omega^{\tor}$. The restrictions of $\omega^{\tor}$ and $\omega^{\min}$ to $Y^{\tor}_A$ and $Y_A$, respectively, are denoted by $\omega_{Y_A}^{\tor}$ and $\omega_{Y_A}^{\min}$. Set $\omega'_{Y_A} := f^*\omega_{Y_A}^{\min}$. Then for all $N > 1$ one has
\[
\pi'_*\omega_{Y_A}^{\tor,\otimes N} = \pi'_*\pi^{\prime*}(\omega^{\prime\otimes N}_{Y_A}) = \omega^{\prime\otimes N}_{Y_A},\tag{*}
\]
where the second equality follows from $\pi'_*(\Oscr_{Y^{\tor}_A}) = \Oscr_{Y'_A}$.

In the special case $Y = S^{\min}_K$ and $A = {}^IW$ one has $\pi' = \pi$ and we also see
\begin{equation}\label{EqPushPowerHodge}
\pi_*\omega^{\tor,\otimes N} \cong \omega^{\min,\otimes N}.
\end{equation}
By \cite[6.2.2]{GoKo_HasseHeckeGalois} there exists an $N \geq 1$ and for all $w \in {}^IW$ sections $h_w \in \Gamma(\overline{S^{\tor}_{K,w}},\omega^{\tor,\otimes N})$ whose non-vanishing locus is $S^{\tor}_{K,w}$. For $w,w' \in A^0$ with $w \ne w'$ one has
\[
{h_w}|_{\overline{S^{\tor}_{K,w}} \cap \overline{S^{\tor}_{K,w'}}} = 0 = {h_w}|_{\overline{S^{\tor}_{K,w}} \cap \overline{S^{\tor}_{K,w'}}}
\]
hence after passing to some power of $N$ and of $h_w$ we can glue the sections $h_w$ with $w \in A^0$ to a section $h_A \in \Gamma(S^{\tor}_{K,A},\omega^{\otimes N})$ whose non-vanishing locus is $S^{\tor}_{K,A^0}$ (\cite[5.2.1]{GoKo_HasseHeckeGalois}). We denote the restriction of $h_A$ to $Y^{\tor}_A$ again by $h_A$. Using (*) we obtain a section
\[
h_A \in \Gamma(Y^{\tor}_A,\omega_{Y_A}^{\tor,\otimes N}) = \Gamma(Y'_A,\omega^{\prime\otimes N}_{Y_A})
\]
whose vanishing locus in $Y'_A$ is precisely $Y'_{\partial A}$. As $\omega^{\prime\otimes N}_{Y_A}$ is the pullback of $\omega^{\min,\otimes N}$ under a finite morphism $Y'_A \to S^{\min}_K$, it is ample and we conclude by Proposition~\ref{GrothendieckConnected}.
\end{proof}

\subsubsection*{Connectedness of the length strata}

Let $d := \dim S_K = \dim S^{\tor}_K = \langle 2\rho,\mu \rangle$, where $\rho$ denotes as usual the half of the sum all positive roots on the root datum of $G$. For $j = 0,\dots,d$ we set
\[
S^?_{K,\leq j} \defeq \bigcup_{\ell(w) \leq j}S^?_{K,w}, \qquad S^?_{K,j} \defeq \bigcup_{\ell(w) = j}S^?_{K,w}
\]
for $? \in \{\emptyset, \tor,\min\}$. Then $S^?_{K,\leq j}$ is closed in $S^{?}_K$ and $S^?_{K,j}$ is open and dense in $S^{?}_{K,\leq j}$ by \eqref{EqClosureEO}, \eqref{EqClosureEOtor}, and \eqref{EqClosureEOmin}. We endow them with the reduced subscheme structure. The closed subschemes $S^?_{K,\leq j}$ of $S^{?}_K$ are called \emph{closed length strata}.

\begin{lemma}\label{LengthSmooth}
The schemes $S_{K,j}$ and $S^{\tor}_{K,j}$ are smooth.
\end{lemma}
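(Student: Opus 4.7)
The plan is to show that each $S_{K,w}$ (resp.\ $S^{\tor}_{K,w}$) with $\ell(w)=j$ is simultaneously open and closed in $S_{K,j}$ (resp.\ $S^{\tor}_{K,j}$), so that these length strata decompose as disjoint unions of smooth open subschemes.

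First I record that the individual strata $S_{K,w}$ are smooth of dimension $\ell(w)$, since they are the preimages of the smooth gerbes $Z_w \subseteq \GZip^{\mu}$ under the smooth morphism $\zeta$. Assuming that $\zeta^{\tor}$ is smooth, the same holds for $S^{\tor}_{K,w}$. Hence it suffices to prove that the set-theoretic decomposition
\[
S^?_{K,j} = \bigsqcup_{\ell(w)=j} S^?_{K,w}, \qquad ? \in \{\emptyset,\tor\},
\]
is a decomposition into open and closed subschemes.

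For this I invoke Lemma~\ref{LemPartialOrder}\ref{LemPartialOrder3}: if $w,w' \in {}^IW$ with $w' \preceq w$ and $\ell(w') = \ell(w)$, then $w' = w$. Combined with the closure relations \eqref{EqClosureEO} (resp.\ \eqref{EqClosureEOtor}), this shows that for $w \ne w'$ of the same length $j$ one has $S^?_{K,w'} \cap \overline{S^?_{K,w}} = \emptyset$. Since ${}^IW$ is finite, there are only finitely many length-$j$ strata, so for each such $w$ the union $\bigcup_{\ell(w')=j,\,w'\ne w} S^?_{K,w'}$ is closed in $S^?_{K,j}$, and $S^?_{K,w}$ is open in $S^?_{K,j}$; by the same reasoning it is also closed. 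Hence $S^?_{K,j}$ is a disjoint union of the smooth open subschemes $S^?_{K,w}$ with $\ell(w)=j$, so it is smooth.

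There is no serious obstacle here; the entire argument rests on the two ingredients that each stratum is individually smooth (from smoothness of $\zeta$ and $\zeta^{\tor}$) and that the partial order $\preceq$ cannot relate two distinct elements of the same length. Note that for $S^{\tor}_{K,j}$ the standing assumption that $\zeta^{\tor}$ be smooth (Remark~\ref{ZetaTorSmooth}) is used both to ensure smoothness of the individual closed strata and to obtain the closure relation \eqref{EqClosureEOtor}.
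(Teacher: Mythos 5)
Your proof is correct and follows essentially the same approach as the paper: both arguments rest on Lemma~\ref{LemPartialOrder} (distinct elements of the same length are $\preceq$-incomparable), use the closure relations \eqref{EqClosureEO}/\eqref{EqClosureEOtor} to conclude that the length-$j$ stratum is a topological sum of the individual strata $S^?_{K,w}$ with $\ell(w)=j$, and then invoke smoothness of each $S^?_{K,w}$ (coming from smoothness of $\zeta$ and, under the standing assumption, of $\zeta^{\tor}$). Your version merely spells out the ``open and closed'' bookkeeping that the paper compresses into the phrase ``topological sum.''
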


\begin{proof}
By Lemma~\ref{LemPartialOrder} no two elements of ${}^IW$ of the same length are comparable with respect to $\preceq$. Hence $S^?_{K,j}$ is the topological sum of the $S^{?}_{K,w}$ for $w \in {}^IW$ with $\ell(w) = j$. This shows the lemma because $S_{K,w}$ and $S^{\tor}_{K,w}$ are smooth for all $w \in {}^IW$.
\end{proof}

Let $X$ be a scheme of finite type over a field $k$. By a \emph{geometric connected component of $X$} we mean a connected component $Y$ of $X_{\kbar}$, where $\kbar$ is an algebraic closure of $k$. Then $Y$ is already defined over some finite extension of $k$.
%

%

\begin{theorem}\label{ConnectedDimStrata}
Let $Y$ be a geometric connected component of $S^{\min}_K$ and let $Y^{\tor} \defeq \pi^{-1}(Y)$ be the corresponding (Lemma~\ref{ConnectedLemma}) geometric connected component of $S^{\tor}_K$. Then for all $j = 1,\dots, d$ the length strata $S^{\tor}_{K,\leq j} \cap Y^{\tor}$ and $S^{\min}_{K,\leq j} \cap Y$ are geometrically connected and equi-dimensional of dimension $j$. In particular they are non-empty.
\end{theorem}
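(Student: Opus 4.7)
The plan is to establish the statement first for the minimal compactification by downward induction on $j$, using Lemma \ref{ConnectedEOStrata} as the workhorse, and then to transfer it to the toroidal compactification via $\pi$. For the dimension upper bound we use that $S^{\min}_{K,w} = \pi(S^{\tor}_{K,w})$ has dimension at most $\dim S^{\tor}_{K,w} = \ell(w)$, so $\dim(S^{\min}_{K,\leq j}) \leq j$.

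Set $A_j \defeq \{w \in {}^IW \mid \ell(w) \leq j\}$. By Lemma \ref{LemPartialOrder}\ref{LemPartialOrder3}, $A_j$ is closed under $\preceq$. Combining Lemma \ref{LemPartialOrder}\ref{LemPartialOrder1} with part \ref{LemPartialOrder4}, every non-maximal element of $({}^IW,\preceq)$ admits an immediate successor (obtained as the first step of a maximal chain up to the unique maximum), which raises the length by $1$. Hence the maximal elements of $A_j$ are precisely those of length $j$, and $\partial A_j = A_{j-1}$; this makes Lemma \ref{ConnectedEOStrata} applicable with $A = A_{j+1}$ to yield information about $Y \cap S^{\min}_{K,\leq j}$.

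For the base case $j=d$: since $S^{\min}_K$ is normal by Lemma \ref{MinCompNormal}, so is $(S^{\min}_K)_{\bar\kappa}$, and the geometric connected component $Y$ is one of its irreducible components, hence irreducible of dimension $d$. As $\bar\kappa$ is algebraically closed, $Y$ is geometrically irreducible, so for any field extension $k'/\bar\kappa$ the scheme $Y \otimes_{\bar\kappa} k'$ is irreducible of dimension $d$; removing a closed subset of dimension $< d-1$ leaves a non-empty open subset which is irreducible, hence connected. Thus $Y$ is geometrically connected in dimension $\geq d-1$. For the inductive step, assume for some $1 \leq j \leq d-1$ that $Y \cap S^{\min}_{K,\leq j+1}$ has irreducible components of dimension $\geq j+1$ and is geometrically connected in dimension $\geq j$. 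Applying Lemma \ref{ConnectedEOStrata} with $l = j$ gives that $Y \cap S^{\min}_{K,\leq j}$ has irreducible components of dimension $\geq j$ and is geometrically connected in dimension $\geq j-1$. Combined with the dimension upper bound this forces equi-dim $j$, and geometric connectedness in dimension $j-1 \geq 0$ implies geometric connectedness in the usual sense.

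To deduce the toroidal statement, observe that $S^{\tor}_{K,\leq j} \cap Y^{\tor} = \pi^{-1}(Y \cap S^{\min}_{K,\leq j})$ and that the restriction of $\pi$ is proper (hence closed) with geometrically connected fibers. By Lemma \ref{ConnectedLemma} this restriction induces a bijection on sets of connected components, so geometric connectedness transfers from base to total space. For equi-dimensionality, $S^{\tor}_{K,j}$ is smooth of pure dimension $j$ by Lemma \ref{LengthSmooth}, and by \eqref{EqClosureEOtor} it is open and dense in $S^{\tor}_{K,\leq j}$; consequently every irreducible component of $S^{\tor}_{K,\leq j}$, and in particular of its intersection with the clopen subspace $Y^{\tor}$, has dimension $j$, with non-emptiness inherited from the image under $\pi$. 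The only real subtlety is the identification $\partial A_j = A_{j-1}$, which requires the structural facts about $\preceq$ noted above; once this is in place the theorem is a straightforward assembly of Lemmas \ref{ConnectedLemma}, \ref{LengthSmooth}, and \ref{ConnectedEOStrata}.
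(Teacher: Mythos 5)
Your proof is correct and follows essentially the same route as the paper: downward induction on $j$ with Lemma~\ref{ConnectedEOStrata} driving the inductive step on the minimal compactification, the base case $j=d$ from geometric normality of $S^{\min}_K$, the dimension upper bound from properness of $\pi$, and the transfer to the toroidal compactification via Lemma~\ref{ConnectedLemma}. The only difference is that you flesh out the justification of $\partial A_j = A_{j-1}$ (existence of immediate successors raising length by one), which the paper compresses into a citation of Lemma~\ref{LemPartialOrder}.
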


\begin{proof}
We already know that $S^{\tor}_{K,\leq j}$ is equi-dimensional of dimension $j$. This shows that $S^{\tor}_{K,\leq j} \cap Y^{\tor}$ is either empty or equi-dimensional of dimension $j$.

Next we show that $S^{\min}_{K,\leq j} \cap Y$ is geometrically connected in dimension $\geq j-1$ and equi-dimensional of dimension $j$ by descending induction on $j$. We have $S^{\min}_{K,\leq d} = S^{\min}_K$. As $S_K^{\min}$ is (geometrically) normal (Lemma~\ref{MinCompNormal}), $Y$ is irreducible of dimension $d$ and in particular it is geometrically connected in dimension $\geq d - 1$.

Now let $A_j := \set{w \in {}^IW}{\ell(w) \leq j}$. Then $A_j^0 = \set{w \in {}^IW}{\ell(w) = j}$ and $\partial A = A_{j-1}$ by Lemma~\ref{LemPartialOrder}. Hence by induction we deduce from Lemma~\ref{ConnectedEOStrata} that $S^{\min}_{K,\leq j} \cap Y$ is geometrically connected in dimension $\geq j-1$ and that every irreducible component $Y$ of $S^{\min}_{K,\leq j} \cap Y$ has dimension $\geq j$. On the other hand we have $\dim(Y) \leq \dim S^{\min}_{K,j} = \pi(S^{\tor}_{K,j}) = j$.

This shows in particular that $S^{\min}_{K,\leq j} \cap Y$ is non-empty which implies that
\[
S^{\tor}_{K,\leq j} \cap Y^{\tor} = \pi^{-1}(S^{\min}_{K,\leq j} \cap Y)
\]
is non-empty. Moreover, $S^{\tor}_{K,\leq j} \cap Y^{\tor} = $ is geometrically connected by Lemma~\ref{ConnectedLemma}.
\end{proof}

\begin{corollary}\label{EOminDim}
The Ekedahl-Oort strata $S^{\min}_{K,w}$ in the minimal compactification is equi-dimensional of dimension $\ell(w)$.
\end{corollary}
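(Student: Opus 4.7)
The plan is to sandwich $\dim S^{\min}_{K,w}$ between $\ell(w)$ on both sides and then reduce the single-stratum statement to a single-length statement by a combinatorial argument. For the upper bound, I would use that $\pi^{-1}(S^{\min}_{K,w}) = S^{\tor}_{K,w}$ and $\pi$ restricts to a surjection $S^{\tor}_{K,w} \onto S^{\min}_{K,w}$; combined with the already known equality $\dim S^{\tor}_{K,w} = \ell(w)$, this gives $\dim S^{\min}_{K,w} \leq \ell(w)$, which also disposes of the case $\ell(w) = 0$ completely.

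For the matching lower bound and equi-dimensionality, I would apply Theorem~\ref{ConnectedDimStrata} with $j \defeq \ell(w) \geq 1$: on each geometric connected component $Y$ of $S^{\min}_K$, the closed length stratum $S^{\min}_{K,\leq j} \cap Y$ is equi-dimensional of dimension $j$, and hence so is its dense open subscheme $S^{\min}_{K,j} \cap Y$.

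It then remains to pass from $S^{\min}_{K,j}$ to the single stratum $S^{\min}_{K,w}$. The crucial input here is Lemma~\ref{LemPartialOrder}\ref{LemPartialOrder3}: two distinct elements of ${}^IW$ of the same length are incomparable under $\preceq$. Combined with the closure formula \eqref{EqClosureEOmin}, this implies that for each $w'$ with $\ell(w') = j$ the closure $\overline{S^{\min}_{K,w'}}$ meets $S^{\min}_{K,j}$ only in $S^{\min}_{K,w'}$ itself. Hence the (already disjoint) decomposition $S^{\min}_{K,j} = \bigsqcup_{\ell(w')=j} S^{\min}_{K,w'}$ is a decomposition into finitely many pieces, each of which is open and closed in $S^{\min}_{K,j}$. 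Therefore $S^{\min}_{K,w} \cap Y$ is a union of connected components of $S^{\min}_{K,j} \cap Y$ and inherits equi-dimensionality of dimension $\ell(w)$. Taking the union over all geometric connected components $Y$ yields the corollary. Since the real work has been done in Theorem~\ref{ConnectedDimStrata}, there is no substantial obstacle; the only thing to verify is this combinatorial separation of same-length strata.
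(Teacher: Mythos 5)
Your proof is correct and rests on the same two pillars as the paper's: Theorem~\ref{ConnectedDimStrata} for the dimension of the length strata, and the combinatorics of $\preceq$ (Lemma~\ref{LemPartialOrder}~\ref{LemPartialOrder3} together with the closure relation~\eqref{EqClosureEOmin}) to isolate a single EO stratum inside its length stratum. The paper packages the second step by observing that the closure of an irreducible component of $S^{\min}_{K,w}$ is an irreducible component of $S^{\min}_{K,\leq \ell(w)}$, whereas you make the underlying clopen decomposition of $S^{\min}_{K,\ell(w)}$ explicit; the extra upper-bound step via $\pi$ is redundant but harmless.
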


\begin{proof}
Let $Y \subseteq S^{\min}_{K,w}$ be an irreducible component and let $\bar{Y}$ be its closure in $S^{\min}_K$. By \eqref{EqClosureEOmin}, $\bar{Y}$ is an irreducible component of $S^{\min}_{K,\leq \ell(w)}$. Hence $\dim(Y) = \dim(\bar{Y}) = \ell(w)$ by Theorem~\ref{ConnectedDimStrata}.
\end{proof}

\begin{corollary}\label{Length1Connected}
Let $S^{\tor}_{K,e}$ be the $0$-dimensional Ekedahl-Oort stratum in $S^{\tor}_K$. Suppose that $S^{\tor}_{K,e}$ is already contained in $S_K$. Let $Y$ be a geometric connected component of $S^{\tor}_K$. Then the length $1$ stratum $S_{K,\leq 1} \cap Y$ in $S_K \cap Y$ is geometrically connected.
\end{corollary}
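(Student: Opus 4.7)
The plan is to deduce this from Theorem~\ref{ConnectedDimStrata} via a dual-graph argument. Base-changing everything to the algebraic closure $k$ of $\kappa$, set $X \defeq S^{\tor}_{K,\leq 1} \cap Y$, $T_0 \defeq S^{\tor}_{K,e} \cap Y$, and $T_1 \defeq S^{\tor}_{K,1} \cap Y$. Theorem~\ref{ConnectedDimStrata} applied with $j=1$ shows that $X$ is connected and equi-dimensional of dimension~$1$, and applied with $j=0$ that $T_0$ is non-empty. By hypothesis $T_0 \subseteq S_K$, and by Lemma~\ref{LengthSmooth} the subscheme $T_1$ is smooth of dimension~$1$; moreover $T_1$ is open in $X$ because $S^{\tor}_{K,e}$ is closed in $S^{\tor}_K$, $e$ being the unique minimum of ${}^IW$ by Lemma~\ref{LemPartialOrder}\ref{LemPartialOrder1}. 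Since the boundary $\partial S_K \defeq S^{\tor}_K \setminus S_K$ can meet $X$ only inside $T_1$, one has $S_{K,\leq 1} \cap Y = X \setminus (X \cap \partial S_K)$, and it remains to show that this set is connected.

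Let $C_1,\dots,C_n$ denote the irreducible components of $X$; all are of dimension~$1$ by equi-dimensionality. The argument will rest on two claims, each a consequence of $T_1$ being smooth of dimension~$1$ and hence geometrically unibranch. First, every $C_i$ meets $T_0$: if instead $C_i \subseteq T_1$, then $C_i$ would be an irreducible $1$-dimensional closed subscheme of a smooth, pure $1$-dimensional scheme, hence a connected component of $T_1$, and therefore open in $T_1$ and thus in $X$. Being also closed in $X$ it would be clopen, forcing $C_i = X$ by connectedness of $X$, whence $T_0 \subseteq X = C_i \subseteq T_1$, contradicting $T_0 \cap T_1 = \emptyset$ and $T_0 \ne \emptyset$. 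Second, $C_i \cap C_j \subseteq T_0$ for all $i \ne j$: if some $p \in C_i \cap C_j$ lay in $T_1$, then $\Oscr_{X,p} = \Oscr_{T_1,p}$ would be a discrete valuation ring and hence a domain, so $X$ would have a unique irreducible component through $p$, contradicting $C_i \ne C_j$.

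Granted these two claims, the corollary follows at once. Indeed, $T_0 \cap \partial S_K = \emptyset$, so every pairwise meeting $C_i \cap C_j \subseteq T_0$ survives in $X \cap S_K$, and each $C_i \cap S_K$ contains the non-empty set $C_i \cap T_0$ and is open in the irreducible $C_i$, hence irreducible and connected. A standard dual-graph argument---vertices the pieces $C_i$, edges the non-empty pairwise meetings---then transfers the connectedness of $X$ to $X \cap S_K = S_{K,\leq 1} \cap Y$. I do not anticipate any substantial obstacle; the only subtle inputs are the openness of $T_1$ in $X$ and the unibranch property of $T_1$, both of which follow directly from results already at hand.
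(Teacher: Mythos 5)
Your argument is correct and follows essentially the same route as the paper's (terser) proof: Theorem~\ref{ConnectedDimStrata} gives connectedness of $X = S^{\tor}_{K,\leq 1}\cap Y$; smoothness of $S^{\tor}_{K,1}$ (Lemma~\ref{LengthSmooth}) forces pairwise intersections of irreducible components of $X$ into $T_0 = S^{\tor}_{K,e}\cap Y$; the hypothesis places $T_0$ inside $S_K$; and the dual-graph argument (which the paper leaves implicit) does the rest. Your claim that each $C_i$ meets $T_0$ is a useful detail not made explicit in the paper.

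One small slip: you invoke Theorem~\ref{ConnectedDimStrata} at $j=0$ to get $T_0 \neq \emptyset$, but that theorem is stated only for $j = 1,\dots,d$. The nonemptiness of $T_0$ is nonetheless available — e.g.\ by Proposition~\ref{EOMinBoundary}(ii), or directly: $S^{\min}_{K,1}\cap Y$ is affine (Theorem~\ref{AffineEO}), so if it were all of the $1$-dimensional closed subset $S^{\min}_{K,\leq 1}\cap Y$ of the proper scheme $S^{\min}_K$ it would be proper and affine, hence $0$-dimensional, a contradiction; thus $S^{\min}_{K,e}\cap Y\neq\emptyset$ and hence $T_0\neq\emptyset$. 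Alternatively, you can dodge the question of $T_0\neq\emptyset$ entirely: if some $C_i\subseteq T_1$, your own clopenness argument shows $X=C_i$ is irreducible, so $X\cap S_K$ is open in an irreducible scheme and therefore connected (or empty, hence vacuously connected). With either repair the proof is complete.
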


The condition that $S^{\tor}_{K,e}$ is contained in $S_K$ is satisfied for all Shimura varieties of PEL type (\cite[6.4.1]{GoKo_HasseHeckeGalois}) and we expect it to hold in general.

\begin{proof}
Let $Y$ and $Y'$ be irreducible components of $S^{\tor}_{K,\leq 1}$. As $S^{\tor}_{K,\leq 1} \cap Y$ is geometrically connected by Theorem~\ref{ConnectedDimStrata}, it suffices to show that $Y \cap Y' \subseteq S^{\tor}_{K,e} = S_{K,e}$. But this is clear because $S^{\tor}_{K,1}$ is smooth (Lemma~\ref{LengthSmooth}) and hence cannot contain intersection points of irreducible components.
\end{proof}


\subsection{The flag space over $S_K^{\tor}$}\label{FlagSpace}
Let $\pi_K\colon \CF_K^{\tor} \to S_K^{\tor}$ be defined by the following fiber product:
\begin{equation*}
  \xymatrix{
    \CF_K^{\tor} \ar[r] \ar[d]_{\pi_K} & \GZipFlag^\mu \ar[d]^\pi \\
    S_K^{\tor} \ar[r]^{\zeta^{\tor}} & \GZip^\mu
}
\end{equation*}
Similarly, we let $\CF_K$ be the restriction of $\CF_K^{\tor}$ to $S_K$. As $\pi$ is representable by schemes, smooth, and proper, $\CF_K^{\tor}$ and $\CF_K$ are schemes, and $\pi_K$ is smooth and proper.

By pulling back the stratification $\GZipFlag^\mu=\bigcup_{w\in W}Z^\emptyset_w$ \eqref{EqZipFlagStrata} to $\CF_K$ and $\CF_K^{\tor}$ we obtain stratifications
\begin{equation*}
  \CF_K=\bigcup_{w\in W}\CF_{K,w}, \qquad \text{and}\qquad 
  \CF_K^{\tor}=\bigcup_{w\in W}\CF_{K,w}^{\tor}.
\end{equation*}

\begin{proposition} \label{FwProps}
The strata $\CF_{K,w}$ are smooth and equi-dimensional of dimension $\ell(w)$. Their closures $\overline{\CF_{K,w}}$ are normal, Cohen-Macaulay, with only rational singularities. If $\zeta^{\tor}$ is smooth, then an analogous result holds for $\CF^{\tor}_{K,w}$.
\end{proposition}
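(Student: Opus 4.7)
The strategy is a transfer from Proposition \ref{ZwProps} via smooth base change. By the defining Cartesian square of $\CF_K$ and the smoothness of $\zeta$, the induced morphism $\eta\colon \CF_K \to \GZipFlag^\mu$ is smooth, of the same relative dimension as $\zeta$ (namely $\dim S_K$). By construction $\CF_{K,w} = \eta^{-1}(Z^\emptyset_w)$; and since $\eta$ is flat, one has $\overline{\CF_{K,w}} = \eta^{-1}(\overline{Z^\emptyset_w})$ as well. (Flat morphisms satisfy going-down, which forces the preimage of a closed subset to be the closure of the preimage of any open dense part of it.)

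All four properties to be proved -- smoothness, normality, Cohen--Macaulayness, and rationality of singularities -- are local and stable under smooth base change. Via $\eta$ they therefore transfer from $Z^\emptyset_w$ and $\overline{Z^\emptyset_w}$ (Proposition \ref{ZwProps}) to $\CF_{K,w}$ and $\overline{\CF_{K,w}}$. Equi-dimensionality is likewise preserved: the smoothness of $\eta$ gives $\dim \CF_{K,w} = \dim Z^\emptyset_w + \dim S_K$ at every point. Combining the smoothness of $\psi\colon \GZipFlag^\mu \to \Brh_G$, the standard identity $\dim \Brh_w = \ell(w) - \dim B$, and $\dim S_K + \dim(P/{}^zB) = \dim G - \dim B = \#\Phi^+$, a short calculation then yields $\dim \CF_{K,w} = \ell(w)$, as required.

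If $\zeta^{\tor}$ is smooth, the identical argument with $\CF_K^{\tor}$ in place of $\CF_K$ (using the Cartesian square defining $\CF_K^{\tor}$) gives the analogous statement for the toroidal compactification. There is no real obstacle in the proof: everything is a formal consequence of Proposition \ref{ZwProps} and of the smoothness of $\zeta$ (respectively $\zeta^{\tor}$).
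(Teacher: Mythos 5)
Your proof is correct and takes essentially the same route as the paper's one-line proof, namely smooth base change along the morphism $\CF_K\to\GZipFlag^\mu$ induced by $\zeta$; you additionally spell out the dimension bookkeeping and the elementary topological fact that flat (hence open) morphisms carry dense open subsets of closed sets to dense open subsets of their preimages, neither of which the paper makes explicit.
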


\begin{proof}
Since $\zeta$ is smooth, this follows from Proposition \ref{ZwProps}.
\end{proof}

The following result generalizes \cite[Proposition 6.1]{EkedahlVanDerGeer_CycleAbVar}:

\begin{proposition} \label{EOMinBoundary}
Suppose that $\zeta^{\tor}$ is smooth. Let $w \in \leftexp{I}{W}$ and $Y$ an irreducible component of $\overline{S^{\min}_{K,w}}$.
  \begin{enumerate}[(i)]
  \item The variety $Y$ has dimension $\ell(w)$ and is geometrically connected in dimension $\geq \ell(w)-1$.
  \item The intersection $Y \cap S^{\min}_{K,e}$ is non-empty.
  \end{enumerate}
\end{proposition}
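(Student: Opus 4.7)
We prove both statements simultaneously by induction on $\ell(w)$. The base case $\ell(w) = 0$ forces $w = e$ by Lemma~\ref{LemPartialOrder}\ref{LemPartialOrder1}; then $\overline{S^{\min}_{K,e}} = S^{\min}_{K,e}$ is zero-dimensional and both claims are vacuous. In the inductive step, set $A_w \defeq \set{w' \in {}^IW}{w' \preceq w}$ and $\partial A_w \defeq A_w \setminus \{w\}$. The equality $\dim Y = \ell(w)$ in (i) is Corollary~\ref{EOminDim}.

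For (ii), $S^{\min}_K$ is projective over $\kappa$, so $Y$ is a projective irreducible variety of positive dimension $\ell(w) \ge 1$ and hence cannot be quasi-affine. By Theorem~\ref{AffineEO} the open subscheme $Y \cap S^{\min}_{K,w}$ of $Y$ is quasi-affine and therefore strictly contained in $Y$; consequently the boundary $Y \cap \overline{S^{\min}_{K,\partial A_w}}$ is non-empty. Any of its irreducible components lies in some $\overline{S^{\min}_{K,w'}}$ with $w' \in {}^IW$, $w' \prec w$ and $\ell(w') < \ell(w)$ by Lemma~\ref{LemPartialOrder}\ref{LemPartialOrder3}, and the induction hypothesis applied to such a component supplies a point of $S^{\min}_{K,e}$, which necessarily lies in $Y$.

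For the geometric connectedness statement in (i), I would adapt the Hasse-invariant and Stein-factorization technique from the proof of Lemma~\ref{ConnectedEOStrata} to the closed subscheme $Y$. Form $Y^{\tor} \defeq \pi^{-1}(Y) \subseteq \overline{S^{\tor}_{K,w}}$ and take the Stein factorization $Y^{\tor} \xrightarrow{\pi'} Y' \xrightarrow{f} Y$ of $\pi|_{Y^{\tor}}$, so that $f$ is finite with geometrically connected fibers (hence a universal homeomorphism) while $\pi'_*\Oscr_{Y^{\tor}} = \Oscr_{Y'}$. The line bundle $f^*\omega^{\min}$ on $Y'$ is ample, being the finite pullback of the ample Hodge line bundle $\omega^{\min}$ on $S^{\min}_K$. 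After replacing $N$ by a suitable multiple, the Hasse invariant $h_w \in \Gamma(\overline{S^{\tor}_{K,w}}, \omega^{\tor, \otimes N})$ from \cite{GoKo_HasseHeckeGalois} restricts to $Y^{\tor}$ and descends along $\pi'$ to a section of $(f^*\omega^{\min})^{\otimes N}$ on $Y'$ whose vanishing locus equals $f^{-1}(Y \cap \overline{S^{\min}_{K,\partial A_w}})$. Since $f$ is a universal homeomorphism, any geometric connectedness conclusion about $Y'$ transfers to $Y$.

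The hard part will be to extract geometric connectedness of the irreducible projective variety $Y'$ in dimension $\ge \ell(w) - 1$ from the existence of this ample effective Cartier divisor. Proposition~\ref{GrothendieckConnected} naturally propagates such connectedness from an ambient variety to an ample divisor, whereas here we need the opposite direction; I expect this to require a bootstrap combining the ampleness of the boundary divisor with the induction hypothesis applied to the length-$(\ell(w) - 1)$ components contained in $Y \cap \overline{S^{\min}_{K,\partial A_w}}$.
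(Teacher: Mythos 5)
Your treatment of the dimension claim in (i) and the non-emptiness of the boundary in the opening of (ii) matches the paper (the paper notes that $Y^\circ \defeq Y \cap S^{\min}_{K,w}$ is in fact \emph{affine}, being closed in the affine $S^{\min}_{K,w}$, rather than merely quasi-affine, but this changes nothing). For the geometric connectedness in (i), however, your sketch never closes the loop, and you say so yourself. The paper does not reprove this from scratch on $Y$: it observes, as in the proof of Corollary~\ref{EOminDim}, that $Y$ is an irreducible component of the length stratum $S^{\min}_{K,\leq\ell(w)}$ and then cites Theorem~\ref{ConnectedDimStrata}, whose proof already contains exactly the Hasse-invariant/Stein-factorization bootstrap you are reaching for. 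Trying to rerun that argument directly on the irreducible component $Y$ is both harder and unnecessary, and your framing of (i) as part of a simultaneous induction is not how the paper structures the argument: (i) is established first (independently of induction) and is then an input to (ii).

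The more serious gap is in the induction step for (ii). You pick an irreducible component $C$ of $Y \cap S^{\min}_{K,\partial A_w}$, note $C \subseteq \overline{S^{\min}_{K,w'}}$ for some $w' \prec w$, and then ``apply the induction hypothesis to such a component.'' But the induction hypothesis is a statement about irreducible components \emph{of} $\overline{S^{\min}_{K,w'}}$, and $C$ is only an irreducible closed subset of it. If $\dim C < \ell(w')$, the hypothesis says nothing about $C$; and if you instead pass to the irreducible component $Y''$ of $\overline{S^{\min}_{K,w'}}$ containing $C$, there is no reason for $Y''$ to lie in $Y$, so the point of $S^{\min}_{K,e}$ produced by induction need not belong to $Y$. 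The paper avoids this by pinning down $\dim C$ exactly: Lemma~\ref{ConnectedEOStrata}, applied to the closed subscheme $Y$ with $l=\ell(w)-1$ and with the geometric connectedness from (i) as its hypothesis, gives $\dim C \geq \ell(w)-1$; Corollary~\ref{EOminDim} gives $\dim C \leq \ell(w)-1$; hence $\dim C = \ell(w)-1$, and equality of dimensions with Corollary~\ref{EOminDim} forces $C$ to be an irreducible component of $\overline{S^{\min}_{K,w'}}$ for some $w'$ with $\ell(w') = \ell(w)-1$. Only then does the induction hypothesis apply, and then to a component contained in $Y$. So the connectedness statement you left unproved in (i) is precisely the input needed to make your induction in (ii) go through.
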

\begin{proof}
  (i) As in the proof of Corollary \ref{EOminDim}, the variety $Y$ is an irreducible component of $S^{\min}_{K,\leq \ell(w)}$. Hence (i) follows from Theorem \ref{ConnectedDimStrata}.

(ii) Let $Y^\circ \defeq Y \cap S^{\min}_{K,w}$. Since $Y^\circ$ is an irreducible component of $S^{\min}_{K,w}$, it is affine by Theorem \ref{AffineEO}. Thus if $Y^\circ$ is closed in $S^{\min}_K$ it has dimension zero, which using (i) implies $\ell(w)=0$ and hence $w=e$. Otherwise $Y \setminus Y^\circ$ is non-empty. Let $Y'$ be an irreducible component of $Y\setminus Y^\circ = Y \cap \bigcup_{w' \prec w} S^{\min}_{K,w'}$. Using (i), Proposition \ref{ConnectedEOStrata} yields $\dim Y' \geq \ell(w)-1$. On the other hand, the inclusion $Y' \subset \bigcup_{w' \prec w} S^{\min}_{K,w'}$ and Corollary \ref{EOminDim} yield $\dim Y' \leq \ell(w)-1$. Thus $\dim Y'=\ell(w)-1$, which again by Corollary \ref{EOminDim} implies that $Y'$ must be an irreducible component of $\overline{S^{\min}_{K,w'}}$ for some $w' \prec w$ with $\ell(w')=\ell(w)-1$. Now we may conclude by induction on $\ell(w)$. 
\end{proof}

\section{Applications}\label{APP}

\subsection{Triviality of Chern classes of flat automorphic bundles}

Let $E'$ be an extension of $E$. By definition, an automorphic bundle over $E'$ is a vector bundle on $Sh_K(\Gbf,\Xbf)_{E'}$ that arises by pullback of a vector bundle on $\Hdg_{E'}$ via the map $\sigma$. Recall that such an automorphic bundle is called \emph{flat} if it comes from a vector bundle on $\B{\Gbf_{E'}}$ by pullback via the composition
\[
Sh_K(\Gbf,\Xbf)_{E'} \ltoover{\sigma} \Hdg_{E'} \lto \B{\Gbf_{E'}},
\]
i.e., it is an automorphic bundle associated with a finite-dimensional representation of $\Gbf_{E'}$. Similarly, we define what it means for an automorphic bundle (or their canonical extensions to the toroidal compactification) on the integral model or its special fiber to be flat.

Now the theory of Chow rings of $G$-zips allows us easily to show the following result for flat automorphic bundles on the special fiber.

\begin{theorem}\label{TrivialFlatBundle}
Let $\kappa'$ be an extension of $\kappa$ and let $\Vscr$ be a flat automorphic bundle on $S_{K,\kappa'}$ and let $\Vscr^{\tor}$ be its canonical extension to $S_{K,\kappa'}^{\tor}$. Then for all $i \geq 1$ the $i$-th Chern class of $\Vscr$ in $A^{\bullet}(S_{K,\kappa'})$ and of $\Vscr^{\tor}$ in $A^{\bullet}(S_{K,\kappa'}^{\tor})$ is zero.
\end{theorem}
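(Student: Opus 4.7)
The plan is to reduce the vanishing to the description of $\beta^*\colon A^\bullet(\Hdg_\kappa)\to A^\bullet(\GZip^\mu)$ established in Theorem~\ref{PullbackKey}, observing that the Chern classes of flat automorphic bundles live in the image of the map $A^\bullet(\B{G})\to A^\bullet(\Hdg_\kappa)$ induced by the structure morphism $\Hdg_\kappa\to\B{G}$, and that this image is annihilated in positive degree by $\beta^*$.

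More precisely, unwinding the definition in analogy with Definition~\ref{DefFlatAutBundle}, a flat automorphic bundle $\Vscr$ on $S_{K,\kappa'}$ is the pullback of a vector bundle $\Wscr$ on $\B{G_{\kappa'}}$ along the composition
\[
S_{K,\kappa'} \ltoover{\sigma} \Hdg_{\kappa'} = \B{P_{\kappa'}} \ltoover{\tau} \B{G_{\kappa'}},
\]
and its canonical extension $\Vscr^{\tor}$ is the corresponding pullback along $\sigma^{\tor}\circ\tau$. Hence for each $i\geq 1$ one has $c_i(\Vscr) = \sigma^*(\tau^*f)$ and $c_i(\Vscr^{\tor}) = (\sigma^{\tor})^*(\tau^*f)$ for some $f\in A^i(\B{G_{\kappa'}})$. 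By the factorizations $\sigma = \beta\circ\zeta$ and $\sigma^{\tor} = \beta\circ\zeta^{\tor}$ recorded in \eqref{EqVBComp}, it suffices to prove that the composite
\[
A^\bullet(\B{G_{\kappa'}}) \ltoover{\tau^*} A^\bullet(\Hdg_{\kappa'}) \ltoover{\beta^*} A^\bullet(\GZip^\mu)
\]
vanishes in every positive degree, and then apply $\zeta^*$ respectively $\zeta^{\tor,*}$.

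Under the identifications $A^\bullet(\B{G_{\kappa'}}) = S^W$ and $A^\bullet(\B{P_{\kappa'}}) = S^{W_{I^{\rm o}}}$ coming from \eqref{EqDescribeChowReductive} together with Proposition~\ref{IgnoreUnipotent}, the map $\tau^*$ is the natural inclusion $S^W\hookrightarrow S^{W_{I^{\rm o}}}$. By Proposition~\ref{ChowGZip} and Theorem~\ref{PullbackKey}, $\beta^*$ is the canonical projection $S^{W_{I^{\rm o}}}\twoheadrightarrow S^{W_{I^{\rm o}}}/\CI S^{W_{I^{\rm o}}}$, and by Lemma~\ref{RationalCI} the ideal $\CI$ is nothing but the augmentation ideal $S^W_+$. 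Consequently every element of $S^W$ of positive degree lies in $\CI S^{W_{I^{\rm o}}}$ and is therefore killed by $\beta^*$, giving the desired vanishing. Given how much has already been set up in Section~\ref{CHOW}, there is no real obstacle here; the only delicate point is to verify that the natural notion of flatness for automorphic bundles in characteristic $p$ indeed matches pullback from $\B{G}$ via $\tau\colon \B{P}\to\B{G}$, which is immediate from the definitions of $\sigma$ and $\sigma^{\tor}$ recalled in~\eqref{EqToHdg}.
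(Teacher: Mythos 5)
Your proof is correct and follows essentially the same route as the paper: factor $\sigma$ and $\sigma^{\tor}$ through $\GZip^\mu$, identify the composite $\beta^*\circ\nu^*$ (your $\tau^*$) with the map $S^W\hookrightarrow S^{W_{I^{\rm o}}}\twoheadrightarrow S^{W_{I^{\rm o}}}/\CI S^{W_{I^{\rm o}}}$, and use Lemma~\ref{RationalCI} ($\CI = S^W_+$) to kill all positive-degree classes. The one step you elide is the initial reduction, via Proposition~\ref{GaloisInvariants}, to $\kappa' = k$ an algebraic closure: the identifications $A^{\bullet}(\B{G_{\kappa'}}) = S^W$ and $A^{\bullet}(\B{P_{\kappa'}}) = S^{W_{I^{\rm o}}}$, and hence the use of Theorem~\ref{PullbackKey} in the form you invoke it, are literally correct over $k$ but not over an arbitrary $\kappa'$, so this reduction should be made explicit.
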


\begin{proof}
As all automorphic bundles are defined over some finite extension of $\kappa$, we may assume that $\kappa'$ is an algebraic extension of $\kappa$. By Proposition~\ref{GaloisInvariants} we may assume that $\kappa' = k$ is an algebraic closure of $\kappa$. As $\sigma$ and $\sigma^{\tor}$ both factor through $\GZip^{\mu}$, it suffices to show that under pullback via the composition
\[
\GZip^{\mu} \ltoover{\beta} \Hdg_{k} \ltoover{\nu} \B{G_{k}}
\]
all elements of degree $>0$ in $A^{\bullet}(\B{G_k})$ are send to $0$. Here $\nu$ is the canonical projection $\Hdg_K = \B{P_k} \to \B{G_k}$ which induces via pullback on Chow rings the inclusion $A^{\bullet}(\B{G_k}) = S^W \mono S^{W_{I^o}}= A^{\bullet}(\Hdg_k)$. Hence the description of $\CI$ in Lemma~\ref{RationalCI} and of $\beta^*$ in Theorem~\ref{PullbackKey} implies the claim. 
\end{proof}

Using proper smooth base change we obtain the triviality of \'etale Chern classes in characteristic $0$ as follows. For a scheme of finite type over a field $k$ we denote by $H^i(X,\BQ_{\ell}(d))$ be the $i$-th continuous $\ell$-adic cohomology with Tate twist defined by Jannsen (\cite{Jannsen_ContEtaleCoh}) or, equivalently, the pro-\'etale cohomology defined by Bhatt and Scholze (\cite{BhattScholze_Proet}). Here $\ell$ is a prime different from the characteristic of $k$. Recall that $\Sbf_K$ denotes a Shimura variety of Hodge type in characteristic $0$ and that $\Sbf^{\tor}_K$ denotes a smooth proper toroidal compactification of $\Sbf_K$.

\begin{corollary}\label{TrivialEtaleChern}
Let $E'$ be a finite extension of the reflex field $E$ contained in the algebraic closure $\Ebar$ of $E$ in $\BC$. Let $\Vscr$ be a flat automorphic bundle over $\Sbf_{K,E'}$ and let $\Vscr^{\tor}$ be its canonical extension to $\Sbf^{\tor}_{K,E'}$. Then for all $i \geq 1$ the $i$-th \'etale Chern classes $c_i(\Vscr) \in H^{2i}(\Sbf_{K,E'},\BQ_{\ell}(i))$ and $c_i(\Vscr^{\tor}) \in H^{2i}(\Sbf^{\tor}_{K,E'},\BQ_{\ell}(i))$ are zero.
\end{corollary}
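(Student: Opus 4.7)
The plan is to combine the positive-characteristic vanishing of Theorem~\ref{TrivialFlatBundle} with proper smooth base change for $\ell$-adic \'etale cohomology in order to obtain the characteristic-zero result. Since $\Vscr = j^{*}\Vscr^{\tor}$ for the open immersion $j\colon \Sbf_{K,E'} \hookrightarrow \Sbf^{\tor}_{K,E'}$, naturality of \'etale Chern classes yields $c_i(\Vscr) = j^{*}c_i(\Vscr^{\tor})$, so it suffices to treat the canonical extension. I then pick a rational prime $p > 2$ of good reduction for $(\Gbf,\Xbf)$ and a place $v$ of $E'$ above $p$, and write $O \defeq O_{E'_v}$ with residue field $\kappa_v$. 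Since $\Vscr^{\tor}$ is flat it comes from a finite-dimensional representation $\rho$ of $\Gbf_{E'}$; after choosing a $\Gscr_O$-stable lattice in the underlying vector space, $\rho$ extends to a representation of the reductive model $\Gscr_O$, and pulling back along the integral morphism $\sigma^{\tor}\colon \Sscr^{\tor}_{K,O} \to \B{\Gscr_O}$ of \eqref{EqToHdg} produces an integral extension $\tilde\Vscr^{\tor}$ of $\Vscr^{\tor}$ to $\Sscr^{\tor}_{K,O}$.

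The structure morphism $f\colon \Sscr^{\tor}_{K,O} \to \Spec O$ is smooth and proper, so for any $\ell \neq p$ proper smooth base change ensures that $R^{q}f_{*}\BQ_{\ell}$ is lisse on $\Spec O$; after passing to the strict Henselization $O^{\mathrm{sh}}$ these sheaves become constant and one obtains a specialization isomorphism
\begin{equation*}
H^{2i}(\Sbf^{\tor}_{K,\bar{E}'_v},\BQ_\ell(i)) \iso H^{2i}(S^{\tor}_{K,\bar\kappa_v},\BQ_\ell(i))
\end{equation*}
compatible with the restrictions of $\tilde\Vscr^{\tor}$ to the geometric generic and special fibers. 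By Theorem~\ref{TrivialFlatBundle} the Chow Chern class of the special-fiber restriction vanishes in $A^i(S^{\tor}_{K,\bar\kappa_v})$ for $i \geq 1$, and naturality of the cycle class map $A^{\bullet} \to H^{2\bullet}(-,\BQ_\ell(\bullet))$ transfers this to vanishing of the corresponding \'etale Chern class on the geometric special fiber. Via the specialization isomorphism, together with the invariance of \'etale $\BQ_\ell$-cohomology of smooth proper varieties under enlargement of the algebraically closed base field of characteristic zero, this yields $c_i(\Vscr^{\tor}_{\bar{E}'}) = 0$ in $H^{2i}(\Sbf^{\tor}_{K,\bar{E}'},\BQ_\ell(i))$.

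It remains to descend from $\bar{E}'$ down to $E'$. Writing $\bar{E}' = \bigcup_{E''} E''$ as a filtered union over finite Galois extensions and using that \'etale cohomology commutes with filtered colimits of schemes at the level of $\BZ/\ell^{n}$-coefficients, the vanishing of the class over $\bar{E}'$ is witnessed already at some finite Galois stage $E''/E'$. Finally, pullback along the finite \'etale Galois cover $\Sbf^{\tor}_{K,E''} \to \Sbf^{\tor}_{K,E'}$ is injective on $\BQ_{\ell}$-cohomology via the trace relation $f_{*}f^{*} = [E'':E']\cdot\mathrm{id}$, producing $c_i(\Vscr^{\tor}) = 0$ in $H^{2i}(\Sbf^{\tor}_{K,E'},\BQ_\ell(i))$ as desired. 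The main obstacle is precisely this descent: one must exchange the inverse limit defining $\BQ_\ell$-cohomology with the filtered colimit over finite extensions, which is delicate in general but is justified here by the finite-dimensionality of the \'etale cohomology of the smooth proper varieties involved, combined with the fact that the Chern class under consideration extends to the integral model and therefore enjoys the rigidity needed to be detected at a finite level.
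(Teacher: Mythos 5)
Your proposal follows the same overall strategy as the paper's proof: extend the flat bundle to the integral model, specialize to the special fiber, apply Theorem~\ref{TrivialFlatBundle}, and transport the resulting vanishing back to characteristic~$0$ via the cycle class map and smooth--proper base change. The integral extension via a $\Gscr_O$-stable lattice is a legitimate alternative to the paper's construction using highest weight modules and Kempf's vanishing theorem; this is a standard fact about representations of flat affine group schemes over Dedekind domains, although it would be worth a reference. The reduction $\Vscr = j^*\Vscr^{\tor}$ is a harmless extra step.

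There is, however, a genuine gap in the descent at the end. You establish $c_i(\Vscr^{\tor}_{\bar E'}) = 0$ in $H^{2i}(\Sbf^{\tor}_{K,\bar E'},\BQ_\ell(i))$ and then attempt to recover vanishing over $E'$ by writing $\bar E'$ as a filtered colimit of finite Galois extensions $E''/E'$ and claiming the vanishing is ``witnessed at a finite stage.'' This requires interchanging the inverse limit over $n$ (defining $\BQ_\ell$- and $\BZ_\ell$-coefficients) with the direct limit over $E''$, which you correctly flag as delicate. But the subsequent appeal to finite-dimensionality and ``rigidity'' does not resolve it: what commuting with filtered colimits gives you is that, for each $n$, the class vanishes in $H^{2i}(\Sbf^{\tor}_{K,E''_n},\BZ/\ell^n(i))$ for some finite $E''_n$, and a priori $E''_n$ may grow without bound as $n \to \infty$. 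The finite-dimensionality of the geometric cohomology is of no help here, since the groups $H^{2i}(\Sbf^{\tor}_{K,E''},\BQ_\ell(i))$ over the non-closed fields $E''$ are typically infinite-dimensional. As written the step would fail.

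The paper's proof avoids this by doing the transfer argument \emph{at the start}: one first reduces to a finite extension $E''/E'$ chosen so that $\Gbf_{E''}$ is split, using the relation $f_*f^* = [E'':E']\cdot\mathrm{id}$ for the finite \'etale cover $\Sbf^{\tor}_{K,E''} \to \Sbf^{\tor}_{K,E'}$. All subsequent work — the choice of a place $v'$, the integral model, the specialization of cycle classes — then takes place over the fixed finite extension $E''$, and no passage through the infinite extension $\bar E'$ or swap of limits ever occurs. You should restructure your argument in the same way. A further minor issue: you fix $p$ before mentioning $\ell$ and then restrict to $\ell \neq p$, so the statement is not proved for the excluded $\ell$; the paper correctly chooses $p$ \emph{after} $\ell$ is given, which is the right order of quantifiers since $\ell$ is fixed in the statement and $p$ is an auxiliary choice.
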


This had been proved by Esnault and Harris for compact Shimura varieties in \cite{EsnaultHarris_AutomorphicVB}. 

\begin{proof}
By passing to a finite extension of $E'$ we may assume that $\Gbf_{E'}$ is split. Let $\Vscr$ be associated to a representation $\rho$ of $\Gbf_{E'}$. As $\Gbf_{E'}$ is reductive and finite-dimensional representations of reductive groups are semi-simple in characteristic $0$, we may assume that $\rho$ is irreducible. It suffices to show the triviality for $c_i(\Vscr^{\tor})$. Let $p \ne \ell$ be a prime number of good reduction for the Shimura datum $(\Gbf,\Xbf)$ such that exists an unramified place $v'$ of $E'$ over $p$. Let $v$ be the restriction of $v'$ to $E$ and let $\Sscr^{\tor}_K$ be a smooth proper toroidal compactification of $\Sscr_K$ over $O_{E_v}$ with generic fiber $\Sbf^{\tor}_{K,E_v}$. Let $\kappa'$ be the residue field of $O' \defeq O_{E'_{v'}}$. We also use the notation of Section~\ref{EOTAUT}. In particular we denote by $\Gscr$ a reductive model of $\Gbf_{\BQ_p}$ over $\BZ_p$.

We extend the representation $\rho$ to a representation $\tilde\rho$ of $\Gscr_{O'}$ on a free $O'$-module of finite rank as follows. Let $\Tscr \subseteq \Bscr \subseteq \Gscr$ be a Borel pair and let $\lambda \in X^*(\Tscr)$ be the highest weight of $\rho$. Considering $\lambda$ as a representation of $\Bscr_{O'}$ on a free $O'$-module of rank $1$ we obtain a line bundle on $\B{\Bscr_{O'}} = [\Gscr \bs \Gscr/\Bscr]_{O'}$, or equivalently a $\Gscr$-equivariant line bundle $\Lscr_{\lambda}$ on $\Gscr/\Bscr$. Then $\rho_{E'_{v'}}$ is given by the global sections of $\Lscr_{\lambda}$ over $(\Gscr/\Bscr)_{E'_{v'}}$. By Kempf's vanishing theorem we have $H^1((\Gscr/\Bscr)_s,(\Lscr_{\lambda})_s) = 0$ for all $s \in \Spec(O')$. Hence $H^0(\Gscr/\Bscr_{O'}, \Lscr_{\lambda})$ is a (locally) free $O'$-module whose formation is compatible with arbitrary base change $S' \to \Spec O'$. This is the desired extension.

Its special fiber is then a representation of the split reductive group $G_{\kappa'}$. Let $\Vscr^{\tor}_{\kappa'}$ be the corresponding flat automorphic bundle on $S^{\tor}_{K,\kappa'}$. It lifts to a flat automorphic bundle over $\Sscr^{\tor}_{K,O'}$ whose generic fiber is $\Vscr^{\tor}$. By Theorem~\ref{TrivialFlatBundle} the $i$-th Chern class of $\Vscr^{\tor}_{\kappa'}$ in $A^i(S_{K,\kappa'}^{\tor})$ is zero for $i \geq 1$. In particular its \'etale cycle class vanishes in
\[
H^{2i}(S^{\tor}_{\kappa'},\BQ_{\ell}(i)) = H^{2i}(\Sbf^{\tor}_{E'_{v'}},\BQ_{\ell}(i)) = H^{2i}(\Sbf^{\tor}_{E'},\BQ_{\ell}(i)),
\]
where the equalities hold by smooth and proper base change. But this cycle class in the last space is the \'etale cycle class of $\Vscr^{\tor}$ because the \'etale cycle class map from Chow groups to \'etale cohomology is compatible with specialization.
\end{proof}


\subsection{The Hodge half line}\label{HodgeHalf}

%

As the Shimura datum is of Hodge type there exists a Siegel embedding of $G$, i.e., an embedding $\iota\colon G \mono \GSp(V)$ of algebraic groups over $\BF_p$ such that $\tilde\mu\defeq \iota \circ \mu$ is minuscule and the parabolic $P_+(\tilde\mu)$ is the stabilizer of a Lagrangian subspace $U \subseteq V$. Consider the character
\begin{equation}\label{EqHodgeChar}
\chi(\iota) \defeq \det(V/U)\vdual
\end{equation}
of $P$, which is defined over $\kappa$. It corresponds to a line bundle on the Hodge stack $\Hdg$ over $\kappa$. We denote its pullback to $\GZip^{\mu}$ by $\omega^{\flat}(\iota)$. We call a class in $A^1(\GZip^{\mu})$ a \emph{Hodge line bundle class} if it is the first Chern class of the line bundle $\omega^{\flat}(\iota)$ given by a symplectic embedding.

Such a Hodge line bundle class is essentially independent of the choice of the embedding by combining Theorem~\ref{TrivialFlatBundle} with a result of Goldring and Koskivirta.

\begin{proposition}\label{HodgeLineBundlesIndep}
Suppose that $\Gbf^{\rm ad}$ is $\BQ$-simple. If $\iota$ and $\iota'$ are two Siegel embeddings, then there exists $\rho \in \BQ_{>0}$ such that
\[
c_1(\omega^{\flat}(\iota)) = \rho c_1(\omega^{\flat}(\iota')) \in A^1(\GZip^{\mu}).
\]
\end{proposition}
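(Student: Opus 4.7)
The plan is to combine two ingredients. The first is that first Chern classes of line bundles pulled back from $\B{G}$ vanish in $A^1(\GZip^{\mu})$; this allows us to work modulo $X^*(G)_{\BQ}$. The second is the classification of quasi-constant characters due to Goldring and Koskivirta in \cite{GoKo_Quasiconstant}; this identifies the Hodge character $\chi(\iota)$ up to positive rational scalar modulo $X^*(G)_{\BQ}$.

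For the first ingredient, I would use Theorem~\ref{PullbackKey} together with Lemma~\ref{RationalCI} to observe that the pullback
\[
\beta^*\colon A^{\bullet}(\Hdg) = S^{W_{I^{\rm o}}} \lto S^{W_{I^{\rm o}}}/\CI S^{W_{I^{\rm o}}} = A^{\bullet}(\GZip^{\mu})
\]
is the canonical projection and hence kills every element of $S^W_+$. In degree one this kills $X^*(T)_{\BQ}^W = X^*(G)_{\BQ}$; equivalently, this is the incarnation on $\GZip^{\mu}$ of the flat Chern-class vanishing of Theorem~\ref{TrivialFlatBundle}. Consequently the assignment $\chi \mapsto c_1$ of the associated line bundle factors through a $\BQ$-linear map
\[
\overline{c}\colon X^*(P)_{\BQ}/X^*(G)_{\BQ} \lto A^1(\GZip^{\mu}),
\]
and the proposition reduces to showing that $\chi(\iota)$ and $\chi(\iota')$ represent positive rational multiples of each other in $X^*(P)_{\BQ}/X^*(G)_{\BQ}$.

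For the second ingredient, I would invoke the Goldring-Koskivirta theory of quasi-constant characters \cite{GoKo_Quasiconstant}. Under the hypothesis that $\Gbf^{\rm ad}$ is $\BQ$-simple, they show that the subspace of quasi-constant characters of $P$ inside $X^*(P)_{\BQ}/X^*(G)_{\BQ}$ is one-dimensional and carries a canonical positive ray, and moreover that the Hodge character $\chi(\iota) = \det(V/U)^{\vee}$ attached to any Siegel embedding $\iota$ is quasi-constant and lies on this positive ray. Applying this to both $\iota$ and $\iota'$ produces the desired $\rho \in \BQ_{>0}$ with $\chi(\iota) - \rho \chi(\iota') \in X^*(G)_{\BQ}$. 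Pushing this relation through $\overline{c}$ yields $c_1(\omega^{\flat}(\iota)) = \rho\, c_1(\omega^{\flat}(\iota'))$ in $A^1(\GZip^{\mu})$.

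The conceptual heart of the argument is the second step, which rests entirely on \cite{GoKo_Quasiconstant}; the first step is essentially formal from the description of $A^{\bullet}(\GZip^{\mu})$ and the pullback map $\beta^*$ worked out in Section~\ref{CHOW}. The only point requiring separate verification is that the Hodge character defined in \eqref{EqHodgeChar} matches the Hodge character considered in \cite{GoKo_Quasiconstant}, which is immediate from the construction in terms of $\det(V/U)^{\vee}$.
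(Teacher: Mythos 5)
Your proposal is correct and follows essentially the same two-step strategy as the paper: first kill the contribution of characters of $G$ (you do this via the explicit description of $\beta^*$ from Theorem~\ref{PullbackKey} and Lemma~\ref{RationalCI}, the paper invokes Theorem~\ref{TrivialFlatBundle} directly, but these are the same fact), then invoke \cite{GoKo_Quasiconstant} to compare the Hodge characters modulo $X^*(G)$. The only cosmetic difference is that the paper passes to characters of $\tilde{L}$, the identity component of the preimage of $L$ in the simply connected cover of $G^{\rm der}$, rather than working literally in $X^*(P)_{\BQ}/X^*(G)_{\BQ}$; rationally these are the same quotient, so your formulation is equivalent. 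The citation target is the same: \cite[1.4.5]{GoKo_Quasiconstant}, which the paper cites for the concrete statement that $m\tilde{\chi} = n\tilde{\chi}'$ for some $m,n\in\BZ_{>0}$ and which you paraphrase as the one-dimensionality of the space of quasi-constant characters with its positive ray.
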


\begin{proof}
By Theorem~\ref{TrivialFlatBundle} it suffices to show that there exists a character $\lambda$ of $G$ and $m,n \in \BZ_{>0}$ such that $m\chi(\iota) = \lambda + n\chi(\iota')$ as characters of $P$ or, equivalently, of the Levi subgroup $L$. Let $\tilde{L}$ be the connected component of the preimage of $L$ in the simply connected cover of the derived group of $G$ and let $\tilde{\chi}$ and $\tilde{\chi}'$ be the characters obtained from $\chi(\iota)$ and $\chi(\iota')$, respectively, by composition with $\tilde{L} \to L$. Then it suffices to show that there exist $m,n \in \BZ_{>0}$ such that $m\tilde{\chi} = n\tilde{\chi}'$. But this is shown in \cite[1.4.5]{GoKo_Quasiconstant}.
\end{proof}

It is easy, as was explained to us by W.~Goldring, to give examples that the assertions fails without the assumption that $\Gbf^{\rm ad}$ is $\BQ$-simple. Indeed if $Sh_K(\Gbf,\Xbf)$ is the Shimura variety that classifies pairs of elliptic curves (with some level structure), then it is easy to see that the embeddings into the moduli space of principally polarized abelian threefolds given by
\[
(E_1,E_2) \sends E_1^2 \times E_2, \qquad\text{and}\qquad (E_1,E_2) \sends E_1 \times E_2^2
\]
yield classes in $A^1(\GZip^{\mu})$ that are not multiples of each other.

Let $\CT$ and $\CT^{\tor}$ be the tautological ring of $S_K$ and $S^{\tor}_K$, respectively.

\begin{definition}\label{DefHodgeHalfLine}
Suppose that $\Gbf^{\rm ad}$ is $\BQ$-simple. We call the $\BQ_{>0}$ half line in $A^1(\GZip^{\mu})$ generated by $c_1(\omega^{\flat}(\iota))$ the \emph{Hodge half line of $\GZip^{\mu}$}. Its image in the tautological rings $\CT_{\kappa}$ and $\CT_{\kappa}^{\tor}$ is also called the \emph{Hodge half line}.
\end{definition}

By \cite[5]{Madapusi_ToroidalHodge} we find that the pullback of a Hodge line bundle class to $\CT_{\kappa}$ (resp.~to $\CT_{\kappa}^{\tor}$) is generated by the determinant of the sheaf of invariant differentials of the abelian scheme (resp.~semi-abelian scheme) that is obtained via pullback from the universal abelian (resp.~semi-abelian) scheme over the Siegel Shimura variety (resp.~over a suitable toroidal compactification of the Siegel Shimura variety). In particular we find the following result.

\begin{proposition}\label{HodgeAmple}
The pullback of a Hodge line bundle class to $\CT_{\kappa}$ is ample.
\end{proposition}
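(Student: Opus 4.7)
The plan is to reduce the claim to the ampleness of the Hodge line bundle on the minimal compactification $S^{\min}_K$, which has already been invoked in the proof of Lemma~\ref{ConnectedEOStrata}. By the discussion immediately preceding the proposition, the pullback $\zeta^* c_1(\omega^{\flat}(\iota)) \in A^1(S_K)$ equals $c_1(\det\omega_{\CA/S_K})$, where $\CA \to S_K$ is the abelian scheme obtained by pulling back the universal abelian scheme along the morphism from $S_K$ to the Siegel modular variety induced by the Siegel embedding $\iota$. Hence it suffices to show that the line bundle $\det\omega_{\CA/S_K}$ is ample on $S_K$.

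For this I would invoke Madapusi Pera's result \cite[5.2.11]{Madapusi_ToroidalHodge}, as recalled in the proof of Lemma~\ref{ConnectedEOStrata}: the Hodge line bundle $\det\omega_{\CA/S_K}$ extends to a line bundle $\omega^{\min}$ on the projective scheme $S^{\min}_K$ which is already ample there. It then remains to observe that the restriction of an ample line bundle along an open immersion of schemes into a projective scheme is again ample. This is immediate from the characterisation of ampleness via embeddings into projective space: a sufficiently high tensor power of $\omega^{\min}$ realises $S^{\min}_K$ as a closed subscheme of some $\BP^N$, and restricting this embedding to the open subscheme $S_K \subseteq S^{\min}_K$ yields an immersion of $S_K$ into $\BP^N$ realised by the same tensor power of $\omega^{\min}|_{S_K} = \det\omega_{\CA/S_K}$.

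I do not expect any substantial obstacle in this argument; the proposition is essentially a formal consequence of two already available inputs, namely the identification (from the discussion preceding the proposition) of $\zeta^*c_1(\omega^{\flat}(\iota))$ with $c_1$ of the Hodge bundle of the pulled-back abelian scheme, together with Madapusi Pera's extension-and-ampleness theorem on $S^{\min}_K$. The reason this statement is singled out at this point is that, via the description of $\CT_\kappa$ as the image of $\zeta^*$ (Proposition~\ref{AboutTautological}), it packages the classical ampleness of the Hodge bundle in precisely the form needed for the subsequent application to Hirzebruch-Mumford proportionality.
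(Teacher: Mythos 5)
Your proof is correct, and it uses the same essential input that the paper uses elsewhere, namely Madapusi Pera's result \cite[5.2.11]{Madapusi_ToroidalHodge} that $\omega^{\min}$ on the projective scheme $S^{\min}_K$ is ample and restricts to the Hodge line bundle on $S_K$. The paper gives no explicit proof body for this proposition (it states it follows ``in particular'' from the identification of the Hodge class with $c_1(\det\omega_{\CA/S_K})$ in the preceding paragraph), so your argument simply makes the implicit step precise. Your final step---that the restriction of an ample invertible sheaf to a quasi-compact open subscheme is again ample---is the standard fact that a quasi-compact open immersion is a quasi-affine morphism and pullback by a quasi-affine morphism preserves ampleness, which is indeed how one should justify it; the phrasing in terms of an induced immersion into $\BP^N$ is fine since ampleness of an invertible sheaf $\Lscr$ on a finite-type $\kappa$-scheme is equivalent to some power of $\Lscr$ being the restriction of $\CO(1)$ along an immersion into projective space.
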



\subsection{Powers of Hodge line bundle classes}

By Proposition~\ref{ChowGZip} and Proposition~\ref{ChowGZipNaive} the Chow ring $A^{\bullet}(\GZip^{\mu})$ is a graded finite-dimensional $\BQ$-algebra of dimension $\#{}^IW$. For $j = 0, \dots, d \defeq \langle 2\rho,\mu\rangle$ the $\BQ$-vector space $A^j(\GZip^{\mu})$ has a basis the cycle classes $[\overline{Z}_w]$ with $w \in {}^IW$ with $\ell(w) = d - j$. In particular its top degree $A^d(\GZip^{\mu})$ is $1$-dimensional and generated by the unique closed zip stratum $[Z_e]$ which we call the \emph{superspecial stratum}.


\begin{proposition}\label{PowerHodge}
Let $\lambda^{\flat} \in A^1(\GZip^{\mu})$ be a Hodge line bundle class. Then for all $j = 0, \dots, d$ one has
\[
(\lambda^{\flat})^{d-j} = \sum_{\substack{w \in {}^IW \\ \ell(w) = j}}\alpha_w[\overline{Z}_w],
\]
with $\alpha_w \in \BQ_{>0}$. In particular there exists $\alpha_e \in \BQ_{>0}$ such that
\begin{equation}\label{EqHodgePower}
(\lambda^{\flat})^d = \alpha_e[Z_e].
\end{equation}
\end{proposition}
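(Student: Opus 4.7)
My plan is to prove the formula by descending induction on $d-j$, with the Hasse invariants of \cite{GoKo_HasseHeckeGalois} as the crucial input. The base case $j = d$ reduces to $(\lambda^{\flat})^0 = 1 = [\overline{Z}_{w_{0,I}w_0}]$, since by Lemma~\ref{LemPartialOrder} the element $w_{0,I}w_0$ is the unique maximal element of ${}^IW$ and by construction has length $d$.

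The engine of the induction is the following positive recursion. By the main theorem of \cite{GoKo_HasseHeckeGalois} applied to the Hodge character $\chi(\iota)$ --- whose positivity hypothesis on $G/P$ is automatic, since $\chi(\iota)$ restricts along the Siegel embedding from an ample line bundle on the Lagrangian Grassmannian --- for every $w \in {}^IW$ there exist an integer $N_w \geq 1$ and a section $h_w \in \Gamma(\overline{Z}_w, \omega^{\flat}(\iota)^{\otimes N_w})$ whose non-vanishing locus is exactly $Z_w$. Since $\overline{Z}_w$ is reduced and irreducible inside the smooth stack $\GZip^{\mu}$, the class $N_w \lambda^{\flat} \cdot [\overline{Z}_w]$ equals the pushforward of the cycle class of $\operatorname{div}(h_w)$. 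The support of this divisor is $\overline{Z}_w \setminus Z_w = \bigcup_{w' \prec w,\, w' \neq w} \overline{Z}_{w'}$ by \eqref{EqClosureGZipStratum}, and from Lemma~\ref{LemPartialOrder} its codimension-$1$ irreducible components in $\overline{Z}_w$ are exactly the $\overline{Z}_{w'}$ with $w' \prec w$ and $\ell(w') = \ell(w)-1$. Each of these appears with strictly positive multiplicity because $h_w$ vanishes identically on it. This yields
\[
\lambda^{\flat} \cdot [\overline{Z}_w] = \sum_{\substack{w' \prec w \\ \ell(w') = \ell(w)-1}} \frac{m_{w,w'}}{N_w}\, [\overline{Z}_{w'}], \qquad m_{w,w'} \in \BQ_{>0}.
\]

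Assuming inductively $(\lambda^{\flat})^{d-j-1} = \sum_{\ell(w) = j+1} \alpha_w [\overline{Z}_w]$ with $\alpha_w \in \BQ_{>0}$, multiplying by $\lambda^{\flat}$ and substituting the positive recursion yields $(\lambda^{\flat})^{d-j} = \sum_{\ell(w')=j} \beta_{w'} [\overline{Z}_{w'}]$ with $\beta_{w'} = \sum_{w \succ w',\, \ell(w)=j+1} \alpha_w m_{w,w'}/N_w \geq 0$. To conclude $\beta_{w'} > 0$ for every $w'$ with $\ell(w') = j$, it suffices to produce a single $w \in {}^IW$ with $w' \prec w$ and $\ell(w) = j+1$. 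Since $w' \preceq w_{0,I}w_0$ and the latter has length $d > j$, repeated application of Lemma~\ref{LemPartialOrder} yields a saturated chain $w' \prec w_1 \prec \cdots \prec w_{0,I}w_0$ whose consecutive length steps are all $1$, and $w_1$ has the required properties. The main obstacle is the existence and strict positivity of the Hasse invariants on $\GZip^{\mu}$, but this is precisely what is provided by \cite{GoKo_HasseHeckeGalois}.
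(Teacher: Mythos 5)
Your proof is correct and takes essentially the same route as the paper's one-line argument, which cites the same Goldring--Koskivirta input and the same descending induction; you merely make explicit the positive recursion $\lambda^\flat \cdot [\overline{Z}_w] = \sum_{\ell(w')=\ell(w)-1} c_{w,w'}[\overline{Z}_{w'}]$ with $c_{w,w'}>0$ furnished by the strata Hasse invariants, plus the chain argument needed to guarantee every coefficient stays strictly positive. The only blemish is that your parenthetical justification of the positivity hypothesis for $\chi(\iota)$ is imprecise (the hypothesis in \cite{GoKo_HasseHeckeGalois} is more delicate than ampleness on a Lagrangian Grassmannian), but the conclusion you invoke is indeed proved there for the Hodge line bundle, so this is a citation infelicity rather than a gap.
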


\begin{proof}
This follows by an easy induction from \cite[5.2.2]{GoKo_HasseHeckeGalois}.
\end{proof}

\begin{remark}\label{EqualityCoeff}
Calculations of examples suggest that the coefficients $\alpha_w$ should be equal for $w \in {}^IW$ with $\ell(w) = j$ if $\Gbf^{\rm ad}$ is $\BQ$-simple. We cannot prove this.
\end{remark}

By pullback we obtain:

\begin{corollary}\label{ClassSuperspecial}
Let $\lambda^{\flat} \in A^1(\GZip^{\mu})$ be a Hodge line bundle class. Let $\lambda \in \CT$ be its pullback. Then for all $j = 0, \dots, d$ one has
\[
\lambda^{d-j} = \sum_{\substack{w \in {}^IW \\ \ell(w) = j}}\alpha_w[\overline{S_{K,w}}]
\]
with $\alpha_w \in \BQ_{>0}$. In particular there exists $\alpha_e \in \BQ_{>0}$ such that $\lambda^d = \alpha_e[S_e]$.
\end{corollary}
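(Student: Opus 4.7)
The proof is essentially a direct pullback of Proposition~\ref{PowerHodge} along the smooth morphism $\zeta\colon S_K \to \GZip^{\mu}$. By definition of the tautological ring and of $\lambda$, we have $\lambda = \zeta^*(\lambda^{\flat})$, and since $\zeta^*$ is a homomorphism of graded $\BQ$-algebras, it satisfies $\lambda^{d-j} = \zeta^*\bigl((\lambda^{\flat})^{d-j}\bigr)$ for every $j$.

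The plan is therefore: first, apply $\zeta^*$ to the identity
\[
(\lambda^{\flat})^{d-j} = \sum_{\substack{w \in {}^IW \\ \ell(w) = j}}\alpha_w[\overline{Z}_w]
\]
provided by Proposition~\ref{PowerHodge}, with the same coefficients $\alpha_w \in \BQ_{>0}$. Second, invoke the property of the Ekedahl-Oort strata recalled in Section~\ref{EOTAUT} (a consequence of smoothness of $\zeta$ and the equality $\overline{S}_{K,w} = \zeta^{-1}(\overline{Z}_w)$ from~\eqref{EqClosureEO}) which says that $\zeta^*([\overline{Z}_w]) = [\overline{S}_{K,w}]$. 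Combining these two facts yields the formula
\[
\lambda^{d-j} = \sum_{\substack{w \in {}^IW \\ \ell(w) = j}}\alpha_w[\overline{S_{K,w}}],
\]
with exactly the same positive rational coefficients $\alpha_w$ as in Proposition~\ref{PowerHodge}. Specializing to $j = 0$ gives the final assertion, using that $\{e\}$ is the unique minimal element of $({}^IW,\preceq)$ by Lemma~\ref{LemPartialOrder}~\ref{LemPartialOrder1}, so $\overline{S}_{K,e} = S_{K,e}$.

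There is essentially no obstacle: the content is already packaged in Proposition~\ref{PowerHodge} (where the positivity of the coefficients was the nontrivial input, coming from the Hasse-invariant construction of Goldring-Koskivirta), and the pullback step is formal because $\zeta$ is smooth and the Hodge line bundle class pulls back by construction. The only point worth stating explicitly is that the positivity of each $\alpha_w$ is preserved, since the pullback map is a ring homomorphism and does not alter the scalar coefficients in the basis expansion.
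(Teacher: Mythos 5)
Your proposal is correct and follows exactly the paper's route: the paper's own proof of this corollary is just the phrase ``By pullback we obtain,'' and you have filled in precisely the intended details — $\zeta^*$ is a ring homomorphism, $\lambda = \zeta^*(\lambda^\flat)$, and $\zeta^*([\overline{Z}_w]) = [\overline{S}_{K,w}]$ by the smoothness of $\zeta$ and \eqref{EqClosureEO}. Nothing is missing and the positivity of the $\alpha_w$ transfers for the reason you give.
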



\subsection{Description of the tautological ring}

We will now show that the pullback map $\zeta^{\tor,*}\colon A^{\bullet}(\GZip^{\mu}_k) \to A^{\bullet}(S_{K,k}^{\tor})$ is always injective. By Proposition~\ref{GaloisInvariants} this also implies the injectivity of $\zeta^{\tor,*}\colon A^{\bullet}(\GZip^{\mu}_{\kappa'}) \to A^{\bullet}(S_{K,\kappa'}^{\tor})$ for every algebraic extension $\kappa'$ of $\kappa$. In particular we obtain an isomorphism of the tautological ring $\CT_{\kappa'}^{\tor}$ with $A^{\bullet}(\GZip^{\mu}_{\kappa'})$.  

The tool for showing injectivity is the following lemma.

\begin{lemma}\label{CritInjectivity}
Let $\alpha\colon A^{\bullet}(\GZip^{\mu}_k) \to T$ be a map of graded $\BQ$-algebras. Then $\alpha$ is injective if and only if $\alpha([Z_e]) \ne 0$.
\end{lemma}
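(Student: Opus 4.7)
The forward implication is trivial, since $[Z_e] \ne 0$ in the one-dimensional top piece $A^d(\GZip^\mu_k)$, so any injective graded ring map must send it to a nonzero element. The real content is the converse, and my plan is to deduce injectivity of $\alpha$ from a Poincar\'e-duality-type non-degeneracy of the cup-product pairing on $A^\bullet(\GZip^\mu_k)$, combined with the fact that $A^d(\GZip^\mu_k) = \BQ\cdot [Z_e]$.

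First I would invoke Corollary~\ref{ChowGZipCohomology}, which identifies $A^\bullet(\GZip^\mu_k)$ as a graded $\BQ$-algebra with the rational cohomology ring $H^{2\bullet}(\Xbf\vdual)$ of the compact dual. Since $\Xbf\vdual$ is a smooth projective homogeneous variety, ordinary Poincar\'e duality guarantees that for every $0 \le i \le d$ the bilinear pairing
\begin{equation*}
A^i(\GZip^\mu_k) \times A^{d-i}(\GZip^\mu_k) \lto A^d(\GZip^\mu_k) = \BQ\cdot [Z_e], \qquad (x,y) \lsends xy,
\end{equation*}
is non-degenerate. Equivalently, in terms of the basis $\{[\overline Z_w]\}_{w \in {}^IW}$ of Proposition~\ref{ChowGZipNaive}, there is an involution $w \leftrightarrow w^*$ on ${}^IW$ matching $\ell(w) + \ell(w^*) = d$ such that $[\overline Z_w]\cdot[\overline Z_{w^*}] \in \BQ^\times \cdot [Z_e]$; this is just the usual Schubert-calculus duality on the flag variety $\Gbf_\BC/\Pbf_I$.

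Granted non-degeneracy, the conclusion is straightforward. Suppose $\alpha([Z_e]) \ne 0$ and let $x \in A^\bullet(\GZip^\mu_k)$ be nonzero. Since $\alpha$ preserves the grading I may assume $x$ is homogeneous of degree $i$. By non-degeneracy there exists $y \in A^{d-i}(\GZip^\mu_k)$ with $xy \ne 0$, and because $A^d(\GZip^\mu_k)$ is spanned by $[Z_e]$ we have $xy = c\,[Z_e]$ for some $c \in \BQ^\times$. Applying the ring homomorphism $\alpha$ gives $\alpha(x)\alpha(y) = c\,\alpha([Z_e]) \ne 0$, whence $\alpha(x) \ne 0$. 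Thus $\alpha$ is injective on each graded piece, hence injective. The only potentially delicate point is the non-degeneracy of the pairing, but this is guaranteed by Corollary~\ref{ChowGZipCohomology} (or directly by Schubert calculus), so I anticipate no real obstacle.
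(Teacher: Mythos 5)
Your proof is correct and takes essentially the same approach as the paper: both reduce the claim to Poincar\'e duality for $H^{2\bullet}(\Xbf\vdual)$ via the graded $\BQ$-algebra isomorphism of Corollary~\ref{ChowGZipCohomology} (the paper phrases it as ``every nonzero graded ideal contains $[Z_e]$,'' which is the same non-degeneracy statement applied to $\ker\alpha$). One small caveat on a parenthetical remark: non-degeneracy of the pairing is not ``equivalent'' to the existence of an involution $w \leftrightarrow w^*$ with $[\overline Z_w]\cdot[\overline Z_{w^*}]\in\BQ^\times[Z_e]$ -- that self-duality of the $[\overline Z_w]$-basis would require identifying these classes with Schubert classes under the abstract isomorphism, which is not established -- but you do not use this aside, so the proof stands.
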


\begin{proof}
It suffices to show that any graded non-zero ideal of $A^{\bullet}(\GZip^{\mu})$ contains $[Z_e]$. By Corollary \ref{ChowGZipCohomology}, $A^{\bullet}(\GZip^{\mu})$ is isomorphic to the rational cohomology ring of the flag space $\Xbf\vdual$ over $\BC$. In particular multiplication yields for all $j = 0,\dots,d = \dim \Sbf_K$ a perfect pairing
\[
A^{j}(\GZip^{\mu}) \times A^{d-j}(\GZip^{\mu}) \to A^{d}(\GZip^{\mu}) = \BQ[Z_e].
\]
This implies our claim. 
\end{proof}

\begin{theorem}\label{ZetaTorInj}
The map $\zeta^{\tor,*}$ is injective. One has
\begin{equation}\label{EqDescribeTaut}
\CT^{\tor}_k \cong A^{\bullet}(\GZip^{\mu}_k) \cong H^{2\bullet}(\Xbf\vdual).
\end{equation}
\end{theorem}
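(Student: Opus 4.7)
My plan is to apply the criterion of Lemma~\ref{CritInjectivity}, which reduces the injectivity of $\zeta^{\tor,*}$ to the non-vanishing of $\zeta^{\tor,*}([Z_e]) \in A^d(S_K^{\tor})$, where $d = \langle 2\rho,\mu\rangle = \dim S_K$. The idea is to express the superspecial class as a power of a Hodge class and then reduce the question to computing an ample top intersection number on the minimal compactification, which is proper and on which the Hodge bundle is ample.

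Concretely, fix a Hodge line bundle class $\lambda^{\flat} \in A^1(\GZip^{\mu}_k)$ arising from a symplectic embedding. By Proposition~\ref{PowerHodge} we have $(\lambda^{\flat})^d = \alpha_e [Z_e]$ with $\alpha_e \in \BQ_{>0}$. Pulling back, $\zeta^{\tor,*}([Z_e]) = \alpha_e^{-1} \lambda^d$, where $\lambda := \zeta^{\tor,*}(\lambda^{\flat}) = c_1(\omega^{\tor})$. So it suffices to show $\lambda^d \neq 0$ in $A^d(S_K^{\tor})$.

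For this I would use the identification $\omega^{\tor} \cong \pi^*\omega^{\min}$ from \cite{Madapusi_ToroidalHodge} (recalled just before Lemma~\ref{MinCompNormal}), where $\pi\colon S_K^{\tor} \to S_K^{\min}$ is the canonical proper morphism and $\omega^{\min}$ is ample on the $d$-dimensional proper scheme $S_K^{\min}$. Since $\pi$ is birational, $\pi_*[S_K^{\tor}] = [S_K^{\min}]$, and the projection formula applied to line bundle Chern classes acting on Chow groups gives
\begin{equation*}
\deg_{S_K^{\tor}}(\lambda^d) \;=\; \deg_{S_K^{\tor}}\bigl(\pi^*c_1(\omega^{\min})^d\bigr) \;=\; \deg_{S_K^{\min}}\bigl(c_1(\omega^{\min})^d\bigr) \;>\;0,
\end{equation*}
the positivity being Nakai--Moishezon applied to the ample line bundle $\omega^{\min}$. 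Hence $\lambda^d \neq 0$, and Lemma~\ref{CritInjectivity} yields the injectivity of $\zeta^{\tor,*}$.

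For the asserted isomorphism, $\CT^{\tor}_k$ is by definition the image of $\sigma^{\tor,*}\colon A^{\bullet}(\Hdg_k) \to A^{\bullet}(S_K^{\tor})$. Since $\sigma^{\tor} = \beta \circ \zeta^{\tor}$ and $\beta^*$ is the canonical projection (hence surjective) by Theorem~\ref{PullbackKey}, this image coincides with $\zeta^{\tor,*}(A^{\bullet}(\GZip^{\mu}_k))$; the injectivity just established promotes this to an isomorphism $\CT^{\tor}_k \cong A^{\bullet}(\GZip^{\mu}_k)$, and the second isomorphism is Corollary~\ref{ChowGZipCohomology}. The only real obstacle in the whole argument is the non-vanishing of $\lambda^d$; once one recognizes that Proposition~\ref{PowerHodge} reduces this to a question about a single power of the Hodge class, the existence of $\omega^{\min}$ as an ample extension on $S_K^{\min}$ does the rest without any appeal to the smoothness of $\zeta^{\tor}$.
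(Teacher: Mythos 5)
Your proposal is essentially the paper's proof: reduce to $\zeta^{\tor,*}([Z_e]) \ne 0$ by Lemma~\ref{CritInjectivity}, rewrite $[Z_e]$ as a positive multiple of $(\lambda^\flat)^d$ via Proposition~\ref{PowerHodge}, and then use $\omega^{\tor}\cong\pi^*\omega^{\min}$, the projection formula, and ampleness of $\omega^{\min}$ on the proper $d$-dimensional scheme $S_K^{\min}$ to get positivity of the top degree. However, you elide the one genuinely delicate point: your identity $\zeta^{\tor,*}([Z_e]) = \alpha_e^{-1}\lambda^d$ rests on $\zeta^{\tor,*}\bigl((\lambda^{\flat})^d \cap [\GZip^\mu_k]\bigr) = (\zeta^{\tor,*}\lambda^\flat)^d \cap [S_K^{\tor}]$, i.e.\ compatibility of $\zeta^{\tor,*}$ with cap product by Chern classes. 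Since $\zeta^{\tor}$ is not known to be flat, this is not a formality: the paper justifies it by observing that any morphism from a smooth algebraic space to a smooth algebraic stack is l.c.i.\ and invoking \cite[6.6]{Fulton_Intersection} for the Gysin pullback. Your closing remark that the argument works ``without any appeal to the smoothness of $\zeta^{\tor}$'' is true, but only because of this l.c.i.\ substitute, which you should make explicit; otherwise the step where $[Z_e]$ trades places with a power of a Chern class under $\zeta^{\tor,*}$ is unjustified. With that citation inserted, your argument coincides with the paper's.
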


\begin{proof}
Let $\lambda^{\flat} \in A^1(\GZip^{\mu})$ be a Hodge line bundle class, say the first Chern class of a line bundle $\omega^{\flat}$ on $\GZip^{\mu}$. Let $\omega^{\tor} \defeq \zeta^{\tor,*}(\omega^{\flat})$. Let $\pi\colon S_K^{\tor} \to S^{\min}_K$ be the canonical proper birational map to the minimal compactification. By \cite[5.2.11]{Madapusi_ToroidalHodge} there exists an ample line bundle $\omega^{\min}$ on $S_K^{\min}$ such that $\pi^{*}(\omega^{\min}) \cong \omega^{\tor}$.

By Lemma~\ref{CritInjectivity} and \eqref{EqHodgePower} we have to show that
\[
\zeta^{\tor,*}(c_1(\omega^{\flat})^d \cap [\GZip^{\mu}_k]) = c_1(\omega^{\tor})^d \cap S_K^{\tor} \ne 0,\tag{*}
\]
where the equality holds by \cite[6.6]{Fulton_Intersection} because $\zeta^{\tor}$ is an l.c.i. morphism (note that any morphism $f\colon X \to \Yscr$ from a smooth algebraic space to a smooth algebraic stack is l.c.i. in the sense used in \cite{Fulton_Intersection}: factorize it into its graph $\Gamma_f\colon X \to X \times \Yscr$ which is representable and a regular immersion by \cite[17.12.1]{EGA4.4} and the representable smooth projection $X \times \Yscr \to \Yscr$). 

The projection formula shows
\[
\pi_*(c_1(\omega^{\tor})^d \cap S_K^{\tor}) = c_1(\omega^{\min})^d \cap [S^{\min}_K]
\]
which is non-zero because $\omega^{\min}$ is ample and $S_K^{\min}$ is proper and of pure dimension $d$ over $\kappa$. Hence the left hand side of (*) is non-zero.

The isomorphisms in \eqref{EqDescribeTaut} are then a consequence by using Corollary~\ref{ChowGZipCohomology}.
\end{proof}

It is conjectured that a similar description does also hold for the tautological ring of a smooth toroidal compactification of the Shimura variety in characteristic $0$. Let $E'$ be an algebraic extension of $E_v$ and let $\kappa'$ be the residue field of the ring of integers of $E'$. There is a commutative diagram
\begin{equation}\label{EqSpecialize}
\begin{aligned}\xymatrix{
A^{\bullet}(\Hdg_{E'}) \ar[r]^{\sigma^{\tor,*}} \ar[d]_{\cong} & A^{\bullet}(\Sbf^{\tor}_{K,E'}) \ar[d] \\
A^{\bullet}(\Hdg_{\kappa'}) \ar[r] & A^{\bullet}(S^{\tor}_{K,\kappa'})
}\end{aligned}
\end{equation}
where the vertical arrows are the specialization maps. For the Hodge stacks one can show that the specialization map is an isomorphism. In particular the right specialization map induces a surjective map of $\BQ$-algebras
\begin{equation}\label{EqSpecialTaut}
{\rm sp}^{\tor}\colon \CT^{\tor}_{E'} \to \CT^{\tor}_{\kappa'}.
\end{equation}

A similar diagram as \eqref{EqSpecialize} with the specialization of $A^{\bullet}(\Sbf_{K,E'}) \to A^{\bullet}(S_{K,\kappa'})$ as the right vertical arrow yields also a surjective map ${\rm sp}\colon \CT_{E'} \to \CT_{\kappa'}$.

\begin{proposition}\label{DescribeTautChar0}
Suppose that $E'$ is chosen such that $\kappa' = k$ is algebraically closed. Then the following assertions are equivalent.
\begin{equivlist}
\item
The map ${\rm sp}^{\tor}\colon \CT^{\tor}_{E'} \to \CT^{\tor}_{k}$ is injective (and hence an isomorphism and $\CT^{\tor}_{E'} \cong H^{2\bullet}(\Xbf\vdual)$ by \eqref{EqDescribeTaut}).
\item
The composition $A^{\bullet}(\B{G_{E'}}) \to A^{\bullet}(\Hdg_{E'}) \to A^{\bullet}(\Sbf^{\tor}_K)$ is zero in degree $> 0$.
\end{equivlist}
\end{proposition}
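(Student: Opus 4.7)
The plan is to identify both $\CT^{\tor}_{E'}$ and $\CT^{\tor}_k$ as explicit quotients of $A^{\bullet}(\Hdg_{E'})$ and then compare kernels via the specialization square \eqref{EqSpecialize}. Write $J_{E'} \subseteq A^{\bullet}(\Hdg_{E'})$ for the ideal generated by the positive-degree image of $A^{\bullet}(\B{\Gbf_{E'}}) \to A^{\bullet}(\Hdg_{E'})$, and define $J_k \subseteq A^{\bullet}(\Hdg_{k})$ analogously. The composition $\B{\Gscr} \to \Hdg$ is defined integrally, so by the functoriality of the specialization maps for $\B{\Gscr}$ and $\Hdg$ (both of which are isomorphisms: this is the stated fact for $\Hdg$, and for $\B{\Gscr}$ both sides are $S^W$) the iso $A^{\bullet}(\Hdg_{E'}) \iso A^{\bullet}(\Hdg_k)$ carries $J_{E'}$ bijectively onto $J_k$.

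Next I would show $\CT^{\tor}_k \cong A^{\bullet}(\Hdg_k)/J_k$. By Theorem~\ref{IntroThm1}(1) the kernel of $\beta^* \colon A^{\bullet}(\Hdg_k) \to A^{\bullet}(\GZip^{\mu}_k)$ is exactly $J_k$, and by Theorem~\ref{ZetaTorInj} the map $\zeta^{\tor,*}$ is injective. Since $\sigma^{\tor,*}_k = \zeta^{\tor,*} \circ \beta^*$, this yields $\ker(\sigma^{\tor,*}_k) = J_k$, as required. Combining this with the specialization isomorphism for $\Hdg$ identifies $\CT^{\tor}_k$ with $A^{\bullet}(\Hdg_{E'})/J_{E'}$.

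Now I would chase the diagram \eqref{EqSpecialize}. Denote by $\mathrm{sp}\colon A^{\bullet}(\Sbf^{\tor}_{K,E'}) \to A^{\bullet}(S^{\tor}_{K,k})$ the right-hand specialization. Computing the kernel of the common composition $A^{\bullet}(\Hdg_{E'}) \to A^{\bullet}(S^{\tor}_{K,k})$ in two ways gives
\[
(\sigma^{\tor,*}_{E'})^{-1}\!\bigl(\ker(\mathrm{sp})\bigr) \;=\; J_{E'},
\]
because going down-then-right has kernel $J_{E'}$ by the previous step. In particular $\ker(\sigma^{\tor,*}_{E'}) \subseteq J_{E'}$ automatically, and $\mathrm{sp}^{\tor}\colon \CT^{\tor}_{E'} \to \CT^{\tor}_k$ (which is surjective because the left vertical in \eqref{EqSpecialize} is an isomorphism) is injective if and only if $\ker(\mathrm{sp}) \cap \CT^{\tor}_{E'} = 0$, equivalently $(\sigma^{\tor,*}_{E'})^{-1}(\ker \mathrm{sp}) \subseteq \ker(\sigma^{\tor,*}_{E'})$, equivalently $J_{E'} \subseteq \ker(\sigma^{\tor,*}_{E'})$. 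This last condition is exactly (ii), since $J_{E'}$ is generated by the image of $A^{>0}(\B{\Gbf_{E'}}) \to A^{\bullet}(\Hdg_{E'})$ and $\sigma^{\tor,*}_{E'}$ is a ring map. The final clause of the proposition follows by combining the resulting isomorphism with Theorem~\ref{ZetaTorInj}.

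There is no serious obstacle here: once the ideals $J_{E'}$ and $J_k$ are correctly introduced and the identification $\CT^{\tor}_k = A^{\bullet}(\Hdg_k)/J_k$ is read off from Theorems~\ref{IntroThm1} and \ref{ZetaTorInj}, the statement is a diagram chase. The only delicate point is to argue that $J_{E'}$ maps onto $J_k$ under the specialization isomorphism for $\Hdg$, which requires the (routine) observation that the projection $\Hdg \to \B{\Gscr}$ and its pullback on Chow rings are compatible with specialization.
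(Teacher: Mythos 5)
Your argument is correct and follows essentially the same route as the paper: the paper also extends diagram \eqref{EqSpecialize} to a larger commutative square (its \eqref{EqKeySpecialize}) in which the bottom row factors $\sigma^{\tor,*}_k$ through $\beta^*$ and $\zeta^{\tor,*}$, and then reads off the equivalence from the facts that $\ker(\beta^*)$ is the ideal generated by $A^{>0}(\B{G_k})$ (Theorem~\ref{PullbackKey}) and $\zeta^{\tor,*}$ is injective (Theorem~\ref{ZetaTorInj}). You have simply spelled out the resulting diagram chase—introducing $J_{E'}$, $J_k$ and matching them under the specialization isomorphism for $\Hdg$—which the paper leaves implicit.
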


\begin{proof}
The commutative diagram \eqref{EqSpecialize} can be extended to a commutative diagram
\begin{equation}\label{EqKeySpecialize}
\begin{aligned}\xymatrix{
A^{\bullet}(\B{G_{E'}}) \ar@{^{(}->}[r] \ar[d]_{\cong} & A^{\bullet}(\Hdg_{E'}) \ar@{->>}[rr]^{\sigma^{\tor,*}} \ar[d]_{\cong} & & \CT^{\tor}_{E'} \ar[d] \\
A^{\bullet}(\B{G_{k}}) \ar@{^{(}->}[r] & A^{\bullet}(\Hdg_{\kappa'}) \ar@{->>}[r]_{\beta^*} & A^{\bullet}(\GZip^{\mu}) \ar[r]^-{\sim}_-{\zeta^{\tor,*}} & \CT^{\tor}_k.
}\end{aligned}
\end{equation}
Hence the equivalence follows as the kernel of $\beta^*$ is generated by the image of $A^{>0}(\B{G_{k}})$ by Theorem~\ref{PullbackKey}.
\end{proof}

Altough we cannot prove this description of the tautological ring in characteristic $0$, we can prove that analogous statement for cohomology:
\begin{theorem}
  For the $\mathbf{Q}_\ell$-algebra $H^{2 \bullet}(\Sbf^{\tor}_K) \defeq \oplus_{i}H^{2i}(\Sbf^{\tor}_K,\mathbf{Q}_\ell(i))$, the composition $$A^{\bullet}(\Hdg_{E'}) \to A^{\bullet}(\Sbf^{\tor}_K) \to H^{2\bullet}(\Sbf^{\tor}_K)$$ induces an injection $H^{2\bullet}(\Xbf\vdual) \into H^{2\bullet}(\Sbf^{\tor}_K)$.
\end{theorem}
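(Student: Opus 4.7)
The plan is to transport the strategy of Theorem~\ref{ZetaTorInj} into $\ell$-adic cohomology, where we can argue directly in characteristic zero. The two key inputs are Corollary~\ref{TrivialEtaleChern}, which furnishes the factorization $A^{\bullet}(\Hdg_{E'}) \twoheadrightarrow H^{2\bullet}(\Xbf\vdual)$ at the level of \'etale cohomology, and the Baily--Borel theorem, which provides an ample Hodge bundle on the minimal compactification that witnesses non-vanishing of the top-degree class.

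First I would establish the factorization. After enlarging $E'$ to split $\Gbf_{E'}$ (harmless by Proposition~\ref{GaloisInvariants}) one has, by Remark~\ref{TautScalarExt}, $A^{\bullet}(\Hdg_{E'}) \cong S^{W_I}$, and Borel's description of the cohomology of a flag variety together with Lemma~\ref{RationalCI} gives $H^{2\bullet}(\Xbf\vdual) \cong S^{W_I}/(S^W_+ \cdot S^{W_I})$. The kernel of this surjection is the ideal generated by the image of $A^{>0}(\B{\Gbf^c_{E'}}) \to A^{\bullet}(\Hdg_{E'})$, i.e., by Chern classes of flat automorphic bundles. Corollary~\ref{TrivialEtaleChern} says precisely that these Chern classes vanish in $H^{>0}(\Sbf^{\tor}_{K,E'},\BQ_\ell(\ast))$, so the composition in question descends to a ring homomorphism $\iota\colon H^{2\bullet}(\Xbf\vdual) \to H^{2\bullet}(\Sbf^{\tor}_K)$.

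Next I would prove $\iota$ is injective by the argument of Lemma~\ref{CritInjectivity} transported to cohomology: since $\Xbf\vdual$ is smooth, projective, and connected of dimension $d = \dim \Sbf_K$, Poincar\'e duality provides a perfect cup-product pairing valued in $H^{2d}(\Xbf\vdual) \cong \BQ$, so because $\iota$ is a ring map it suffices to check that $\iota$ is non-zero on $H^{2d}(\Xbf\vdual)$. Fix a Siegel embedding $\Gbf \into \GSp(V)$; the Hodge line bundle on $\Xbf\vdual$ pulls back from the Pl\"ucker polarization on the ambient Lagrangian Grassmannian and so is ample, whence $c_1(\omega^\flat)^d$ generates $H^{2d}(\Xbf\vdual)$. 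Its image under $\iota$ equals the \'etale cycle class of $c_1(\omega^{\tor})^d \cap [\Sbf^{\tor}_K]$; writing $\omega^{\tor} \cong \pi^*\omega^{\min}$ for an ample line bundle on the projective compactification $\Sbf^{\min}_{K,E'}$ (Baily--Borel) and applying the projection formula,
\[
\pi_*\bigl(c_1(\omega^{\tor})^d \cap [\Sbf^{\tor}_K]\bigr) = c_1(\omega^{\min})^d \cap [\Sbf^{\min}_K]
\]
has strictly positive degree on each geometric connected component, and so its cycle class in $H^{2d}(\Sbf^{\tor}_K,\BQ_\ell(d))$ is non-zero by compatibility of degree with the \'etale trace map.

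The main obstacle is the final step: everything else is fairly formal once Corollary~\ref{TrivialEtaleChern} and the Borel presentation of $H^{\bullet}(\Xbf\vdual)$ are on the table, but verifying that Baily--Borel ampleness, the compatibility of proper pushforward with $\ell$-adic cycle classes, and the projection-formula degree computation all fit together coherently requires some care. An appealing alternative is to bypass the ampleness verification on $\Xbf\vdual$ by specializing to the special fibre and invoking Theorem~\ref{ZetaTorInj} together with Proposition~\ref{PowerHodge}, which already guarantees that $c_1(\omega^\flat)^d$ is a non-zero multiple of $[Z_e]$ in $A^{\bullet}(\GZip^\mu)$.
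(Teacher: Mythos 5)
Your proof is correct. The factorization step is the same as the paper's, relying on Corollary~\ref{TrivialEtaleChern}. For injectivity, however, you and the paper diverge: the paper reduces to characteristic $p$ by the same proper--smooth base change argument used to prove Corollary~\ref{TrivialEtaleChern} and then invokes Theorem~\ref{ZetaTorInj} as a black box, whereas your primary argument stays entirely in characteristic zero. You first note (the cohomological analogue of Lemma~\ref{CritInjectivity}) that Poincar\'e duality on the smooth projective connected variety $\Xbf\vdual$ reduces injectivity of the ring map $\iota$ to nonvanishing of $\iota$ on the one-dimensional top piece $H^{2d}(\Xbf\vdual)$, which is spanned by $c_1(\omega^\flat)^d$ since the Hodge line bundle is ample on $\Xbf\vdual$; you then detect nonvanishing of $\iota(c_1(\omega^\flat)^d)$ through the degree pairing, using the Baily--Borel ampleness of $\omega^{\min}$ on the projective minimal compactification and the projection formula along $\pi\colon \Sbf^{\tor}_K \to \Sbf^{\min}_K$. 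This is precisely the characteristic-zero incarnation of the argument the paper runs \emph{inside} the proof of Theorem~\ref{ZetaTorInj} in characteristic $p$, and both routes are sound. The modest advantage of your route is that it avoids a second round of specialization to $\kappa$ for the injectivity step, at the cost of invoking the (classical) characteristic-zero Baily--Borel ampleness rather than the integral statement from Madapusi Pera that the paper's characteristic-$p$ argument uses. Your fallback route --- specialize and cite Theorem~\ref{ZetaTorInj} with Proposition~\ref{PowerHodge} --- is essentially what the paper does.
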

\begin{proof}
 The existence of the factorization $A^\bullet(\Hdg_{E'}) \to H^{2\bullet}(\Xbf\vdual) \to H^{2\bullet}(\Sbf^{\tor}_K)$ is given by Corollary \ref{TrivialEtaleChern}. To prove injectivity, as in the proof of Corollary \ref{TrivialEtaleChern} one reduces to proving that the morphism $H^{2\bullet}(\Xbf\vdual) \cong A^\bullet(\GZip^\mu) \to A^\bullet(\Sbf^{\tor}_K) \to H^{2\bullet}(\Sbf^{\tor}_K)$ is injective in characteristic $p$. This follows from the argument used in the proof of Theorem \ref{ZetaTorInj}.
\end{proof}
%


\subsection{Hirzebruch-Mumford proportionality}

The above results immediately imply a very strong form of Hirzebruch-Mumford proportionality in positive characteristic and the usual form of Hirzebruch-Mumford proportionality in characteristic $0$.

Recall that an automorphic bundle on $S_{K,k}^{\tor}$ is by definition a vector bundle of the form $\sigma^{\tor,*}(\Escr)$ for some vector bundle $\Escr$ on $\Hdg_{k} = [G_{k}\bs G_{k}/P_{k}]$. Let $X\vdual \defeq G_{k}/P$ be the characteristic-$p$-version of the compact dual $\Xbf\vdual$ and let 
\[
\rho\colon X\vdual_{k} \to \Hdg_{k}
\]
be the projection. If we consider $\Escr$ as a $G_{k}$-equivariant vector bundle on $X\vdual_{k}$, then $\rho^*(\Escr)$ is the underlying vector bundle.

\begin{theorem}\label{HMCharp}
There is an isomorphism $u\colon A^{\bullet}(X\vdual) \liso \CT^{\tor}_k$ of graded $\BQ$-algebras such that for every $G_{k}$-equivariant vector bundle $\Escr$ on $X\vdual_{k}$ the $i$-th Chern class of the underlying vector bundle on $X\vdual_{k}$ is sent by $u$ to the $i$-th Chern class of the automorphic bundle $\sigma^{\tor,*}(\Escr)$.
\end{theorem}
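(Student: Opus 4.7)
The plan is to realize $u$ as the composition
\[
A^{\bullet}(X\vdual) \liso A^{\bullet}(\GZip^{\mu}_k) \liso \CT^{\tor}_k,
\]
where the second isomorphism is given by Theorem~\ref{ZetaTorInj}. The first isomorphism will follow from the observation that both Chow rings admit the same explicit description as quotients of $A^{\bullet}(\Hdg_k)$. Using Proposition~\ref{IgnoreUnipotent} and the identification of the Weyl group of the Levi $L$ with $W_{I^o}$ underlying Proposition~\ref{ChowPhiQ}, we have $A^{\bullet}(\Hdg_k) = A^{\bullet}(\B{P_k}) = A^{\bullet}(\B{L_k}) = S^{W_{I^o}}$. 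On one hand, Theorem~\ref{PullbackKey} together with Lemma~\ref{RationalCI} shows that $\beta^*\colon A^{\bullet}(\Hdg_k) \to A^{\bullet}(\GZip^{\mu}_k)$ is surjective with kernel $S^W_+ \cdot S^{W_{I^o}}$. On the other hand, since $X\vdual = G_k/P_k$, the classical Borel presentation of the Chow ring of a flag variety (a standard generalization of Proposition~\ref{AGB} from $G/B$ to $G/P$) identifies $\rho^*\colon A^{\bullet}(\Hdg_k) \to A^{\bullet}(X\vdual)$ with the quotient by the same ideal. Thus both pullbacks descend to the common quotient $S^{W_{I^o}}/(S^W_+ \cdot S^{W_{I^o}})$, which yields the desired canonical isomorphism.

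The Chern class compatibility will then be essentially automatic. A $G_k$-equivariant vector bundle $\Escr$ on $X\vdual$ is the same datum as a vector bundle $\tilde\Escr$ on $\Hdg_k = \B{P_k}$, i.e.\ an algebraic representation of $P_k$, with $\Escr = \rho^*\tilde\Escr$ as underlying bundle on $X\vdual$ and $\sigma^{\tor,*}\tilde\Escr$ as associated automorphic bundle. From the factorization $\sigma^{\tor} = \beta \circ \zeta^{\tor}$ (see \eqref{EqVBComp}) we get $\sigma^{\tor,*} = \zeta^{\tor,*} \circ \beta^*$, and by the very construction of $u$ we have $u \circ \rho^* = \zeta^{\tor,*} \circ \beta^*$ as maps $A^{\bullet}(\Hdg_k) \to \CT^{\tor}_k$. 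Functoriality of Chern classes with respect to these pullback maps then gives $u(c_i(\Escr)) = c_i(\sigma^{\tor,*}\tilde\Escr)$.

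I do not expect any serious obstacle. The theorem amounts to a repackaging of Theorem~\ref{ZetaTorInj}, combined with the observation, valid only after inverting primes, that the Frobenius-twisted ideal $\CI$ appearing in Brokemper's presentation of $A^{\bullet}(\GZip^{\mu}_k)$ coincides with the augmentation ideal $S^W_+$ governing the Borel presentation of $A^{\bullet}(G_k/P_k)$. The only point requiring some care is to match the two presentations correctly: one must verify that both $\beta^*$ and $\rho^*$ correspond to the plain quotient map out of $S^{W_{I^o}}$, without a Weyl-group-theoretic twist, so that the two resulting descriptions of $A^{\bullet}(X\vdual)$ and $A^{\bullet}(\GZip^{\mu}_k)$ are identified in a geometrically compatible way rather than merely abstractly isomorphic.
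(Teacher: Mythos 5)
Your proof is correct and takes essentially the same route as the paper's: both construct $u$ as the composite of the common-kernel identification $A^{\bullet}(X\vdual) \cong A^{\bullet}(\GZip^{\mu}_k)$ (via $\rho^*$, $\beta^*$, Lemma~\ref{RationalCI}) with $\zeta^{\tor,*}$ from Theorem~\ref{ZetaTorInj}. You are merely more explicit than the paper in spelling out the Borel presentation of $A^{\bullet}(G_k/P_k)$, in citing Theorem~\ref{PullbackKey} for the kernel of $\beta^*$, and in verifying the Chern-class compatibility from $u \circ \rho^* = \zeta^{\tor,*} \circ \beta^*$, all of which the paper leaves implicit.
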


\begin{proof}
The surjective map $\rho^*\colon A^{\bullet}(\Hdg_k) \to A^{\bullet}(X\vdual_k)$ has the same kernel as the surjective map $\beta^* \colon A^{\bullet}(\Hdg_k) \to A^{\bullet}(\GZip^{\mu}_k)$ by Lemma~\ref{RationalCI}. Hence we obtain some isomorphism of graded $\BQ$-algebras $A^{\bullet}(X\vdual) \iso A^{\bullet}(\GZip^{\mu}_k)$. Composing it with $\zeta^{\tor,*}\colon A^{\bullet}(\GZip^{\mu}_k) \to \CT^{\tor}_k$, which is an isomorphism by Theorem~\ref{ZetaTorInj}, we obtain the desired isomorphism $u$.
\end{proof}

For a smooth proper equi-dimensional scheme $X$ over $k$ we denote by $\int_X\colon A^{\dim X}(X) \to \BQ$ the degree map. Let $\BQ[c_1,\dots,c_d]$ be the graded polynomial ring with $\deg(c_i) = i$. 

The isomorphism $u$ from Theorem~\ref{HMCharp} induces in particular an isomorphism of the $1$-dimensional top degree parts $A^d(X\vdual_k)$ and $\CT^{\tor,d}_k$, where $d \defeq \dim(X\vdual) = \dim(S^{\tor}_K)$ we obtain:

\begin{corollary}\label{HMStandardCharp}
There exists a rational number $R \in \BQ^{\times}$ such that for all classes $\alpha \in A^{d}(\Hdg_k)$ one has
\[
\int_{X\vdual_k} \rho^*(\alpha) = R \int_{S_{K,k}^{\tor}} \sigma^{\tor,*}(\alpha).
\]
\end{corollary}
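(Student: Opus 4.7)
The proof is essentially a direct application of Theorem~\ref{HMCharp} in top degree, reduced to checking that the two degree maps are nonzero. I will organize it as follows.

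First, I would restrict attention to the top degree pieces. The compact dual $X^{\vee}_k = G_k/P_k$ is a flag variety, so $A^d(X^{\vee}_k)$ is a one-dimensional $\BQ$-vector space. By Theorem~\ref{HMCharp} (combined with Corollary~\ref{ChowGZipCohomology}), the graded isomorphism $u$ identifies $A^d(X^{\vee}_k)$ with $\CT^{\tor,d}_k$, which is therefore also one-dimensional. Moreover, by construction of $u$, both $\rho^*\colon A^d(\Hdg_k) \to A^d(X^{\vee}_k)$ and $\sigma^{\tor,*}\colon A^d(\Hdg_k) \to \CT^{\tor,d}_k$ are surjective with the same kernel. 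Hence the two linear functionals
\[
\alpha \lsends \int_{X^{\vee}_k} \rho^*(\alpha), \qquad \alpha \lsends \int_{S^{\tor}_{K,k}} \sigma^{\tor,*}(\alpha)
\]
on $A^d(\Hdg_k)$ factor through the same one-dimensional quotient. As soon as both functionals are nonzero, they must differ by a unique $R \in \BQ^\times$, which is the desired statement.

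The remaining task is to exhibit a single class $\alpha \in A^d(\Hdg_k)$ on which both functionals evaluate to a nonzero number. The natural candidate is $\alpha = c_1(\chi(\iota))^d$, the $d$-th power of a Hodge line bundle class attached to some Siegel embedding $\iota$ (see Subsection~\ref{HodgeHalf}). On the compact dual, $\rho^*(\alpha)$ is the top self-intersection of an ample line bundle on the proper smooth variety $X^{\vee}_k$, so its degree is a positive integer. On the Shimura side, $\sigma^{\tor,*}(\alpha) = c_1(\omega^{\tor})^d$, where $\omega^{\tor} \cong \pi^*\omega^{\min}$ with $\omega^{\min}$ ample on the proper scheme $S^{\min}_K$ of pure dimension $d$ (by the results of Madapusi-Pera used in the proof of Theorem~\ref{ZetaTorInj}). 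The projection formula applied to the proper morphism $\pi\colon S^{\tor}_K \to S^{\min}_K$ yields
\[
\int_{S^{\tor}_{K,k}} c_1(\omega^{\tor})^d = \int_{S^{\min}_{K,k}} c_1(\omega^{\min})^d > 0,
\]
where positivity is because $\omega^{\min}$ is ample. So both functionals are nonzero on this $\alpha$, and the ratio $R$ exists in $\BQ^\times$.

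There is no serious obstacle here; the nontrivial content is already carried by Theorem~\ref{HMCharp} and by the availability of the ample extension $\omega^{\min}$ on the minimal compactification. Once these are in hand, the proportionality statement is a one-line consequence of the fact that any two nonzero linear functionals on a one-dimensional vector space are proportional by a unique scalar.
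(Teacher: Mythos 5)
Your proof is correct and takes essentially the same route as the paper. The paper states Corollary~\ref{HMStandardCharp} as an immediate consequence of Theorem~\ref{HMCharp} (with no separate written proof), relying on exactly the ingredients you make explicit: the graded isomorphism $u$ identifies the two one-dimensional top-degree pieces, $\rho^*$ and $\sigma^{\tor,*}$ have the same kernel by construction of $u$, and the nonvanishing of the degree functional on $\CT^{\tor,d}_k$ was already established in the proof of Theorem~\ref{ZetaTorInj} via the projection formula and ampleness of $\omega^{\min}$ on the minimal compactification.
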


%

As specialization of cycles commutes with taking degrees we obtain a new and purely algebraic proof of Hirzebruch-Mumford proportionality in characteristic $0$.

\begin{corollary}\label{HMChar0}
There exists a rational number $R \in \BQ^{\times}$ such that for all homogenous $f \in \BQ[c_1,\dots,c_d]$ of degree $d$ and all $\Gbf_{\BC}$-equivariant vector bundles $\Escr$ on $\Xbf\vdual$ one has
\[
\int_{\Xbf\vdual} f(c_1(\rho^*(\Escr)),\dots,c_d(\rho^*(\Escr))) = R \int_{\Sbf_{K,\BC}^{\tor}} f(c_1(\sigma^{\tor,*}(\Escr)),\dots,c_d(\sigma^{\tor,*}(\Escr))).
\]
\end{corollary}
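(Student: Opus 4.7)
The plan is to deduce Corollary~\ref{HMChar0} from its characteristic-$p$ analogue Corollary~\ref{HMStandardCharp} by a spreading-out and specialization argument, using that the Chern classes on both sides are pulled back from the Hodge stack and that the integral models of $\Hdg$, $\Xbf\vdual$ and $\Sbf_K^{\tor}$ fit together coherently.

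First I would reduce to a situation defined over a discretely valued base. Choose a prime $p > 2$ of good reduction for $(\Gbf,\Xbf)$, a finite extension $E'$ of $E$ (contained in $\BC$) large enough so that both the $\Gbf$-equivariant bundle $\Escr$ and the polynomial identity we want to verify are defined over $E'$, and a place $v'$ of $E'$ over $p$ with $E'_{v'}$ unramified over $\BQ_p$. Let $O'=O_{E'_{v'}}$ with residue field $\kappa'$. As in the proof of Corollary~\ref{TrivialEtaleChern}, a $\Gbf_{E'}$-equivariant vector bundle on $\Xbf\vdual_{E'}$ corresponds by Tannakian duality (Proposition~\ref{CharHodge}) to a vector bundle on $\Hdg_{E'}$, which via the integral Hodge stack $\Hdg_{O'}=\B{\Pscr_{O'}}$ and Kempf's vanishing spreads out to a vector bundle $\Vscr$ on $\Hdg_{O'}$ whose generic fiber recovers $\Escr$. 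Write $\Vscr_s$ for its special fiber, a $\Gscr_{\kappa'}$-equivariant bundle on $\Xbf\vdual_{\kappa'}$, and pull it back to an automorphic bundle $\sigma^{\tor,*}(\Vscr)$ on $\Sscr^{\tor}_{K,O'}$ whose generic and special fibers are $\sigma^{\tor,*}(\Escr)$ and $\sigma^{\tor,*}(\Vscr_s)$ respectively.

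Next I would run the specialization argument on both sides of the desired identity. The polynomial $F(\Vscr)\defeq f(c_1(\rho^*\Vscr),\dots,c_d(\rho^*\Vscr))\in A^d(\Xbf\vdual_{O'})$ has well-defined generic and special fibers, and by the compatibility of Chern classes with pullback these specialize to the analogous classes on $\Xbf\vdual_{E'}$ and on $\Xbf\vdual_{\kappa'}$; since $\Xbf\vdual_{O'}$ is smooth and proper over $O'$, the degree maps on the two fibers agree. The same holds for the Shimura side: $\Sscr^{\tor}_K$ is smooth and proper over $O'$, so $\int_{\Sbf^{\tor}_{K,E'}}\bigl(\text{generic fiber}\bigr)=\int_{S^{\tor}_{K,\kappa'}}\bigl(\text{special fiber}\bigr)$ for the class $f(c_1(\sigma^{\tor,*}\Vscr),\dots,c_d(\sigma^{\tor,*}\Vscr))$. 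This reduces the desired equality in $\BC$ to the same equality with $\Xbf\vdual$ replaced by $X\vdual_{\kappa'}$ and $\Sbf_{K,\BC}^{\tor}$ by $S_{K,\kappa'}^{\tor}$.

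Finally I would invoke Corollary~\ref{HMStandardCharp} applied to the class $\alpha = f(c_1(\Vscr_s),\dots,c_d(\Vscr_s))\in A^d(\Hdg_{\kappa'})$, which directly yields the proportionality in characteristic $p$ with some constant $R\in\BQ^{\times}$ depending only on $(\Gbf,\Xbf)$ (not on $\Escr$ or $f$). Combining with the specialization isomorphisms on degrees gives the identity over $E'$, and hence over $\BC$, with the same $R$. I do not expect any real obstacle: the only thing to be careful about is that the class $\alpha$ is genuinely in the image of $A^\bullet(\Hdg)\to A^\bullet(\Xbf\vdual)$, i.e.\ that $\rho^*$ is surjective on Chow rings, which follows from the description of $A^{\bullet}(\Hdg_{\kappa'})\cong S^{W_{I^{\rm o}}}$ together with the surjectivity statement for $\beta^*$ (the kernels are compatible), so that Corollary~\ref{HMStandardCharp} is actually applicable.
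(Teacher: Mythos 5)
Your proposal is correct and takes essentially the same route as the paper: pick a good prime and unramified place, spread $\Escr$ out over the integral Hodge stack, use that degree commutes with specialization on both sides, and invoke Corollary~\ref{HMStandardCharp}. The worry at the end about surjectivity of $\rho^*$ is a non-issue: the class $\alpha=f(c_1(\Escr),\dots,c_d(\Escr))$ already lives in $A^d(\Hdg_{\kappa'})$ because $\Escr$ is by definition a ($\Gbf$-equivariant) bundle on $\Hdg$, and $\rho^*\alpha=f(c_1(\rho^*\Escr),\dots,c_d(\rho^*\Escr))$ by functoriality of Chern classes, so Corollary~\ref{HMStandardCharp} applies directly without any surjectivity statement for $\rho^*$.
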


\begin{proof}
All $\Gbf_{\BC}$-equivariant vector bundles $\Escr$ on $\Xbf\vdual$ are already defined over some splitting field $E'$ of $G$ that we may assume to be a finite extension of the reflex field. We now choose $p$ and $v'$ as in the proof of Corollary~\ref{TrivialEtaleChern}: Let $p$ be a prime number of good reduction for the Shimura datum $(\Gbf,\Xbf)$ such that exists an unramified place $v'$ of $E'$ over $p$. Let $v$ be the restriction of $v'$ to $E$ and let $\Sscr^{\tor}_K$ be a smooth proper toroidal compactification of $\Sscr_K$ over $O_{E_v}$ with generic fiber $\Sbf^{\tor}_{K,E_v}$. Consider the commutative diagram
\begin{equation}\label{EqSpecialize}
\begin{aligned}\xymatrix{
A^{\bullet}(\Xbf\vdual_{E'}) \ar[d]^{\rm sp} & A^{\bullet}(\Hdg_{E'}) \ar[l]_{\rho^*} \ar[r]^{\sigma^{\tor,*}} \ar[d]^{{\rm sp}} & A^{\bullet}(\Sbf^{\tor}_{K,E'}) \ar[d]^{\rm sp} \\
A^{\bullet}(X\vdual_{\kappa'}) & A^{\bullet}(\Hdg_{\kappa'}) \ar[l]_{\rho^*} \ar[r]^{\sigma^{\tor,*}} & A^{\bullet}(S^{\tor}_{K,\kappa'}),
}\end{aligned}
\end{equation}
where the vertical maps are given by specialization.
Then we have
\begin{align*}
\int_{\Xbf\vdual} f\bigl(c_1(\rho^*(\Escr)),\dots,\rho^*(c_d(\Escr))\bigr) 
&= \int_{X\vdual_{\kappa'}}{\rm sp}\bigl(f\bigl(c_1(\rho^*(\Escr)),\dots,c_d(\rho^*(\Escr))\bigr)\bigr) \\
&= \int_{X\vdual_{\kappa'}}\rho^*\bigl({\rm sp}\bigl(f(c_1(\Escr),\dots,c_d(\Escr))\bigr)\bigr) \\
&= R\int_{S^{\tor}_{K,\kappa'}}\sigma^{\tor,*}\bigl({\rm sp}\bigl(f(c_1(\Escr),\dots,c_d(\Escr))\bigr)\bigr) \\
&= R\int_{S^{\tor}_{K,\kappa'}}{\rm sp}\bigl(\sigma^{\tor,*}\bigl(f(c_1(\Escr),\dots,c_d(\Escr))\bigr)\bigr) \\
&= R\int_{\Sbf_{K,\BC}^{\tor}} f\bigl(c_1(\sigma^{\tor,*}(\Escr)),\dots,c_d(\sigma^{\tor,*}(\Escr))\bigr).
\end{align*}
Here the first and the last equality hold because taking the degree commutes with specialization (\cite[20.3(a)]{Fulton_Intersection}), and the third equality is a special case of Corollary~\ref{HMStandardCharp}.
%
%
%
\end{proof}

The proof shows that the numbers $R$ of Corollary~\ref{HMStandardCharp} and Corollary~\ref{HMChar0} coincide.



\section{Examples} \label{EXAMPLE}

For a permutation $\pi \in S_n$ we also write $\pi = [\pi(1),\pi(2),\dots,\pi(n)]$. We will always denote by $\tau_{i,j}$ the transposition of $i$ and $j$. For any permutation $\sigma$ one has $\sigma\tau_{i,j}\sigma^{-1} = \tau_{\sigma(i),\sigma(j)}$.

\subsection{Siegel Case}
Fix $g \geq 1$. We consider the vector space $\BF_p^{2g}$ with the symplectic pairing
\begin{equation*}
  (( a_i)_i, (b_i)_i) \mapsto \sum_{1\leq i \leq g}a_i b_{2g+1-i} - \sum_{g+1 \leq i \leq 2g} a_i b_{2g+1-i}.
\end{equation*}
We take $G$ to be the resulting group of symplectic similitudes and let $\mu$ be the cocharacter of $G$ with weights $(1,\hdots,1,0,\hdots ,0)$ (with both weights having multiplicity $g$). 

Let $T$ be the group of diagonal matrices in $G$. We use the description of the Weyl group $W$ of $(G,T)$ given in \cite[Section A7]{VW_PEL}, i.e.,
\[
W = \set{w \in S_{2g}}{w(i) + w(i^{\bot}) = 2g+1},
\]
where $i^{\bot} \defeq 2g+1-i$. Its simple reflections are $s_i = \tau_{i,i+1}\tau_{2g-i,2g+1-i}$ for $i = 1,\dots,g-1$ and $s_g = \tau_{g,g+1}$. Every element $w \in W$ is uniquely determined by $w(1),\dots,w(g)$. As $G$ is split over $\BF_p$, the Frobenius $\varphi$ acts trivially on $W$.

We get a frame for the resulting zip datum by taking $T$ to be the above torus, $B$ the group of upper triangular matrices in $G$ and $z$ the canonical representative of the element $[1+g,\hdots ,2g, 1,\hdots g]$ of $W$. The types $I$ and $J$ of $P$ and $Q$ are both equal to $\{s_1,\hdots,s_{g-1}\}$ and
\[
{}^IW = \set{w \in W}{w^{-1}(1) < \dots < w^{-1}(g)} = \set{w \in W}{w^{-1}(g+1) < \dots < w^{-1}(2g)}.
\]
Elements of $T$ have diagonal entries $(t_1,\hdots ,t_g,t_g^{-1},\hdots, t_1^{-1})$. Hence if for $1\leq i \leq g$ we let $x_i \in X^*(T)$ be the character sending such an element to $t_i$ we obtain a basis $(x_1,\hdots, x_g)$ of $X^*(T)$ which induces an isomorphism $S \cong \BQ[x_1,\hdots, x_g]$. The element $z \in W$ is given by $z(i) = g+i$ for all $i = 1,\dots,g$. It acts on $X^*(T)$ via $x_i \mapsto -x_{g+1-i}$. We have
\begin{equation}\label{EqSiegelz}
zs_iz^{-1} = s_{g-i} \qquad \text{for all} \quad i = 1,\dots,g-1.
\end{equation} 

\subsubsection*{Computation of $\gamma(w)$}

For $w \in {}^IW$ set $\sigma_w \defeq \inn(wz)$. Then we find
\[
I_w = \bigcap_{m\geq1} I_w^{(m)}, \qquad I_w^{(m)} \defeq \set{s \in I}{\sigma_w^k(s) \in I\ \forall\,k = 1,\dots,m}
\]
by \eqref{EqIw}. For instance $s_i \in I_w^{(1)}$ if and only if $w(g-i)$ and $w(g+1-i)$ are both $\leq g$ or both $\geq g+1$. In this case $w(g+1-i) = w(g-i) + 1$ and $\sigma_w(s)$ is $s_{w(g-i)}$ if $w(g-i) \leq g$ and it is $s_{w(g+1-i)^{\bot}}$ if $w(g-i) \geq g+1$.

We can consider $I_w$ as a subset of vertices of the Dynkin diagram of $G$ and get a subgraph with those edges in the Dynkin diagram of $G$ that have vertices in $I_w$. As $\sigma_w$ preserves angles between roots, it is an automorphism of the Dynkin diagram $I_w$. In particular it permutes all connected components of $I_w$. Let $\mathfrak{c}$ be a $\alpha_w^{\BZ}$-orbit of such connected components. Choose some connected component $C$ in $\mathfrak{c}$. Let $m(\mathfrak{c})$ be the number of vertices in $C$ and let $l(\mathfrak{c})$ be the minimal integer $n \geq 1$ such that $\sigma_w^n(C) = C$. We say that $\mathfrak{c}$ is of linear type if $\sigma^{l(\mathfrak{c})}(w) = w$ for all $w \in C$. Otherwise it is called of unitary type. Then $m(\mathfrak{c})$, $l(\mathfrak{c})$, and the type do not depend on the choice of $C$.

Now we can calculate $\gamma(w)$ by \eqref{EqGammaw} as follows. If $\mathfrak{c}$ is of linear type, then we let $\gamma_{\mathfrak{c}}(w)$ be the number of $\BF_p$-valued points in the full flag variety of the scalar restriction of $\GL_{m(\mathfrak{c})+1}$ over $\BF_{p^{l(\mathfrak{c})}}$, i.e.,
\[
\gamma_{\mathfrak{c}}(w) = \sum_{\pi \in S_{m(\mathfrak{c})+1}}p^{l(\mathfrak{c})\ell(\pi)} = \prod_{1 \leq j \leq m(\mathfrak{c})} \frac{q^{j+1} - 1}{q-1}.
\]
where $q \defeq p^{l(\mathfrak{c})}$.
If $\mathfrak{c}$ is of unitary type, then we let $\gamma_{\mathfrak{c}}(w)$ be the number of $\BF_p$-valued points in the full flag variety of the scalar restriction of a unitary group in $m+1$ variables over $\BF_{p^{l(\mathfrak{c})}}$. To describe this concretely, we let $\tau$ be the conjugation with the longest element in the symmetric group $S_{m(\mathfrak{c})+1}$, an automorphism of Coxeter groups order $2$ (except if $m(\mathfrak{c}) = 1$). For $\pi \in S_{m(\mathfrak{c})+1}$ set $\delta(\pi) \defeq p^{2l(\mathfrak{c})\ell(\pi)}$ if $\pi \ne \tau(\pi)$ and $\delta(\pi) \defeq p^{l(\mathfrak{c})\ell(\pi)}$ if $\tau(\pi) = \pi$. Then
\[
\gamma_{\mathfrak{c}}(w) = \sum_{\pi \in S_{m(\mathfrak{c})+1}/\tau}\delta(\pi),
\]
Altogether we obtain
\[
\gamma(w) = \prod_{\mathfrak{c}} \gamma_{\mathfrak{c}}(w),
\]
where $\mathfrak{c}$ runs trough all orbits of connected components of $I_w$.

For instance fix $0 \leq f \leq g$ and let $u_f$ be the permutation
\[
u_f \defeq [g+1,g+2,\dots,g+f,1,g+f+1,\dots, 2g-1, 2, \dots, g-f, 2g, g-f+1,\dots,g].
\]
Then $u_g = z$ and $\overline{Z_{u_0}}$ is the locus where the $p$-rank is $0$. We have
\[
I_{u_f} = I \setminus \{s_1,s_2,\dots,s_{g-f}\}
\]
in particular $I_{u_1} = I_{u_0} = \emptyset$. Moreover, $I_{u_f}$ has only one connected component and this is of linear type. Therefore
\[
\gamma(u_f) = \prod_{1 \leq j \leq f-1}\frac{p^{j+1} - 1}{p-1}.
\]

\subsubsection*{Cycle classes}

By Example~\ref{ExampleClassDiag} we find that
\begin{equation}\label{EqDiagSp}
[\Brh_e] = \prod_{1\leq i< j \leq g}(x_i \otimes 1 - 1 \otimes x_j)\Gamma_g(c_1,\dots,c_g),
\end{equation}
where $c_i = \sigma_i(x_1,\dots,x_g) \otimes 1 + 1 \otimes \sigma_i(x_1,\dots,x_g)$ for $i = 1,\dots,g$ and we set $c_0 = 2$ and $c_i = 0$ for all $i \notin \{0,\dots,g\}$. 

The operators $\delta_{s_i}$ act on $ S$ by
\begin{equation*}
  \delta_{s_i}(f)=\frac{f(x_1,\hdots,x_g)-f(x_1,\hdots,x_{i+1},x_i,\hdots x_g)}{x_i-x_{i+1}}
\end{equation*}
for $i=1,\hdots ,g-1$ and by
\begin{equation*}
  \delta_{s_g}(f)=\frac{f(x_1,\hdots,x_g)-f(x_1,\hdots,x_{g-1},-x_g)}{2x_g}.
\end{equation*}
for $i=g$. 

The element $z$ acts on $S$ by $x_i \mapsto -x_{g+1-i}$. Since the torus $T$ is split over $\BF_p$, the Frobenius $\phi$ acts on $S$ by $x_i \mapsto px_i$. Hence $\psi^*$ sends $x_i \otimes 1$ to $-x_{g+1-i}$ and $1\otimes x_i$ to $px_i$. Thus for $w \in W$ we find
\begin{equation*}
  [\overline{\Brh}_w]=\delta_w(\prod_{1\leq i< j \leq g}(x_i \otimes 1 - 1 \otimes x_j)\Gamma_g(c_1,\dots,c_g))
\end{equation*}
and
\begin{equation} \label{EqZEmptySp}
  [\overline{Z}^\emptyset_w] = \psi^*([\overline{\Brh}_w]).
\end{equation}

Such a formula is already given in \cite[Theorem 12.1]{EkedahlVanDerGeer_CycleAbVar}. The formula in \emph{loc.cit.} agrees with \eqref{EqZEmptySpin} if one takes the following into account: We believe that in \emph{loc.cit.} there is a typo and the polynomial should be evaluated at $y_j=p\ell_{g+1-j}$ instead of $y_j=p\ell_j$. Then the formulas agree under the substitution $x_i=\ell_{g+1-i}$.

\subsubsection*{The case $g=2$}
As an example, let us consider the case $g=2$. We let
\begin{align*}
  \Phi&\defeq x_1 \otimes 1 - 1\otimes x_2, \\
  \Gamma &\defeq c_1 c_2 = ( (x_1+x_2) \otimes 1 + 1\otimes (x_1+x_2))(x_1x_2\otimes 1 + 1\otimes x_1x_2)
\end{align*}
so that 
\begin{equation*}
  [\Brh_e]=\Phi\Gamma.
\end{equation*}
The set $\leftexp{I}{W}$ consists of the elements $\{e,s_2,s_2s_1,s_2s_1s_2\}$. By applying the operators $\delta_w$ we find
\begin{align*}
  [\Brh_{s_2}]&=  \Phi(x_1\otimes 1 + 1\otimes x_1)(x_1\otimes 1 + 1 \otimes x_2),\\{}
  [\Brh_{s_2s_1}]&= (x_1 \otimes 1 + 1\otimes x_1)(x_1\otimes 1 + 1\otimes x_2),\\{}
  [\Brh_{s_2s_1s_2}] &= x_1 \otimes 1 + 1\otimes x_1.
\end{align*}

Applying $\psi^*$ yields
\begin{align} \label{EqZEmptySp2}
  [\overline{Z}^\emptyset_{e}] &= -(p^4 -1)(x_1+x_2)x_1 x_2^2 ,\\{}
  [\overline{Z}^\emptyset_{s_2}] &= -(p^2-1)(px_1 - x_2)x_2^2, \nonumber \\{}
  [\overline{Z}^\emptyset_{s_2 s_1}] &= (p-1)(px_1 -x_2)x_2,\nonumber \\{}
  [\overline{Z}^\emptyset_{s_2 s_1 s_2}] &= px_1 - x_2. \nonumber
\end{align}

We have $I=I^{\rm o}=\{s_1\}$. Hence by Theorem $\pi_*=\delta_{s_1}$. Since $I$ has only a single element, we see that for $w \in \leftexp{I}{W}$ either $\sigma_w(s_1)=s_1$ and hence $I_w=I$ or $I_w =\emptyset$. In the first case we find $\gamma(w)=p+1$, in the second $\gamma(w)=1$. Using this we find:
\begin{equation}\label{EqGammawSp}
\gamma(e) = \gamma(s_2 s_1 s_2) = p+1, \qquad \gamma(s_2) = \gamma(s_2 s_1) = 1 
\end{equation}

Altogether we obtain the following formulas for the classes of the Ekedahl-Oort strata:
\begin{align} \label{EqZSp}
  [\overline{Z}_{e}] &= (p+1)(p^4-1)(x_1+x_2)x_1x_2,\\{}
  [\overline{Z}_{s_2}] &= (p^2-1)((p-1)x_1x_2-x_1^2-x_2^2), \nonumber \\{}
  [\overline{Z}_{s_2 s_1}] &= (p-1)(x_1+x_2),\nonumber \\{}
  [\overline{Z}_{s_2 s_1 s_2}] &= (p+1)^2,\nonumber
\end{align}
These formulas agree with the ones given in \cite[12.2]{EkedahlVanDerGeer_CycleAbVar} with $x_i$ corresponding to $\ell_{g+1-i}$, except that it appears that in \emph{loc.cit.} the rows for $s_1 s_2$ and $s_2 s_1$ should be switched and the entry for $\pi_*([\overline{U}_{s_2}])$ is incorrect.

\subsection{Hilbert-Blumenthal Case}

Fix $d \geq 1$ and let $\Gtilde \defeq \Res_{\BF_{p^d}/\BF_p}\GL_2$. Define $G$ by the cartesian diagram
\[\xymatrix{
G \ar[r] \ar[d] & \BG_{m,\BF_p} \ar[d] \\
\Gtilde \ar[r]^-{\det} & \Res_{\BF_{p^d}/\BF_p}\BG_m,
}\]
where the right vertical map is the canonical embedding. Let $\Sigma$ be the set of embeddings $\BF_{p^d} \mono k$. We fix an embedding $\iota_0$ and identify the set $\BZ/d\BZ$ with $\Sigma$  via $i \sends \sigma^{-i} \iota_0$, where $\sigma\colon x \to x^p$ is the arithmetic Frobenius. Let $\mgtilde$ be the cocharacter of $\Gtilde_k = \prod_{\Sigma}\GL_2$ given by $t \sends \twosmallmatrix{t}{0}{0}{1}$ in each component. Then $\mgtilde$ factors through a cocharacter $\mu$ of $G$. Let $\Ttilde$ be the standard torus of $\Gtilde$, i.e., $\Ttilde_k$ is the product of the diagonal tori. For $i \in \BZ/d\BZ$ and $j = 1,2$ let $x^{(i)}_j$ be the character
\[
\bigl(\twomatrix{t_1^{(i)}}{0}{0}{t_2^{(i)}}\bigr)_{i\in \BZ/d\BZ} \sends t_j^{(i)}
\]
of $\Ttilde_k$. Then $\Stilde = \Sym(X^*(\Ttilde)_{\BQ}) = \BQ[x^{(i)}_1,x_2^{(i)}; i \in \BZ/d\BZ]$. The intersection $T = \Ttilde \cap G$ is a maximal torus of $G$ and $S = \Sym(X^*(T)_{\BQ})$ identifies with the quotient of $\Stilde$ by the ideal generated by $(x^{(i)}_1 + x^{(i)}_2) - (x^{(i+1)}_1 + x^{(i+1)}_2))$ for $i \in \BZ/d\BZ$. We will compute all cycle classes of Ekedahl-Oort strata for $(\Gtilde,\mgtilde)$. This yields then also the corresponding cycle classes for $(G,\mu)$ by Subsection~\ref{FuncZip}.

Let $\Btilde$ be the Borel subgroup of $\Gtilde$ such that $\Btilde_k$ is the product of groups of upper triangular matrices in $\GL_2$. The Weyl group is $W = \{\pm 1\}^{\BZ/d\BZ}$ and we have $I = J = \emptyset$. Hence ${}^IW = W$. As frame for $(\Gtilde,\mgtilde)$ we choose $(\Ttilde,\Btilde,z)$ with $z$ a representative of $(-1,\dots,-1) \in W$.

By \eqref{DiagonalA} we have
\[
[\Brh_e] = \prod_{i \in \BZ/d\BZ}(x^{(i)}_1 \otimes 1 - 1 \otimes x^{(i)}_2) \in A^{\bullet}(\Brh_{\Gtilde}).
\]
Let $w = (\epsilon_i)_{i \in \BZ/d\BZ} \in W$. Then $\ell(w) = \#\set{i \in \BZ/d\BZ}{\epsilon_i = -1}$. We have
\[
[\overline{\Brh}_w] = \prod_{\substack{i \in \BZ/d\BZ \\ \epsilon_i = 1}}(x^{(i)}_1 \otimes 1 - 1 \otimes x^{(i)}_2)
\]
and hence
\[
[\overline{Z}_w^{\emptyset}] = \psi^*([\overline{\Brh}_w]) = \prod_{\substack{i \in \BZ/d\BZ \\ \epsilon_i = 1}}(x^{(i)}_2 - px^{(i+1)}_2).
\]
With the notation of Subsection~\ref{Gammaw} we find $I_w = \emptyset$ and $L_w = T$. Hence $\gamma(w) = 1$ for all $w \in W$. Moreover $\pi$ is an isomorphism. Therefore we have isomorphisms
\[
A^{\bullet}(\GZipFlag[\Gtilde]^{\mgtilde}) \cong A^{\bullet}(\GZip[\Gtilde]^{\mgtilde}) \cong A^{\bullet}(\GZip^{\mu}) \cong A^{\bullet}(\GZipFlag^{\mu})
\]
in this case. From the description of $A^{\bullet}(\GZipFlag^{\mu})$ in \eqref{ChowGZip2} one deduces easily that $x^{(i)}_2 \sends z_i$ yields an isomorphism of graded $\BQ$-algebras
\[
A^{\bullet}(\GZip^{\mu}) \cong \BQ[z_0,\dots,z_{d-1}]/(z_0^2, \dots ,z_{d-1}^2).
\]
Via this isomorphism we get for the cycle classes of the $\overline{Z}_w$
\[
[\overline{Z}_w] = \prod_{\substack{i \in \BZ/d\BZ \\ \epsilon_i = 1}}(z_i - pz_{i+1}) \in A^{\bullet}(\GZip^{\mu}).
\]
To describe the Hodge half line in $A^{\bullet}(\GZip^{\mu})$ we use the notation from \ref{HodgeHalf}. The restriction of the standard embedding $\iota$ of $G$ into $\GSp_{2d}$ to the maximal torus is given by
\[
\bigl(\twomatrix{t_1^{(i)}}{0}{0}{t_2^{(i)}}\bigr)_{i\in \BZ/d\BZ} \sends {\rm diag}(t^{(0)}_1, t^{(1)}_1, \dots, t^{(d-1)}_1, t^{(d-1)}_2, \dots, t^{(0)}_2).
\]
Therefore the character $\chi(\iota)$ (see \eqref{EqHodgeChar}) is given by
\[
\bigl(\twomatrix{t_1^{(i)}}{0}{0}{t_2^{(i)}}\bigr)_{i\in \BZ/d\BZ} \sends \prod_{i\in \BZ/d\BZ}(t_2^{(i)})^{-1}
\]
and the Hodge half line consists of all $\BQ_{>0}$-multiples of the class of
\[
\lambda \defeq -\sum_{i\in \BZ/d\BZ}x^{(i)}_2 = -(z_0 + \dots + z_{d-1}) \in A^{\bullet}(\GZip^{\mu}).
\]
Hence (as an illustration of Proposition~\ref{PowerHodge}) we see that
\begin{gather*}
[Z_{\leq d-1}] = \sum_{i\in \BZ/d\BZ}(z_i - pz_{i+1}) = (p-1)\lambda, \\
[Z_e] = (1+(-1)^dp^d) \prod_{i\in \BZ/d\BZ}z_i = \frac{p^d+(-1)^d}{d!}\lambda^d.
\end{gather*}


\subsection{Spin Case}

We assume that $p > 2$. Let $(V,Q)$ be a quadratic space over $\BF_p$ of odd dimension $2n+1 \geq 3$. Let $C(V) = C^+(V) \oplus C^-(V)$ its Clifford algebra. It is a $\BZ/2\BZ$-graded (non-commutative) $\BF_p$-algebra of dimension $2^{2n+1}$ generated as an algebra by the image of the canonical injective $\BF_p$-linear map $V \mono C(V)$. It is endowed with an involution ${}^*$ uniquely determined by $(v_1\cdots v_r)^* = v_r \cdots v_1$ for $v_1,\dots,v_r \in V$.

The \emph{spinor similitude group} is the reductive group $G = \GSpin(V)$ over $\BF_p$ defined by
\[
G(R) = \set{g \in C^+(V_R)^{\times}}{gV_Rg^{-1} = V_R, g^*g \in R^{\times}}.
\]
Then $g \sends (v \sends g\bullet v \defeq gvg^{-1})$ defines a surjective map of reductive groups $G \to \Gtilde \defeq \SO(V)$ with kernel $\BG_m$. The groups $G$ and $\Gtilde$ are both of Dynkin type $B_n$.

We now assume that we can find a $\BF_p$-basis $(v_0,v_1,\dots,v_{2n})$ such that the matrix of the bilinear form attached to $Q$ with respect to this basis is given by
\[
\begin{pmatrix}
1 \\
 & & & & & 1\\
 & & & & 1 \\
 & & & \addots \\
 & & 1 \\
 & 1
\end{pmatrix}.
\]
Although there are two isomorphism classes of quadratic spaces over $\BF_p$ of dimension $2n+1$, the associated groups $\GSpin$ and $\SO$ are isomorphic. Hence our assumption is harmless.

We define the cocharacter $\mu\colon \BG_m \to G$ by
\[
\mu(t) = tv_1v_{2n} + v_{2n}v_1.
\]
The composition of $\mu$ with $G \to \SO(V)$ yields the cocharacter
\[
\mgtilde\colon \BG_m \to \SO(V), \qquad t \sends {\rm diag}(1,t,1,\dots,1,t^{-1}).
\]
We will compute the cycle classes of Ekedahl Oort strata for $(\Gtilde,\mgtilde)$. Again by Subsection~\ref{FuncZip} this yields then also the corresponding cycle classes for $(G,\mu)$.

As a maximal torus $\Ttilde$ for $\Gtilde = \SO(V)$ we choose
\[
\Ttilde = \set{{\rm diag}(1,t_1,\dots,t_n,t_n^{-1},\dots,t_1^{-1})}{t_i \in \BG_m}.
\]
For $i = 1,\dots,n$ let $x_i$ be the character ${\rm diag}(1,t_1,\dots,t_n,t_n^{-1},\dots,t_1^{-1}) \sends t_i$. Then $\Stilde = \BQ[x_1,\dots,x_n]$. The Weyl group $W$ is the group
\[
W = \sett{w \in S_{2n}}{$w(i) + w(2n+1-i) = 2n+1$ for all $i$}
\]
acting on $T$ in the standard way via the last $2n$ coordinates.
The roots of $(\Gtilde,\Ttilde)$ are given by $\pm x_i \pm x_j$ for $1 \leq i \ne j \leq n$ and $\pm x_i$ for $i = 1,\dots,n$. Let $\Btilde$ be the Borel subgroup such that the corresponding simple roots are given by $x_1 - x_2, \dots, x_{n-1} - x_n, x_n$. Then the set $\Sigma$ of simple reflections in $W$ corresponding to $B$ is given by $s_1,\dots,s_{n-1},s_n$, where $s_i$ is the transposition $\tau_{i,i+1}\tau_{2n-i,2n+1-i}$ for $i = 1,\dots,n-1$ and $s_n = \tau_{n,n+1}$. 

Let $\ztilde \in \Gtilde$ be a lift of the element in $W$ with $1 \sends 2n$, $2n \sends 1$ and $i \sends i$ for all $i = 2,\dots,2n-1$. Then $(\Btilde,\Ttilde,\ztilde)$ is a frame by Lemma~\ref{DescribeFrame}.

As $I$ is the set of simple reflections corresponding to simple roots $\alpha$ with $\langle \mu,\alpha\rangle = 0$, we find $I = \{s_2,\dots,s_n\}$. Hence we find a bijection
\begin{equation}\label{EqIWSpin}
{}^IW = \set{w^{-1} \in W}{w(2) < w(3) < \dots < w(2n-1)} \iso \{1,\dots,2n\}, \qquad w \sends w^{-1}(1).
\end{equation}
Moreover $\ell(w) = w^{-1}(1) - 1$ for $w \in {}^IW$. By Lemma~\ref{LemPartialOrder}~\ref{LemPartialOrder2} and~\ref{LemPartialOrder4} this implies that the order $\preceq$ coincides with the Bruhat order on ${}^IW$ and that \eqref{EqIWSpin} is an isomorphism of ordered sets. There is the following concrete reduced expression of $w \in {}^IW$ as product of simple reflections
\begin{equation}\label{EqSpinRed}
w = \begin{cases}
s_1s_2\cdots s_{\ell(w)},&\text{if $\ell(w) \leq n$}; \\
s_1s_2\cdots s_ns_{n-1}\cdots s_{2n-\ell(w)},&\text{if $\ell(w) > n$.}
\end{cases}
\end{equation}
For all $s \in I$ one has ${}^zs = s$. As $\Gtilde$ is split over $\BF_p$, the Frobenius $\varphi$ acts trivial on $W$. Hence for $w \in {}^IW$ set subset $I_w \subseteq I$ defined in Subsection~\ref{Gammaw} is the largest subset such that ${}^wI_w = I_w$. Hence
\begin{equation}\label{EqIwSpin}
\begin{gathered}
I_e = I, \qquad I_{s_1} = I \setminus \{s_2\}, \qquad \dots \qquad I_{s_1\dots s_{n-1}} = \emptyset, \qquad I_{s_1\dots s_n} = \emptyset, \\
 I_{s_1\dots s_ns_{n-1}} = \emptyset, \qquad I_{s_1\dots s_ns_{n-1}s_{n-2}} = \{s_n\}, \qquad \dots \qquad I_{s_1\dots s_n\dots s_1} = I \setminus \{s_2\}.
\end{gathered}
\end{equation}
Hence $F\ell_w$ is the flag variety of a split group over $\BF_p$ of the following Dynkin type $B_k$, where
\[
k = \begin{cases}
n - 1 - \ell(w),&\text{if $\ell(w) \leq n-1$}; \\
0,&\text{if $\ell(w) = n$}; \\
\ell(w)-n-1,&\text{if $\ell(w) \geq n+1$}.
\end{cases}
\]
By Example~\ref{ExampleClassDiag} we find that
\begin{equation}\label{EqDiagSpin}
[\Brh_e] = \prod_{1\leq i< j \leq n}(x_i \otimes 1 - 1 \otimes x_j)\Gamma_n(c_1,\dots,c_n),
\end{equation}
where $c_i = \frac{1}{2}(\sigma_i(x_1,\dots,x_n) \otimes 1 + 1 \otimes \sigma_i(x_1,\dots,x_n))$ for $i = 1,\dots,n$ and we set $c_0 = 1$ and $c_i = 0$ for all $i \notin \{0,\dots,n\}$. For instance, if $n = 2,3$ we find
\begin{equation}\label{EqDiagSpinSmall}
[\Brh_e] = \begin{cases}
(x_1 \otimes 1 - 1 \otimes x_2)c_1c_2,&\text{if $n = 2$}; \\
\prod_{1 \leq i < j \leq 3}(x_i \otimes 1 - 1 \otimes x_j)c_3(c_1c_2 - c_3),&\text{if $n=3$}.
\end{cases}
\end{equation}

\subsubsection*{The case $n=2$}
From now on we assume that $n = 2$. Then the operators $\delta_{s_1}$ and $\delta_{s_2}$ from Subsection~\ref{Deltaw} act on $\Stilde = \BQ[x_1,x_2]$ by
\begin{align*}
\delta_1 \defeq \delta_{s_1}\colon f(x_1,x_2) &\sends \frac{f(x_1,x_2) - f(x_2,x_1)}{x_1-x_2}, \\
\delta_2 \defeq \delta_{s_2}\colon f(x_1,x_2) &\sends \frac{f(x_1,x_2) - f(x_1,-x_2)}{x_2}.
\end{align*}
Recall that we extend this action on $\Stilde \otimes \Stilde$ by letting $\delta_{s_i}$ on the first component.

We set
\begin{align*}
\Phi &\defeq x_1 \otimes 1 - 1 \otimes x_2, \\
\Gamma &\defeq c_1c_2 = \frac{1}{4}\bigl((x_1 + x_2) \otimes 1 + 1 \otimes (x_1 + x_2)\bigr)\bigl(x_1x_2 \otimes 1 + 1 \otimes x_1x_2\bigr).
\end{align*}
Note that
\begin{align*}
\delta_1(\Gamma) &= 0, \qquad \delta_2(\Phi) = 0, \qquad \delta_1(\Phi) = 1, \\
\delta_2(\Gamma) &= 2c_2 + s_2(c_1)\delta_2(c_2) = 2c_2 + 2(x_1^2 \otimes 1 - x_1x_2 \otimes 1 + x_1 \otimes (x_1 + x_2))
\end{align*}
since $\Gamma$ is symmetric in $x_1$ and $x_2$.
For $i = 1,\dots, 4$ let $w_i \in {}^IW$ be the element of length $i-1$. Then
\begin{align*}
[\overline{\Brh}_{w_1}] &= [\Brh_e] = \Phi\Gamma, \\
[\overline{\Brh}_{w_2}] = \delta_1([\Brh_e]) &= \delta_1(\Phi)\Gamma = \Gamma, \\
[\overline{\Brh}_{w_3}] = \delta_1\delta_2([\Brh_e]) &= \delta_1(\Phi \delta_2(\Gamma)) = \delta_2(\Gamma) + s_1(\Phi)\delta_1\delta_2(\Gamma), \\
[\overline{\Brh}_{w_4}] = \delta_1\delta_2\delta_1([\Brh_e]) &= \delta_1\delta_2(\Gamma) = \underbrace{\delta_1(s_2(c_1))}_{=1}\underbrace{\delta_2(c_2)}_{=x_1 \otimes 1} + s_1s_2(c_1)\underbrace{\delta_1\delta_2(c_2)}_{=\delta_1(x_1 \otimes 1) = 1}, \\
&= x_1 \otimes 1 + \frac{1}{2}\bigl((x_2-x_1)\otimes 1 + 1 \otimes (x_1 + x_2)\bigr).
\end{align*}
The element $\ztilde$ acts on $\Stilde$ by $x_1 \sends -x_1$ and $x_2 \sends x_2$. As $\Gtilde$ is split over $\BF_p$, the Frobenius $\varphi$ acts by $x_i \sends px_i$ for $i = 1,2$. We find
\begin{align*}
\psi^*(\Phi) &= -x_1 - px_2, \\
\psi^*(\Gamma) &= \psi^*(c_1)\psi^*(c_2) = \frac{1}{4}((p-1)x_1 + (p+1)x_2)(p^2 -1)x_1 x_2,\\
\psi^*(\delta_2(\Gamma))&= 2(1-p)x_1^2 + (1-p)x_1x_2
\end{align*}
and hence

\begin{align}\label{EqZEmptySpin}
[\overline{Z}^{\emptyset}_{w_1}] &= \psi^*(\Phi)\psi^*(\Gamma)  =-\frac{1}{4}(p^2-1)((p-1)x_1+(p+1)x_2)(x_1+px_2)x_1x_2, \\{}
[\overline{Z}^{\emptyset}_{w_2}] &= \psi^*(\Gamma)=\frac{1}{4}(p^2-1)((p-1)x_1+(p+1)x_2)x_1x_2, \nonumber \\{}
[\overline{Z}^{\emptyset}_{w_3}] &= 2(1-p)x_1^2 + (1-p)x_1x_2 - \psi^*(\Phi)[Z^{\emptyset}_{w_4}]=\frac{1-p}{2}((p+1)x_2^2+x_1^2-x_1x_2), \nonumber \\{}
[\overline{Z}^{\emptyset}_{w_4}] &= \frac{p-1}{2}x_1 + \frac{p+1}{2}x_2. \nonumber
\end{align}`

By \eqref{EqIwSpin} we find that
\begin{equation}\label{EqGammawSpin}
\gamma(w_1) = \#\BP^1(\BF_p) = p+1, \qquad \gamma(w_i) = 1 \quad\text{for $i = 2,3,4$}.
\end{equation}

We have $I^\circ=\leftexp{z}{I}=\{s_2\}$ and hence $\pi_*=\delta_2$ by Theorem \ref{GysinFormula}. Using this we find the following formulas for the classes of the closures of the Ekedahl-Oort strata:
\begin{align} \label{EqZSpin}
  [\overline{Z}_{w_1}] &= (p+1)\frac{1-p^2}{2} ((p^2+p)x_2^2+(p-1)x_1^2)x_1,\\{}
  [\overline{Z}_{w_2}] &= \frac{1}{2}(p^2-1)(p-1)x_1^2, \nonumber\\{}
  [\overline{Z}_{w_3}] &= (p-1)x_1, \nonumber\\{}
  [\overline{Z}_{w_4}] &= p+1. \nonumber
\end{align}


\bibliographystyle{amsalpha}
\bibliography{references}

\end{document}